\documentclass[11pt]{amsart}
\usepackage{latexsym}
\usepackage{amsmath,amsthm,amssymb, amscd,color, mathtools,bbm}
\usepackage[bb=libus]{mathalpha}
\usepackage{mathrsfs}
\usepackage[all]{xy}
\usepackage[all]{xy}
\usepackage{tikz}
\usepackage{tikz-cd}
\usepackage{adjustbox}
\usetikzlibrary{decorations.pathmorphing}

\usepackage{blkarray} 
\usepackage{framed}
\theoremstyle{plain}
\newtheorem{theorem}{Theorem}[section]
\newtheorem{lemma}[theorem]{Lemma}
\newtheorem{corollary}[theorem]{Corollary}
\numberwithin{equation}{section}

\newtheorem{innercustomgeneric}{\customgenericname}
\providecommand{\customgenericname}{}
\newcommand{\newcustomtheorem}[2]{%
	\newenvironment{#1}[1]
	{%
		\renewcommand\customgenericname{#2}%
		\renewcommand\theinnercustomgeneric{##1}%
		\innercustomgeneric
	}
	{\endinnercustomgeneric}
}

\newcustomtheorem{customthm}{Theorem}

\theoremstyle{definition}
\newtheorem{definition}[theorem]{Definition}
\newtheorem{example}[theorem]{Example}

\newtheorem{proposition}[theorem]{Proposition}
\newtheorem{remark}[theorem]{Remark}

\theoremstyle{remark}

\newcommand{\CC}{\mathbb{C}}

\newcommand{\ZZ}{\mathbb{Z}}

\newcommand{\WG}{\mbox{WGr}}
\newcommand{\G}{\mbox{Gr}}
\newcommand{\WW}{\mathbb{W}}
\newcommand{\Pl}{Pl\"{u}cker}

\usepackage{blkarray} 
\usepackage{framed}

\newcustomtheorem{customlemma}{Lemma}
\newcustomtheorem{customprop}{Proposition}
\newcustomtheorem{customcorollary}{Corollary}

\begin{document}

\title[$K$-theory of divisive weighted Grassmann orbifold] {Twisted factorial Grothendieck polynomials and equivariant $K$-theory of weighted Grassmann orbifolds}

\author[K Brahma]{Koushik Brahma}
\address{Department of Pure and Applied Mathematics, Waseda University, 3-4-1 Okubo, Shinjuku-ku, Tokyo 169-8555, Japan.}
\email{w.iac24166@kurenai.waseda.jp koushikbrahma95@gmail.com}

\subjclass[2020]{14M15, 14N15, 57R18, 19L47}

\keywords{Pl\"{u}cker weight vector, weighted Grassmann orbifold, factorial  Grothendieck polynomial, equivariant $K$-theory, Schubert basis, twisted factorial Grothendieck polynomial, Chevalley formulae, structure constant}

\date{\today}
\dedicatory{}
\abstract In this paper, we provide an explicit description of the Schubert classes in the equivariant $K$-theory of weighted Grassmann orbifolds. We introduce the `twisted factorial Grothendieck polynomials', a family of symmetric polynomials by specializing the factorial Grothendieck polynomials, and prove that they represent the Schubert classes in the equivariant $K$-theory of the weighted Grassmann orbifolds. We give an explicit formula for the restriction of the Schubert classes to any torus fixed point in terms of twisted factorial Grothendieck polynomials. We give an explicit formula for the structure constants with respect to the Schubert basis in the equivariant $K$-theory of weighted Grassmann orbifolds. Eminently, we describe `twisted Grothendieck polynomials' and prove that these represent the Schubert classes in the $K$-theory of the weighted Grassmann orbifold. As a consequence, we describe the structure constants in the $K$-theory of weighted Grassmann orbifolds.
\endabstract

\maketitle

\tableofcontents 

\section{Introduction}

The primary objective of Schubert calculus is to compute the structure constants of the cohomology ring of the (partial) flag variety, with respect to the basis formed by Schubert classes; see \cite{km,Man,Fu} and reference therein for historical background and foundational developments. One effective approach to determine these structure constants is to realize Schubert classes as explicit symmetric polynomials. For instance, in the case of Grassmannians, Schubert classes in the equivariant cohomology are represented by factorial Schur polynomials \cite{MoSe}. Other notable examples include (double or, quantum) Schubert polynomials \cite{LaSc,FGP}, which correspond to the Schubert classes in (equivariant or, quantum) cohomology of full flag varieties, and (factorial) Schur $Q$-polynomials \cite{Ik,Iva}, which arise in the study of the (equivariant) cohomology of Lagrangian Grassmannians. These polynomial realizations offer a significant advantage: they enable the study of structure constants through the explicit multiplication of polynomials. In \cite{MS}, Molev-Segan compute the Littlewood-Richardson rule in the equivariant cohomology of Grassmannian in terms of factorial Schur polynomials.

In the same direction, Lascoux-Schützenberger introduced the (double) Grothendieck polynomials in \cite{LS} as representatives for (equivariant) $K$-theory classes determined by Schubert structure sheaves of flag varieties.
A Littlewood–Richardson rule for the $K$-theory of Grassmannians was discovered by Buch \cite{Buch},  who provided the initial rule governing the multiplication of Schubert structure sheaves in the $K$-theory.  Since then, their properties were studied by Lenart \cite{Len}, Ikeda-Shimazaki \cite{IkeShi}, among others. McNamara introduced the factorial Grothendieck polynomial in \cite{mc} corresponding to the Grassmannian permutations. The Grassmannian permutations  are naturally indexed by partitions, where Grothendieck polynomials acquire symmetry in their variables. Throughout the article, we use the terminology (factorial) Grothendieck polynomials, with the implicit understanding that this designation always pertains to Grassmannian permutations, or, equivalently, to the (equivariant) $K$-theory of the Grassmannians. Pechenik-Yong found combinatorial rule of the structure constants with respect to Schubert class in the equivariant $K$-theory of the Grassmannians in \cite{PY,PY1}.

%We focus on the case of Grassmannian permutations when the Grothendieck polynomials become symmetric in their variables and are more naturally indexed by partitions. We will continue to refer to them simply as Grothendieck polynomials,with the implicit understanding that this always refers to Grassmannian permutations, or equivalently to the K-theory of the Grassmannian.
%Let $K_T(X)$ denote the Grothendieck ring of $ T$-equivariant vector bundles over $X$. This ring has a natural $K_T(pt)$-module structure and an additive basis given by the classes of Schubert structure sheaves; for background, we refer the reader to, e.g., [KoKu90, AnGrMi11] and the references therein. 

%The analogs of Littlewood-Richardson coefficients in the equivariant $K$-theory of Grassmannians are the Laurent polynomials. 

%In \cite{Ka}, Kawasaki introduced a ${\bf{b}}$-action of $\CC^*$ on $\CC^{m+1}\setminus\{\mathbf{0}\}$ for a weight vector ${\bf{b}}=(b_0,\dots,b_m)\in(\ZZ_{\geq 1})^{m+1}$ as follows:
%\begin{equation}\label{eq_weigh_act}	t(z_0,z_1,z_2,\dots,z_m)=(t^{b_0}z_0,t^{b_1}z_1,t^{b_2}z_2,\dots,t^{b_m}z_m).
%\end{equation}

Kawasaki introduced the weighted projective space in \cite{Ka} and computed the cohomology ring of weighted projective spaces with integer coefficients. In \cite{HHRW}, Harada-Holm-Ray- Williams introduced divisive weighted projective spaces and computed the integral generalized equivariant cohomology ring of divisive weighted projective spaces. 
Amrani computed the $K$-theory of weighted projective space in \cite{Am2, Am3}. The weighted Grassmannians was introduced by Corti-Reid in \cite{CoRe}, as the weighted projective analog of the Grassmann manifolds. 
The equivariant cohomology rings were studied for weighted Grassmannians in \cite{AbMa1,AbMa2}, and for weighted flag varieties in \cite{ANQ} all with rational coefficients.
%The equivariant cohomology ring of the weighted Grassmannians with rational coefficients was studied in \cite{AbMa1,AbMa2}. The equivariant cohomology ring of weighted Flag varieties with rational coefficients was studied in \cite{ANQ}.
The author and Sarkar \cite{BS} provided another topological definition of weighted Grassmannians and referred to them as weighted Grassmann orbifolds. A description of generalized equivariant cohomologies of divisive weighted Grassmann orbifolds with integer coefficients was studied in \cite{BS}.  For basic properties of orbifolds, readers are referred to \cite{ALR}.

In this article, we explore the Schubert calculus in equivariant $K$-theory of divisive weighted Grassmann orbifolds with integer coefficients by providing a combinatorial description of the Schubert classes in the aforementioned ring. Initially, we derive the Chevalley rule in the equivariant $K$-theory ring of  divisive weighted Grassmann orbifolds. Building on this, we explicitly compute all the structure constants with respect to the Schubert classes and discuss the positivity of the structure constants. Our approach to this computation is to realize Schubert classes as concrete symmetric polynomials. We refer to these polynomials as `twisted factorial Grothendieck polynomials', and give an explicit formula for the restriction of the Schubert classes to any torus fixed point in terms of the twisted factorial Grothendieck polynomials. As a consequence, we introduce twisted Grothendieck polynomials and prove that these polynomials represent the Schubert structure sheaves in the ordinary $K$-theory of the divisive weighted Grassmann orbifold. This paves the way to compute the structure constants in the ordinary $K$-theory of divisive weighted Grassmann orbifolds with integer coefficients. 

% A. Knutson-T. Tao [KnTa03] introduced puzzles to give the first rule for equivariant Schubert calculus that is positive in the sense of [Gr01]. 

The paper is organized as follows.
In Section \ref{bld_seq_wgt_gmn_obd}, we recall the definition of the weighted Grassmann orbifold in terms of the {\Pl} weight vector and discuss the orbifold and $q$-CW complex structure. In Section \ref{Sec_eq_k_th_gsm}, we explore the Schubert calculus in $K_{T^n}(\G(d,n))$, and describe the geometric and combinatorial properties of the Schubert classes in the aforementioned ring, see Proposition \ref{prop_mu_comp}, Proposition \ref{prop_van_pro_S} and Proposition \ref{prop_HHH}. We recall the factorial Grothendieck polynomials from \cite{ikNa} and study their combinatorial properties, see Proposition \ref{prop_div_dif_ope}, Proposition \ref{cor_Gro_pol} and Proposition \ref{prop_van_pro}. We construct an algebraic localization map from an algebra generated by the factorial Grothendieck polynomials to $K_{T^n}(\G(d,n))$ establishing an explicit correspondence between the factorial Grothendieck polynomials and Schubert classes in $K_{T^n}(\G(d,n))$, see Theorem \ref{thm_sur_hom}. 

In Section \ref{sec_sc_cal_pldn}, we explore the equivariant $K$-theory $K_{T^{n+1}}(Pl(d,n))$ of {\Pl} coordinates. We prove that $K_{T^{n+1}}(Pl(d,n))$ is isomorphic to $K_{T^n}(\G(d,n))$ as a $R(T^n)$-algebra and provide a combinatorial description of the Schubert classes in $K_{T^{n+1}}(Pl(d,n))$, see Lemma \ref{lem_div class} and Lemma \ref{lem_pslam}. In Section \ref{Sec_Sc_bas_eq_K_th_wgt_gsm}, we explore the equivariant $K$-theory of divisive weighted Grassmann orbifolds with integer coefficients and realize this as a sub algebra of $K_{T^{n+1}}(Pl(d,n))$. Moreover, we explicitly describe the Schubert classes in the equivariant $K$-theory $K_{T_c}(\G_{\bf c}(d,n))$ of divisive weighted Grassmann orbifolds, see Lemma \ref{lem_sc_bas_wt_case} and Lemma \ref{lem_sc_bas_wt_case_gen}. We also discuss the Schubert classes in the $K$-theory of divisive weighted Grassmann orbifolds.

In Section \ref{sec_twis_gro_pol}, we introduce twisted factorial Grothendieck polynomials. Let $\mathcal{P}_d$ denote the set of partitions of length less than or equal to $d$; that is, $\mathcal{P}_d$ consists of sequences 
$\lambda = (\lambda_1,\lambda_2, \ldots, \lambda_d)$ of non-negative integers satisfying 
$\lambda_1 \ge \lambda_2 \ge \cdots \ge \lambda_d \ge 0.$
 Let $\mathbb{a}=(\mathbb{a}_1,\mathbb{a}_2,\dots)$ be an infinite set of parameter. For every $\lambda \in \mathcal{P}_d$ we introduce `twisted factorial Grothendieck polynomials' $G_{\lambda}^{\bf c}(x|\mathbb{a})$ and
%$$G_\lambda^{\bf c}(x_1,\dots,x_d|\mathbb{a}_1,\mathbb{a}_2,\dots)=G_\lambda(x_1,\dots,x_d|1-\mathbb{a}_1{(\xi(x))^{w_1}},1-\mathbb{a}_1{(\xi(x))^{w_2}},\dots),$$ where $G_\lambda(x_1,\dots,x_d|a_1,a_2,\dots)$ is a factorial Grothendieck polynomial discussed in Section \ref{Sec_eq_k_th_gsm}, and $\xi(x)=\prod_{i=1}^d(1-x_i)$.
describe that $\{G_{\lambda}^{\bf c}(x|\mathbb{a})\}_{\lambda \in \mathcal{P}_d}$ form a basis of the algebra $\ZZ[\mathbb{a}^{\pm1}_1,\mathbb{a}^{\pm1}_2,\dots][x_1,\dots,x_d]^{S_d}$ as a $\ZZ[\mathbb{a}^{\pm1}_1,\mathbb{a}^{\pm1}_2,\dots]$ module. For $0<d<n$, let $\mathcal{P}(d, n)$ denotes the partitions in $\mathcal{P}_d$ contained in a $d\times (n-d)$ rectangle. In other words,
$$\mathcal{P}(d,n)=\{\lambda = (\lambda_1, \ldots, \lambda_d):n-d\ge \lambda_1\ge \lambda_2\ge \dots,\ge \lambda_d\ge 0\}.$$
For every $\lambda \in \mathcal{P}(d,n)$, we describe the Schubert classes ${\bf c}S_\lambda\in K_{T_c}(\G_{\bf c}(d,n))$ and show that $\{{\bf c}S_\lambda\}_{\lambda \in \mathcal{P}(d,n)}$ form a basis of the algebra $K_{T_{\bf c}}(\G_{\bf c}(d,n))$ as $\ZZ[\mathbf{a}^{\pm1}_1,\dots,\mathbf{a}^{\pm1}_n]$ module. 
%Next, we prove the following algebra homomorphism.

\begin{customthm}{A}[Theorem \ref{thm_alg_mor}]
    There exists a surjective algebra homomorphism $$\Psi^{\bf c}\colon \ZZ[\mathbb{a}^{\pm1}_1,\mathbb{a}^{\pm1}_2,\dots][x_1,\dots,x_d]^{S_d}\to K_{T_{\bf c}}(\G_{\bf c}(d,n))$$ of $\ZZ[\mathbb{a}_1^{\pm1},\mathbb{a}_2^{\pm1},\dots]$ to $\ZZ[{\mathbf{a}}_1^{\pm1}, \dots, {\bf a}_n^{\pm1}]$ algebra such that \[\Psi^{\bf c}(G_{\lambda}^{\bf c}(x|\mathbb{a}))=\begin{cases}
{\bf c}S_{\lambda} &\text{ if } \lambda\in \mathcal{P}(d,n)\\
 0 &\text{ otherwise } \\  
\end{cases}.
\]  
\end{customthm}

\begin{customcorollary}{B}[Corollary \ref{cor_alg_mor}]
For every $\mu\in \mathcal{P}(d,n)$, there exist algebra homomorphisms 
$$\Psi^{\bf c}_{\mu}: \ZZ[\mathbb{a}^{\pm1}_1,\mathbb{a}^{\pm1}_2,\dots][x_1,\dots,x_d]^{S_d}\to \ZZ[{\bf{a}}_1^{\pm1}, \dots, {\bf a}_n^{\pm1}]$$ such that for every $\lambda\in \mathcal{P}(d,n)$,
$${\bf c}S_{\lambda}|_{\mu}=\Psi_{\mu}^{\bf c}(G_{\lambda}^{\bf c}(x|\mathbb{a})).$$
\end{customcorollary}
In other words, the restriction of the Schubert class ${\bf c}S_\lambda$ in any torus fixed point can be explicitly computed as a image of twisted factorial Grothendieck polynomial $G_{\lambda}^{\bf c}(x|\mathbb{a})$. Moreover, we define twisted Grothendieck polynomials $G_\lambda^{\bf c}(x)\in \ZZ[x_1,\dots,x_d]^{S_d}$ and the Schubert classes ${\bf c}\mathbb{S}_\lambda\in K(\G_{\bf c}(d,n))$. Interestingly, the collection $\{G_{\lambda}^{\bf c}(x)\}_{\lambda \in \mathcal{P}_d}$ form a basis of the $\ZZ$-algebra $\ZZ[x_1,\dots,x_d]^{S_d}$, and $\{{\bf c}\mathbb{S}_\lambda\}_{\lambda \in \mathcal{P}(d,n)}$ forms a basis of the $\ZZ$-algebra $K(\G_{\bf c}(d,n))$. Then we prove the following algebra homomorphism.

\begin{customthm}{C}[Theorem \ref{thm_alg_hom_ord_case}]
There exists a surjective homomorphism of the $\ZZ$-algebra
$$\ZZ[x_1,\dots,x_d]^{S_d}\to K(\G_{\bf c}(d,n))$$ 
  which sends $G_{\lambda}^{\bf c}(x)$ to ${\bf c}\mathbb{S}_\lambda$ if $\lambda\in \mathcal{P}(d,n)$, and to 0 if $\lambda\in \mathcal{P}_d\setminus \mathcal{P}(d,n)$.  
\end{customthm}

Moreover, the twisted Grothendieck polynomials can be written as the $\ZZ$-linear combination of Grothendieck polynomials, see Theorem \ref{thm_Z_lin_com}.

 In Section \ref{sec_che_rule}, we prove the Chevalley rule, the multiplication rule ${\bf c}S_{\lambda}{\bf c}S_{(1)}$, where $(1)$ denote the unique partition $(1,0,\dots,0)\in \mathcal{P}(d,n)$ of total size 1.
\begin{customthm}{D}[Theorem \ref{thm_eq_che}][Chevalley rule]
\begin{equation*}
{\bf c}S_{\lambda}{\bf c}S_{(1)}=(1-\frac{\mathbf{a}_{(0)}}{(\mathbf{a}_{\lambda})^{d_\lambda}}){\bf c}S_\lambda-{\mathbf{a}_{(0)}}\sum_{\mu:\lambda\xRightarrow[d_\lambda]{}^*\mu}\mathcal{L}_{\lambda, d_{\lambda}}^\mu{\bf c}S_{\mu}.
\end{equation*}
\end{customthm}
The formula to compute the Laurent polynomial $\mathcal{L}_{\lambda,d_{\lambda}}^\mu$ is explained in \eqref{eq_lau_pol}.

In Section \ref{sec_int_coh}, we concentrate to compute the structure coefficient ${\bf c}{K}_{\lambda\mu}^{\nu}$ defined by:
$$
{\bf c}{S}_{\lambda}{\bf c}{{S}}_{\mu}=\sum_{\eta\in \mathcal{P}(d,n) }{\bf c}{K}_{\lambda\mu}^{\eta}{\bf c}{{S}}_{\eta}.$$

\begin{customthm}{E}[Theorem \ref{str_cst_eq_case}]
$${\bf c}{K}_{\lambda\mu}^\eta=\sum_{\nu:\nu \succeq\lambda,\mu  }\sum_{I: \nu \xRightarrow[d_{I,\nu}]{} \eta}C(\lambda,\mu,\nu,I)\mathcal{U}_I \mathcal{L}_{\nu,d_{I,\nu}}^\eta.$$
\end{customthm}
Here $I$ is a finite collection of elements of $\{1,\dots,n-1\}$, $\mathcal{U}_I=\prod_{i\in I}\frac{{\bf a}_{i}}{{\bf a}_{i+1}}$ and the coefficients $C(\lambda,\mu,\nu,I)\in \ZZ$ are known due to \cite{PY}. We describe the formula to compute the Laurent polynomial $\mathcal{L}_{\nu,d_{I,\nu}}^\eta$ in \eqref{eq_lau_pol}. 
As a corollary, we compute the structure constants $ {\bf{c}}{\mathscr{K}}_{\lambda\mu}^{\eta}$ of ordinary $K$-theory of divisive weighted Grassmann orbifolds with respect to Schubert classes. 

\begin{customcorollary}{F}[Corollary \ref{cor_st_con_ord_case}]
   $$ {\bf{c}}{\mathscr{K}}_{\lambda\mu}^{\eta}={\mathscr{K}}_{\lambda\mu}^{\eta}+\sum_{\nu:~\eta\succ\nu \succeq\lambda,\mu  }\sum_{I: \nu \xRightarrow[d_{I,\nu}]{} \eta}(-1)^{|\eta\setminus \nu|}C(\lambda,\mu,\nu,I)N_{\nu,d_{I,\nu}}^{\eta}.$$ 
\end{customcorollary}

The coefficients $N_{\nu,d_{I,\nu}}^{\eta}$ is explained in Remark \ref{rem_a_0}. In addition, we explore the positivity of the structure constants ${\bf c}{K}_{\lambda\mu}^{\eta}$, see Theorem \ref{thm_postivity}. Some explicit computations of the structure constants are discussed in Example \ref{eg_wgt_pro_sp} and Example \ref{eg_wgt_gr_24}.

\section{An overview of weighted Grassmann orbifolds} \label{bld_seq_wgt_gmn_obd}

In this section, we explore Plücker weight vector and revisit the definition of the weighted Grassmann orbifold, formulated as a orbit space of Plücker coordinates under a $\CC^*$-action determined by the Plücker weight vector following \cite{br}. This construction aligns with the notion of the weighted Grassmannian as presented in \cite{CoRe,AbMa1,BS}. Furthermore, we discuss the CW complex structure of divisive weighted Grassmann orbifolds.

%Note that there are $n\choose d$ many elements in the set $\mathcal{P}(d, n)$.   

For two positive integer $d<n$, let $I(d,n)$ denote the set of all cardinality $d$ subset $(x_1,x_2,\dots,x_d)$ in $\{1,2,\dots,n\}$ such that $1\leq x_1 < x_2 < \cdots <x_d\leq n$. The elements of $I(d,n)$ are known as the Schubert symbols; see \cite{BS,AbMa1}. There is a standard bijection between the partitions in $\mathcal{P}(d, n)$ the Schubert symbols ${I(d,n)}$ given by $$(\lambda_1,\lambda_2,\dots,\lambda_d) \mapsto (\lambda_d+1,\lambda_{d-1}+2,\dots,\lambda_1+d).$$

  Let $\lambda=(\lambda_1,\lambda_2,\dots,\lambda_d)$ be a partition in $\mathcal{P}(d, n)$. For each $i\in \{1,\dots,d\}$, define 
  \begin{equation}\label{eq_cor}
     \widetilde{\lambda}_{i}:=\lambda_{d+1-i}+i.  
  \end{equation}
 Then $\widetilde{\lambda}_i\in \{1,2,\dots,n\}$ and $1\leq \widetilde{\lambda}_1 < \widetilde{\lambda}_2 < \cdots < \widetilde{\lambda}_d\leq n$. 
 Thus  $(\widetilde{\lambda}_1,\widetilde{\lambda}_2,\dots,\widetilde{\lambda}_d)$ is the Schubert symbol corresponding to $\lambda$.
 \begin{remark}
Throughout the paper, we use the same notation $\lambda$ to denote the elements in $\mathcal{P}(d,n)$ as well as in $I(d,n)$. If we consider $\lambda\in \mathcal{P}(d,n)$ we denote this by $\lambda=({\lambda}_1,{\lambda}_2,\dots,{\lambda}_d)$. If we consider $\lambda\in I(d,n)$ we denote $\lambda=(\widetilde{\lambda}_1,\widetilde{\lambda}_2,\dots,\widetilde{\lambda}_d)$. The relation between $\lambda_i$ and $\widetilde{\lambda}_i$ is given in \eqref{eq_cor}.
 \end{remark}

Define a partial order `$\preceq$' on the set $I(d,n)$ by
\begin{equation}\label{eq_bru_ord}
	\lambda \preceq \mu \text{ if } \widetilde{\lambda}_i\leq \widetilde{\mu}_i \text{ for all } i=1,2,\dots,d.
\end{equation}

In addition, the dictionary order `$\leq$' gives a total order on the set $I(d,n)$. This induces a total order on $I(d,n)$ and it satisfies if $\lambda\preceq\mu$ then $\lambda\leq \mu$.

% Assume $\lambda\in I(d,n)$ be a Schubert symbol and $(i,j)$ be a pair such that $i\in \lambda$ and $j\notin\lambda$ then we define $(i,j)\lambda$ be the Schubert symbol obtained by replacing $i$ by $j$ in $\lambda$ and ordering the later set.
%  A pair $(s,s')$ is called a reversal of $\lambda\in I(d,n)$ if $s\in \lambda, s'\notin \lambda$ and $s>s'$. The set of all reversals of $\lambda$ is denoted by $\rm{rev}(\lambda)$.  Now corresponding to $\text{rev}(\lambda)$, one can define a subset of Schubert symbols as follows 
% \begin{equation}\label{eq_inv_lbd}
% 	R(\lambda):=\{\mu~|~\mu=(s,s')\lambda \text{ for } (s,s')\in \text{rev}(\lambda)\}.
% \end{equation}

\subsection{Pl\"{u}cker weight vectors and weighted Grassmann orbifolds}\label{subsec_pl_wgt_vec}
Let $\Lambda^d(\CC^n)$ be the $d$-th exterior product of the complex $n$-dim space $\CC^n$. The standard basis $\{f_1,\dots, f_n\}$ for $\mathbb{C}^n$ induces a basis $\{f_{\lambda}\}_{\lambda \in I(d,n)}$ of $\Lambda^d(\mathbb{C}^n)$, where $f_{\lambda}=f_{\tilde{\lambda}_1}\wedge f_{\tilde{\lambda}_2}\wedge\dots\wedge f_{\tilde{\lambda}_d}$ and $\lambda=(\tilde{\lambda}_1,\tilde{\lambda}_2,\dots,\tilde{\lambda}_d) \in I(d,n)$. Consider a subset $Pl(d,n)\subseteq\Lambda^d(\CC^n)$ by the following
\begin{equation}\label{eq_pl_kn}
   Pl(d,n):= \{a_1\wedge a_2\wedge\dots\wedge a_d\colon a_i\in \CC^n\}\setminus\{\bf{0}\}. 
\end{equation}

The elements of $Pl(d,n)$ are known as {\Pl} coordinates, those are also defined as
solutions to a system of homogeneous polynomial equations known as {\Pl} relations, see \cite[Theorem 3.4.11]{Jac} and \cite[Section 2.1]{br} for more details.

 \begin{definition}\label{def_plu_vec}
 A weight vector ${\bf{c}}=(c_\lambda)_{\lambda \in I(d,n)}\in (\mathbb{Z}_{\geq 1})^{n \choose d}$ is called a Pl\"{u}cker weight vector if for any two ordered sequences $1\leq i_1<\dots<i_{d-1}\leq n$ and $1\leq \ell_0<\ell_1<\dots<\ell_d\leq n$ the following satisfied
$$c_{\lambda^j}+c_{\mu^j}= c_{\lambda^i}+c_{\mu^i} \text{ for all } i,j\in \{0,1,\dots,d\},$$ where 
$\lambda^{j}=(i_1,\dots,i_{d-1},\ell_j) \in I(d,n)$ and $\mu^{j}=(\ell_0,\dots,\widehat{\ell}_j,\dots,\ell_d) \in I(d,n)$.
 \end{definition}

% \begin{remark}\label{rem_plu_wgt_vec}
% Let ${\bf c}=(c_\lambda)_{\lambda\in \mathcal{P}(d,n)}$ be a Pl\"{u}cker weight vector. If  $\lambda\cup \mu=\bar{\lambda}\cup \bar{\mu}$ for any 4 Schubert symbols $\lambda$, $\mu$, $\bar{\lambda}$, $\bar{\mu}$ then $c_\lambda+c_\mu=c_{\bar{\lambda}}+c_{\bar{\mu}}$.  
% \end{remark}

\begin{example}\label{ex_pl_wgt_vec}
	If $d=2$ and $n=4$, then $\mathcal{P}(2,4)$ consists of 6 elements $(0,0),(1,0),$ $(2,0),(1,1),(2,1),(2,2)$. The corresponding elements in $I(2,4)$ are given by $(1,2),(1,3),$ $(1,4),(2,3),(2,4),(3,4)$ respectively. Consider order sequences  $i_1=1,~ \ell_0=2<\ell_1=3<\ell_2=4.$
    A weight vector ${\bf{c}}=(c_{(1,2)},c_{(1,3)},c_{(1,4)},c_{(2,3)},c_{(2,4)},c_{(3,4)})\in (\ZZ_{\geq 1})^6$ is a Pl\"{u}cker weight vector if $$c_{(1,2)}+c_{(3,4)}=c_{(1,3)}+c_{(2,4)}=c_{(1,4)}+c_{(2,3)}.$$  
\end{example}

%$Pl(d,n)$ is $\CC^*$-invariant subset of $\CC^{m+1}\setminus\{{\bf 0}\}$ under the ${\bf c}$-action iff ${\bf c}$ is a Pl\"{u}cker weight vector. Then we can define the following.

\begin{definition}\label{def_wgt_gsm}
Let ${\bf{c}}=(c_\lambda)_{\lambda \in I(d,n)}\in (\mathbb{Z}_{\geq 1})^{n \choose d}$ be a Pl\"{u}cker weight vector. Define a `${\bf c}$-action' of $\CC^*$ on $Pl(d,n)$ by $$t{\cdot}(\zeta_\lambda)_{\lambda\in I(d,n)}=(t^{c_\lambda}\zeta_\lambda)_{\lambda\in I(d,n)}.$$
 We denote the orbit space  $$\G_{\bf{c}}(d,n):=\frac{Pl(d,n)}{{\bf c}\text{-action}}.$$
	Consider the quotient map  $$\pi_{\bf{c}}: Pl(d,n)\to \G_{\bf{c}}(d,n).$$
    The topology on $\G_{\bf{c}}(d,n)$ is given by the quotient topology induced by the map $\pi_{\bf{c}}$. 
\end{definition}

Note that $Pl(d,n)$ is stable with respect to ${\bf c}$-action of $\CC^*$ if and only if ${\bf c}$ is a {\Pl} weight vector, see \cite{br} for more details.

\begin{remark}
  If ${\bf{c}}=(1,1,\dots,1)$, then ${\bf{c}}$ is a  Pl\"{u}cker weight vector. In this case the ${\bf c}$-action of $\CC^*$ reduced to
  $$t{\cdot}(\zeta_\lambda)=(t\zeta_\lambda)$$
  and the space $\G_{\bf{c}}(d,n)$ becomes the Grassmann manifold $\G(d,n)$. We denote the corresponding quotient map by $$\pi: Pl(d,n)\to \G(d,n).$$
\end{remark}
  
%The definition of weighted Grassmannians was introduced in \cite{CoRe}.
{\Pl} weight vectors appears naturally as follows. Let $W:=(w_1, \dots ,w_n) \in (\mathbb{Z}_{\geq 0})^{n}$ and $a \in\mathbb{Z}_{\geq 1}$. For all $\lambda\in I(d,n)$, define 
\begin{equation}\label{eq_wli}
	w_\lambda:=a+\sum_{j=1}^{d} w_{\tilde{\lambda}_j},
\end{equation}
%where $(\tilde{\lambda}_1,\tilde{\lambda}_2,\dots,\tilde{\lambda}_d)$ is the element in $I(d,n)$ corresponding to $\lambda$. 
Then $w_\lambda\geq 1$ for all $\lambda\in I(d,n)$. From definition \ref{def_plu_vec}, it follows that ${\bf{c}}=(c_\lambda)_{\lambda\in I(d,n)}$ is a {\Pl} weight vector, where $c_\lambda=w_\lambda$. Conversely, we have the following.
\begin{proposition}\label{prop_rel_two_wt}\cite[Proposition 2.9]{br}
Let ${\bf{c}}=(c_\lambda)_{\lambda\in I(d,n)}\in (\mathbb{Z}_{\geq 1})^{n\choose d}$ be a Pl\"{u}cker weight vector for $d<n$. Then there exist $a\in \{1,2,\dots,d\}$ and $W=(w_1,\dots,w_n) \in \mathbb{Z}^{n}$ such that $c_\lambda=w_{\lambda}$ for every element $\lambda\in I(d,n)$, where $w_{\lambda}$ is defined in \eqref{eq_wli}.
\end{proposition}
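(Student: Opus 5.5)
We prove Proposition \ref{prop_rel_two_wt} by reading off the $w_j$ from differences of weights along a reference chain of Schubert symbols, and then using the Pl\"{u}cker relations of Definition \ref{def_plu_vec} to show that $c_\lambda-\sum_j w_{\tilde\lambda_j}$ is a constant $a$. Finally we adjust $a$ into $\{1,\dots,d\}$.

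The key reformulation is an exchange property. Take $d-1$ indices $i_1<\dots<i_{d-1}$ and two further indices $p,q$ not among them. Writing $\lambda^p=(i_1,\dots,i_{d-1},p)$ (reordered), the relation of Definition \ref{def_plu_vec} says that the difference $c_{\lambda^p}-c_{\lambda^q}$ does not depend on the common part $\{i_1,\dots,i_{d-1}\}$. To see this, compare two common parts $J,J'$ that differ in one element. Choose the $\ell$-sequence as $J'\cup\{p,q\}$ minus the appropriate element, so that the $\mu^j$ range over sets $J'\cup\{p\}$, $J'\cup\{q\}$, and so on. The equalities $c_{\lambda^p}+c_{\mu^p}=c_{\lambda^q}+c_{\mu^q}$ then give exactly the needed independence. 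Since any two $(d-1)$-subsets avoiding $p,q$ are connected by single swaps, independence holds in general. This requires $n\ge d+2$ whenever two such common parts exist, and otherwise the claim is vacuous; the small edge cases $d=1$ and $d=n-1$ we handle directly.

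Define $\delta(p,q):=c_{J\cup\{p\}}-c_{J\cup\{q\}}$, which is well defined by the exchange property. It is additive: $\delta(p,q)+\delta(q,r)=\delta(p,r)$. For $p,q,r$ distinct with a $J$ avoiding all three this is a telescoping identity. If no such $J$ exists, we pass through intermediate sets instead.

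Now fix index $1$ and set $w_j:=\delta(j,1)$ for $j\neq1$ and $w_1:=0$, which gives $w_p-w_q=\delta(p,q)$. Put $a:=c_\lambda-\sum_j w_{\tilde\lambda_j}$. This quantity is invariant under replacing one element of $\lambda$, because such a change alters $c_\lambda$ by $\delta(p,q)=w_p-w_q$. Any two Schubert symbols are connected by single-element replacements, so $a$ is constant on $I(d,n)$.

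To normalize, shift $w_j\mapsto w_j+k$ for all $j$ and $a\mapsto a-dk$; this leaves every $w_\lambda$ unchanged. Choosing $k$ appropriately puts $a$ in $\{1,\dots,d\}$, since any integer is congruent modulo $d$ to an element of that set. Integrality of the $w_j$ is automatic because they are differences of integers.

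The main obstacle is the first step: checking that the quadratic Pl\"{u}cker relation really yields the exchange independence for every configuration, including the degenerate small-$n$ cases. I would first verify it on the pattern of Example \ref{ex_pl_wgt_vec} and then argue by induction on the number of swaps. Positivity $c_\lambda\ge1$ plays no role in existence.
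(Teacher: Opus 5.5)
The paper does not prove this proposition. It is quoted from \cite[Proposition 2.9]{br}, so there is no internal argument to compare against. Your proof is correct and self-contained. You show that the exchange difference $\delta(p,q)=c_{J\cup\{p\}}-c_{J\cup\{q\}}$ is independent of $J$ and additive, integrate it to get $W$, and read off $a$ as the constant remainder. This makes integrality of $W$ and $a$ automatic. It also exhibits the only ambiguity, $(W,a)\mapsto(W+k(1,\dots,1),\,a-dk)$, which is exactly why $a$ is determined modulo $d$ and can be placed in $\{1,\dots,d\}$.

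Two points should be stated more cleanly when you write it out.

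\textbf{The exchange step.} Take the $i$-sequence to be $J=K\cup\{s\}$ and the $\ell$-sequence to be exactly $J'\cup\{p,q\}=K\cup\{t,p,q\}$. This already has $d+1$ elements, so nothing is removed. The surviving terms are those with $\ell_j\in\{t,p,q\}$. Equating the $p$- and $q$-terms gives $c_{J\cup\{p\}}+c_{J'\cup\{q\}}=c_{J\cup\{q\}}+c_{J'\cup\{p\}}$, which is your claim. This relation has overlapping $i$- and $\ell$-sequences (they share $K$). So you are reading Definition~\ref{def_plu_vec} with the convention that terms with $\ell_j\in\{i_1,\dots,i_{d-1}\}$ are absent. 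That is the only meaningful reading, since for $n<2d$ every relation overlaps, but say it explicitly.

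\textbf{Additivity.} No detour through intermediate sets is needed. If $n\ge d+2$, a $(d-1)$-subset avoiding any three given indices exists, and the telescoping argument applies. If $n=d+1$, the set $J$ is forced to be $\{1,\dots,n\}\setminus\{p,q\}$. Then $\delta(p,q)=c_{\{1,\dots,n\}\setminus\{q\}}-c_{\{1,\dots,n\}\setminus\{p\}}$ is a difference of values of a single function of one index, hence additive.

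The remaining steps are fine as written: invariance of $a$ under single replacements, connectivity of $I(d,n)$ under such replacements, and the shift $w_j\mapsto w_j+k$, $a\mapsto a-dk$.
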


% \begin{remark}\label{lem_many_WD}
% Let $W=(w_1,\dots,w_n)\in \ZZ^n$ and $a\in \ZZ_{\ge 1}$. The Pl\"{u}cker weight vector corresponding to $(W,a)$ is same as the Pl\"{u}cker weight vector corresponding to $(W',a')$, where $W'=(w_1-\alpha,\dots,w_n-\alpha)\in \ZZ^n$ and $a'=a+d\alpha$ for any $\alpha \in \ZZ_{\geq 0}$.
% \end{remark}

 The algebraic torus $(\CC^*)^{n+1}$ acts on $Pl(d,n)$ by the following: 
\begin{equation}\label{eq_c_action}
	(t_1, t_2, \dots, t_n, t) \sum_{\lambda\in I(d,n)}\zeta_{\lambda}f_{\lambda}=\sum_{\lambda\in I(d,n)} t{\cdot}t_{\lambda} \zeta_{\lambda} f_{\lambda}, 
\end{equation}	
where $t_\lambda:=t_{\tilde{\lambda}_1}t_{\tilde{\lambda}_2}\dots t_{\tilde{\lambda}_d}$ for $\lambda=(\tilde{\lambda}_1,\dots,\tilde{\lambda}_d)\in I(d,n)$. Let $W=(w_1, \dots ,w_n) \in (\mathbb{Z}_{\geq 0})^{n}$, $a \in\mathbb{Z}_{\geq 1}$ and consider a subgroup $WD$ of $(\CC^*)^{n+1}$ defined by \begin{equation}\label{eq_wd}
	   	WD:=\{(t^{w_1},\dots,t^{w_n},t^a):t\in \CC^*\}.  
\end{equation}
The restricted action of $WD$  on $Pl(d,n)$ is the same as the  ${\bf{c}}$-action of $\CC^*$ on $Pl(d,n)$, where the Pl\"{u}cker weight vector ${\bf{c}}=(c_\lambda)_{\lambda\in I(d,n)}$ is defined using $c_\lambda=w_\lambda$ as in  \eqref{eq_wli}. Thus $\G_{\bf{c}}(d,n)$ is same as the orbit space $\frac{Pl(d,n)}{WD}$, which is referred to as a weighted Grassmannian $\WG(d,n)$ in \cite{AbMa1}.

    In \cite{BS}, the author and Sarkar introduced a topological definition of $\WG(d,n)$ and called it weighted Grassmann orbifold. Using the argument of \cite[Subsection 2.2]{AbMa1} and Proposition \ref{prop_rel_two_wt}, the quotient space $\G_{\bf{c}}(d,n)$ has an orbifold structure for a Pl\"{u}cker weight vector ${\bf{c}}$. We call the space $\G_{\bf{c}}(d,n)$ a weighted Grassmann orbifold associated to the Pl\"{u}cker weight vector ${\bf{c}}$.

\subsection{CW complex structures of divisive weighted Grassmann orbifolds}\label{subsec_q_cell_stu}
  
In this subsection, we recall the $q$-CW complex structure of $\G_{\bf{c}}(d, n)$ following \cite{BS,AbMa1}. We define a divisive weighted Grassmann orbifold and discuss that it has a CW complex structure.

A CW complex structure of $\G(d,n)$ given by 
\begin{equation*}
  \G(d,n)=\bigsqcup_{\lambda\in I(d,n)} {E}(\lambda). 
\end{equation*}
The Schubert cell $E(\lambda)$ is defined by:
\begin{equation}\label{eq_open_cell}
E(\lambda)=\{[u_1\wedge\dots\wedge u_d]\in \G(d,n) ~|~  u_{i\widetilde{\lambda}_i}=1 ;~u_{ij}=0 \text{ if }j<\widetilde{\lambda}_i \text{ or, } j=\widetilde{\lambda}_{i+1},\dots,\widetilde{\lambda}_d\},
\end{equation}
where $u_i=(u_{i1},\dots,u_{in})\in \CC^n$ for $1\leq i\leq d$. Then $E(\lambda)$ is an open cell of codimension $|\lambda|=\lambda_1+\dots+\lambda_d$.  Define \begin{equation*}Y_\lambda:=\sqcup_{\mu\succeq \lambda} E(\mu). 
\end{equation*}
Then $Y_\lambda=\overline{E(\lambda)}$ (the Zariski closure), is known as the Schubert variety. The structure sheaf in $K_{T^n}(\G(d,n))$ corresponding to the Schubert variety $Y_\lambda$ is known as the Schubert class. We discuss more on the combinatorial description of Schubert class in Section \ref{Sec_eq_k_th_gsm}.

Let $G(c_\lambda)$ be a finite subgroup of $\CC^*$ defined by 
$$G(c_\lambda)=\{t\in \CC^*:t^{c_\lambda}=1\}.$$

A $q$-CW complex structure of $\G_{\bf c}(d,n)$ is given by 

\begin{equation*}
\G_{\bf{c}}(d,n)=\bigsqcup_{\lambda\in I(d,n)}\frac{E(\lambda)}{G(c_\lambda)},
\end{equation*}
see \cite{BS,br}, where $G(c_\lambda)$ action on $E(\lambda)$ is given by 
\begin{align}\label{eq_t_act}
    t[(z_\mu)_{\mu\in I(d,n)}]=[(t^{c_\mu}z_\mu)_{\mu\in I(d,n)}] \text{ for } t\in G(c_\lambda).
\end{align}

%as a restriction of the ${\bf c}$-action of $\CC^*$.

For every permutation $\sigma$ on the set $I(d,n)$ and $z=(z_\lambda)_{\lambda\in I(d,n)}\in Pl(d,n)$ define $\sigma z:=(z_{\sigma(\lambda)})_{\lambda\in I(d,n)}$. We  introduce a sign vector $t:=(t_{\lambda})_{\lambda\in I(d,n)}$ such that $t_\lambda\in \{1,-1\}$ for all $\lambda\in I(d,n)$.
\begin{definition}
A permutation $\sigma$ on the set $I(d,n)$ is said to be a Pl\"{u}cker permutation if there exist a sign vector $t_\sigma$ such that $t_\sigma{\cdot}\sigma z\in Pl(d,n)$ for every $z\in Pl(d,n)$. 
\end{definition}
Similarly, for every {\Pl} weight vector ${\bf c}=(c_\lambda)_{\lambda\in I(d,n)}$, define $\sigma {\bf c}:=(c_{\sigma(\lambda)})_{\lambda\in I(d,n)}$.
\begin{proposition}\label{prop_pl_per}
   Let ${\bf c}$ be a Pl\"{u}cker weight vector and $\sigma$ be a {\Pl} permutation. Then $\sigma{\bf c}$ is a {\Pl} weight vector. Moreover, $\G_{\bf c}(d,n)$ is homeomorphic to $\G_{\sigma \bf c}(d,n)$.
\end{proposition}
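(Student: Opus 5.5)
The plan is to prove two things: first that $\sigma{\bf c}$ satisfies the linear relations of Definition \ref{def_plu_vec}, and then that the map $z\mapsto t_\sigma\cdot\sigma z$ on $Pl(d,n)$ descends to a homeomorphism of orbit spaces. The second part is the cleaner one, so we arrange the argument so that it also handles the first.

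Define $\Phi\colon \Lambda^d(\CC^n)\to\Lambda^d(\CC^n)$ by $\Phi(z)=t_\sigma\cdot\sigma z$, i.e. $\Phi(z)_\lambda=t_{\sigma,\lambda}z_{\sigma(\lambda)}$. This is a linear isomorphism, since it is a signed coordinate permutation, and it is therefore a homeomorphism. By hypothesis $\Phi(Pl(d,n))\subseteq Pl(d,n)$. Because $Pl(d,n)\cup\{0\}$ is the affine cone over the irreducible Grassmannian, $\Phi$ restricts to an injective morphism of this irreducible variety to itself. An injective endomorphism of a variety is bijective (Ax--Grothendieck), so $\Phi(Pl(d,n))=Pl(d,n)$. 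Alternatively, one can note that $\Phi$ is a linear automorphism preserving a closed cone of the same dimension, and conclude equality by irreducibility and dimension. Either way $\Phi$ is a self-homeomorphism of $Pl(d,n)$, with inverse $z\mapsto \sigma^{-1}(t_\sigma z)$.

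Next we check equivariance. For $t\in\CC^*$ we have
\[
\Phi(t\cdot_{\bf c} z)_\lambda=t_{\sigma,\lambda}\,t^{c_{\sigma(\lambda)}}z_{\sigma(\lambda)}=(t\cdot_{\sigma{\bf c}}\Phi(z))_\lambda .
\]
So $\Phi$ intertwines the ${\bf c}$-action with the $\sigma{\bf c}$-weighted action on $\Lambda^d(\CC^n)$. Since $\Phi(Pl(d,n))=Pl(d,n)$ and $Pl(d,n)$ is stable under the ${\bf c}$-action, it follows that $Pl(d,n)$ is stable under the $\sigma{\bf c}$-action. By the remark after Definition \ref{def_wgt_gsm}, stability holds if and only if the vector is a Plücker weight vector, so $\sigma{\bf c}$ is a Plücker weight vector. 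If one prefers a direct argument, the Plücker relations can be checked on explicit decomposable vectors, such as $(f_{i_1}+\cdots)\wedge\cdots$, whose support is exactly $\{\lambda^j,\mu^j\}$; stability then forces the equal-sum condition. This is the step I expect to need the most care, since it relies on the cited equivalence from \cite{br}.

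Finally, equivariance shows that $\Phi$ maps ${\bf c}$-orbits bijectively onto $\sigma{\bf c}$-orbits. It therefore induces a bijection $\bar\Phi\colon \G_{\bf c}(d,n)\to\G_{\sigma{\bf c}}(d,n)$ with $\bar\Phi\circ\pi_{\bf c}=\pi_{\sigma{\bf c}}\circ\Phi$. The map $\bar\Phi$ is continuous by the universal property of the quotient topology. The same argument applied to $\Phi^{-1}$, which intertwines the actions in the reverse direction, gives a continuous inverse. Hence $\bar\Phi$ is a homeomorphism.
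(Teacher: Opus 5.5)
The paper states this proposition without proof, so there is no in-paper argument to compare against. Your proof is correct and is the natural one: $\Phi$ is a linear self-homeomorphism of $Pl(d,n)$ that intertwines the ${\bf c}$-action with the $\sigma{\bf c}$-action, so it descends to a homeomorphism of orbit spaces. Stability of $Pl(d,n)$ under the $\sigma{\bf c}$-action then gives the first claim through the equivalence quoted after Definition \ref{def_wgt_gsm}.

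Two remarks.

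First, the equality $\Phi(Pl(d,n))=Pl(d,n)$ does not need Ax--Grothendieck or a dimension count. $\Phi$ is a signed permutation of coordinates, so $\Phi^{N}=\mathrm{id}$ for $N=2\,\mathrm{ord}(\sigma)$. Hence $\Phi^{-1}=\Phi^{N-1}$ also maps $Pl(d,n)$ into itself.

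Second, your parenthetical ``direct argument'' for the first claim does not work as phrased. Stability only says that, at each point, the terms of a Pl\"{u}cker relation sharing a common weight sum add up to zero. Once the relation has four or more nonzero terms, a point at which all terms are nonzero does not exclude two pairs of terms cancelling separately. What does work is to compare the terms two at a time, using a point where exactly two terms $j\neq k$ are nonzero; then stability forces their weight sums to agree. Such a point exists. Write $A=\{i_1,\dots,i_{d-1}\}$ and $L=\{\ell_0,\dots,\ell_d\}$, with $\ell_j,\ell_k\notin A$. Take the wedge of the following vectors:
\begin{itemize}
\item $f_\beta$ for $\beta\in A\cap L$;
\item $f_{\ell_j}+f_{\ell_k}$;
\item $f_a+f_m$, for a bijection $a\mapsto m$ from $A\setminus L$ onto $L\setminus(A\cup\{\ell_j,\ell_k\})$ (both sets have $d-1-|A\cap L|$ elements).
\end{itemize}
A Pl\"{u}cker coordinate of this point is nonzero exactly when its index set contains $A\cap L$ and picks one element from each of the pairs $\{\ell_j,\ell_k\}$ and $\{a,m\}$. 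So terms $j$ and $k$ survive, and every other term contains some pair $\{a,m\}$ and vanishes.
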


\begin{remark}
Using the homeomorphism in Proposition \ref{prop_pl_per}, the weighted Grassmann orbifold $\G_{\bf c}(d,n)$ has another $q$-CW complex structure given by $$\G_{\bf c}(d,n)=\bigsqcup_{\lambda\in I(d,n)}\frac{E(\lambda)}{G(c_{\sigma(\lambda)})}.$$    
\end{remark}

\begin{definition}\cite{BS}
A weighted Grassmann orbifold $\G_{\bf c}(d,n)$ is said to be divisive if there exists a {\Pl} permutation $\sigma$ such that $c_{\sigma(\lambda)}$ divides $c_{\sigma(\mu)}$ for all $\mu \leq \lambda$, where `$\leq$' is the total order on $I(d,n)$ induced from the dictionary order. 
\end{definition}

From \eqref{eq_open_cell} it follows that, if $[z]=[(z_\mu)_{\mu\in I(d,n)}]\in E(\lambda)$ then $z_\mu=0$, whenever $\mu\nsucceq\lambda$. The $G(c_\lambda)$ action on $E(\lambda)$ as described in \eqref{eq_t_act} becomes trivial if $c_\lambda$ divides $c_\mu$ for $\mu\succeq \lambda$. Thus, if $\G_{\bf c}(d,n)$ is a divisive weighted Grassmann orbifold, then one can find a {\Pl} permutation $\sigma$ such that the action of $G(c_{\sigma(\lambda)})$ on $E(\lambda)$ becomes trivial action. Moreover, we have the following:

\begin{proposition}\cite[Theorem 3.19]{BS}\label{thm_CW_cplx_str}
If $\G_{\bf c}(d,n)$ is a divisive weighted Grassmann orbifold, then it has a CW complex structure with even-dimensional cells $\{E(\lambda)\}_{\lambda\in I(d,n)}$. %Thus, the integral cohomology of the divisive weighted Grassmann orbifold is torsion-free and concentrated in even degrees. 
\end{proposition}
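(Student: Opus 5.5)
The statement to prove is the last one displayed, Proposition~\ref{thm_CW_cplx_str}: a divisive $\G_{\bf c}(d,n)$ is a CW complex whose cells are the even-dimensional cells $E(\lambda)$. The plan is to start from the $q$-CW structure
\[
\G_{\bf c}(d,n)=\bigsqcup_{\lambda} E(\lambda)/G(c_{\sigma(\lambda)})
\]
coming from Proposition~\ref{prop_pl_per}. We then show that, under the divisibility hypothesis, every finite group in this decomposition acts trivially on the corresponding open cell and also on its attaching data. Once this holds, each $q$-cell $E(\lambda)/G$ is a genuine cell, and the characteristic maps descend to honest CW characteristic maps.

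\textbf{Step 1 (reduction).} Replace ${\bf c}$ by $\sigma{\bf c}$, using the homeomorphism $\G_{\bf c}(d,n)\cong\G_{\sigma{\bf c}}(d,n)$ of Proposition~\ref{prop_pl_per}. After this we may assume $\sigma=\mathrm{id}$, so that $c_\lambda\mid c_\mu$ whenever $\mu\le\lambda$ in dictionary order. Since $\mu\preceq\lambda$ implies $\mu\le\lambda$, the divisibility holds in particular along the Bruhat order. One has to be careful here about which direction is needed, because the text says the action is trivial when $c_\lambda\mid c_\mu$ for $\mu\succeq\lambda$. I would fix the direction so that it matches the direction of the cell-closure relation. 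If the indexing is reversed, I would run the same argument with $\sigma$ composed with the Plücker permutation induced by reversing $\{1,\dots,n\}$, which reverses the orders.

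\textbf{Step 2 (trivial action on cells).} Take a point $[z]\in E(\lambda)$. By \eqref{eq_open_cell}, $z_\mu=0$ unless $\mu\succeq\lambda$. For $t\in G(c_\lambda)$ and every $\mu$ with $z_\mu\neq 0$, divisibility gives $t^{c_\mu}=1$. Hence $t\cdot z=z$, and the action \eqref{eq_t_act} is trivial. Consequently the quotient map from the chart $\{z_\lambda\ne0\}$ restricted to $E(\lambda)$ is a homeomorphism onto its image in $\G_{\bf c}(d,n)$. Normalizing $z_\lambda=1$ uses a $c_\lambda$-th root, and this choice is now well defined. So $E(\lambda)\cong\CC^{\dim}$ embeds as an open cell of real dimension $2(d(n-d)-|\lambda|)$.

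\textbf{Step 3 (attaching maps).} This is the main obstacle. The closure of $E(\lambda)$ in $\G_{\bf c}(d,n)$ is the image of $Y'_\lambda=\{z\in Pl(d,n): z_\mu=0 \text{ for } \mu\not\succeq\lambda\}$. We need a characteristic map from a closed disk onto this closure whose restriction to the interior is the homeomorphism of Step 2.

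I would take the standard characteristic map $\phi_\lambda\colon D\to Y_\lambda\subset\G(d,n)$ of the ordinary Grassmannian. The plan is to lift it to a map $D\to Y'_\lambda\subset Pl(d,n)$, namely a normalized representative on the sphere, and then compose with $\pi_{\bf c}$. Continuity of the composite is automatic. The issue is whether the map is injective on the interior and whether the boundary lands in cells of smaller dimension.

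The boundary lands in $\bigsqcup_{\mu\succ\lambda}E(\mu)$, which is the lower-dimensional skeleton, because the ${\bf c}$-action preserves the coordinate-vanishing strata. Injectivity on the interior is exactly Step 2. The delicate point is that the identification on the boundary is by the ${\bf c}$-action rather than the standard $\CC^*$-action. I would handle this by constructing the characteristic maps inductively, following the argument of \cite{BS}. On each closed stratum the weights $c_\mu$ for $\mu\succeq\lambda$ are all multiples of $c_\lambda$, so the ${\bf c}$-action on $Y'_\lambda$ factors through $t\mapsto t^{c_\lambda}$. This identifies $\pi_{\bf c}(Y'_\lambda)$ with a weighted projective variety, namely the quotient by the weights $c_\mu/c_\lambda$, in which the top cell has trivial stabilizer. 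Applying the standard cone construction to this variety gives a CW characteristic map. Closure-finiteness and the weak topology follow because there are finitely many cells and the space is compact Hausdorff.
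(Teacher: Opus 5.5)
Your proposal is correct and is essentially the paper's argument. The paper, citing \cite[Theorem 3.19]{BS}, observes that points of $E(\lambda)$ have $z_\mu=0$ for $\mu\nsucceq\lambda$, so after a suitable Pl\"{u}cker permutation each $G(c_{\sigma(\lambda)})$ acts trivially and the $q$-CW structure becomes a genuine CW structure.

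Your direction fix is actually necessary under the paper's normalization. Already for $d=1$ the top cell $\{[1:z_2:\cdots:z_n]\}$ carries $G(c_1)$, which acts nontrivially unless all weights agree. Composing with the reversal $i\mapsto n+1-i$ works because it reverses the Bruhat order $\preceq$ (though not the dictionary order), and that is all Step 2 uses.

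Step 3 is redundant once you start from the $q$-CW characteristic maps. If you keep it, note that injectivity of $\pi_{\bf c}\circ\tilde\phi_\lambda$ on the interior is not literally Step 2, because standard rescaling does not respect ${\bf c}$-equivalence. Injectivity holds, by a norm comparison, for the lift with $z_\lambda$ real positive, such as $z/\|z\|$ with $z_\lambda=1$; this is what the usual orthonormal-frame characteristic map produces. It can fail for an arbitrary choice of phase in the lift.
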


If $\G_{\bf c}(d,n)$ is a divisive weighted Grassmann orbifold then using Proposition \ref{prop_pl_per}, it is enough to assume that $c_{\lambda}$ divides $c_{\mu}$ for $\mu \leq \lambda$. Throughout the paper, we assume this condition whenever we say  $\G_{\bf c}(d,n)$ is a divisive weighted Grassmann orbifold. Moreover, we  define $d_{\mu \lambda}:=\frac{c_\mu}{c_\lambda}$ if $\mu\leq \lambda$ and $d_{\lambda}:=\frac{c_{(0)}}{c_\lambda}$, where $(0)$ is the unique element $(0,0,\dots,0)\in \mathcal{P}(d,n)$.
We study Schubert calculus on equivariant $K$-theory of divisive weighted Grassmann orbifold in Section \ref{Sec_Sc_bas_eq_K_th_wgt_gsm} onwards.

\section{Factorial Grothendieck polynomials and equivariant $K$-theory of Grassmannians}\label{Sec_eq_k_th_gsm}
In this section, we provide combinatorial description of the Schubert classes in the equivariant $K$-theory of Grassmann manifolds. We recall the factorial Grothendieck polynomials, following \cite{ikNa, mc}, and discuss their combinatorial and geometric properties. Subsequently, we prove connection between the Schubert classes and the factorial Grothendieck polynomials through an algebraic localization map.

\subsection{The Schubert classes in the equivariant $K$-theory of Grassmannians}\label{subsec_Sch_class}
In this subsection, we provide a combinatorial description of the Schubert classes in the GKM ring $K_{T^n}(\G(d,n))$ as a $R(T^n)$-algebra. We explicitly describe each Schubert classes in terms of divided difference operator $\pi_i$ acts on $K_{T^n}(\G(d,n))$. Define a $T^n$-action on $\G(d,n)$ by
$$(t_1,\dots,t_n)[(\zeta_\lambda)_{\lambda\in I(d,n)}]=[(t_{\lambda}\zeta_\lambda)_{\lambda\in I(d,n)}],$$
where $t_\lambda=t_{\tilde{\lambda}_1}t_{\tilde{\lambda}_2}\dots t_{\tilde{\lambda}_d}$. The equivariant $K$-theory $K_{T^n}(\G(d,n))$ has a module structure over $K_{T^n}(\{pt\})=R(T^n)$. We denote the representation ring by $R(T^n)=\ZZ[{a}_1^{\pm1}, \dots, {a}_n^{\pm1}]$. For every partition $\lambda\in I(d,n)$, define $a_\lambda=a_{\tilde{\lambda}_1}a_{\tilde{\lambda}_2}\dots a_{\tilde{\lambda}_d}$. 
\begin{proposition}
    The $T^n$-equivariant $K$-theory of $\G(d,n)$ is given by 
   {\small $$K_{T^n}(\G(d,n))=\Big\{f\in \bigoplus_{\lambda\in I(d,n)} \ZZ[{a}_1^{\pm1}, \dots, {a}_n^{\pm1}]~~\Big{|}~~ f|_\mu-f|_\lambda \text{ is divisible by } (1-\frac{{a}_{\lambda}}{{a}_{\mu}})\text{ if } \mu=(i,j)\lambda \Big\}.$$}
\end{proposition}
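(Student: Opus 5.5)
This is the GKM description of equivariant $K$-theory, and I would obtain it by specializing the localization theorem for $T$-equivariant $K$-theory of a smooth projective variety with finitely many fixed points and finitely many one-dimensional orbits, in the form due to Knutson--Rosu and Harada--Henriques--Holm. I would check the hypotheses in order and then read off the fixed points, the curves, and their weights.

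First, the Schubert cells $E(\lambda)$ give an algebraic cell decomposition of $\G(d,n)$ into $T^n$-invariant affine spaces. Hence $K_{T^n}(\G(d,n))$ is a free $R(T^n)$-module of rank $\binom{n}{d}$. It is also equivariantly formal, so the restriction map to the fixed points
\[
K_{T^n}(\G(d,n))\to K_{T^n}(\G(d,n)^{T^n})=\bigoplus_{\lambda\in I(d,n)} R(T^n)
\]
is injective. The fixed points are the coordinate planes $[f_\lambda]$, one for each $\lambda\in I(d,n)$: the characters $t_\lambda$ are pairwise distinct, so any fixed point must have a single nonzero Plücker coordinate.

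Second, I would determine the one-dimensional $T^n$-orbits. The tangent space at $[f_\lambda]$ is $\mathrm{Hom}(\langle f_i: i\in\lambda\rangle,\langle f_j: j\notin\lambda\rangle)$, with weights $t_j/t_i$ for $i\in\lambda$, $j\notin\lambda$. These weights are pairwise non-proportional, so each is primitive and no two are proportional. This gives exactly one invariant $\mathbb{P}^1$ joining $[f_\lambda]$ to $[f_\mu]$ for $\mu=(i,j)\lambda$, and these are all the $T$-curves. On such a curve the torus acts on the two Plücker coordinates $\zeta_\lambda,\zeta_\mu$ by $t_\lambda,t_\mu$. So the curve is $\mathbb{P}^1$ with weight $a_\lambda/a_\mu$ (or its inverse) in the tangent line.

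Third, I would apply the theorem. Its GKM condition in $K$-theory is that $f|_\lambda-f|_\mu$ be divisible by $1-\chi$, where $\chi$ is the character of the curve. Divisibility by $1-\chi$ is the same as divisibility by $1-\chi^{-1}=-\chi^{-1}(1-\chi)$, so the orientation of the curve does not matter. The image is then exactly the stated ring.

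The main obstacle is the surjectivity half: showing that every tuple satisfying the pairwise conditions actually comes from a class. If I could not simply cite the theorem, I would argue by induction along the Bruhat order. For each $\lambda$ I would construct a class supported on $\{\mu\succeq\lambda\}$ whose value at $\lambda$ is $\prod_{i\in\lambda,\,j\notin\lambda,\,j<i}(1-a_\lambda/a_{(i,j)\lambda})$; the Schubert structure sheaf $[\mathcal{O}_{Y_\lambda}]$ is such a class. Given a GKM tuple, I would take its minimal support point $\lambda$. The pairwise conditions at $\lambda$ force the value there to be divisible by each factor above. These factors are pairwise coprime in the UFD $R(T^n)$, since the characters involved are non-proportional primitive characters. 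So I can subtract a multiple of the class for $\lambda$ and strictly shrink the support. Iterating, every GKM tuple lies in the span of the Schubert classes, which proves equality.
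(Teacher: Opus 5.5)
Your proposal is correct and follows the paper's route. The paper states this proposition without proof as the standard GKM description; it cites \cite[Theorem 3.1]{HHH} for the analogous $Pl(d,n)$ statement and \cite[Proposition 4.1]{HHH} for the Schubert basis, and the latter is essentially your subtraction argument. One wording fix: subtracting a multiple of $[\mathcal{O}_{Y_\lambda}]$ can create new nonzero entries at points $\mu\succ\lambda$, so the support need not shrink; induct instead on the minimum of the support in a linear extension of $\preceq$ (such as the dictionary order), which strictly increases at each step.
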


The symmetric group $S_n$ acts on $ \ZZ[a_1^{\pm1},\dots,a_n^{\pm1}]$ by permuting the variables $a_i$, also it acts on the set $ I(d,n)$ of all Schubert symbol, as every element of $I(d,n)$ is a subset of $\{1,2,\dots,n\}$. 
We focus on the simple reflections $s_i:= (i,i+1)\in S_n$ for $1\leq i\leq n-1$. For every $\lambda \in I(d,n)$, we have 
    $s_i\lambda\prec\lambda$ (resp. $s_i\lambda\succ\lambda$) if $i+1\in \lambda$, $i\notin \lambda$ (resp. $i\in \lambda, i+1\notin \lambda$).
If both  $i,i+1$ is in $\lambda$ or, none of  $i,i+1$ is in $\lambda$ then $s_i\lambda=\lambda$ .

We define the divided difference operators $\pi_i$ on $K_{T^n}(\G(d,n))$. Let $x\in K_{T^n}(\G(d,n))$, define $\pi_ix$ by
 %$$\pi_ix|_\mu:= x(\mu)-\frac{(x(\mu)- s_i( x(s_i\mu))}{1-\frac{a_{i+1}}{a_i}},$$
% where $e_i(\mu)=1-\frac{a_{i+1}}{a_i}$. 
$$\pi_ix|_\mu:= \frac{x|_\mu-(1-e_i(a))s_i( x|_{s_i\mu})}{e_i(a)},$$
where $e_i(a):=1-\frac{a_{i}}{a_{i+1}}$.
Apriori, this is just a rational function. But $\{\pi_ix\}_{i=1}^{n-1}$ are actually elements of $K_{T^n}(\G(d,n))$ if $x\in K_{T^n}(\G(d,n))$. This follows using a similar argument as in \cite[Appendix, 1st Lemma]{KnTa}. 
%The operator $\pi_i$ satisfies the following relation $$\pi_i\pi_{i+1}\pi_i=\pi_{i+1}\pi_i\pi_{i+1}~ \text{if }(1\leq i\leq n-2); \quad\pi_i\pi_j=\pi_j\pi_i \text{ if }|i-j|\geq 2.$$

% \begin{remark}
%     The operator $\pi_i$ can be written as $\pi_i=\partial_i+1$, where $$\partial_i=\frac{s_i-1}{e_i(a)}.$$
%     Here $s_i$ acts on $K_{T^n}(\G(d,n))$ by $s_ix|_\mu=s_i( x|_{s_i\mu})$, for $x\in K_{T^n}(\G(d,n))$. The operator $\partial_i$ was defined in \cite{dem} and known as Demazure operator in the literature.
% \end{remark}

\begin{definition}\label{def:sch bas}
    A set of elements $\{S_{\lambda}: \lambda \in \mathcal{P}(d,n)\}\subset K_{T^n}(\G(d,n))$ is said to be the family of Schubert classes if the following conditions are satisfied.
\begin{enumerate}
    \item $\pi_i S_{\lambda}=
  \begin{cases}
      S_{s_i\lambda} & \text{ if } s_i\lambda\prec\lambda\\
      S_\lambda & \text{ if } s_i\lambda \succeq \lambda
  \end{cases}\\$ 

  \item $  S_\lambda|_{(0)}=\delta_{\lambda,(0)}$ (the Kronecker's delta).
\end{enumerate} 
  
\end{definition}

A family of Schubert class exists. The existence of Schubert class also follows as a consequence of Theorem \ref{thm_sur_hom}. The uniqueness of the Schubert class follows from Definition \ref{def:sch bas}. Moreover, for every $\mu\in \mathcal{P}(d,n)$, the $\mu$-th coordinate of $S_\lambda$ is given by the following recurrence relation.

\begin{proposition}\label{prop_mu_comp}
Let $\{S_{\lambda}:\lambda\in \mathcal{P}(d,n)\}$ be a family of Schubert classes. If $\mu\neq (0)$ be a Schubert symbol and $s_i\mu \prec\mu$ for some $i$ then 
\[S_{\lambda}|_\mu=\begin{cases}
(1-e_i(a))s_i(S_{\lambda}|_{s_i\mu})+e_i(a)s_i(S_{s_i\lambda}|_{s_i\mu}) & \text{ if } s_i\lambda\prec\lambda\\

    s_i(S_\lambda|_{s_i\mu})& \text{ if } s_i\lambda\succeq \lambda   
\end{cases}.\]
\end{proposition}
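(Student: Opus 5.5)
The recurrence comes from reading the definition of $\pi_i$ at the fixed point $s_i\mu$ rather than at $\mu$, and then inverting.

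Fix $\mu\neq(0)$ and $i$ with $s_i\mu\prec\mu$. Evaluating the definition of $\pi_i$ at the fixed point $s_i\mu$ gives, for any $x\in K_{T^n}(\G(d,n))$,
$$e_i(a)\,(\pi_i x)|_{s_i\mu}=x|_{s_i\mu}-(1-e_i(a))\,s_i\big(x|_{\mu}\big),$$
using $s_i(s_i\mu)=\mu$. This expresses $x|_\mu$ in terms of data at $s_i\mu$, which is smaller in the Bruhat order. To isolate $x|_\mu$, I will apply the involution $s_i$ of $\ZZ[a_1^{\pm1},\dots,a_n^{\pm1}]$ to both sides. Since $s_i(e_i(a))=1-\frac{a_{i+1}}{a_i}$, I compute
$$s_i(1-e_i(a))=\frac{a_{i+1}}{a_i}=(1-e_i(a))^{-1}.$$
Hence
$$x|_\mu=(1-e_i(a))\,s_i\big(x|_{s_i\mu}\big)-(1-e_i(a))\,s_i(e_i(a))\,s_i\big((\pi_ix)|_{s_i\mu}\big).$$
Next I simplify the coefficient $-(1-e_i(a))\,s_i(e_i(a))$:
$$-\frac{a_i}{a_{i+1}}\Big(1-\frac{a_{i+1}}{a_i}\Big)=1-\frac{a_i}{a_{i+1}}=e_i(a).$$
This yields the general identity
$$x|_\mu=(1-e_i(a))\,s_i\big(x|_{s_i\mu}\big)+e_i(a)\,s_i\big((\pi_ix)|_{s_i\mu}\big).$$

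Now I substitute $x=S_\lambda$ and use property (1) of Definition \ref{def:sch bas}. If $s_i\lambda\prec\lambda$, then $\pi_iS_\lambda=S_{s_i\lambda}$, and the identity gives the first case directly. If $s_i\lambda\succeq\lambda$, then $\pi_iS_\lambda=S_\lambda$. The two terms then combine to
$$\big((1-e_i(a))+e_i(a)\big)\,s_i\big(S_\lambda|_{s_i\mu}\big)=s_i\big(S_\lambda|_{s_i\mu}\big),$$
which is the second case.

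The only delicate point is the sign and normalization bookkeeping in the inversion: checking that $s_i$ applied to $(1-e_i(a))$ gives exactly its inverse, and that the coefficient collapses to $e_i(a)$ with the paper's convention $e_i(a)=1-a_i/a_{i+1}$. I will therefore verify the two displayed identities carefully. Since the whole argument is purely algebraic and uses only the defining formula of $\pi_i$, there is no other obstacle. The hypothesis $\mu\neq(0)$ only ensures that such an $i$ exists.
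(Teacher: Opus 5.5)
Your argument is correct: evaluating the defining formula of $\pi_i$ at $s_i\mu$, applying the involution $s_i$, and using $s_i(1-e_i(a))=(1-e_i(a))^{-1}$ and $-(1-e_i(a))\,s_i(e_i(a))=e_i(a)$ gives exactly the stated recurrence. The paper gives no proof of its own and only points to \cite[Remark 2.3]{LSS} and \cite[Lemma 5.1]{ikNa}, where this left-hand recurrence is obtained in essentially the same way, so your computation supplies the omitted verification.
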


\begin{remark}
The recurrence relation in Proposition \ref{prop_mu_comp} called `left hand' recurrence in \cite[Remark 2.3]{LSS}. This recurrence is also given in \cite[Lemma 5.1]{ikNa} other classical types.    
\end{remark}

 A pair $(s,s')$ is called a reversal of $\lambda\in I(d,n)$ if $s\in \lambda, s'\notin \lambda$ and $s>s'$. The set of all reversals of $\lambda$ is denoted by $\rm{rev}(\lambda)$.  Now corresponding to $\text{rev}(\lambda)$, one can define a subset of Schubert symbols as follows 
\begin{equation}\label{eq_inv_lbd}
	R(\lambda):=\{\mu~|~\mu=(s,s')\lambda \text{ for } (s,s')\in \text{rev}(\lambda)\}.
\end{equation}
Here $(s,s')\lambda$ be the Schubert symbol obtained by replacing $s$ by $s'$ in $\lambda$ and ordering the later set. Thus $(1-\frac{a_{s'}}{a_s})=(1-\frac{a_\mu}{a_\lambda})$ if $\mu=(s,s')\lambda$.
\begin{proposition}\label{prop_van_pro_S}
    For every $\lambda\in \mathcal{P}(d,n)$ the elements $S_{\lambda} \in K_{T^n}(\G(d,n))$ satisfies the following conditions
    \begin{enumerate}
    \item  ${S}_{\lambda}|_{\mu}= 0 \text{ if }\mu \nsucceq \lambda$.\\
    \item $ {S}_{\lambda}|_\lambda=\prod_{\nu\in R(\lambda)}(1-\frac{{a}_{\nu}}{{a}_{\lambda}}).$
 \end{enumerate}
\end{proposition}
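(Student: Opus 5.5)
Both statements will be proved by induction on $\mu$ along the Bruhat order $\preceq$, using only the recurrence of Proposition \ref{prop_mu_comp} together with the defining properties of Definition \ref{def:sch bas}. Every $\mu\neq(0)$ has some $i$ with $i+1\in\mu$, $i\notin\mu$, so $s_i\mu\prec\mu$ and $\ell(s_i\mu)=\ell(\mu)-1$ (here $\ell(\mu)=|\mu|$). The base case $\mu=(0)$ is the normalization $S_\lambda|_{(0)}=\delta_{\lambda,(0)}$. This gives (1) for every $\lambda\neq(0)$, since $(0)\nsucceq\lambda$. It also gives (2) for $\lambda=(0)$, where $R((0))=\emptyset$ and the empty product is $1$.

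\textbf{Step 1 (vanishing).} Let $\mu\nsucceq\lambda$ and $\mu\neq(0)$, and pick $i$ with $s_i\mu\prec\mu$.

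If $s_i\lambda\succeq\lambda$, then $S_\lambda|_\mu=s_i(S_\lambda|_{s_i\mu})$. It suffices to show $s_i\mu\nsucceq\lambda$. Suppose instead that $s_i\mu\succeq\lambda$. Since $i\notin\mu$ and $i+1\in\mu$, we have $i\in s_i\mu$ and $i+1\notin s_i\mu$. Either $s_i$ fixes $\lambda$, or $i\in\lambda$ and $i+1\notin\lambda$. In both cases the standard lifting property of Bruhat order for the Grassmannian (checked directly on the entrywise order of Schubert symbols) gives $\mu=s_i(s_i\mu)\succeq\lambda$, a contradiction.

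If $s_i\lambda\prec\lambda$, the recurrence involves $S_\lambda|_{s_i\mu}$ and $S_{s_i\lambda}|_{s_i\mu}$. Here $i+1\in\lambda$, $i\notin\lambda$, $i+1\in\mu$, $i\notin\mu$. From $\mu\nsucceq\lambda$ I will show both $s_i\mu\nsucceq\lambda$ and $s_i\mu\nsucceq s_i\lambda$. For the second, use that $s_i$ acts on both symbols by the same swap, so $s_i\mu\succeq s_i\lambda$ would imply $\mu\succeq\lambda$. For the first, note $s_i\mu\prec\mu$. Both claims follow from entrywise comparison of the sorted tuples. Induction then gives vanishing in both cases.

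I expect these combinatorial Bruhat lemmas to be the main obstacle. I will isolate them as a lemma on $I(d,n)$: if $s_i\nu\succ\nu$ and $\lambda\preceq\nu$, then $s_i\lambda\preceq s_i\nu$ or $\lambda\preceq\nu$ accordingly. It is proved by tracking the single entry that changes from $i$ to $i+1$.

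\textbf{Step 2 (diagonal value).} Take $\lambda\neq(0)$, choose $i$ with $s_i\lambda\prec\lambda$, and apply the recurrence at $\mu=\lambda$:
\[S_\lambda|_\lambda=(1-e_i(a))s_i(S_\lambda|_{s_i\lambda})+e_i(a)s_i(S_{s_i\lambda}|_{s_i\lambda}).\]
The first term vanishes by Step 1, since $s_i\lambda\nsucceq\lambda$. By induction the second term is
\[e_i(a)\,s_i\Big(\prod_{\nu\in R(s_i\lambda)}\big(1-\tfrac{a_\nu}{a_{s_i\lambda}}\big)\Big).\]

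It remains to compare reversal sets. The map $(s,s')\mapsto(s_i(s),s_i(s'))$ is a bijection from $\mathrm{rev}(s_i\lambda)$ onto $\mathrm{rev}(\lambda)\setminus\{(i+1,i)\}$: the order between $s,s'$ is preserved unless $\{s,s'\}=\{i,i+1\}$, and $(i,i+1)$ is not a reversal of $s_i\lambda$. Each factor has the form $1-a_{s'}/a_s$, and $s_i$ acts on it by permuting indices. So $s_i$ of the product over $R(s_i\lambda)$ equals the product over $R(\lambda)$ with the factor for $(i+1,i)$ omitted. That missing factor is $1-a_i/a_{i+1}=e_i(a)$, which completes (2).
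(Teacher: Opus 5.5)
Your proof is correct. It cannot be checked against a proof in the paper, because the paper gives none: Proposition \ref{prop_van_pro_S} is stated right after Proposition \ref{prop_mu_comp} as a standard fact.

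The justification available inside the paper is to transport Ikeda--Naruse's vanishing property (Proposition \ref{prop_van_pro}) through Corollary \ref{cor_alg_mor_gsm}. Since $S_\lambda|_\mu=\Psi_\mu(G_\lambda(x|1-a))$, and the proof of Theorem \ref{thm_sur_hom} never uses the present statement, (1) and (2) can be read off from the polynomial side.

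Your argument instead derives both properties purely from the axioms of Definition \ref{def:sch bas} via the left-hand recurrence. Vanishing comes by induction on $|\mu|$. The diagonal value comes from the bijection $\mathrm{rev}(s_i\lambda)\to\mathrm{rev}(\lambda)\setminus\{(i+1,i)\}$, whose missing factor is exactly $e_i(a)$. This route is self-contained: it holds for any family satisfying Definition \ref{def:sch bas} and needs no determinantal formula, no commutation $\Psi\circ\pi_i=\pi_i\circ\Psi$, and no external citation. The polynomial route, in exchange, also produces existence of the family.

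Some points to tidy up.

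\begin{itemize}
\item In the case $s_i\lambda\succeq\lambda$, and in the first claim of the case $s_i\lambda\prec\lambda$, no lifting property is needed. Transitivity suffices: $\lambda\preceq s_i\mu\prec\mu$ gives $\lambda\preceq\mu$.

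\item The only real combinatorial input is the second claim, $s_i\mu\nsucceq s_i\lambda$. Your auxiliary lemma as written is garbled, since its second alternative is just the hypothesis. What you need is: if $i+1\in\lambda\cap\mu$ and $i\notin\lambda\cup\mu$, then $s_i\lambda\preceq s_i\mu$ implies $\lambda\preceq\mu$.

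\item To prove it, note that replacing $i+1$ by $i$ does not change sorted positions. So the only nontrivial entry is the position $j$ with $\widetilde{\lambda}_j=i+1$, when it differs from the position of $i+1$ in $\mu$. There you only get $\widetilde{\mu}_j\geq i$, and it is $i\notin\mu$ that upgrades this to $\widetilde{\mu}_j\geq i+1$. The phrase ``the same swap'' does not capture this, so write it out.

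\item Avoid writing $\ell(\mu)=|\mu|$, which clashes with the paper's $\ell(\lambda)=d(n-d)-|\lambda|$.

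\item Say explicitly that $(s,s')\mapsto(s,s')\lambda$ is injective. This is what makes the product over $R(\lambda)$ equal to the product over $\mathrm{rev}(\lambda)$.
\end{itemize}
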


\begin{proposition}\label{prop_HHH}
   $\{S_\lambda:\lambda \in \mathcal{P}(d,n)\}$ forms a $R(T^n)$-module basis of $K_{T^n}(\G(d,n))$. 
\end{proposition}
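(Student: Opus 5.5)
We prove Proposition \ref{prop_HHH} by a triangularity argument: we show the $S_\lambda$ are linearly independent and that they span $K_{T^n}(\G(d,n))$ over $R(T^n)$. The inputs are the vanishing and diagonal properties in Proposition \ref{prop_van_pro_S} and the GKM description of $K_{T^n}(\G(d,n))$. Throughout we order $I(d,n)$ by the dictionary order `$\leq$', which refines `$\preceq$'.

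\emph{Linear independence.} Suppose $\sum_\lambda f_\lambda S_\lambda=0$ with $f_\lambda\in R(T^n)$, not all zero. Choose $\lambda_0$ minimal in the dictionary order among those $\lambda$ with $f_\lambda\neq0$, and restrict the relation to $\lambda_0$. By Proposition \ref{prop_van_pro_S}(1), $S_\lambda|_{\lambda_0}=0$ unless $\lambda_0\succeq\lambda$, which forces $\lambda\leq\lambda_0$. By minimality of $\lambda_0$, the only surviving term is therefore $f_{\lambda_0}S_{\lambda_0}|_{\lambda_0}$, so this term vanishes. Now $S_{\lambda_0}|_{\lambda_0}=\prod_{\nu\in R(\lambda_0)}(1-a_\nu/a_{\lambda_0})$ is a nonzero element of the domain $R(T^n)$, so $f_{\lambda_0}=0$, a contradiction.

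\emph{Spanning.} Let $x\in K_{T^n}(\G(d,n))$ be nonzero. Define its support to be the set of $\mu$ with $x|_\mu\neq0$, and let $\lambda$ be minimal in the dictionary order within the support. We claim that $x|_\lambda$ is divisible by $S_\lambda|_\lambda$ in $R(T^n)$. Granting the claim, write $x|_\lambda=g\,S_\lambda|_\lambda$ and replace $x$ by $x-gS_\lambda$. This new class vanishes at $\lambda$. By Proposition \ref{prop_van_pro_S}(1), $S_\lambda$ is supported on $\{\mu\succeq\lambda\}\subseteq\{\mu\geq\lambda\}$, so the new class also vanishes at every $\mu<\lambda$. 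Hence the minimum of the support strictly increases. Since $I(d,n)$ is finite, iterating this step expresses $x$ as an $R(T^n)$-combination of the $S_\lambda$.

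\emph{The divisibility claim.} This is the main obstacle. For each $\nu\in R(\lambda)$ we have $\nu=(s,s')\lambda$ with $s'<s$, so $\nu$ is lexicographically smaller than $\lambda$, and therefore $x|_\nu=0$. The GKM condition then says that $1-a_\nu/a_\lambda=1-a_{s'}/a_s$ divides $x|_\lambda-x|_\nu=x|_\lambda$. Distinct reversals $(s,s')$ give the distinct linear factors $1-a_{s'}/a_s$. These are pairwise non-associate prime elements of the UFD $R(T^n)=\ZZ[a_1^{\pm1},\dots,a_n^{\pm1}]$: each is a unit multiple of $a_s-a_{s'}$, which is irreducible, and different pairs are not associates. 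Since pairwise non-associate primes each dividing $x|_\lambda$ implies their product divides it, the product $S_\lambda|_\lambda$ divides $x|_\lambda$. The care needed here is to check that the GKM edges at $\lambda$ are exactly the transpositions $(i,j)\lambda$, so that every $\nu\in R(\lambda)$ is a genuine GKM neighbour of $\lambda$ and the divisibility condition applies to each of them.
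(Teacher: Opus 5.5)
Your proof is correct. The paper gives no argument of its own: it simply invokes \cite[Proposition 4.1]{HHH}, the general criterion that classes with upper-triangular support and the Euler class as diagonal entry (with pairwise coprime Euler factors) form a module basis. Your triangularity argument is a self-contained version of that criterion in this setting: linear independence comes from the nonzero diagonal entries $S_\lambda|_\lambda$, and spanning comes from the divisibility of $x|_\lambda$ by the pairwise non-associate primes $a_s-a_{s'}$ attached to the GKM edges.
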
  
\begin{proof}
    This follows from \cite[Proposition 4.1]{HHH}.
\end{proof}

Let $(0)$ denote the element $(0,\dots,0)\in \mathcal{P}(d,n)$, corresponding to $(1,2,\dots,d)\in I(d,n)$. $(1)$ denotes the element $(1,0,\dots,0)\in \mathcal{P}(d,n)$, corresponding to $(1,2,\dots,d-1,d+1)\in I(d,n)$. Thus $$a_{(0)}=a_{1}a_{2}\dots a_d,\quad a_{(1)}=a_{1}a_{2}\dots a_{d-1}a_{d+1}.$$
\begin{lemma}\label{lem_div_case}
   The Schubert divisor class $S_{(1)}$ is given by $S_{(1)}|_\mu=1-\frac{a_{(0)}}{a_{\mu}}$.
\end{lemma}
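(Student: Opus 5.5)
Write $f$ for the element of $\bigoplus_{\mu\in I(d,n)}\ZZ[a_1^{\pm1},\dots,a_n^{\pm1}]$ given by $f|_\mu=1-\frac{a_{(0)}}{a_\mu}$. The plan is to check that $f$ lies in $K_{T^n}(\G(d,n))$ and obeys the two conditions of Definition \ref{def:sch bas} with $\lambda=(1)$. Since the family of Schubert classes is uniquely determined by Definition \ref{def:sch bas}, this forces $S_{(1)}=f$.

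First, membership in $K_{T^n}(\G(d,n))$. Take $\mu=(i,j)\lambda$. Then
$$f|_\mu-f|_\lambda=a_{(0)}\Big(\frac{1}{a_\lambda}-\frac{1}{a_\mu}\Big)=\frac{a_{(0)}}{a_\mu}\Big(\frac{a_\mu}{a_\lambda}-1\Big),$$
and this is divisible by $1-\frac{a_\lambda}{a_\mu}$ in the Laurent ring, because $\frac{a_\mu}{a_\lambda}-1=\frac{a_\mu}{a_\lambda}\big(1-\frac{a_\lambda}{a_\mu}\big)$. The normalization condition (2) holds at once, since $f|_{(0)}=1-1=0=\delta_{(1),(0)}$.

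The real work is condition (1), which needs a case split on $s_i$. From the $I(d,n)$ description, $(1)=(1,\dots,d-1,d+1)$. The only simple reflection with $s_i(1)\prec(1)$ is $s_d$, and $s_d(1)=(0)$. For every other $i$ we have $s_i(1)\succeq(1)$.

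1. \emph{Case $i=d$.} We must show $\pi_d f=S_{(0)}$. Here $S_{(0)}$ is the constant class $1$: it satisfies (1) and (2) because $\pi_i 1=\frac{1-(1-e_i)}{e_i}=1$, and uniqueness applies.
2. \emph{Case $i\neq d$.} We must show $\pi_i f=f$.

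The key identity is that $s_i$ acting on the variables commutes with the action on symbols, so $s_i(a_{s_i\mu})=a_\mu$. This gives
$$s_i(f|_{s_i\mu})=1-\frac{s_i(a_{(0)})}{a_\mu}.$$
Substituting into the formula for $\pi_i$ and writing $e=e_i(a)$:
$$\pi_if|_\mu=\frac{\big(1-\frac{a_{(0)}}{a_\mu}\big)-(1-e)\big(1-\frac{s_ia_{(0)}}{a_\mu}\big)}{e}.$$

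- If $i\neq d$, then $s_i$ either fixes $\{1,\dots,d\}$ setwise or moves only indices beyond $d$, so $s_ia_{(0)}=a_{(0)}$. The numerator becomes $e\big(1-\frac{a_{(0)}}{a_\mu}\big)$, hence $\pi_if=f$.
- If $i=d$, then $s_da_{(0)}=a_{(0)}\frac{a_{d+1}}{a_d}$ and $1-e=\frac{a_d}{a_{d+1}}$. The numerator is
$$1-\frac{a_{(0)}}{a_\mu}-\frac{a_d}{a_{d+1}}+\frac{a_{(0)}}{a_\mu}=e,$$
so $\pi_df=1=S_{(0)}$.

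The main obstacle I expect is purely one of conventions: how $s_i$ acts on the coordinate $x|_{s_i\mu}$, and the orientation of $e_i(a)$. The $i=d$ computation only closes if these conventions are handled exactly as above, so I would double-check them against Proposition \ref{prop_mu_comp}.
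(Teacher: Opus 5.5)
Your argument is correct, but it runs in the opposite direction from the paper's.

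The paper computes $S_{(1)}|_\mu$ forward. It settles $\mu=(0)$ and $\mu=(1)$ (the latter via Proposition \ref{prop_van_pro_S}). It then inducts on $|\mu|$ using the second branch of Proposition \ref{prop_mu_comp}, choosing a descent $i\neq d$ of $\mu$ so that $s_i(a_{(0)})=a_{(0)}$. You instead check that the candidate $f$ is a GKM class satisfying every condition of Definition \ref{def:sch bas}, and then appeal to uniqueness.

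The algebra is shared: $s_i(a_{s_i\mu})=a_\mu$, and $a_{(0)}$ is $s_i$-invariant for $i\neq d$. Your route has a concrete advantage, though: you also verify the branch $i=d$, i.e.\ $\pi_d f=S_{(0)}=1$. The paper asserts that every $\mu\succ(1)$ has a descent $i\neq d$, and this fails for square shapes $(k^k)$ with $k\ge 2$. For instance, $\mu=(2,2)$, i.e.\ $(3,4)\in I(2,4)$, has $i=d=2$ as its only descent. At such $\mu$ the paper's induction must use the first branch of Proposition \ref{prop_mu_comp} with $S_{(0)}=1$, and that is exactly your $i=d$ computation. Your conventions also agree with the paper's: Proposition \ref{prop_mu_comp} follows from the definition of $\pi_i$ you used. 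So the worry in your last paragraph does not materialize.

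The one step to state more carefully is the appeal to uniqueness. Uniqueness of the whole family does not literally say that a single class obeying the conditions for $\lambda=(1)$ equals $S_{(1)}$. It does hold index by index. From the definition of $\pi_i$,
$$x|_\mu=e_i(a)\,(\pi_ix)|_\mu+(1-e_i(a))\,s_i(x|_{s_i\mu}).$$
Hence $g=f-S_{(1)}$ satisfies $g|_{(0)}=0$. For $i\neq d$ it satisfies $g|_\mu=s_i(g|_{s_i\mu})$, using that $1-e_i(a)=a_i/a_{i+1}$ is a unit. For $i=d$ it satisfies $g|_\mu=(1-e_d(a))\,s_d(g|_{s_d\mu})$. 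Induction on $|\mu|$ along any descent then gives $g=0$. The same argument justifies your claim $S_{(0)}=1$.
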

\begin{proof}
If $\mu=(0)$ then $S_{(1)}|_\mu=1-1=0$, that is consistence with Definition \ref{def:sch bas}. For $\mu=(1)$ it follows from Proposition \ref{prop_van_pro_S}. For the remaining $\mu$ the proof follows using induction on $|\mu|$ and the recurrence relation as in Proposition \ref{prop_mu_comp}. Let $\lambda=(1)$. For any $\mu\succ \lambda$, there exist $i\neq d$ such that $s_i\mu\prec\mu$. By induction hypothesis, $S_{(1)}|_{s_i\mu}=(1-\frac{a_{(0)}}{a_{s_i\mu}})$. If $i\neq d$ then $s_i\lambda \succeq \lambda$ and $s_i(a_{(0)})=a_{(0)}$. Thus, using  Proposition \ref{prop_mu_comp},  
\begin{equation*}
  S_{(1)}|_\mu=s_i(1-\frac{a_{(0)}}{a_{s_i\mu}})=(1-\frac{a_{(0)}}{a_{\mu}}). 
\end{equation*}
\end{proof}

%Note that one can construct the Schubert class $S_\lambda$ by starting with the class $S_{(1)}$ and  and applying successive divided difference operators following Proposition \ref{prop_mu_comp}.

\begin{lemma}\label{lem_loc_sc_bas_gsm}
  $\{S_{\lambda}|_\mu:\lambda \in \mathcal{P}(d,n)\}$ is a polynomial in $\frac{a_{\nu}}{a_\mu}$ for some $\nu\preceq \mu$.
\end{lemma}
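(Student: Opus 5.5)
Read Lemma \ref{lem_loc_sc_bas_gsm} as follows: for every $\lambda,\mu\in\mathcal{P}(d,n)$, the restriction $S_\lambda|_\mu$ lies in the subring $\ZZ[\frac{a_\nu}{a_\mu}:\nu\preceq\mu]$ of $R(T^n)$. I would prove this by induction on $|\mu|$, using the left-hand recurrence of Proposition \ref{prop_mu_comp}.

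\emph{Base case} $\mu=(0)$. Here $S_\lambda|_{(0)}=\delta_{\lambda,(0)}$ is a constant, hence a polynomial in the empty set of ratios.

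\emph{Inductive step.} Take $\mu\neq(0)$ and choose $i$ with $s_i\mu\prec\mu$, so $i+1\in\mu$ and $i\notin\mu$. By induction, $S_\lambda|_{s_i\mu}$ and $S_{s_i\lambda}|_{s_i\mu}$ are polynomials in ratios $\frac{a_{\nu'}}{a_{s_i\mu}}$ with $\nu'\preceq s_i\mu$.

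First, the operator $s_i$ sends $a_{s_i\mu}$ to $a_\mu$ and $a_{\nu'}$ to $a_{s_i\nu'}$. So $s_i$ turns each such ratio into $\frac{a_{s_i\nu'}}{a_\mu}$.

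Second, I need $s_i\nu'\preceq\mu$ whenever $\nu'\preceq s_i\mu$. This is the main combinatorial point. Since $s_i\mu\prec\mu$, we have $\nu'\preceq\mu$ by transitivity. There are three cases.
\begin{enumerate}
\item If $s_i\nu'\preceq\nu'$, we are done.
\item If $s_i\nu'=\nu'$, we are also done.
\item If $s_i\nu'\succ\nu'$, then $i\in\nu'$ and $i+1\notin\nu'$. Comparing $s_i\nu'$ with $\mu$ entrywise, only the entry equal to $i$ changes, and it becomes $i+1$. That entry sits at some position $k$ with $\widetilde{\nu'}_k=i\le\widetilde{(s_i\mu)}_k$. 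Here $s_i\mu$ has $i$ in the slot where $\mu$ has $i+1$; call that slot $k_0$.
\begin{itemize}
\item If $k\ne k_0$, then $\widetilde{\mu}_k\ne i,i+1$ and $\widetilde{\mu}_k\ge i$, so $\widetilde{\mu}_k\ge i+1$.
\item If $k=k_0$, then $\widetilde{\mu}_k=i+1$.
\end{itemize}
Either way $\widetilde{(s_i\nu')}_k\le\widetilde{\mu}_k$, and all other entries are unchanged and bounded by $\widetilde{\mu}$. Hence $s_i\nu'\preceq\mu$.
\end{enumerate}

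Third, the remaining factor $e_i(a)=1-\frac{a_i}{a_{i+1}}$ must also be expressed in the allowed ratios. Put $\nu=(i+1,i)\mu=s_i\mu$. Then $\frac{a_i}{a_{i+1}}=\frac{a_{s_i\mu}}{a_\mu}$ with $s_i\mu\preceq\mu$. Thus $e_i(a)$ and $1-e_i(a)$ are polynomials in an allowed ratio.

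Combining these, both branches of Proposition \ref{prop_mu_comp} give $S_\lambda|_\mu$ as a polynomial in ratios $\frac{a_\nu}{a_\mu}$ with $\nu\preceq\mu$. This completes the induction. I expect the comparison of Schubert symbols in the second step to be the only delicate point; the rest is bookkeeping.
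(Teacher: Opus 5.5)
Your proof is correct and follows essentially the same route as the paper: induction on $|\mu|$ using the left-hand recurrence of Proposition \ref{prop_mu_comp}, transporting the ratios by $s_i$, and writing $\frac{a_i}{a_{i+1}}=\frac{a_{s_i\mu}}{a_\mu}$. The main difference is that you actually check the combinatorial step $\nu'\preceq s_i\mu\Rightarrow s_i\nu'\preceq\mu$, which the paper only asserts, and your check is sound; you are also more precise about $e_i(a)$, which the paper loosely writes as a single ratio rather than $1-\frac{a_{s_i\mu}}{a_\mu}$.
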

\begin{proof}
We proceed by induction on $|\mu|$. For $\mu=(0)$, the statement is true using Proposition \ref{prop_van_pro_S}. For $\mu\succ (0)$ consider $s_i$ such that $s_i\mu \prec\mu$. By induction hypothesis $S_{\lambda}|_{s_i\mu}$ is a polynomial in $\frac{a_{\eta}}{a_{s_i\mu}}$ for some $\eta\preceq s_i\mu$.  Now $\eta\preceq s_i\mu$ and $s_i\mu\prec\mu$ together implies $s_i\eta\preceq \mu$. Thus $s_i(S_{\lambda}|_{s_i\mu})$ is a polynomial in $\frac{a_{\nu}}{a_{\mu}}$ for some $\nu\preceq \mu$, where $\nu=s_i\eta$. Moreover, $s_i(S_{s_i\lambda}|_{s_i\mu})$ is a polynomial in $\frac{a_{\nu}}{a_{\mu}}$ for some $\nu\preceq \mu$ by the similar argument. Also, $s_i\mu\prec\mu$ implies $i+1\in \{\widetilde{\mu}_1,\dots,\widetilde{\mu}_d\}$ but not $i$. Thus $e_i(a)=1-\frac{a_i}{a_{i+1}}$ can be written as $\frac{a_\nu}{a_\mu}$ for some $\nu\preceq \mu$. Hence, using the recurrence relation as in Proposition \ref{prop_mu_comp}, we can write $S_\lambda|_\mu$ as a polynomial in $\frac{a_\nu}{a_\mu}$ for $\nu\preceq \mu$.
%${S}_{\lambda}|_\lambda=\prod_{\nu\in R(\lambda)}(1-\frac{{a}_{\nu}} {{a}_{\lambda}})$ and $S_{\lambda}|_\mu=0$.  By the recurrence relation as in Proposition \ref{prop_mu_comp}, $\{S_{\lambda}|_\mu:\lambda \in \mathcal{P}(d,n)\}$ can be computed as a polynomial in $\{s_i(S_{\lambda}|_{s_i\mu}):\lambda \in \mathcal{P}(d,n)\}$ and $e_i(a)$.   Thus considering $\nu=s_i\eta$, we have $s_i(S_{\lambda}|_{s_i\mu})$ is a polynomial in $\frac{a_{\nu}}{a_{\mu}}$ for some $\nu\leq \mu$.  Also, $s_i\mu\prec\mu$ implies $i+1\in \{\widetilde{\mu}_1,\dots,\widetilde{\mu}_d\}$ but not $i$. Thus $e_i(a)=1-\frac{a_i}{a_{i+1}}$ can be written as $\frac{a_\nu}{a_\mu}$ for some $\nu\leq \mu$. 
\end{proof}

\begin{remark}\label{rem_pol_1}
For every $\lambda,\mu\in \mathcal{P}(d,n)$, there exist polynomials $f_{\lambda\mu}$ such that $$S_\lambda|_\mu=f_{\lambda\mu}\big((\frac{a_\nu}{a_\mu})_{\nu\preceq \mu}\big).$$ 
     % Let $\mu \succ \lambda$. If $\mu$ is obtained by attaching one box in $\lambda$, then $$S_\mu|_\mu=(1-\frac{a_{{\lambda}}}{a_{{\mu}}})S_\lambda|_\mu.$$
\end{remark}

\subsection{Factorial Grothendieck polynomials}
In this subsection, we discuss the factorial Grothendieck polynomial introduced in \cite{mc}, and study its combinatorial and geometric properties. We also define a divided difference operator on the algebra generated by factorial Grothendieck polynomials that behaves similarly to the divided difference operator on the Schubert classes. 

Define binary operators $\oplus$ and $\ominus$ by 
$$x\oplus y:=x+y- xy, ~~x\ominus y:=\frac{x-y}{1- y}.$$

We also define a deformation of $k$-th power of $x$ with parameters $b=(b_1,b_2,\dots)$ by
$$[x|b]^k:=(x\oplus b_1)(x\oplus b_2)\dots(x\oplus b_k).$$

Let $\lambda \in \mathcal{P}_d$ and $b=(b_1,b_2,\dots)$ be an infinite sequence. Define the following function. 
\begin{equation}\label{def_gro_pol}
   G_{\lambda}(x_1,\dots,x_d|b_1,b_2,\dots,)=\frac{\det([x_i|b]^{\lambda_j+d-j}(1-x_i)^{j-1})_{d\times d}}{\prod_{1\leq i<j\leq d}(x_i-x_j)}.
\end{equation}

 The determinant in the numerator on the right-hand side is a multiple of the denominator $\prod_{1\leq i<j\leq d}(x_i-x_j)$. The reason is that for any $1\leq k<\ell\leq d$, the $k$-th row and $\ell$-th row are the same if we substitute $x_k=x_\ell$. Thus  $G_{\lambda}(x_1,\dots,x_d|b_1,b_2,\dots)$ becomes a polynomial, and it is known as the factorial Grothendieck polynomial. 

\begin{example}\label{eg_fac_gro}
   $G_{(0)}(x|b)=1$. Let $\lambda=(1)\in \mathcal{P}_1$ then $G_{(1)}(x|b)=x_1\oplus b_1=x_1+b_1+x_1b_1=1-(1-x_1)(1-b_1)$. Similarly, if we consider $(1)=(1,0,\dots,0)\in \mathcal{P}_d$ then $G_{(1)}(x|b)=1-\xi(x)\xi(b)$, where $\xi(x)=\prod_{i=1}^d(1- x_i)$ and $\xi(b)=\prod_{i=1}^d(1- b_i)$. 
\end{example}

% \begin{proposition}
%     The factorial Grothendieck polynomials are symmetric in $x_1,x_2,\dots,x_d$.
% \end{proposition}

 Let $\mathcal{R}$ be the localization of $\ZZ[b_1,b_2,\dots]$ by the multiplicative system formed by the products of $\{1-b_i~|~i\geq 1\}$.  Identifying $b_i=1-a_i$ yields an isomorphism $\mathcal{R}\cong \mathbb{Z}[a_1^{\pm 1},a_2^{\pm 1},\dots ]$, and we obtain the following:

\begin{proposition}\cite{mc,ikNa}\label{cor_Gro_pol}
     $\{G_{\lambda}(x_1,\dots,x_d|1-a_1,1-a_2,\dots)\}_{\lambda \in \mathcal{P}_d}$ form a $\ZZ[a_1^{\pm 1},a_2^{\pm1},\dots]$-basis of $\ZZ[a_1^{\pm1},a_2^{\pm1},\dots][x_1,\dots,x_d]^{S_d}$. 
\end{proposition}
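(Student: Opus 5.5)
We reduce the statement to a unitriangularity argument against a known basis of the symmetric polynomial ring. Set $R:=\ZZ[a_1^{\pm1},a_2^{\pm1},\dots]$ and $b_i=1-a_i$. Then $x\oplus b_i=1-(1-x)a_i$, so every entry of the determinant in \eqref{def_gro_pol} lies in $R[x_1,\dots,x_d]$. Hence $G_\lambda(x|1-a)\in R[x_1,\dots,x_d]^{S_d}$: it is a polynomial by the argument given after \eqref{def_gro_pol}, and it is symmetric because it is a ratio of two alternating expressions.

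As a reference basis we use the Grothendieck polynomials without parameters, $G_\lambda(x):=G_\lambda(x|0,0,\dots)$. It is classical that these form a $\ZZ$-basis of $\ZZ[x_1,\dots,x_d]^{S_d}$. To see this, expand $\det(x_i^{\lambda_j+d-j}(1-x_i)^{j-1})$. It equals $a_{\lambda+\delta}$ plus a combination of alternants $a_{\nu+\delta}$ with $|\nu|>|\lambda|$ and integer coefficients. Dividing by the Vandermonde determinant gives $G_\lambda=s_\lambda+\sum_{|\nu|>|\lambda|}c_{\lambda\nu}s_\nu$ with $\nu\in\mathcal{P}_d$. This expansion is unitriangular with respect to the grading, and the Schur polynomials form a $\ZZ$-basis. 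So we can invert degree by degree. Only finitely many $\nu$ of each degree occur, and in any fixed degree the correction terms have higher degree. This inversion therefore expresses each $s_\mu$ as a finite combination of the $G_\nu$ modulo terms of arbitrarily high degree.

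Degree filtrations are awkward in this setting. We instead work with the lowest-degree component. In $G_\lambda(x|1-a)$ the entries are $[x|b]^k(1-x)^{j-1}$, and each is a polynomial in $x$ whose constant term is $\prod(1-a_i)$. This need not vanish, so the lowest-degree filtration fails. The cleaner route is the top-degree component. The top $x$-degree part of $[x_i|b]^{k}(1-x_i)^{j-1}$ is $\prod_{l\le k} a_l\cdot x_i^k\cdot(-x_i)^{j-1}$. The top homogeneous component of $G_\lambda(x|1-a)$ is then $\pm\,(\text{unit in }R)\cdot\det(x_i^{\lambda_j+d-1})/\Delta$, which is degenerate. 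So the top-degree filtration also fails, and we need a triangularity in a different order.

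Use instead the change of variables $y_i=1-x_i$. Then $x\oplus b_l=1-a_l y$, and the $(i,j)$ entry becomes $\prod_{l\le \lambda_j+d-j}(1-a_l y_i)\, y_i^{j-1}$. Take the lowest $y$-degree component. Modulo higher degree in $y$, the column $j$ contributes $y_i^{j-1}$, which again collapses.

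The robust approach is therefore to apply the factorial analogue via the vanishing (interpolation) property, and this step is the main obstacle. Consider the substitution $x=x_\mu$, meaning $x_i=1-a_{\tilde\mu_i}^{-1}$ or the analogous points from \cite{ikNa}. At these points $G_\lambda(x_\mu|1-a)=0$ unless $\mu\supseteq\lambda$, and $G_\lambda(x_\lambda|1-a)$ is a product of factors of the form $1-a_s/a_t$. Granting this, spanning follows by an interpolation/induction argument. Linear independence follows by evaluating a putative relation $\sum c_\lambda G_\lambda=0$ at $x_\mu$ for $\mu$ minimal among those with $c_\mu\ne0$, which gives $c_\mu G_\mu(x_\mu)=0$. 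Since $R$ is a domain, $c_\mu=0$.

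For spanning we keep the graded triangularity. Over $R$, $G_\lambda(x|1-a)=\sum_{\nu\supseteq\lambda}r_{\lambda\nu}G_\nu(x)$, where the sum is finite in each degree, $r_{\lambda\lambda}$ is a unit (a monomial in the $a_i$), and $r_{\lambda\nu}\in R$. To find these coefficients, expand each entry $\prod_{l\le k}(1-a_l+a_l x)$ in powers of $x$. These powers are reached by applying the operator $\pi$ that shifts $x^k\mapsto x^{k+1}$ on the Grothendieck determinant basis. Inverting this unitriangular relation, which is possible because $G_\nu$ of a given $\lambda$-degree involve only finitely many terms, expresses each $G_\nu(x)$, and hence each symmetric polynomial, in terms of the $G_\lambda(x|1-a)$ over $R$.
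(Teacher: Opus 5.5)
\textbf{Verdict: the proposal has a genuine gap.} Spanning over $R=\ZZ[a_1^{\pm1},a_2^{\pm1},\dots]$ is never established. For reference, the paper gives no argument here and simply cites \cite{mc,ikNa}.

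Your linear-independence step is correct. Take a finite relation $\sum c_\lambda G_\lambda(x|1-a)=0$ and evaluate it at $x_\mu$, where $\mu$ is inclusion-minimal with $c_\mu\neq0$. This leaves $c_\mu\prod_{\nu\in R(\mu)}(1-a_\nu/a_\mu)=0$, and since $R$ is a domain, $c_\mu=0$.

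Spanning is the real content of an $R$-basis statement, and interpolation does not supply it. To recover the coefficients of a given $f$ from the values $f(x_\mu)$ you must divide by $G_\mu(x_\mu|1-a)=\prod_{\nu\in R(\mu)}(1-a_\nu/a_\mu)$. That is not a unit of $R$, so interpolation can at best give an expansion over the fraction field of $R$. Even that needs a finiteness or rank argument you do not give, for instance $\deg_{x_1}G_\lambda(x|1-a)\le\lambda_1$ together with a rank count in each module $\{f:\deg_{x_1}f\le k\}$. Integrality of the coefficients is exactly what is at stake.

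Your fallback triangularity points in the wrong direction. Already for $d=1$ one has $G_{(m)}(x)=x^m$ and
\[ G_{(k)}(x|1-a)=\prod_{l=1}^{k}(1-a_l+a_lx)=a_1\cdots a_k\,G_{(k)}(x)+\sum_{m<k}r_{km}\,G_{(m)}(x),\qquad r_{k0}=\prod_{l=1}^{k}(1-a_l)\neq 0. \]
So the support is $\nu\subseteq\lambda$, not $\nu\supseteq\lambda$. Moreover, as you noticed yourself with the Schur expansion, an upward-triangular system cannot be inverted by finite sums. The inversion you describe would therefore not prove spanning even if the support claim held, and the sentence about a shift operator $\pi$ is not an argument.

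What does work is the opposite triangularity:
\[ G_\lambda(x|1-a)=u_\lambda G_\lambda(x)+\sum_{\nu\subsetneq\lambda}r_{\lambda\nu}G_\nu(x),\qquad u_\lambda\in R^\times,\ r_{\lambda\nu}\in R. \]
Each set $\{\nu:\nu\subseteq\lambda\}$ is finite, so such a transition matrix is invertible over $R$ by finite recursion on $|\lambda|$. Combined with the $\ZZ$-basis property of the $G_\nu(x)$, this gives the proposition; cite that property outright, since your degree sketch only yields it modulo high degree.

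This triangularity, with unit diagonal, must itself be proved. One route is as follows.
\begin{itemize}
\item The $j$-th column of the numerator lies in $(1-x)^{j-1}$ times the polynomials of degree at most $\lambda_j+d-j$.
\item These modules decrease in $j$ because $\lambda_{j+1}\le\lambda_j$.
\item One then compares the span of the resulting alternants with that of $\prod_{i<j}(x_i-x_j)\,G_\nu(x)$ for $\nu\subseteq\lambda$.
\end{itemize}
That step is precisely the content the paper delegates to \cite{mc,ikNa}.
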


% \begin{proposition}\cite{mc,ikNa}\label{thm_basis_ord}
%     $\{G_{\lambda}(x_1,\dots,x_d|b_1,b_2,\dots)\}_{\lambda \in \mathcal{P}_d}$ form an $\mathcal{R}$-basis of $\mathcal{R}[x_1,\dots,x_d]^{S_d}$. 
% \end{proposition}

 If we set $b_i=0$ in $G_{\lambda}(x_1,\dots,x_d|b_1,b_2,\dots)$ for all $i$, then we call it by Grothendieck polynomial and denote it by $G_{\lambda}(x_1,\dots,x_d)$. Thus 
$$G_{\lambda}(x_1,\dots,x_d)=\frac{\det(x_i^{\lambda_j+d-j}(1-x_i)^{j-1})_{d\times d}}{\prod_{1\leq i<j\leq d}(x_i-x_j)}.$$

\begin{proposition}\cite{mc,ikNa}
     $\{G_{\lambda}(x_1,\dots,x_d)\}_{\lambda \in \mathcal{P}_d}$ form a $\ZZ$-basis of $\ZZ[x_1,\dots,x_d]^{S_d}$.
\end{proposition}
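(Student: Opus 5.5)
The plan is to use a triangularity argument relative to the basis of Schur-type alternants. Put $\Delta=\prod_{i<j}(x_i-x_j)$. Multiplication by $\Delta$ identifies $\ZZ[x_1,\dots,x_d]^{S_d}$ with the $\ZZ$-module of antisymmetric polynomials. That module is free, with basis the monomial alternants $a_\alpha=\det(x_i^{\alpha_j})$ indexed by strictly decreasing $\alpha=(\alpha_1>\dots>\alpha_d\ge 0)$. Equivalently, it has basis $a_{\lambda+\delta}$ with $\lambda\in\mathcal{P}_d$ and $\delta=(d-1,\dots,1,0)$. It therefore suffices to show that the alternants $A_\lambda:=\det\bigl(x_i^{\lambda_j+d-j}(1-x_i)^{j-1}\bigr)=\Delta\, G_\lambda(x)$ form a $\ZZ$-basis of the antisymmetric polynomials.

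First expand the entries. Row $i$, column $j$ is $x_i^{\lambda_j+d-j}(1-x_i)^{j-1}=\sum_{k=0}^{j-1}(-1)^k\binom{j-1}{k}x_i^{\lambda_j+d-j+k}$. The lowest-degree term is $x_i^{\lambda_j+d-j}$ with coefficient $1$, and every other exponent is strictly larger. By multilinearity in the columns, $A_\lambda$ is a $\ZZ$-combination of $\det(x_i^{\beta_j})$ with $\beta_j\ge\lambda_j+d-j$ componentwise, and the term $\beta=\lambda+\delta$ occurs with coefficient exactly $1$. Each $\det(x_i^{\beta_j})$ is either zero (repeated exponents) or $\pm a_{\mathrm{sort}(\beta)}$. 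Sorting a vector that dominates $\lambda+\delta$ componentwise gives a partition $\mu+\delta$ with $|\mu|\ge|\lambda|$, and with $|\mu|>|\lambda|$ unless $\beta=\lambda+\delta$. Hence
\[
A_\lambda=a_{\lambda+\delta}+\sum_{|\mu|>|\lambda|}c_{\lambda\mu}\,a_{\mu+\delta},\qquad c_{\lambda\mu}\in\ZZ,
\]
and this sum is finite because $A_\lambda$ is a polynomial. Equivalently, $G_\lambda=s_\lambda+(\text{higher-degree Schur terms})$, which says the transition matrix is unitriangular with respect to degree.

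Linear independence then follows immediately. In any nontrivial relation $\sum n_\lambda G_\lambda=0$, take the $\lambda$ of minimal $|\lambda|$ with $n_\lambda\ne0$. Its lowest-degree homogeneous component is a nonzero combination of Schur polynomials $s_\lambda$ of that degree, which is impossible.

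Spanning is the main obstacle, because the triangularity points the wrong way: correction terms have higher degree, and a naive inversion would give an infinite sum. I would handle this by working with finitely many partitions at a time. Fix $N$ and let $V_N$ be the span of the $s_\mu$ with $\mu\subseteq(N^d)$, that is $\mu_1\le N$. In the expansion above, each $\beta_j\le\lambda_j+d-1$, so $\mathrm{sort}(\beta)_1\le\lambda_1+d-1$. This bound alone does not close $V_N$, so instead I would use a second triangularity, with respect to the top degree in the single variable $x_1$. Since $(1-x_i)^{j-1}=(-1)^{j-1}x_i^{j-1}+\dots$, the highest-exponent term in each column is $\pm x_i^{\lambda_j+d-1}$. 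This gives $A_\lambda=\pm\det(x_i^{\lambda_j+d-1})+\dots$, but that determinant degenerates, so this route fails.

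The approach I expect to work is to bound only the total degree. Every $\beta$ in the expansion of $A_\lambda$ has $|\beta|\le|\lambda|+\sum_j(d-j)+\binom d2$. So for each fixed $D$, the space $W_D$ spanned by the $s_\mu$ with $|\mu|\le D$ is contained, modulo elements of degree greater than $D$, in the span of the $G_\lambda$ with $|\lambda|\le D$.

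To make this rigorous, take the descending induction on degree in reverse: to write a given symmetric polynomial $f$ of degree at most $D$, subtract $G$'s to kill its lowest-degree part step by step. Each step raises the lowest degree, and degrees of the corrections are bounded by $D+\binom d2\cdot$(constant) only if the process terminates. Termination is the key point, and I would get it from a $\ZZ$-rank argument. Setting $x_i=1-y_i$ turns $(1-x_i)^{j-1}$ into $y_i^{j-1}$ and $x_i^{m}$ into $(1-y_i)^m$. The alternant then becomes $\det\bigl((1-y_i)^{\lambda_j+d-j}y_i^{j-1}\bigr)$, whose top total degree in $y$ is $\pm a_{\lambda+\delta}(y)$ with coefficient $\pm1$.

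This gives upper triangularity with respect to degree in $y$: $G_\lambda$ equals $\pm s_\lambda(y)$ plus lower-degree terms. The $\ZZ$-span of the $G_\lambda$ with $|\lambda|\le D$ therefore equals the space of symmetric polynomials of $y$-degree at most $D$, by the usual descending induction on degree. Since $\ZZ[x]^{S_d}=\ZZ[y]^{S_d}$, this proves spanning.
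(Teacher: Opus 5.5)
The paper gives no argument of its own here (it only cites McNamara and Ikeda--Naruse), so your proof has to stand alone. The independence half does: the unitriangularity $G_\lambda=s_\lambda+(\text{terms } s_\mu \text{ with } |\mu|>|\lambda|)$ is derived correctly. The spanning half has a genuine gap, because the route you finally settle on rests on a false claim.

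After $x_i=1-y_i$, the top-degree term of the entry $(1-y_i)^{\lambda_j+d-j}y_i^{j-1}$ is $\pm y_i^{\lambda_j+d-1}$, not $y_i^{\lambda_j+d-j}$. So the candidate leading part is $\pm\det(y_i^{\lambda_j+d-1})$. This is exactly the degenerate determinant you had just rejected in the $x$-variables; the substitution only swaps the roles of $x_i$ and $1-x_i$. A concrete check for $d=2$: $G_{(1,0)}=1-(1-x_1)(1-x_2)=1-y_1y_2$ and $G_{(1,1)}=x_1x_2=(1-y_1)(1-y_2)$. Both have top $y$-degree part $\pm s_{(1,1)}(y)$. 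Moreover, $s_{(1,0)}(y)=y_1+y_2$ is not in the span of $G_{(0,0)}=1$ and $G_{(1,0)}$. So ``$G_\lambda=\pm s_\lambda(y)+$ lower'' is false, and your claimed equality of spans already fails for $D=1$. Your earlier ``kill the lowest-degree part'' procedure is the right idea. But, as you note yourself, nothing you wrote guarantees it terminates, since a total-degree bound is not preserved when you subtract $G_\lambda$'s.

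The missing ingredient is a bound that is preserved, and you actually had it and discarded it. In your expansion, every nonzero $\det(x_i^{\beta_j})$ equals $\pm a_{\mu+\delta}$. The largest entry of $\mu+\delta$ is $\mu_1+d-1=\max_j\beta_j\le\lambda_1+d-1$, that is, $\mu_1\le\lambda_1$. So this bound does close $V_N=\mathrm{span}_{\ZZ}\{s_\mu:\mu\in\mathcal{P}_d,\ \mu_1\le N\}$: $G_\lambda\in V_N$ whenever $\lambda_1\le N$.

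Now $V_N$ is free of finite rank with basis $\{s_\mu\}_{\mu_1\le N}$. Consider the square matrix expressing $\{G_\lambda\}_{\lambda_1\le N}$ in that basis. Its diagonal entries are $1$, and its off-diagonal entries are nonzero only when $|\mu|>|\lambda|$. Ordering the finitely many partitions in the $d\times N$ box by size makes it unitriangular, hence invertible over $\ZZ$. So $\{G_\lambda\}_{\lambda_1\le N}$ is a $\ZZ$-basis of $V_N$. Equivalently, your lowest-degree reduction terminates inside $V_N$, where all degrees are at most $dN$. Since $\ZZ[x_1,\dots,x_d]^{S_d}=\bigcup_N V_N$, this gives spanning and independence at once.
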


\begin{remark}
Let $\beta$ be a parameter. One can define a `$\beta$ deformed version' of the factorial Grothendieck polynomial by defining $x\oplus y=x+y+\beta xy$, $x\ominus y=\frac{x-y}{1+\beta y}$ and 
$${G}_{\lambda}^{(\beta)}(x_1,\dots,x_d|b_1,b_2,\dots):=\frac{\det([x_i|b]^{\lambda_j+d-j}(1+\beta x_i)^{j-1})_{d\times d}}{\prod_{1\leq i<j\leq d}(x_i-x_j)}.$$  
If we set $\beta=0$, then ${G}_{\lambda}^{(\beta)}(x_1,\dots,x_d|b_1,b_2,\dots)$ becomes factorial Schur polynomial as in \cite{MS}, if we further restrict $b_i=0$, then it becomes the Schur polynomial. If we specialize $\beta=-1$, then ${G}_{\lambda}^{(\beta)}(x_1,\dots,x_d|b_1,b_2,\dots)$ coincides with ${G}_{\lambda}(x_1,\dots,x_d|b_1,b_2,\dots)$ as in \eqref{def_gro_pol}. Since the case $\beta=-1$ is relevant to equivariant $K$-theory of Grassmannian, in this paper, we stick to $\beta=-1$ and work with ${G}_{\lambda}(x_1,\dots,x_d|b_1,b_2,\dots)$ as in \eqref{def_gro_pol}.
\end{remark}

%\subsection{Definition of Grothendieck polynomial using Divided difference operator}

We define the divided difference operator $\pi_i$ on $\mathcal{R}[x_1,\dots,x_d]^{S_d}$ by the following. For $i\geq 1$, $s_i$ acts on $\mathcal{R}[x_1,\dots,x_d]^{S_d}$ by interchanging $b_i$ and $b_{i+1}$. Let $f\in \mathcal{R}[x_1,\dots,x_d]^{S_d}$, define
$$\pi_if:=\frac{f-(1- e_i(b))s_if}{e_i(b)},$$
where $e_i(b)=b_i\ominus b_{i+1}$. Note that if we replace $b_i=1-a_i$ then $b_i\ominus b_{i+1}=1-\frac{a_i}{a_{i+1}}=e_i(a)$.

 For notational convenience, we denote $G_{\lambda}(x_1,\dots,x_d|b_1,b_2,\dots)$ by $G_\lambda(x|b)$ for $\lambda\in \mathcal{P}_d$. The next Proposition follows using the same argument as in \cite[Theorem 6.1]{ikNa}.
\begin{proposition}\label{prop_div_dif_ope}
    We have \[\pi_i G_\lambda(x|b)=\begin{cases}
              G_{s_i\lambda}(x|b) & \text{ if } s_i\lambda\prec\lambda\\
      G_\lambda(x|b) & \text{ if } s_i\lambda \succeq \lambda.
    \end{cases}\]
\end{proposition}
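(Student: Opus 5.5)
The plan is to work directly with the determinantal formula \eqref{def_gro_pol}. The observation is that the operator $\pi_i$ acts only on the parameters $b$, and the numerator determinant depends on $b_i,b_{i+1}$ in a very controlled way. Write the $j$-th column as $[x_r|b]^{k_j}(1-x_r)^{j-1}$ with $k_j=\lambda_j+d-j$. By \eqref{eq_cor}, $k_j=\widetilde{\lambda}_{d+1-j}-1$. Hence the exponents $k_j$ are distinct, and an exponent equal to $i$ occurs exactly when $i+1\in\lambda$ (as a Schubert symbol), while an exponent $i-1$ occurs exactly when $i\in\lambda$. Since the Vandermonde denominator does not involve $b$, it suffices to compute $\pi_i$ of the numerator.

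First I record two elementary facts about $\pi_i$. If $f$ is symmetric in $b_i,b_{i+1}$, then $\pi_i(fg)=f\,\pi_i(g)$, directly from the definition. Also $\pi_i(1)=1$. Next, using $e_i(b)=\frac{b_i-b_{i+1}}{1-b_{i+1}}$ and $1-e_i(b)=\frac{1-b_i}{1-b_{i+1}}$, I get
\[\pi_i(1-b_i)=\frac{(1-b_i)-\frac{1-b_i}{1-b_{i+1}}(1-b_{i+1})}{e_i(b)}=0.\]
Moreover, $[x|b]^k$ is symmetric in $b_i,b_{i+1}$ whenever $k\neq i$, since for $k\le i-1$ it involves neither parameter and for $k\ge i+1$ it involves both symmetrically.

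Case 1: $i+1\notin\lambda$. Then no column has exponent $i$, so the determinant is $s_i$-invariant and $\pi_i$ acts as the identity, giving $\pi_iG_\lambda=G_\lambda$. In this case $s_i\lambda\succeq\lambda$, as required.

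Case 2: $i+1\in\lambda$, say $k_j=i$. I expand
\[[x|b]^i=[x|b]^{i-1}(x\oplus b_i)=[x|b]^{i-1}-(1-b_i)(1-x)[x|b]^{i-1}.\]
By linearity in the $j$-th column, the numerator equals $D_0+(1-b_i)D_1$. Here $D_0$ is the determinant with the $j$-th column replaced by $[x_r|b]^{i-1}(1-x_r)^{j-1}$, and $D_1$ is the determinant with that column replaced by $-[x_r|b]^{i-1}(1-x_r)^{j}$. Both $D_0$ and $D_1$ are symmetric in $b_i,b_{i+1}$, because every remaining column has exponent $\neq i$. Therefore
\[\pi_i(\text{numerator})=D_0\,\pi_i(1)+D_1\,\pi_i(1-b_i)=D_0.\]

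Sub-case $i\notin\lambda$: then $s_i\lambda\prec\lambda$. Lowering $k_j$ from $i$ to $i-1$ is the same as lowering $\lambda_j$ by one, and the result is still a partition, namely $s_i\lambda$. So $D_0/\prod(x_r-x_s)=G_{s_i\lambda}(x|b)$.

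Sub-case $i\in\lambda$: then $s_i\lambda=\lambda$, and the exponent $i-1$ sits in column $j+1$, with entries $[x_r|b]^{i-1}(1-x_r)^{j}$. Thus $D_1$ has two proportional columns and vanishes, so $D_0=D_0+(1-b_i)D_1$ is the original numerator and $\pi_iG_\lambda=G_\lambda$.

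The main point requiring care is this last bookkeeping. Specifically, I must check that the exponent-$(i-1)$ column is exactly the adjacent column $j+1$, which holds because the $k_j$ are strictly decreasing, and that the powers of $(1-x_r)$ match so that $D_1=0$. I must also verify that the translation between partitions and Schubert symbols sends $k_j\mapsto k_j-1$ to $s_i\lambda$. The remaining steps are the routine identities above.
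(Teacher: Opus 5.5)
Your proof is correct and follows essentially the paper's route. Like the paper, you apply $\pi_i$ only to the single column with exponent $i$, the only one not symmetric in $b_i,b_{i+1}$. The paper's key identity $\pi_i(x\oplus b_i)=1$ is equivalent to your pair $\pi_i(1)=1$, $\pi_i(1-b_i)=0$, via $x\oplus b_i=1-(1-x)(1-b_i)$.

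Your decomposition of the numerator as $D_0+(1-b_i)D_1$ adds one thing the paper does not spell out. It handles the case $i,i+1\in\lambda$ explicitly, since $D_1=0$ there by proportional columns, whereas the paper only asserts that $G_\lambda(x|b)$ is symmetric in $b_i,b_{i+1}$ whenever $s_i\lambda\succeq\lambda$.
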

\begin{proof}
  If $s_i\lambda\succeq \lambda$, one see that $G_{\lambda}(x|b)$ is symmetric with respect to $b_i$ and $b_{i+1}$. Thus $s_iG_\lambda(x|b)=G_\lambda(x|b)$ and consequently $\pi_iG_\lambda(x|b)=G_\lambda(x|b)$.
  If $s_i\lambda\prec \lambda$ then $\tilde{\lambda}_j=i+1$ for some $j$.
 Since the permutation $s_i$ only interchange the variables $b_i$ and $b_{i+1}$ and do not affect the $x$ variables we focus on $[x|b]^{\lambda_\ell+d-\ell}=[x|b]^{\tilde{\lambda}_{d+1-\ell}-1}$. Now using
 $$x\oplus b_i-(1-b_i\ominus b_{i+1})(x\oplus b_{i+1})=b_i\ominus b_{i+1},$$
  we have $\pi_i(x\oplus b_i)=1$. Therefore, $\pi_i[x|b]^{\tilde{\lambda}_{j}-1}=\pi_i[x|b]^{i}=[x |b]^{i-1}$. This completes the proof.

 %=\begin{cases}G_{s_i\lambda}(x|b) & \text{ if } s_i\lambda\prec\lambda\\ G_\lambda(x|b) & \text{ if } s_i\lambda \succeq \lambda.  \end{cases}\]
\end{proof}

% \subsection{Algebraic localization map $\phi$}

% Let $\psi_{\mu}:\mathcal{R}[x_1,x_2,\dots,x_d]^{S_d}\to \mathcal{R}$ be the map defined by the substitution $x=b_\mu$, where $b_\mu=(b_{\mu_1}, b_{\mu_2},\dots, b_{\mu_d})$ and $b_{\mu_i}=\frac{-b_{\widetilde{\mu}_i}}{1+\beta b_{\widetilde{\mu}_i}}$.
% \begin{definition}
%     Define a homomorphism of $\mathcal{R}$-algebra map 
%     $$\Psi: \mathcal{R}[x_1,x_2,\dots,x_d]^{S_d}\to Fun(\mathcal{P}_{d,n}, \mathcal{R}), ~~~ f\mapsto (\mu \to \psi_{\mu}(f)).$$
% \end{definition}

% Let $\mathcal{X}$ be the subring of $Fun(\mathcal{P}_{d,n}, \mathcal{R})$ defined by $$\mathcal{X}=\{f=(f_\lambda)\in \bigoplus_{\lambda \in \mathcal{P}_{d,n}}\mathcal{R}: f_{\lambda}-f_{(i,j)\lambda}\in (b_i\ominus b_j){\cdot}\mathcal{R}\}.$$

% \begin{proposition}
%     $Im(\Psi)\subset \mathcal{X}$.
% \end{proposition}

\subsection{The vanishing property and algebraic localization map}
In this subsection, we describe that the factorial Grothendieck polynomial satisfies vanishing property and construct the algebraic localization map that proves the correspondence between factorial Grothendieck polynomials and Schubert classes.

 %The factorial Grothendieck polynomial $G_\lambda(x|1-a)$ carries the geometrical properties of the Schubert basis $S_\lambda\in K_{T^n}(\G(d,n))$ as in Proposition \ref{prop_van_pro_S}.
 Let $\mu=(\mu_1,\dots,\mu_d)\in \mathcal{P}_d$ and for all $1\leq i\leq d$. Define $\widetilde{\mu}_i:=\mu_{d+1-i}+i$ and $\Psi_\mu(G_\lambda(x|1-a))$ by replacing $x_i$ by $1-\frac{1}{a_{\widetilde{\mu}_i}}$ in $G_\lambda(x|1-a)$.
\begin{proposition}[vanishing property]\cite[Proposition 2.2]{ikNa}\label{prop_van_pro}
     \[\Psi_\mu(G_\lambda(x|1-a))=\begin{cases}0 &\text{ if } \lambda\npreceq\mu\\
\prod_{\nu\in R(\lambda)}(1-\frac{{a}_{\nu}}{{a}_{\lambda}}) &\text{ if } \lambda=\mu
\end{cases}\]
\end{proposition}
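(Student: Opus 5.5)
Set $b_k=1-a_k$ and substitute $x_i=1-1/a_{\widetilde{\mu}_i}$. The basic identity is
\[
x\oplus b=1-(1-x)(1-b),
\]
so that
\[
x_i\oplus b_k=1-\frac{a_k}{a_{\widetilde{\mu}_i}}, \qquad 1-x_i=\frac{1}{a_{\widetilde{\mu}_i}} .
\]
Hence the $(i,j)$ entry of the numerator matrix in \eqref{def_gro_pol} becomes
\[
M_{ij}=a_{\widetilde{\mu}_i}^{-(j-1)}\prod_{k=1}^{\widetilde{\lambda}_{d+1-j}-1}\Bigl(1-\frac{a_k}{a_{\widetilde{\mu}_i}}\Bigr),
\]
where I have used $\lambda_j+d-j=\widetilde{\lambda}_{d+1-j}-1$. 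This vanishes precisely when $\widetilde{\mu}_i\le \widetilde{\lambda}_{d+1-j}-1$, i.e. $\widetilde{\mu}_i<\widetilde{\lambda}_{d+1-j}$. Since the $\widetilde{\mu}_i$ are distinct, the denominator $\prod_{i<j}(x_i-x_j)$ is a nonzero product of differences $a_{\widetilde{\mu}_j}^{-1}-a_{\widetilde{\mu}_i}^{-1}$. So it suffices to evaluate the determinant directly, or, more safely, to evaluate the polynomial as a limit; the specialization is legitimate because the quotient is a genuine polynomial.

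For the vanishing: reindex the columns by $j'=d+1-j$, so column $j'$ corresponds to $\widetilde{\lambda}_{j'}$. Suppose $\lambda\npreceq\mu$, so there is an $r$ with $\widetilde{\lambda}_r>\widetilde{\mu}_r$. Then for all $i\le r$ and all $j'\ge r$ we get $\widetilde{\mu}_i\le\widetilde{\mu}_r<\widetilde{\lambda}_r\le\widetilde{\lambda}_{j'}$, so $M=0$ on the block with rows $1,\dots,r$ and columns $j'=r,\dots,d$. That zero block has size $r\times(d-r+1)$, and $r+(d-r+1)=d+1>d$, so by the standard Frobenius–König argument every term of the Leibniz expansion contains a zero factor and the determinant vanishes.

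For $\lambda=\mu$ the same reasoning shows $M_{ij'}=0$ whenever $i<j'$. The matrix is therefore triangular in the $j'$ ordering, and the determinant equals $\pm\prod_i M_{i,i}$, with the sign coming from the column reversal. Each diagonal factor is
\[
a_{\widetilde{\lambda}_i}^{-(d-i)}\prod_{k<\widetilde{\lambda}_i}\Bigl(1-\frac{a_k}{a_{\widetilde{\lambda}_i}}\Bigr).
\]
The factors with $k=\widetilde{\lambda}_m$ for $m<i$ should cancel against the Vandermonde denominator: writing
\[
x_m-x_i=\frac{1}{a_{\widetilde{\lambda}_i}}-\frac{1}{a_{\widetilde{\lambda}_m}}=\frac{1}{a_{\widetilde{\lambda}_i}}\Bigl(1-\frac{a_{\widetilde{\lambda}_i}}{a_{\widetilde{\lambda}_m}}\Bigr)
\]
and pairing it against $1-a_{\widetilde{\lambda}_m}/a_{\widetilde{\lambda}_i}=-\frac{a_{\widetilde{\lambda}_m}}{a_{\widetilde{\lambda}_i}}\bigl(1-\frac{a_{\widetilde{\lambda}_i}}{a_{\widetilde{\lambda}_m}}\bigr)$ leaves monomials. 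These monomials should exactly absorb the powers $a_{\widetilde{\lambda}_i}^{-(d-i)}$ together with the sign from the column reversal. What remains is
\[
\prod_i\prod_{k<\widetilde{\lambda}_i,\ k\notin\lambda}\Bigl(1-\frac{a_k}{a_{\widetilde{\lambda}_i}}\Bigr),
\]
which is exactly the product over the reversals $(\widetilde{\lambda}_i,k)$, i.e. over $R(\lambda)$ with $a_\nu/a_\lambda=a_k/a_{\widetilde{\lambda}_i}$.

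The main obstacle is the bookkeeping of signs and monomial powers in this cancellation. I would first check it on $d=1$ and $d=2$, and then write the general count: for each pair $m<i$, one factor $a_{\widetilde{\lambda}_m}/a_{\widetilde{\lambda}_i}$ and one sign against one $a^{-1}$ from the column factor. An alternative route that sidesteps this is to cite \cite[Proposition 2.2]{ikNa} after matching conventions.
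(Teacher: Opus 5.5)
Your argument is correct. The paper gives no proof of its own here; it only cites \cite[Proposition 2.2]{ikNa}. Your direct bialternant argument therefore makes the statement self-contained: a zero block of size $r\times(d-r+1)$ gives vanishing when $\lambda\npreceq\mu$, and triangularity after column reversal handles $\lambda=\mu$. It also lands directly in the paper's normalization (substitution $x_i=1-1/a_{\widetilde\mu_i}$, value written as a product over $R(\lambda)$), so no matching of conventions with the cited source is needed.

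The bookkeeping step you flagged does close. For $m<i$ one has
\[
1-\frac{a_{\widetilde\lambda_m}}{a_{\widetilde\lambda_i}}=-a_{\widetilde\lambda_m}(x_m-x_i).
\]
Hence the diagonal factors with $k\in\lambda$ multiply to
\[
(-1)^{\binom d2}\prod_m a_{\widetilde\lambda_m}^{d-m}\prod_{m<i}(x_m-x_i).
\]
This cancels exactly against the column-reversal sign $(-1)^{\binom d2}$ and the diagonal powers $\prod_m a_{\widetilde\lambda_m}^{-(d-m)}$, which leaves the product over reversals.

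No limiting argument is needed either. Apply the substitution homomorphism to the polynomial identity $\det=G_\lambda\cdot\prod_{i<j}(x_i-x_j)$, then divide by the image of the Vandermonde. That image is nonzero in the domain $\ZZ[a_1^{\pm1},a_2^{\pm1},\dots]$ because the $\widetilde\mu_i$ are distinct.

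The same reasoning covers the version of $\Psi_\mu$ used in Theorem \ref{thm_sur_hom}, which also sets $a_i=1$ for $i>n$. For $\mu\in\mathcal{P}(d,n)$ all $\widetilde\mu_i\le n$, so the Vandermonde image stays nonzero, and the diagonal value involves only $a_k$ with $k\le n$.
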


Consider a $\ZZ[a_1^{\pm1},a_2^{\pm1},\dots]$-algebra structure on $K_{T^n}(\G(d,n))$ through the map $$\ZZ[a_1^{\pm1},a_2^{\pm1},\dots] \to  \ZZ[a_1^{\pm1},\dots,a_n^{\pm1}],\text{ sending } a_i\to 1 \text{ for } i>n.$$ Now we consider two $\ZZ[a_1^{\pm1},a_2^{\pm1},\dots]$-algebras. The first is $\ZZ[a_1^{\pm1},a_2^{\pm1},\dots][x_1,\dots,x_d]^{S_d}$, which has the basis $\{G_{\lambda}(x|1-a):\lambda \in \mathcal{P}_d\}$. The second is $K_{T^n}(\G(d,n))$ with the Schubert basis $\{S_\lambda:\lambda \in \mathcal{P}(d,n)\}$. Also, $\mathcal{P}(d,n)\subset \mathcal{P}_d$.  We have the following algebra homomorphism:

\begin{theorem}\label{thm_sur_hom}
There exists a surjective homomorphism of $\ZZ[a_1^{\pm1},a_2^{\pm1},\dots]$-algebra
\[\Psi\colon\ZZ[a_1^{\pm1},a_2^{\pm1},\dots][x_1,\dots,x_d]^{S_d}\to K_{T^n}(\G(d,n))\]
which sends $G_{\lambda}(x|1-a)$ to the Schubert class $S_{\lambda}$ if $\lambda\in \mathcal{P}(d,n)$ and 0 otherwise.
 \end{theorem}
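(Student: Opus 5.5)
Define $\Psi$ as the algebraic localization map: for $f\in \ZZ[a_1^{\pm1},a_2^{\pm1},\dots][x_1,\dots,x_d]^{S_d}$, set
$$\Psi(f):=\big(\Psi_\mu(f)\big)_{\mu\in \mathcal{P}(d,n)}\in \bigoplus_{\mu\in I(d,n)}\ZZ[a_1^{\pm1},\dots,a_n^{\pm1}],$$
where $\Psi_\mu$ substitutes $x_i=1-\frac{1}{a_{\widetilde{\mu}_i}}$ and then sets $a_j=1$ for $j>n$. Each $\Psi_\mu$ is a ring homomorphism; since $f$ is symmetric in the $x_i$, the value does not depend on how the $\widetilde{\mu}_i$ are ordered. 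Hence $\Psi$ is a homomorphism of $\ZZ[a_1^{\pm1},a_2^{\pm1},\dots]$-algebras into the direct sum.

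The first step is to show that the image lies in $K_{T^n}(\G(d,n))$, i.e. that it satisfies the GKM conditions. Let $\mu=(i,j)\lambda$. Then $\Psi_\mu(f)$ and $\Psi_\lambda(f)$ are obtained from the same symmetric polynomial by substitutions that differ in exactly one variable: $1-\frac1{a_i}$ versus $1-\frac1{a_j}$. Therefore $\Psi_\mu(f)-\Psi_\lambda(f)$ is divisible by $\frac1{a_j}-\frac1{a_i}$, which is a unit multiple of $1-\frac{a_i}{a_j}=1-\frac{a_\lambda}{a_\mu}$ up to swapping roles. This uses only the fact that $g(u)-g(v)$ is divisible by $u-v$ in a polynomial ring.

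The second step is to identify $\Psi(G_\lambda(x|1-a))$ with $S_\lambda$ for $\lambda\in\mathcal{P}(d,n)$. By uniqueness of the family in Definition \ref{def:sch bas}, it suffices to check two things.
\begin{enumerate}
\item $\Psi(G_\lambda)|_{(0)}=\delta_{\lambda,(0)}$. This follows from Proposition \ref{prop_van_pro}: $\lambda\npreceq(0)$ unless $\lambda=(0)$, and $G_{(0)}=1$.
\item $\Psi$ intertwines the two divided difference operators $\pi_i$ for $1\le i\le n-1$. We verify $\Psi_\mu(s_if)=s_i(\Psi_{s_i\mu}(f))$, which holds because substituting $a_{\widetilde{\mu}}$ and then swapping $a_i,a_{i+1}$ is the same as substituting $a_{s_i\mu}$. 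Together with $\Psi_\mu(e_i(b))=e_i(a)$, this gives $\Psi\circ\pi_i=\pi_i\circ\Psi$.
\end{enumerate}
Proposition \ref{prop_div_dif_ope} then shows that $\{\Psi(G_\lambda)\}$ satisfies the defining relations, so $\Psi(G_\lambda)=S_\lambda$.

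To make the uniqueness argument explicit, we use Proposition \ref{prop_mu_comp}: its recurrence, together with the value at $(0)$, determines every coordinate inductively on $|\mu|$. Alternatively, one can compare the upper-triangular vanishing properties in Propositions \ref{prop_van_pro} and \ref{prop_van_pro_S}.

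For $\lambda\notin\mathcal{P}(d,n)$, i.e. $\lambda_1>n-d$, we must show $\Psi(G_\lambda)=0$. Since $\lambda$ does not fit in the rectangle, $\lambda\npreceq\mu$ for every $\mu\in\mathcal{P}(d,n)$, and the vanishing property of Proposition \ref{prop_van_pro} gives $\Psi_\mu(G_\lambda)=0$ for all such $\mu$. We must check that Proposition \ref{prop_van_pro} applies with the infinite parameter sequence before setting $a_j=1$ for $j>n$. It does, because $\Psi_\mu$ only involves $a_1,\dots,a_n$ when $\mu\in\mathcal{P}(d,n)$.

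Surjectivity then follows from Proposition \ref{prop_HHH}, since the image contains every $S_\lambda$ and these form an $R(T^n)$-basis.

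The main obstacle is the divided-difference compatibility, specifically the bookkeeping of the $s_i$-action through the substitution and the specialization $a_j\mapsto 1$ for $j>n$ when $i=n$. This issue is avoided because only $i\le n-1$ is needed.
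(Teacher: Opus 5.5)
Your proposal is correct and follows the paper's proof step by step: the same localization map $\Psi=(\Psi_\mu)_{\mu}$, the same GKM check (your version, where only one substituted $x$-value changes and the coefficients stay fixed, is the precise form of the paper's argument), the intertwining $\Psi\circ\pi_i=\pi_i\circ\Psi$ for $1\le i\le n-1$ (the paper cites this from an existing reference; you verify it directly), uniqueness of the Schubert family via the recurrence of Proposition~\ref{prop_mu_comp}, and the vanishing property for $\lambda\notin\mathcal{P}(d,n)$. Drop your suggested alternative of comparing only the upper-triangular vanishing properties: in $K$-theory the support condition and the diagonal value determine a class only up to adding combinations of $S_\nu$ with $\nu\succ\lambda$ (for instance $S_\lambda+S_\nu$ has the same support and diagonal value as $S_\lambda$), so only the recurrence argument gives uniqueness.
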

\begin{proof}
For every $\mu\in \mathcal{P}(d,n)$, define a $\ZZ[a_1^{\pm1},a_2^{\pm1},\dots]$-algebra homomorphism $$\Psi_{\mu}: \ZZ[a_1^{\pm1},a_2^{\pm 1},\dots][x_1,\dots,x_d]^{S_d}\to \ZZ[a_1^{\pm 1},\dots,a_n^{\pm1}]$$ by $\Psi_{\mu}(x_i)=1-\frac{1}{a_{\widetilde{\mu}_i}}$, $\Psi_{\mu}(a_i)=a_i$ for $1\leq i\leq n$, and  $\Psi_{\mu}(a_i)=1$ for $i> n$. 

This map is well defined by the vanishing property of $G_\lambda(x|1-a)$, as in Proposition \ref{prop_van_pro}. More explicitly, let $F= \sum_{\lambda\in \mathcal{P}_d}k_\lambda G_\lambda(x|1-a)$ be an arbitrary element in $\ZZ[a_1^{\pm1},a_2^{\pm1},\dots][x_1,\dots,x_d]^{S_d}$. For each $\mu$, the value $\Psi_\mu(G_\lambda(x|1-a))$ is non-zero for only finitely many $\lambda$ such that $\lambda\preceq \mu$. Therefore, $\Psi_\mu(F)$ is well defined element in $\ZZ[a_1^{\pm 1},\dots,a_n^{\pm1}]$.
Define the algebraic localization map
$$\Psi\colon \ZZ[{a}^{\pm1}_1,{a}^{\pm1}_2,\dots][x_1,\dots,x_d]^{S_d}\to \bigoplus_{\mu\in \mathcal{P}(d,n)} \ZZ[{a}_1^{\pm1}, \dots, {a}_n^{\pm1}]$$
by $(\Psi(f))_\mu=\Psi_\mu(f)$
for $\mu\in\mathcal{P}(d,n)$. 

We claim that ${\rm{Image}}(\Psi)\subset K_{T^n}(\G(d,n))$. Let $F= \sum_{\lambda\in \mathcal{P}_d}k_\lambda G_\lambda(x|1-a)$ be an arbitrary element in $\ZZ[a_1^{\pm1},a_2^{\pm1},\dots][x_1,\dots,x_d]^{S_d}$. Let $\eta$ and $\mu$ be two Schubert symbol such that $\eta=(i,j)\mu$. Since $F$ is symmetric, we focus on only two variables $a_i$ and $a_j$. By construction, substituting $a_i$ by $a_j$ in $\Psi_\mu(F)$ yields $\Psi_\eta(F)$. Thus the difference $\Psi_\eta(F)-\Psi_\mu(F)$ is divisible by $(1-\frac{a_i}{a_j})=(1-\frac{a_\mu}{a_\eta})$. Therefore $\Psi(F)\in K_{T^n}(\G(d,n))$. 

 Moreover, $\Psi\circ\pi_i(f)=\pi_i\circ\Psi(f)$ for every $1\leq i\leq n-1$ using the same argument as in \cite[Proposition 7.4]{IMN}. Now using Proposition \ref{prop_div_dif_ope}, Definition \ref{def:sch bas} and the uniqueness of Schubert classes implies that $\Psi(G_{\lambda}(x|1-a))$ coincide with the Schubert class $S_\lambda$ if $\lambda\in \mathcal{P}(d,n)$. If $\lambda\in \mathcal{P}_d\setminus\mathcal{P}(d,n)$ and $\mu\in \mathcal{P}(d,n)$ then $\lambda\npreceq \mu$ and $\Psi_\mu(G_\lambda(x|1-a))=0$ follows from the Proposition \ref{prop_van_pro}. Thus $\Psi(G_\lambda(x|1-a))=0$ if $\lambda\in \mathcal{P}_d\setminus\mathcal{P}(d,n)$.
\end{proof}

\begin{corollary}\label{cor_alg_mor_gsm} 
There exist algebra homomorphisms 
$$\Psi_{\mu}: \ZZ[{a}^{\pm1}_1,{a}^{\pm1}_2,\dots][x_1,\dots,x_d]^{S_d}\to \ZZ[{{a}}_1^{\pm1}, \dots, { a}_n^{\pm1}]$$  such that $\Psi_{\mu}(G_{\lambda}(x|1-a))=S_{\lambda}|_{\mu}$ for every $\lambda,\mu \in \mathcal{P}(d,n)$.
%In other words, the restriction of the Schubert class $S_\lambda\in K_{T^n}(\G(d,n))$ in the torus fixed points can be explicitly computed as the images of the factorial Grothendieck polynomial.
\end{corollary}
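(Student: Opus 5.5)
The maps are the ones already built in the proof of Theorem \ref{thm_sur_hom}: for each $\mu\in\mathcal{P}(d,n)$ take $\Psi_\mu$, the $\ZZ[a_1^{\pm1},a_2^{\pm1},\dots]$-algebra homomorphism
\[
\Psi_\mu(x_i)=1-\frac{1}{a_{\widetilde{\mu}_i}},\qquad \Psi_\mu(a_j)=a_j \ (j\le n),\qquad \Psi_\mu(a_j)=1 \ (j>n).
\]
It remains to check that $\Psi_\mu$ is an algebra homomorphism into $\ZZ[a_1^{\pm1},\dots,a_n^{\pm1}]$, and that $\Psi_\mu(G_\lambda(x|1-a))=S_\lambda|_\mu$ for $\lambda,\mu\in\mathcal{P}(d,n)$.

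For the homomorphism property, $\Psi_\mu$ is just evaluation of a polynomial in $x_1,\dots,x_d$, with Laurent coefficients in the $a_j$, at specific points. Concretely, it is the composite of the coefficient specialization $a_j\mapsto 1$ for $j>n$, which is a ring map on Laurent polynomials, with substitution of the $x_i$ by elements of $\ZZ[a_1^{\pm1},\dots,a_n^{\pm1}]$. Both steps are ring homomorphisms, so the composite is as well, and the result lies in $\ZZ[a_1^{\pm1},\dots,a_n^{\pm1}]$ because $1-a_{\widetilde{\mu}_i}^{-1}$ is a Laurent polynomial. No appeal to the basis expansion is needed. The finiteness remark in the proof of Theorem \ref{thm_sur_hom} is consistent with this but not required.

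For the identity, recall that $\Psi$ was defined coordinatewise by $(\Psi(f))_\mu=\Psi_\mu(f)$, so $\Psi(f)|_\mu=\Psi_\mu(f)$ by construction. Theorem \ref{thm_sur_hom} gives $\Psi(G_\lambda(x|1-a))=S_\lambda$ for $\lambda\in\mathcal{P}(d,n)$. Restricting to the $\mu$-coordinate then yields $\Psi_\mu(G_\lambda(x|1-a))=S_\lambda|_\mu$.

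There is no real obstacle, since the statement is a repackaging of the preceding proof. The only point deserving care is that the identification $\Psi(G_\lambda(x|1-a))=S_\lambda$ rests on two facts from that proof: the commutation $\Psi\circ\pi_i=\pi_i\circ\Psi$, and the uniqueness of the family of Schubert classes characterized in Definition \ref{def:sch bas}. Both are already established there, so I would cite them rather than reprove them.
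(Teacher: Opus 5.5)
Your proposal is correct and is exactly the paper's argument: you take the maps $\Psi_\mu$ constructed in the proof of Theorem~\ref{thm_sur_hom} and compare $\mu$-th components of $\Psi(G_\lambda(x|1-a))=S_\lambda$. Your remark that $\Psi_\mu$ is just a coefficient specialization followed by an evaluation, so well-definedness needs no basis-expansion argument, is a harmless and slightly cleaner addition.
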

\begin{proof}
If $\lambda\in \mathcal{P}(d,n)$ then using Theorem \ref{thm_sur_hom}, we have $\Psi(G_\lambda(x|1-a))=S_\lambda$.
Comparing $\mu$-th component both side $\Psi_{\mu}(G_{\lambda}(x|1-a))=S_{\lambda}|_{\mu}$ for every $\mu\in \mathcal{P}(d,n)$.
\end{proof}
\begin{example}\label{eg_sur_hom}
Using example \ref{eg_fac_gro}, we have $G_{(1)}(x|1-a)=1-(a_1\cdots a_d)\prod_{i=1}^d(1-x_i)$. For $\mu\in \mathcal{P}(d,n)$, if we replace $x_i$ by $1-\frac{1}{a_{\widetilde{\mu}_i}}$ in $G_{(1)}(x|1-a)$ then $$\Psi_\mu(G_{(1)}(x|1-a))=1-\frac{a_{(0)}}{a_\mu}=S_{(1)}|_\mu.$$     
\end{example}

% \begin{lemma}\label{lem_st_con}
%    The structure constant $\mathscr{C}_{\lambda \mu}^{\nu}$ has the following properties.
% 	\begin{enumerate}
% 		\item ${\mathscr{C}}_{\lambda \mu}^{\nu}=0$ unless  $\nu \succeq \lambda, \mu$.   
% 		\item If $\lambda=\nu$ we have ${\mathscr{C}}_{\lambda\mu}^{\lambda}={{S}_{\mu}}|_{\lambda}$.
% 	\end{enumerate}
% \end{lemma}

Next, we discuss the Chevalley rule in $K_{T^n}(\G(d,n))$. For every partition $\lambda\in \mathcal{P}(d,n)$, we associate its Young diagram by left aligning $d$ rows of boxes, where $i$-th row contains $\lambda_i$ boxes. For two partitions $\lambda,\mu \in \mathcal{P}(d,n)$ with $\lambda\preceq \mu$, we denote $\lambda\Rightarrow \mu$ if every box in the skew diagram $\mu\setminus\lambda$ lies in distinct row and distinct column. The notation $\lambda\Rightarrow \mu$ also allows the case $\mu=\lambda$. If we want to exclude the possibility that $\mu=\lambda$, then we write  $\lambda\Rightarrow^{*} \mu$.

\begin{proposition}[Chevalley rule]\label{thm_eq_ch}
  \begin{equation*}
    S_{\lambda}S_{(1)}=(1-\frac{{a}_{(0)}}{a_{\lambda}})S_\lambda+\frac{{a}_{(0)}}{a_{\lambda}}\sum_{\mu:\lambda\Rightarrow^{*} \mu}(-1)^{|\mu\setminus \lambda|{-1}}S_{\mu}.
\end{equation*}  
\end{proposition}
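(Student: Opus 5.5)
Since $\{S_\mu\}$ is an $R(T^n)$-basis (Proposition \ref{prop_HHH}) and elements of $K_{T^n}(\G(d,n))$ are determined by their localizations, I will verify the identity pointwise. By Lemma \ref{lem_div_case}, $S_{(1)}|_\nu = 1-\frac{a_{(0)}}{a_\nu}$. After subtracting $(1-\frac{a_{(0)}}{a_\lambda})S_\lambda$ from both sides, the left side at $\nu$ becomes $\frac{a_{(0)}}{a_\lambda}\bigl(1-\frac{a_\lambda}{a_\nu}\bigr)S_\lambda|_\nu$. The claim is therefore equivalent to
\begin{equation*}
\Bigl(1-\frac{a_\lambda}{a_\nu}\Bigr)S_\lambda|_\nu=\sum_{\mu:\lambda\Rightarrow^{*}\mu}(-1)^{|\mu\setminus\lambda|-1}S_\mu|_\nu \quad\text{for all }\nu\in\mathcal{P}(d,n).
\end{equation*}
The factor $a_{(0)}$ has disappeared, so this is a statement about the ordinary divisor $1-\frac{a_\lambda}{a_\nu}$ relative to $\lambda$.

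I would prove this identity on the polynomial side, using Theorem \ref{thm_sur_hom} and Corollary \ref{cor_alg_mor_gsm}. Writing $\Psi_\nu(x_i)=1-1/a_{\widetilde\nu_i}$, one has $\Psi_\nu(\xi(x))=1/a_\nu$. So $1-\frac{a_\lambda}{a_\nu}$ is $\Psi_\nu$ of $1-a_\lambda\,\xi(x)$, where $a_\lambda=\prod_j a_{\widetilde\lambda_j}$. It then suffices to show the polynomial identity
\begin{equation*}
(1-a_\lambda\xi(x))\,G_\lambda(x|1-a)=\sum_{\mu:\lambda\Rightarrow^{*}\mu,\ \mu\in\mathcal{P}_d}(-1)^{|\mu\setminus\lambda|-1}G_\mu(x|1-a),
\end{equation*}
and apply $\Psi$. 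Terms with $\mu\notin\mathcal{P}(d,n)$ are killed by Theorem \ref{thm_sur_hom}, and this matches exactly the range of summation in the statement.

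To prove the polynomial identity, multiply the determinant in \eqref{def_gro_pol} by $\xi(x)=\prod_i(1-x_i)$, distributing one factor $(1-x_i)$ into row $i$. The key identity is that with $b_k=1-a_k$, $x\oplus b_k=1-(1-x)a_k$. Hence
\begin{equation*}
a_k(1-x)=1-(x\oplus b_k), \qquad a_k(1-x)[x|b]^{k-1}=[x|b]^{k-1}-[x|b]^{k}.
\end{equation*}
Column $j$ of the determinant has entries $[x_i|b]^{\widetilde\lambda_{d+1-j}-1}(1-x_i)^{j-1}$. Using multilinearity, I would expand $a_\lambda\xi(x)\det(\cdots)$ as a product over columns of the operators $1-\text{shift}_j$, where $\text{shift}_j$ raises $\widetilde\lambda_{d+1-j}$ by one. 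This requires first rewriting the $(1-x_i)$ factor as a single column factor, for example via the alternant form $\det\bigl([x_i|b]^{\widetilde\lambda}(1-x_i)^{\cdot}\bigr)$. Expanding gives $\sum_{J}(-1)^{|J|}G_{\lambda+e_J}$, where $J$ ranges over subsets of rows. A determinant vanishes whenever two shifted indices coincide, which happens precisely when adding a box in row $r$ without adding one in row $r-1$ produces a repeat. The surviving $J$ are exactly the rook strips $\lambda\Rightarrow\mu$, so $a_\lambda\xi G_\lambda=\sum_{\lambda\Rightarrow\mu}(-1)^{|\mu\setminus\lambda|}G_\mu$. Moving the $\mu=\lambda$ term across gives the identity.

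The main obstacle is the bookkeeping of the extra factors $(1-x_i)^{j-1}$ in \eqref{def_gro_pol}. The shift relation must be applied consistently so that each column contributes exactly one factor $a_{\widetilde\lambda_{d+1-j}}(1-x_i)$. I would handle this by first checking the case $d=1$ and then arguing column by column.

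If this becomes messy, the fallback is to prove the localized identity directly by induction. Both sides satisfy the same $\pi_i$-recurrences (Proposition \ref{prop_mu_comp}) and agree at $\nu=(0)$, and uniqueness as in Definition \ref{def:sch bas} then finishes the argument.
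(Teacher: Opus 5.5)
Your reduction is sound and follows the paper's route. The paper also rests everything on the identity $a_\lambda\,\xi(x)G_\lambda(x|1-a)=\sum_{\lambda\Rightarrow\mu}(-1)^{|\mu\setminus\lambda|}G_\mu(x|1-a)$, which it quotes from \cite[Proposition 4.8]{mc}. It then rewrites $\xi(x)=(1-G_{(1)}(x|1-a))/a_{(0)}$ and applies $\Psi$.

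So the only issue is your self-contained derivation of that identity, and there the key step fails. The multilinear expansion itself is fine. Multiply row $i$ by $(1-x_i)$ and use $a_{m+1}(1-x)[x|b]^m=[x|b]^m-[x|b]^{m+1}$ in each column. With $m_j=\lambda_j+d-j$ and $\prod_j a_{m_j+1}=a_\lambda$ this gives
\[
a_\lambda\,\xi(x)\det\bigl([x_i|b]^{m_j}(1-x_i)^{j-1}\bigr)=\sum_{J\subseteq\{1,\dots,d\}}(-1)^{|J|}D(J),
\]
where $D(J)$ is the same determinant with $m_j$ replaced by $m_j+1$ for $j\in J$. The factors $(1-x_i)^{j-1}$ are untouched, so they are not the real obstacle.

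The real problem is your claim that $D(J)$ vanishes when two exponents coincide. Suppose $r\in J$, $r-1\notin J$ and $\lambda_{r-1}=\lambda_r$. Then the two columns are $[x_i|b]^M(1-x_i)^{r-2}$ and $[x_i|b]^M(1-x_i)^{r-1}$, which are different. For $d=2$, $\lambda=(0,0)$, $J=\{2\}$ one gets $D(J)/(x_1-x_2)=(x_1\oplus b_1)(x_2\oplus b_1)\neq 0$. Even if such terms did vanish, the survivors would be all vertical strips. These include $J\supseteq\{r-1,r\}$ with $\lambda_{r-1}=\lambda_r$, i.e.\ two new boxes in one column; e.g.\ $G_{(1,1)}$ for $\lambda=(0,0)$ is nonzero and not a rook strip. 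So ``the surviving $J$ are exactly the rook strips'' does not follow from your argument.

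The missing idea is a straightening relation plus a sign-reversing involution. Since $[x|b]^{M+1}=[x|b]^M\bigl(1-a_{M+1}(1-x)\bigr)$, the column operation $\mathrm{col}_{r-1}\mapsto\mathrm{col}_{r-1}-a_{M+1}\,\mathrm{col}_r$ shows $D(J)=D(J\cup\{r-1\})$ whenever $r\in J$, $r-1\notin J$, $\lambda_{r-1}=\lambda_r$.

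Now take any $J$ containing some $r\ge 2$ with $\lambda_{r-1}=\lambda_r$. Choose the largest such $r$ and toggle $r-1$ in $J$. This is a fixed-point-free involution pairing equal determinants with opposite signs. The $J$ containing no such $r$ are precisely the rook strips $\lambda\Rightarrow\mu$. In the example, $D(\{2\})=D(\{1,2\})$, so after dividing by $x_1-x_2$ only $1-G_{(1,0)}=a_1a_2\,\xi(x)$ survives. With this, or by citing McNamara as the paper does, your proof is complete.

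Your fallback is not a substitute, for three reasons:
\begin{itemize}
\item $\pi_i$ is not $R(T^n)$-linear, so it does not pass through the factor $1-a_\lambda/a_\nu$.
\item The set of rook strips changes under $s_i$.
\item The uniqueness in Definition \ref{def:sch bas} characterizes the family $\{S_\lambda\}$, not an arbitrary identity among classes.
\end{itemize}
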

\begin{proof}
     Using \cite[Proposition 4.8]{mc}, we have 
     \begin{equation}\label{eq_che_rul}
         \xi(x)G_{\lambda}(x|1-{{a}})=\frac{1}{{a}_{\lambda}}\sum_{\mu:\lambda \Rightarrow\mu}(-1)^{|\mu\setminus \lambda|}G_{\mu}(x|1-{{a}}).
     \end{equation}
Now $G_{(1)}(x|1-a)=1-a_{(0)}\xi(x)$. This implies $\xi(x)=\frac{(1-G_{(1)}(x|1-a))}{a_{(0)}}.$
Thus \begin{align*}
  &(1-G_{(1)}(x|1-a))G_{\lambda}(x|1-{{a}})=\frac{a_{(0)}}{{a}_{\lambda}}\sum_{\mu:\lambda \Rightarrow\mu}(-1)^{|\mu\setminus \lambda|}G_{\mu}(x|1-{{a}}). \text{ Therefore,} \\
 &G_{(1)}(x|1-a)G_{\lambda}(x|1-{{a}})=(1-\frac{a_{(0)}}{{a}_{\lambda}})G_{\lambda}(x|1-{{a}})+\frac{{a}_{(0)}}{a_{\lambda}}\sum_{\mu:\lambda\Rightarrow^{*} \mu}(-1)^{|\mu\setminus \lambda|{-1}}G_{\mu}(x|1-{{a}}).
\end{align*}
Now the proof follows using the Theorem \ref{thm_sur_hom}.
\end{proof}

Let $\varepsilon: \ZZ[a_1^{\pm1},\dots,a_n^{\pm1}]\to \ZZ$ be the ring homomorphism given by $\varepsilon(a_i)=1$. Equivalently $\varepsilon(1-a_i)=0$. The ordinary $K$-theory $K(\mathrm{Gr}(d,n))$ is an algebra over $\mathbb{Z}$, and can be recovered from the equivariant theory by base change:
$$K(\mathrm{Gr}(d,n)) \cong K_T(\mathrm{Gr}(d,n)) \otimes_{R(T)} \mathbb{Z}.$$ The map $\varepsilon$ corresponds to trivializing the torus action.
Thus, we have the forgetful map 
$$ K_T(\mathrm{Gr}(d,n)) \longrightarrow K(\mathrm{Gr}(d,n)).$$
It takes a $T$-equivariant vector bundle and simply forgets the torus action, yielding the underlying bundle in the ordinary category. Let $\mathbb{S}_\lambda$ be the image of $S_\lambda$ through the forgetful map.
\begin{proposition}
       There exists a surjective homomorphism of $\ZZ$-algebra 
    \[ \ZZ[x_1,x_2,\dots,x_d]^{S_d}\to K(\G(d,n))\] which send $G_{\lambda}(x_1,\dots,x_d)$ to $\mathbb{S}_\lambda$ if $\lambda\in \mathcal{P}(d,n)$ and 0 otherwise. 
\end{proposition}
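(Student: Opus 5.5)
The plan is to obtain the ordinary statement from Theorem \ref{thm_sur_hom} by base change along $\varepsilon$. Extend $\varepsilon$ to a ring map $\widetilde{\varepsilon}\colon \ZZ[a_1^{\pm1},a_2^{\pm1},\dots]\to\ZZ$ by setting $a_i\mapsto 1$ for every $i$. Applying $\widetilde{\varepsilon}$ coefficientwise gives a surjective $\ZZ$-algebra map
$$\widetilde{\varepsilon}_*\colon \ZZ[a_1^{\pm1},a_2^{\pm1},\dots][x_1,\dots,x_d]^{S_d}\to \ZZ[x_1,\dots,x_d]^{S_d}.$$
Since $1-a_i\mapsto 0$, this map sends $G_\lambda(x|1-a)$ to $G_\lambda(x|0)=G_\lambda(x_1,\dots,x_d)$; this is immediate from the determinantal formula \eqref{def_gro_pol}, because $[x|0]^k=x^k$.

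\textbf{Step 1: a ring map to ordinary $K$-theory.} Let $\Phi$ be the composite of $\Psi$ from Theorem \ref{thm_sur_hom} with the forgetful map $K_{T^n}(\G(d,n))\to K(\G(d,n))$. The forgetful map is the base change $-\otimes_{R(T^n)}\ZZ$ along $\varepsilon$, so it is a ring homomorphism satisfying $r\cdot y\mapsto \varepsilon(r)\,\overline{y}$. Because $\Psi$ is $\ZZ[a^{\pm1}]$-linear, we get $\Phi(r f)=\widetilde{\varepsilon}(r)\,\Phi(f)$ for every scalar $r$.

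\textbf{Step 2: $\Phi$ factors through $\widetilde{\varepsilon}_*$.} I would show that $\ker\widetilde{\varepsilon}_*\subseteq\ker\Phi$. Using the basis of Proposition \ref{cor_Gro_pol}, write $F=\sum_\lambda k_\lambda G_\lambda(x|1-a)$ with $k_\lambda\in\ZZ[a^{\pm1}]$. Then
$$\widetilde{\varepsilon}_*F=\sum_\lambda\widetilde{\varepsilon}(k_\lambda)\,G_\lambda(x).$$
Since the $G_\lambda(x)$ form a $\ZZ$-basis of $\ZZ[x_1,\dots,x_d]^{S_d}$, the equation $\widetilde{\varepsilon}_*F=0$ forces $\widetilde{\varepsilon}(k_\lambda)=0$ for all $\lambda$. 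Hence
$$\Phi(F)=\sum_\lambda\widetilde{\varepsilon}(k_\lambda)\,\Phi(G_\lambda(x|1-a))=0.$$
Therefore $\Phi$ descends to a $\ZZ$-algebra homomorphism $\overline{\Phi}\colon\ZZ[x_1,\dots,x_d]^{S_d}\to K(\G(d,n))$.

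\textbf{Step 3: values on the basis and surjectivity.} By construction,
$$\overline{\Phi}(G_\lambda(x))=\Phi(G_\lambda(x|1-a)),$$
which is the image $\mathbb{S}_\lambda$ of $S_\lambda$ when $\lambda\in\mathcal{P}(d,n)$, and is $0$ otherwise, by Theorem \ref{thm_sur_hom}. For surjectivity, Proposition \ref{prop_HHH} gives that the $S_\lambda$ form an $R(T^n)$-basis of $K_{T^n}(\G(d,n))$. The isomorphism $K(\G(d,n))\cong K_{T^n}(\G(d,n))\otimes_{R(T^n)}\ZZ$ then shows that the $\mathbb{S}_\lambda$ span $K(\G(d,n))$ over $\ZZ$, and each $\mathbb{S}_\lambda$ lies in the image of $\overline{\Phi}$.

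\textbf{Main obstacle.} The only delicate point is Step 2, namely that $\Phi$ descends. It depends on the compatibility of the forgetful map with scalars, which is exactly the statement that it is base change along $\varepsilon$. The paper assumes this identification, and I would take it as given.
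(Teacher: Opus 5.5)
Your proposal is correct and is essentially the paper's argument. The paper gives no separate proof of this proposition, but (as it does explicitly for Theorem~\ref{thm_alg_hom_ord_case}) it obtains the statement by specializing Theorem~\ref{thm_sur_hom} along $\varepsilon$ via the base-change identification $K(\G(d,n))\cong K_{T^n}(\G(d,n))\otimes_{R(T^n)}\ZZ$. Your Steps 2 and 3 make explicit what the paper leaves implicit: the descent through $a_i\mapsto 1$, which uses that the $G_\lambda(x)$ form a $\ZZ$-basis, and surjectivity from the Schubert basis.
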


The Chevalley rule in $K(\G(d,n))$ follows as a consequence of Proposition \ref{thm_eq_ch}.
\begin{corollary}[Chevalley rule]\cite[Theorem 3.1]{Len}
  \begin{equation*}
\mathbb{S}_{\lambda}\mathbb{S}_{(1)}=\sum_{\mu:\lambda\Rightarrow^{*} \mu}(-1)^{|\mu\setminus \lambda|{-1}}\mathbb{S}_{\mu}.
\end{equation*}      
\end{corollary}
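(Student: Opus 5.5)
The plan is to apply the forgetful map $\varepsilon$ to the equivariant Chevalley rule of Proposition \ref{thm_eq_ch}. The forgetful map $K_{T^n}(\G(d,n))\to K(\G(d,n))\cong K_{T^n}(\G(d,n))\otimes_{R(T^n)}\ZZ$ is a ring homomorphism. It is also semilinear with respect to $\varepsilon\colon R(T^n)\to\ZZ$, meaning $f\cdot x\mapsto \varepsilon(f)\,\overline{x}$ for $f\in R(T^n)$. By definition it sends $S_\lambda\mapsto \mathbb{S}_\lambda$. So the first step is simply to apply this map to both sides of the identity in Proposition \ref{thm_eq_ch}:
\begin{equation*}
\mathbb{S}_\lambda\mathbb{S}_{(1)}=\varepsilon\Big(1-\frac{a_{(0)}}{a_\lambda}\Big)\mathbb{S}_\lambda+\varepsilon\Big(\frac{a_{(0)}}{a_\lambda}\Big)\sum_{\mu:\lambda\Rightarrow^{*}\mu}(-1)^{|\mu\setminus\lambda|-1}\mathbb{S}_\mu .
\end{equation*}

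The second step is to evaluate the coefficients. Since $\varepsilon(a_i)=1$ for all $i$, and $a_{(0)}/a_\lambda$ is a Laurent monomial in the $a_i$, we get $\varepsilon(a_{(0)}/a_\lambda)=1$. Hence $\varepsilon(1-a_{(0)}/a_\lambda)=0$. The diagonal term therefore vanishes, and the remaining sum gives exactly the stated formula.

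There is no serious obstacle; the only points needing care are the properties of the forgetful map. First, it must be multiplicative, which holds because it is the base change $-\otimes_{R(T^n)}\ZZ$ along a ring map. Second, it must be compatible with the $R(T^n)$-module structure through $\varepsilon$. As a cross-check, one can instead start from the identity \eqref{eq_che_rul} and set $a_i=1$, so that $G_\lambda(x|1-a)$ becomes $G_\lambda(x)$, and then push forward along the surjection $\ZZ[x_1,\dots,x_d]^{S_d}\to K(\G(d,n))$ of the preceding proposition. This uses that $G_{(1)}(x)=1-\xi(x)$ and that $G_\mu$ maps to $0$ for $\mu\notin\mathcal{P}(d,n)$, so terms outside the rectangle drop out automatically.
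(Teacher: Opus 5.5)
Your proposal is correct and follows the paper's own route: the paper derives this corollary directly from the equivariant Chevalley rule of Proposition \ref{thm_eq_ch} by passing through the forgetful map, where $\varepsilon(a_{(0)}/a_\lambda)=1$ kills the diagonal coefficient and leaves exactly the stated sum. Your cross-check via \eqref{eq_che_rul} at $a_i=1$ is a harmless equivalent reformulation of the same argument.
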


\section{Schubert classes in the equivariant $K$-theory of {\Pl} coordinates}\label{sec_sc_cal_pldn}
In this section, we recall {\Pl} coordinates from Section \ref{subsec_pl_wgt_vec}, and describe a presentation of the $T^{n+1}$-equivariant $K$-theory of the {\Pl} coordinates. We discuss an isomorphism between $K_{T^{n+1}}(Pl(d,n))$ and $K_{T^n}(\G(d,n))$ and describe a combinatorial description of Schubert classes in $K_{T^{n+1}}(Pl(d,n))$.

 We recall the coordinate vector $f_{\lambda}$ in $\Lambda^d(\CC^n)$ described in Section \ref{subsec_pl_wgt_vec}. Define $$F(\lambda):=\pi^{-1}(f_\lambda)=\CC^*{\cdot}f_{\lambda}\subset Pl(d,n).$$
 
 The $T^{n+1}$ action on $Pl(d,n)$ defined in \eqref{eq_c_action}. This restricts to an action of $T^{n+1}$ in $F(\lambda)$ given by $$(t_1,t_2,\dots,t_n,t){\cdot}z=t{\cdot}t_{\lambda}{\cdot}z.$$
Therefore, $\widetilde{T}_\lambda:=\{(t_1,t_2,\dots,t_n,t)\in T^{n+1}:t{\cdot}t_{\lambda}=1\}$ acts trivially on $F(\lambda)$. We denote $R(T^{n+1})=\ZZ[a_1^{\pm1}, \dots, a_n^{\pm1}, u^{\pm1}]$. Then $$K_{T^{n+1}}( F(\lambda))=K_{\widetilde{T}_\lambda}(\{pt\})=R(\widetilde{T}_\lambda)=\frac{\ZZ[a_1^{\pm1},\dots, a_n^{\pm1}, u^{\pm1}]}{(1-ua_{\lambda})}.$$

We define $\ell(\lambda):=d(n-d)-|\lambda|$.
\begin{proposition}\label{thm:q-cw_srtucture}
 There exist a $T^{n+1}$-invariant stratification 
$\{X_{\lambda}\}_{\lambda\in \mathcal{P}(d,n)}$ such that for all $\lambda\in \mathcal{P}(d,n)$, the quotient $X_\lambda/\cup_{\mu\succ\lambda}X_\mu$
is homeomorphic to the Thom space $Th(\xi^{\lambda})$
of an  $T^{n+1}$-vector bundle 
\begin{equation}\label{eq_orb_vector_bdl_over_pt}
\xi^{\lambda} \colon E(\lambda)\times F(\lambda) \to F(\lambda).
\end{equation} 
\end{proposition}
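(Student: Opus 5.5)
The plan is to pull back the Schubert cell stratification of $\G(d,n)$ along the quotient map $\pi\colon Pl(d,n)\to\G(d,n)$. Set
\[
X_\lambda:=\pi^{-1}(Y_\lambda)=\pi^{-1}\Big(\bigsqcup_{\mu\succeq\lambda}E(\mu)\Big)\subseteq Pl(d,n).
\]
Since $\pi$ is equivariant for the $T^{n+1}$-action of \eqref{eq_c_action} (the extra coordinate $t$ acts by scalars and fixes points of $\G(d,n)$), and each $Y_\lambda$ is $T^n$-invariant, every $X_\lambda$ is $T^{n+1}$-invariant. Moreover $X_\lambda\setminus\bigcup_{\mu\succ\lambda}X_\mu=\pi^{-1}(E(\lambda))$, so the strata are indexed by $\mathcal{P}(d,n)$ as required.

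The key step is to trivialize $\pi^{-1}(E(\lambda))$ as a product $E(\lambda)\times F(\lambda)$, $T^{n+1}$-equivariantly. Every point of $E(\lambda)$ has a unique representative $u_1\wedge\dots\wedge u_d$ in the normal form \eqref{eq_open_cell}. Its $f_\lambda$-Plücker coordinate equals $1$, and its $f_\mu$-coordinates vanish for $\mu\nsucceq\lambda$. Hence any $z\in\pi^{-1}(E(\lambda))$ can be written uniquely as $z=s\cdot(u_1\wedge\dots\wedge u_d)$ with $s=z_\lambda\in\CC^*$, giving
\[
\pi^{-1}(E(\lambda))\to E(\lambda)\times F(\lambda),\qquad z\mapsto\big([z],\,z_\lambda f_\lambda\big).
\]
This is a homeomorphism whose inverse is $([u],sf_\lambda)\mapsto s\,(u_1\wedge\dots\wedge u_d)$. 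We regard $E(\lambda)\cong\CC^{\ell(\lambda)}$ (affine coordinates $u_{ij}$, with $\ell(\lambda)=d(n-d)-|\lambda|$ free entries) as the fibre, and $F(\lambda)$ as the base.

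For equivariance, $(t_1,\dots,t_n,t)$ scales $z_\lambda$ by $t\,t_\lambda$, which is exactly the given action on $F(\lambda)$. On the fibre we use the induced linear action: renormalizing the pivot entries, $u_{ij}\mapsto t_j t_{\tilde\lambda_i}^{-1}u_{ij}$. This action is linear on $\CC^{\ell(\lambda)}$, so $\xi^\lambda$ is a $T^{n+1}$-vector bundle, in fact a trivial bundle over $F(\lambda)\cong\CC^*$ with a linear torus action.

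It remains to identify $X_\lambda/\bigcup_{\mu\succ\lambda}X_\mu$ with the Thom space $Th(\xi^\lambda)$, and I expect this to be the main obstacle. Since $Pl(d,n)$ and $F(\lambda)$ are non-compact, the Thom space has to be read as the one-point compactification of each fibre, i.e.\ $D(\xi^\lambda)/S(\xi^\lambda)$, or equivalently fibrewise compactification with the section at infinity collapsed. The quotient by the closed subset $\bigcup_{\mu\succ\lambda}X_\mu$ is not compact either, so the argument must be done fibrewise over $F(\lambda)$.

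I would first restrict to the unit circle $|z_\lambda|=1$, using a $\CC^*=\mathbb{R}_{>0}\times S^1$ retraction that is equivariant for the compact torus. There I would import the classical fact that the closure of $E(\lambda)$ in $\G(d,n)$ is attached along its boundary $Y_\lambda\setminus E(\lambda)=\bigcup_{\mu\succ\lambda}Y_\mu$, so that $Y_\lambda/\bigcup_{\mu\succ\lambda}Y_\mu\cong S^{2\ell(\lambda)}=Th(E(\lambda)\to pt)$. Pulling this back fibrewise through the product decomposition above yields the required homeomorphism with $Th(\xi^\lambda)$. Finally I would check that the identification is compatible with the $T^{n+1}$-action, which follows from the formulas in the previous step.
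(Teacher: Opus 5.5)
\textbf{Verdict.} Your outline matches the paper's proof, but the mechanism you propose for the Thom-space step fails; the step is true and needs a different base coordinate. Your stratification $X_\lambda=\pi^{-1}(Y_\lambda)$ and the trivialization $\pi^{-1}(E(\lambda))\cong E(\lambda)\times F(\lambda)$ are exactly what the paper does. After noting that homeomorphism, the paper simply asserts that $X_\lambda/\bigcup_{\mu\succ\lambda}X_\mu$ is the Thom space, so you are right that this is where the content lies.

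\textbf{Why the fibrewise identification fails.} Under $z\mapsto([z],z_\lambda f_\lambda)$ the fibre over $sf_\lambda$ is $\{z\in X_\lambda: z_\lambda=s\}$. Since $X_\lambda\cap\{z_\lambda\neq 0\}=\pi^{-1}(E(\lambda))$, this set is closed in $X_\lambda$. Going to infinity inside a fibre means $\|z\|\to\infty$. Points approaching $\bigcup_{\mu\succ\lambda}X_\mu=X_\lambda\cap\{z_\lambda=0\}$ instead have $z_\lambda\to 0$, i.e.\ they run off the end of the base $F(\lambda)$.
\begin{itemize}
\item The set $\{z\in X_\lambda:|z_\lambda|<\epsilon\}$ is open, contains $\bigcup_{\mu\succ\lambda}X_\mu$, and misses every fibre over $|s|\geq\epsilon$. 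In $Th(\xi^\lambda)$, by contrast, every neighbourhood of the base point meets every fibre.
\item Already for the top cell of $\G(1,2)$ the quotient is $(\CC^2\setminus 0)/(\{0\}\times\CC^*)$ with $z=(s,su)$. The points $(u,s)=(k,1)$ converge to the base point of the Thom space but diverge in the quotient.
\end{itemize}
So the identity on $E(\lambda)\times F(\lambda)$ does not extend to a homeomorphism, and ``pulling back $Y_\lambda/\bigcup_{\mu\succ\lambda}Y_\mu\cong S^{2\ell(\lambda)}$ fibrewise'' is not available. Restricting to the circle $|z_\lambda|=1$ has the same defect: that slice is separated from the collapsed set by $\{|z_\lambda|<1/2\}$. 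In any case a retraction gives only a homotopy equivalence, not the homeomorphism claimed.

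\textbf{A repair.} Change the base coordinate. Use the standard Hermitian norm on $\Lambda^d(\CC^n)$ (with the $f_\lambda$ orthonormal), which is invariant under the compact torus, and put $w(z)=\|z\|\,z_\lambda/|z_\lambda|$.
\begin{itemize}
\item The map $z\mapsto([z],w(z)f_\lambda)$ is again a homeomorphism $\pi^{-1}(E(\lambda))\to E(\lambda)\times F(\lambda)$. Its inverse is $([u],w)\mapsto w\,\hat u/\|\hat u\|$, where $\hat u=u_1\wedge\dots\wedge u_d$ is in normal form.
\item It is equivariant for the compact torus with the same product action, since the norm is invariant and the phase is multiplied by $tt_\lambda$.
\item Write $X_\lambda\cong S_\lambda\times\mathbb{R}_{>0}$, where $S_\lambda=X_\lambda\cap\{\|z\|=1\}$ is compact and $S_\lambda\cap\{z_\lambda\neq 0\}\cong E(\lambda)\times S^1$. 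By compactness, the traces of neighbourhoods of $S_\lambda\cap\{z_\lambda=0\}$ are exactly the complements of compact subsets of $E(\lambda)\times S^1$.
\item A tube-lemma argument in the $\mathbb{R}_{>0}$-direction then shows the neighbourhoods of the collapsed point are cofinally $\{|u|>R(\|z\|)\}$, with $R$ continuous and $|u|$ the norm in the affine coordinates of $E(\lambda)\cong\CC^{\ell(\lambda)}$. This system is cofinal with the Thom-space system $\{|u|>R(w)\}$, which yields the equivariant homeomorphism.
\end{itemize}

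\textbf{Fibre weights.} Your weights $t_j/t_{\tilde\lambda_i}$ differ from the weights $tt_\mu$ in Remark \ref{rem_eq_strati}, but this is harmless. In the $z_\lambda$-trivialization, multiplying $u_{ij}$ by $s=z_\lambda$ gives the Pl\"{u}cker coordinate $\pm z_\mu$, and this is an equivariant isomorphism between the two bundle structures.
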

\begin{proof}
The CW complex structure of $\G(d,n)$ described in Section \ref{subsec_q_cell_stu}, induces a $T^{n+1}$-invariant stratification $\{X_{\lambda}\}_{\lambda\in \mathcal{P}(d,n)}$ on $Pl(d,n)$ where, $$X_\lambda:=\cup_{\mu\succeq\lambda}\pi^{-1}(E(\mu))
%=\cup_{\mu\succeq\lambda}(E(\mu)\times\CC^*) .
$$   
Note that $\pi^{-1}(E(\mu))$ is homeomorphic to $E(\mu)\times F(\mu)$. Thus  $X_\lambda/\cup_{\mu\succ\lambda}X_\mu$ is the Thom space of the $T^{n+1}$-vector bundle $$E(\lambda)\times F(\lambda) \to F(\lambda).$$ 
This completes the proof.
%for all $\lambda\in \mathcal{P}(d,n)$.
%It remains to note that $E(\lambda)$ is $T^{n+1}$-equivariantly homeomorphic to $\CC^{\ell(\lambda)}$. 
\end{proof}

 A pair $(s,s')$ is called an inversion of $\lambda\in I(d,n)$ if $s\in \lambda, s'\notin \lambda$ and $s<s'$. The set of all inversions of $\lambda$ is denoted by $\rm{inv}(\lambda)$.
Now corresponding to $\text{inv}(\lambda)$, one can define a subset of Schubert symbols as follows 
\begin{equation}\label{eq_rev_lbd}
	I(\lambda):=\{\mu~|~\mu=(s,s')\lambda \text{ for } (s,s')\in \text{inv}(\lambda)\}.
\end{equation}
 Then the cardinality of the set $I(\lambda)$ is $\ell(\lambda)$ for every $\lambda\in \mathcal{P}(d,n)$. %where $\ell(\lambda^i)$ is the length of $\lambda^i$.
Note that the bundle in \eqref{eq_orb_vector_bdl_over_pt} is also an $T^{n+1}$-vector bundle.

%It seems better to give a notation for $F(\lambda){\cdot}f_\lambda$

\begin{proposition}\label{prop:cond_hhh}
The $T^{n+1}$-bundle in \eqref{eq_orb_vector_bdl_over_pt} has a decomposition $$\xi^{\lambda} \colon E(\lambda)\times F(\lambda) \to F(\lambda) \cong \bigoplus_{\mu: \mu\in I(\lambda)}(\xi^{\lambda\mu} \colon E_{\lambda\mu}\times F(\lambda)\to F(\lambda) ).$$ 
\end{proposition}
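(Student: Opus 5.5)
The decomposition comes from putting the Schubert cell $E(\lambda)$ into explicit affine coordinates, checking that the coordinate axes are $T^{n+1}$-stable lines, and matching these lines with the set $I(\lambda)$.

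\emph{Coordinates on the cell.} By \eqref{eq_open_cell}, a point of $E(\lambda)$ is the span of row vectors $u_1,\dots,u_d$ in echelon form, with $u_{i\widetilde{\lambda}_i}=1$. The free entries are exactly the $u_{ij}$ with $j>\widetilde{\lambda}_i$ and $j\notin\lambda$. These pairs are in bijection with $\mathrm{inv}(\lambda)$ via $(i,j)\mapsto(\widetilde{\lambda}_i,j)$, and hence with $I(\lambda)$ via $\mu=(\widetilde{\lambda}_i,j)\lambda$. This gives an isomorphism $E(\lambda)\cong\CC^{\ell(\lambda)}=\bigoplus_{\mu\in I(\lambda)}\CC$. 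Accordingly, set $E_{\lambda\mu}$ to be the coordinate line spanned by the entry $u_{ij}$.

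\emph{The torus action on the coordinates.} Identify $\pi^{-1}(E(\lambda))\cong E(\lambda)\times F(\lambda)$ by sending $(u,z f_\lambda)$ to $z\,u_1\wedge\dots\wedge u_d$. An element $(t_1,\dots,t_n,t)$ sends this to $t\,(t\cdot u_1)\wedge\dots\wedge(t\cdot u_d)$, where $t\cdot u_i$ has entries $t_j u_{ij}$. To return to echelon form we rescale the $i$-th row by $t_{\widetilde{\lambda}_i}^{-1}$. The compensating scalar $t\,t_\lambda$ is absorbed into the fibre coordinate $z$, which is exactly the action on $F(\lambda)$. After this normalization the entry $u_{ij}$ is multiplied by $t_j/t_{\widetilde{\lambda}_i}$. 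So the action is diagonal in the coordinates, and each $E_{\lambda\mu}$ is a $T^{n+1}$-stable line. Since $a_\mu/a_\lambda=a_j/a_{\widetilde{\lambda}_i}$, the line $E_{\lambda\mu}$ carries the character $a_\mu/a_\lambda$, or its inverse depending on the convention for $R(T^{n+1})$.

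\emph{Splitting the bundle.} Because the base $F(\lambda)$ acts on the fibre only through the product structure, the trivial bundle $E(\lambda)\times F(\lambda)\to F(\lambda)$ splits equivariantly as the direct sum of the line bundles $E_{\lambda\mu}\times F(\lambda)\to F(\lambda)$ for $\mu\in I(\lambda)$. This is the claimed decomposition. The count of summands agrees with $|I(\lambda)|=\ell(\lambda)$.

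\emph{Main obstacle.} The step needing care is the second one: showing that the action in the $(E(\lambda),F(\lambda))$ coordinates is really linear and diagonal, rather than mixing the coordinates. One must check that renormalizing the pivot entries does not alter the zero pattern of \eqref{eq_open_cell}. It does not, because the renormalization only rescales rows, and rescaling rows never creates a nonzero entry where there was a zero.
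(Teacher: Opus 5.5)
Your proof is correct, and it takes a more explicit route than the paper.

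The paper argues abstractly. Since $T^{n+1}$ is abelian, its action on $E(\lambda)\cong\CC^{\ell(\lambda)}$ splits into one-dimensional irreducible summands, and these are labelled by $I(\lambda)$ only because $|I(\lambda)|=\ell(\lambda)$. That argument silently assumes the action on the cell is linear in some coordinates, and it never says which character belongs to which $\mu$.

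Your echelon-coordinate computation fills exactly these points. The action is $(u,z)\mapsto(u',t\,t_\lambda z)$ with $u'_{ij}=(t_j/t_{\widetilde\lambda_i})u_{ij}$. So it is a product action that is diagonal on the cell, and the line spanned by $u_{ij}$ is canonically attached to $\mu=(\widetilde\lambda_i,j)\lambda\in I(\lambda)$.

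The weights still need reconciling with the paper. You get the character $a_\mu/a_\lambda$, whereas Remark~\ref{rem_eq_strati}(2) uses $ua_\mu$, the character $t\,t_\mu$. These differ by the base character $t\,t_\lambda$, so they define isomorphic equivariant line bundles over the orbit $F(\lambda)$ via $(w,z)\mapsto(zw,z)$. Equivalently, they agree in $R(\widetilde{T}_\lambda)$, where $ua_\lambda=1$. The paper's convention amounts to using the Pl\"ucker coordinate $\zeta_\mu=\pm z\,u_{ij}$ as the fibre coordinate instead of $u_{ij}$. Your hedge "or its inverse" is unnecessary: with the paper's convention $t\,t_\mu\leftrightarrow ua_\mu$, your weight is exactly $a_\mu/a_\lambda$.

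So your argument proves the decomposition and also justifies the Euler classes $1-ua_\mu$ that the paper feeds into Theorem~\ref{thm_eq_K_thm_pldn}. The paper's shorter argument does neither of these.
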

\begin{proof}
	Observe that $X_\lambda\setminus\cup_{\mu\succ\lambda}X_\mu=\pi^{-1}(E(\lambda))\cong E(\lambda)\times F(\lambda)$. Since $T^{n+1}$ is abelian, the $T^{n+1}$ action on $E(\lambda)\cong \CC^{\ell(\lambda)}$ determines the following decomposition
	$$E(\lambda)\cong \bigoplus_{\mu: \mu \in I(\lambda)}E_{\lambda\mu}$$ for some irreducible representation $E_{\lambda\mu}\cong \CC$ of $T^{n+1}$. Hence, the proof follows.	
\end{proof}

\begin{remark}\label{rem_eq_strati}
\begin{enumerate}
\item The attaching map $\eta_{\lambda} \colon S(\xi^\lambda) \to \cup_{\mu\succ\lambda}X_\mu$ for the vector bundle in \eqref{eq_orb_vector_bdl_over_pt} satisfies $\eta_\lambda|_{S(\xi^{\lambda\mu})}=h_{\lambda\mu}\circ\xi^{\lambda\mu}$ where $h_{\lambda\mu}:F(\lambda)\to F(\mu)$ is given by $h_{\lambda\mu}(t{\cdot}f_{\lambda})=(t{\cdot}f_{\mu})$. The induced map in the equivariant $K$-theory $$h_{\lambda\mu}^*\colon K_{T^{n+1}}(F(\mu))\to K_{T^{n+1}}(F(\lambda))$$ is 
%$h_{\lambda\mu}^*\colon \frac{\ZZ[a_1^{\pm},a_2^{\pm}, \dots, a_n^{\pm}, u^{\pm}]}{(1-u{\cdot}a_{\mu})}\to \frac{\ZZ[a_1^{\pm},a_2^{\pm}, \dots, a_n^{\pm},u^{\pm}]}{(1-u{\cdot}a_{\lambda})}$
defined by $$h_{\lambda\mu}^*(x+(1-ua_{\mu}))=x+(1-ua_{\lambda}).$$

\item The $T^{n+1}$-action on $Pl(d,n)$ restricts to a $T^{n+1}$-action on $E_{\lambda\mu}\times F(\lambda)$ by $$(t_1,t_2,\dots,t_n,t)(z_1,z_2)=(t{\cdot}t_\mu z_1, t{\cdot}t_\lambda z_2).$$ Therefore, the equivariant Euler classes $\{e_{T^n}(\xi^{\lambda\mu})~|~\mu\in I(\lambda)\}$ of the bundles $\xi^{\lambda\mu}$ are given by $1-ua_{\mu}$. They are pairwise prime by \cite[Lemma 5.2]{HHH}.
\end{enumerate}
\end{remark}

\begin{theorem}\label{thm_eq_K_thm_pldn}
    The $T^{n+1}$-equivariant $K$-theory ring of $Pl(d,n)$ is given by  $$K_{T^{n+1}}(Pl(d,n))=\Big\{f\in \bigoplus_{\lambda\in \mathcal{P}(d,n)} R(\widetilde{T}_\lambda)~~|~~ f|_\lambda-h_{\lambda\mu}^*(f|_\mu) \text{ is divisible by } 1-ua_{\mu} \text{ if } \mu=I(\lambda)\Big\}.$$
\end{theorem}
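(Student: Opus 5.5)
The plan is to apply the GKM-type localization theorem of Harada--Henriques--Holm \cite{HHH} for equivariant $K$-theory of spaces with an equivariant stratification whose strata are Thom spaces of equivariant vector bundles. The hypotheses are exactly what Proposition \ref{thm:q-cw_srtucture}, Proposition \ref{prop:cond_hhh} and Remark \ref{rem_eq_strati} were set up to check. The conclusion is that the restriction map to the ``fixed strata'' $F(\lambda)$,
$$\iota^*\colon K_{T^{n+1}}(Pl(d,n))\to \bigoplus_{\lambda\in\mathcal{P}(d,n)} K_{T^{n+1}}(F(\lambda))=\bigoplus_{\lambda} R(\widetilde T_\lambda),$$
is injective, and that its image is cut out by the stated divisibility conditions along the one-dimensional pieces $E_{\lambda\mu}$.

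\textbf{Step 1: verify the hypotheses of \cite[Theorem 3.1/Prop.~4.1]{HHH}.}
\begin{itemize}
\item[(i)] $\{X_\lambda\}$ is a finite $T^{n+1}$-invariant filtration with successive quotients the Thom spaces $Th(\xi^\lambda)$ (Proposition \ref{thm:q-cw_srtucture}).
\item[(ii)] Each $K_{T^{n+1}}(F(\lambda))=R(\widetilde T_\lambda)$ is concentrated in even degree. Since $F(\lambda)\simeq T^{n+1}/\widetilde T_\lambda$ equivariantly, $K^1_{T^{n+1}}(F(\lambda))=K^1_{\widetilde T_\lambda}(pt)=0$.
\item[(iii)] The bundles split as sums of line bundles $\xi^{\lambda\mu}$, $\mu\in I(\lambda)$ (Proposition \ref{prop:cond_hhh}).
\item[(iv)] The $K$-theoretic Euler classes $1-ua_\mu$ are non-zero-divisors in $R(\widetilde T_\lambda)$ and pairwise relatively prime (Remark \ref{rem_eq_strati}(2)).
\end{itemize}
From (ii) and the Thom isomorphism, the long exact sequences of the pairs $(X_\lambda,\cup_{\mu\succ\lambda}X_\mu)$ split into short exact sequences. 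Induction along the order $\preceq$ then shows $K^1$ vanishes, $K^0$ is free over the point pieces, and $\iota^*$ is injective, because each Euler class $\prod_{\mu\in I(\lambda)}(1-ua_\mu)$ is a non-zero-divisor.

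\textbf{Step 2: identify the image.} This uses the attaching data in Remark \ref{rem_eq_strati}(1). Each piece $E_{\lambda\mu}\times F(\lambda)$ closes up in $Pl(d,n)$ to a $T^{n+1}$-invariant subspace joining $F(\lambda)$ to $F(\mu)$, glued via $h_{\lambda\mu}$. Its equivariant $K$-theory imposes that $f|_\lambda-h^*_{\lambda\mu}(f|_\mu)$ is divisible by $e(\xi^{\lambda\mu})=1-ua_\mu$, which gives the inclusion ``$\subseteq$''.

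For ``$\supseteq$'', follow the HHH inductive argument. Take $f$ satisfying all the conditions and let $\lambda$ be minimal with $f|_\lambda\neq 0$. Then $f|_\lambda$ is divisible by every $1-ua_\mu$, $\mu\in I(\lambda)$. By pairwise primeness it is divisible by their product, which is the Euler class of $\xi^\lambda$. Hence it lies in the image of the Thom class pushforward from the stratum $X_\lambda$, producing a genuine class whose restriction agrees with $f$ at $\lambda$. Subtract that class and induct.

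\textbf{Main obstacle.} I expect the main obstacle to be the pairwise-primeness and divisibility bookkeeping in the quotient rings $R(\widetilde T_\lambda)=\ZZ[a^{\pm1},u^{\pm1}]/(1-ua_\lambda)$. In this ring, $1-ua_\mu$ becomes $1-a_\mu/a_\lambda$, which is a binomial $1-a_{s'}/a_s$ for $\mu=(s,s')\lambda$. Distinct $\mu\in I(\lambda)$ give distinct transpositions $(s,s')$, so these elements are non-associate primes in a UFD (a Laurent polynomial ring after eliminating $u$). I would cite \cite[Lemma 5.2]{HHH} for this and check it directly via the transposition description.

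Finally, transporting along $h^*_{\lambda\mu}$ (which simply changes the modulus) also requires care: one must confirm that the divisibility condition is well defined modulo $1-ua_\lambda$.
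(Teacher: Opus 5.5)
Your overall route is the paper's. Its proof is just the citation of \cite[Theorem 3.1]{HHH}, with Propositions~\ref{thm:q-cw_srtucture}, \ref{prop:cond_hhh} and Remark~\ref{rem_eq_strati} supplying exactly the hypotheses you check in Step~1, so invoking that theorem already finishes the proof.

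However, your unpacking of the ``$\supseteq$'' direction runs the induction the wrong way, and the step fails as written.

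\textbf{Divisibility.} Every $\mu\in I(\lambda)$ satisfies $\mu\succ\lambda$, since an inversion replaces $s$ by a larger $s'$. So taking $\lambda$ \emph{minimal} with $f|_\lambda\neq 0$ tells you nothing about $f|_\mu$ for $\mu\in I(\lambda)$, and the claimed divisibility need not hold. For example, take $f=1$: the minimal such $\lambda$ is $(0)$, and $f|_{(0)}=1$ is divisible by no $1-ua_\mu$.

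\textbf{Correction class.} The element coming from the Thom class of $\xi^\lambda$ vanishes on $\cup_{\mu\succ\lambda}X_\mu$, i.e.\ at the indices \emph{above} $\lambda$. Its values at $\nu\not\succeq\lambda$, after extending from $X_\lambda$ to $Pl(d,n)$, are uncontrolled. Subtracting it would therefore destroy the zeros below $\lambda$ that your induction relies on.

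\textbf{Repair.} Follow the attaching order instead. Refine $\{X_\lambda\}$ to a linear filtration in which larger $\lambda$ come first (closed strata first). Then take $\lambda$ to be the earliest index with $f|_\lambda\neq 0$.
\begin{itemize}
\item Then $f|_\mu=0$ for all $\mu\in I(\lambda)$, so the edge conditions give $1-ua_\mu\mid f|_\lambda$ for each such $\mu$.
\item Pairwise primeness then gives $e(\xi^\lambda)=\prod_{\mu\in I(\lambda)}(1-ua_\mu)\mid f|_\lambda$.
\item The extended Thom-class element vanishes at all earlier indices, so subtracting it preserves the zeros already obtained and the induction closes.
\end{itemize}
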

\begin{proof}
    This follows from \cite[Theorem 3.1]{HHH}.
\end{proof}
\begin{theorem}\label{thm_iso_alg}
    The projection map $\pi \colon Pl(d,n)\to \G(d,n) $ induces the isomorphism $$\pi^*\colon K_{T^n}(\G(d,n)) \to K_{T^{n+1}}(Pl(d,n))$$
    as a $R(T^n)$-algebra, where the map $R(T^n)$ to $R(T^{n+1})$ is given by the inclusion.
\end{theorem}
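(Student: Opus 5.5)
The approach is through GKM-type descriptions on both sides: Proposition (the GKM presentation of $K_{T^n}(\G(d,n))$) and Theorem \ref{thm_eq_K_thm_pldn}. Concretely, I would identify each $R(\widetilde{T}_\lambda)=\ZZ[a_1^{\pm1},\dots,a_n^{\pm1},u^{\pm1}]/(1-ua_\lambda)$ with $\ZZ[a_1^{\pm1},\dots,a_n^{\pm1}]$ by sending $u\mapsto a_\lambda^{-1}$. This is an isomorphism of $R(T^n)$-algebras, since $u$ is a unit and the quotient simply eliminates $u$. The pullback $\pi^*$ restricted to fixed components is the map $K_{T^n}(\{f_\lambda\})=R(T^n)\to K_{T^{n+1}}(F(\lambda))=R(\widetilde T_\lambda)$ induced by $F(\lambda)\to\{[f_\lambda]\}$, which is equivariant along $T^{n+1}\to T^n$. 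It is therefore the inclusion $R(T^n)\to R(T^{n+1})$ followed by the quotient, which under the identification is the identity on $\ZZ[a^{\pm1}]$. Naturality of restriction to the strata $F(\lambda)\subset \pi^{-1}([f_\lambda])$ gives a commutative square, so $\pi^*$ becomes the map $f\mapsto (f|_\lambda)_\lambda$ between the two localization rings.

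Next, I would check that the divisibility conditions match under the identification. For $\mu=(s,s')\lambda$, the condition in Theorem \ref{thm_eq_K_thm_pldn} asks that $f|_\lambda-h^*_{\lambda\mu}(f|_\mu)$ be divisible by $1-ua_\mu$ in $R(\widetilde T_\lambda)$. Substituting $u=a_\lambda^{-1}$, this becomes $f|_\lambda-f|_\mu$ divisible by $1-\frac{a_\mu}{a_\lambda}$, which is a unit multiple of $1-\frac{a_\lambda}{a_\mu}$. One point needs care: $h^*_{\lambda\mu}$ is written as "the identity on representatives". Under the identifications, an element $g(a)\in R(\widetilde T_\mu)\cong\ZZ[a^{\pm1}]$ is represented by $g(a)$ with no $u$, so $h^*$ sends it to $g(a)$. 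The edge pairs also coincide: $I(\lambda)$ (inversions) together with the condition imposed from the other endpoint covers exactly all transpositions $\mu=(i,j)\lambda$, matching the GKM description of $\G(d,n)$. Hence the two subrings of $\bigoplus_\lambda\ZZ[a^{\pm1}]$ coincide.

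It remains to justify that the fixed-point restriction maps are injective, so that $\pi^*$ really is given by the componentwise map above. I would invoke Theorem \ref{thm_eq_K_thm_pldn} and the GKM proposition, which present both rings as such subrings. Compatibility of $\pi^*$ with those embeddings follows from the equivariant map of stratifications in Proposition \ref{thm:q-cw_srtucture}: $\pi$ sends the stratum $X_\lambda$ onto $Y_\lambda$ and $F(\lambda)$ to $[f_\lambda]$. Multiplicativity and $R(T^n)$-linearity are automatic for a pullback.

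I expect the main obstacle to be bookkeeping rather than conceptual. First, one must confirm that the identification $u\mapsto a_\lambda^{-1}$ turns $h^*_{\lambda\mu}$ into the identity and not into a twist by $a_\lambda/a_\mu$. This requires that elements of $K_{T^n}(\G)$ pulled back contain no $u$, which holds since they come from $R(T^n)$. Second, one must confirm that the edge sets $I(\lambda)$ and transpositions match with the correct divisibility direction.
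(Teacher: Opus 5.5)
Your argument is correct, but it takes a different route from the paper. The paper's proof is one line. The last circle factor of $T^{n+1}$ acts freely on $Pl(d,n)$, the quotient is (homotopy equivalent to) $\G(d,n)$, and $T^{n+1}/S^1\cong T^n$. Segal's free-quotient isomorphism $K_G(X)\cong K_{G/H}(X/H)$ then yields $\pi^*$ directly, with no localization at all.

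You instead identify both rings with the same GKM subring of $\bigoplus_\lambda \ZZ[a_1^{\pm1},\dots,a_n^{\pm1}]$ via $u\mapsto a_\lambda^{-1}$. The restriction maps $\iota_{\G}$ and $\iota_{P}$ are both injective, and you use the commutative square $\iota_{P}\circ\pi^*=\theta\circ\iota_{\G}$ to conclude that $\pi^*$ is bijective. Your checks are right: each transposition pair occurs exactly once as $\mu\in I(\lambda)$, and $1-a_\mu/a_\lambda$ is a unit multiple of $1-a_\lambda/a_\mu$.

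What the paper's route buys is independence. It never uses Theorem \ref{thm_eq_K_thm_pldn}, whose justification is the more delicate input: it applies HHH to the noncompact $Pl(d,n)$, whose minimal strata are orbits $F(\lambda)\cong\CC^*$ rather than points. In fact, Segal plus the GKM description of $\G(d,n)$ would recover Theorem \ref{thm_eq_K_thm_pldn}. Your argument runs the logic in the opposite direction, so it is only as solid as that theorem, including both the injectivity of restriction and the description of its image.

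What your route buys is an explicit formula. You show $\pi^*$ is the identity componentwise after $u\mapsto a_\lambda^{-1}$. This is exactly what the paper uses silently afterwards, in Lemma \ref{lem_pslam}, Remark \ref{rem_pol_2}, and the computation of $pS_{(1)}$.

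On the bookkeeping worry you flagged: $h^*_{\lambda\mu}$ as written is only well defined modulo $1-ua_\mu$ in $R(\widetilde T_\lambda)$. Changing a representative by a multiple of $1-ua_\mu$ changes its image by a multiple of $1-ua_\mu$. Hence the divisibility condition does not depend on the representative, and no genuine twist by $a_\lambda/a_\mu$ can arise. Choosing the $u$-free representative is legitimate for that reason, not merely by convention.
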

 \begin{proof}
The circle subgroup $\{(0,\dots,0,t):t\in S^1\}$ of $T^{n+1}$ acts freely on $Pl(d,n)$. Then, we have the above isomorphism using \cite[Proposition 2.1]{Segal}.  
 \end{proof}

Define 
\begin{equation}\label{eq_pl_sc_bas}
    pS_{\lambda}:=\pi^*(S_{\lambda}).
\end{equation}

\begin{lemma}\label{lem_pslam}
For every $\lambda\in \mathcal{P}(d,n)$, the element $pS_{\lambda} \in K_{T^{n+1}}(Pl(d,n))$ satisfies the following conditions:
\begin{enumerate}
    \item $pS_{\lambda}|_\mu=0\text{ if}~  \mu\nsucceq\lambda$\\
    \item $pS_{\lambda}|_\lambda=\prod_{\nu \in R(\lambda)}(1-{u{ a}_{\nu}})$.   
\end{enumerate}
\end{lemma}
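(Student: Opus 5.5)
The plan is to describe $\pi^*$ explicitly in terms of fixed-point restrictions, and then to transport Proposition \ref{prop_van_pro_S} across it.

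First, I will identify the restriction of $\pi^*(x)$ to the orbit $F(\mu)$ for $x\in K_{T^n}(\G(d,n))$. The map $\pi$ sends $F(\mu)=\CC^*\cdot f_\mu$ onto the fixed point $[f_\mu]$. The restriction of $\pi^*(x)$ to $F(\mu)$ is therefore the pullback of $x|_\mu\in R(T^n)$ along $F(\mu)\to\{pt\}$. This pullback is the composite
\[
R(T^n)\hookrightarrow R(T^{n+1})\to R(\widetilde{T}_\mu)=\ZZ[a_1^{\pm1},\dots,a_n^{\pm1},u^{\pm1}]/(1-ua_\mu).
\]
It follows that $pS_\lambda|_\mu$ is the class of $S_\lambda|_\mu$ modulo $(1-ua_\mu)$. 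This is the naturality of restriction to fixed points under the commuting square formed by the inclusions $F(\mu)\subset Pl(d,n)$ and $\{[f_\mu]\}\subset\G(d,n)$.

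Part (1) is then immediate. If $\mu\nsucceq\lambda$, Proposition \ref{prop_van_pro_S}(1) gives $S_\lambda|_\mu=0$, so its image in $R(\widetilde{T}_\mu)$ vanishes.

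For part (2), Proposition \ref{prop_van_pro_S}(2) gives
\[
S_\lambda|_\lambda=\prod_{\nu\in R(\lambda)}\Big(1-\frac{a_\nu}{a_\lambda}\Big).
\]
In $R(\widetilde{T}_\lambda)$ the relation $ua_\lambda=1$ gives $a_\lambda^{-1}=u$, so each factor becomes $1-ua_\nu$. This yields $pS_\lambda|_\lambda=\prod_{\nu\in R(\lambda)}(1-ua_\nu)$ as claimed.

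The main obstacle is the first step. I need to justify that the GKM-type description in Theorem \ref{thm_eq_K_thm_pldn} is compatible with $\pi^*$ in this way, i.e.\ that the component at $\lambda$ really is restriction to $F(\lambda)$. The identification $K_{T^{n+1}}(F(\lambda))=R(\widetilde{T}_\lambda)$ must also match the quotient by $(1-ua_\lambda)$. I would verify this by noting that the $T^{n+1}$-character of the fibre over $f_\lambda$ is $u\,a_\lambda$. Since $\widetilde{T}_\lambda$ is the kernel of that character, its representation ring is precisely the quotient above. The rest is pure substitution.
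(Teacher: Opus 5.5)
Your proposal is correct and follows the paper's own argument. The paper likewise reads $pS_\lambda|_\mu$ as the image of $S_\lambda|_\mu$ in $\ZZ[a_1^{\pm1},\dots,a_n^{\pm1},u^{\pm1}]/(1-ua_\mu)$, then applies Proposition \ref{prop_van_pro_S} and the substitution $a_\lambda^{-1}=u$; your justification of the compatibility of $\pi^*$ with restriction to the orbits $F(\mu)\cong T^{n+1}/\widetilde{T}_\mu$ fills in a step the paper leaves implicit.
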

\begin{proof}
The proof follows from the definition of $pS_{\lambda}$ as in \eqref{eq_pl_sc_bas} together with Proposition \ref{prop_van_pro_S}. 
    Note that $1-\frac{{ a}_{\nu}}{{a}_{\lambda}}=1-{u{a}_{\nu}}\in \frac{\ZZ[a_1^{\pm1}, \dots, a_n^{\pm1},u^{\pm1}]}{(1-ua_{\lambda})}$.
\end{proof}
%Recall $(0)$ denotes $(0,0,\dots,0)\in \mathcal{P}(d,n)$ and $(1)$ denotes $(1,0,\dots,0)\in \mathcal{P}(d,n)$.

\begin{lemma}\label{lem_div class}
    $pS_{(1)}|_{\lambda}=1-ua_{{(0)}}$.
\end{lemma}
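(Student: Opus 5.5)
The claim follows directly from the definition $pS_{(1)}=\pi^*(S_{(1)})$ in \eqref{eq_pl_sc_bas}, together with Lemma \ref{lem_div_case}. The real work is to see how $\pi^*$ acts on localizations.

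\textbf{Step 1: how $\pi^*$ acts on localizations.} The isomorphism $\pi^*$ of Theorem \ref{thm_iso_alg} is compatible with restriction to the fibers $F(\lambda)=\pi^{-1}(f_\lambda)$. Restriction to $F(\lambda)$ is the composite
$$K_{T^n}(\G(d,n))\to K_{T^n}(\{f_\lambda\})=R(T^n)\to K_{T^{n+1}}(F(\lambda))=R(\widetilde{T}_\lambda)=\frac{\ZZ[a_1^{\pm1},\dots,a_n^{\pm1},u^{\pm1}]}{(1-ua_\lambda)}.$$
The second map is induced by the projection $T^{n+1}\to T^n$ restricted to $\widetilde T_\lambda$, so it is the inclusion of Laurent polynomials followed by the quotient. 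Hence for any $x$ we have $(\pi^*x)|_\lambda$ equal to the image of $x|_\lambda$ in $R(\widetilde T_\lambda)$.

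There is one subtlety: $x|_\lambda$ is a Laurent polynomial in $a_1,\dots,a_n$ with $a_\lambda^{-1}$ possibly appearing. In the quotient, $u=a_\lambda^{-1}$, because $ua_\lambda=1$. This is exactly the identification already used in the proof of Lemma \ref{lem_pslam}.

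\textbf{Step 2: apply this to $S_{(1)}$.} By Lemma \ref{lem_div_case}, $S_{(1)}|_\lambda=1-\frac{a_{(0)}}{a_\lambda}$. Passing to $R(\widetilde T_\lambda)$ and replacing $\frac{1}{a_\lambda}$ by $u$ gives
$$pS_{(1)}|_\lambda=1-ua_{(0)}.$$
This element is the same for every $\lambda$, as claimed.

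\textbf{Consistency check and main obstacle.} For $\lambda=(0)$ the formula gives $1-ua_{(0)}$, which is $0$ in $R(\widetilde T_{(0)})$, matching Lemma \ref{lem_pslam}(1). For $\lambda=(1)$ we have $R((1))=\{(0)\}$, and the formula agrees with Lemma \ref{lem_pslam}(2).

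The main obstacle is justifying Step 1 rigorously, i.e. that the GKM-type description of Theorem \ref{thm_eq_K_thm_pldn} is compatible with $\pi^*$ componentwise. I would argue this by naturality of restriction to the $T^{n+1}$-invariant orbit $F(\lambda)\subset Pl(d,n)$, which maps to the fixed point $f_\lambda\in\G(d,n)$. This yields a commutative square of pullbacks in which the right vertical map is the identification $R(T^n)\to R(\widetilde T_\lambda)$ described above.
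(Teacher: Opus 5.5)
Your proposal is correct and is essentially the paper's proof: the paper invokes Lemma \ref{lem_div_case} to get $S_{(1)}|_\lambda=1-\frac{a_{(0)}}{a_\lambda}$, then rewrites this as $1-ua_{(0)}$ in $\ZZ[a_1^{\pm1},\dots,a_n^{\pm1},u^{\pm1}]/(1-ua_\lambda)$. Your Step 1 uses naturality of restriction to the orbit $F(\lambda)$ to show that $\pi^*$ acts componentwise through this identification, which fills in a compatibility the paper leaves implicit.
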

\begin{proof}
 We have $S_{(1)}|_{\lambda}=1-\frac{a_{(0)}}{a_\lambda}$ by Lemma \ref{lem_div_case} and $1-\frac{a_{(0)}}{a_\lambda}=1-ua_{(0)}\in \frac{\ZZ[a_1^{\pm1}, \dots, a_n^{\pm1},u^{\pm1}]}{(1-ua_{\lambda})}.$
\end{proof}

\begin{remark}\label{rem_pol_2}
  There exist polynomials $f_{\lambda\mu}$ such that  $pS_\lambda|_\mu=f_{\lambda\mu}((ua_\nu)_{\nu\preceq\mu})$, for every $\lambda,\mu \in \mathcal{P}(d,n)$. This follows from Remark \ref{rem_pol_1} and Definition of $pS_\lambda$ as in \eqref{eq_pl_sc_bas}.
\end{remark}

% \begin{lemma}
%     The isomorphism $\pi^*$ sends the element $\frac{1-S_{(1)}}{a_{(0)}}\in K_{T^n}(\G(d,n))$ to $u{\cdot}1\in K_{T^{n+1}}(Pl(d,n))$.
% \end{lemma}
% \begin{proof}
% Using Lemma \ref{lem_div class}, $u{\cdot}1=\frac{1-pS_{(1)}}{a_{(0)}}=\pi^*(\frac{1-S_{(1)}}{a_{(0)}})$. 
% \end{proof} 

% We recall $G_{(1)}(x|b)=1-\xi(x)\xi(b)$, where $\xi(x)=\prod_{i=1}^d(1- x_i)$. Note that $\xi(x)$ is a symmetric polynomial in $x_1,\dots,x_d$. Thus $\xi(1-a)=a_1\dots a_d=a_{(0)}$ and $G_{(1)}(x|1-a)=1-\xi(x)a_{(0)}$. Note that $\Psi(G_{(1)}(x|1-a))=S_{(1)}$. Thus 

\begin{proposition}\label{sur_alg_hom_pldn}
 There exists a surjective homomorphism of $\ZZ[a_1^{\pm1},a_2^{\pm1},\dots]$-algebra
 $$\phi:\ZZ[a_1^{\pm1},a_2^{\pm1},\dots][x_1,\dots,x_d]^{S_d}\to K_{T^{n+1}}(Pl(d,n))$$ 
%of $\ZZ[a_1^{\pm1},a_2^{\pm1},\dots]\to \ZZ[a_1^{\pm1},\dots,a_n^{\pm 1}]$ algebra sending $a_i\to 1$ for $i>n$,
such that
$\phi(G_\lambda(x|1-a))=pS_\lambda$  if $\lambda\in \mathcal{P}(d,n)$ and  
0 otherwise.
\end{proposition}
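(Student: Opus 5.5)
The plan is to obtain $\phi$ as the composite $\phi:=\pi^*\circ\Psi$. Here $\Psi$ is the surjective $\ZZ[a_1^{\pm1},a_2^{\pm1},\dots]$-algebra homomorphism of Theorem \ref{thm_sur_hom}, and $\pi^*\colon K_{T^n}(\G(d,n))\to K_{T^{n+1}}(Pl(d,n))$ is the isomorphism of Theorem \ref{thm_iso_alg}.

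First I will make the algebra structures on $K_{T^{n+1}}(Pl(d,n))$ explicit. It is an $R(T^n)$-algebra via the inclusion $R(T^n)\hookrightarrow R(T^{n+1})$. It then becomes a $\ZZ[a_1^{\pm1},a_2^{\pm1},\dots]$-algebra by composing with the map sending $a_i\mapsto a_i$ for $i\le n$ and $a_i\mapsto 1$ for $i>n$. This is the same convention already used for $K_{T^n}(\G(d,n))$ in Theorem \ref{thm_sur_hom}. Since $\pi^*$ is an $R(T^n)$-algebra isomorphism, it is automatically a $\ZZ[a_1^{\pm1},a_2^{\pm1},\dots]$-algebra isomorphism for these structures. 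Hence $\phi$ is a $\ZZ[a_1^{\pm1},a_2^{\pm1},\dots]$-algebra homomorphism.

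Surjectivity is immediate, because $\Psi$ is surjective and $\pi^*$ is bijective. For the values on the basis:
\begin{itemize}
\item if $\lambda\in\mathcal{P}(d,n)$, then $\phi(G_\lambda(x|1-a))=\pi^*(S_\lambda)=pS_\lambda$ by definition \eqref{eq_pl_sc_bas};
\item if $\lambda\notin\mathcal{P}(d,n)$, then $\phi(G_\lambda(x|1-a))=\pi^*(0)=0$.
\end{itemize}

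The only point needing care is that the localization picture of Theorem \ref{thm_eq_K_thm_pldn} is consistent with the $R(T^n)$-linearity of $\pi^*$. The component at $\lambda$ lives in $R(\widetilde{T}_\lambda)=\ZZ[a^{\pm1},u^{\pm1}]/(1-ua_\lambda)$, and $a_i$ acts there by multiplication. Restricting to the fixed orbit $F(\lambda)$ commutes with pullback along $\pi$, which sends $F(\lambda)$ to the fixed point $f_\lambda$. Therefore $\pi^*$ acts componentwise as the natural map $\ZZ[a^{\pm1}]\to R(\widetilde{T}_\lambda)$, which is $R(T^n)$-linear. This is also what underlies Lemma \ref{lem_pslam} and Lemma \ref{lem_div class}, so I expect no real obstacle here.

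As an optional sanity check, one can also write $\phi$ directly by components: $\phi(f)|_\mu$ is $\Psi_\mu(f)$ reduced modulo $(1-ua_\mu)$. This matches the computation $pS_{(1)}|_\lambda=1-ua_{(0)}$ in Lemma \ref{lem_div class}.
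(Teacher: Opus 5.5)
Your proposal is correct and matches the paper's proof: the paper also defines $\phi:=\pi^*\circ\Psi$ and concludes directly from Theorem \ref{thm_sur_hom} and Theorem \ref{thm_iso_alg}. Your remarks on the $\ZZ[a_1^{\pm1},a_2^{\pm1},\dots]$-algebra structure and on the componentwise description of $\pi^*$ only make explicit what the paper leaves implicit.
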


\begin{proof}
    Define $\phi:=\pi^*{\circ}\Psi$. Then the proof follows by Theorem \ref{thm_sur_hom} and Theorem \ref{thm_iso_alg}.
\end{proof}

\begin{lemma}
    The algebra homomorphism $\phi$ sends the symmetric polynomial $\xi(x)=\prod_{i=1}^d(1-x_i)$ to $u{\cdot}1\in K_{T^{n+1}}(Pl(d,n))$.
\end{lemma}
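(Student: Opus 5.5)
We need to show $\phi(\xi(x))=u\cdot 1$. The plan is to express $\xi(x)$ through $G_{(1)}(x|1-a)$, push it through $\phi$, and evaluate at each fixed point using the divisor class computed in Lemma \ref{lem_div class}.

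First, by Example \ref{eg_fac_gro} (see also Example \ref{eg_sur_hom}), we have $G_{(1)}(x|1-a)=1-a_{(0)}\xi(x)$, where $a_{(0)}=a_1\cdots a_d$ is a unit in $\ZZ[a_1^{\pm1},a_2^{\pm1},\dots]$. Hence
\[
\xi(x)=a_{(0)}^{-1}\bigl(1-G_{(1)}(x|1-a)\bigr)
\]
holds in $\ZZ[a_1^{\pm1},a_2^{\pm1},\dots][x_1,\dots,x_d]^{S_d}$.

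Second, $\phi$ is a $\ZZ[a_1^{\pm1},a_2^{\pm1},\dots]$-algebra homomorphism. It therefore sends $1$ to the unit element and is linear over the coefficient ring. Together with Proposition \ref{sur_alg_hom_pldn}, this gives $\phi(\xi(x))=a_{(0)}^{-1}(1-pS_{(1)})$. Since $(1)\in\mathcal{P}(d,n)$ whenever $0<d<n$, the case where $G_{(1)}$ is sent to $0$ does not occur.

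Third, we compare components at each $\lambda\in\mathcal{P}(d,n)$ in the GKM-type description of Theorem \ref{thm_eq_K_thm_pldn}. By Lemma \ref{lem_div class}, $pS_{(1)}|_\lambda=1-ua_{(0)}$ in $R(\widetilde{T}_\lambda)=\ZZ[a^{\pm1},u^{\pm1}]/(1-ua_\lambda)$. Therefore
\[
\phi(\xi(x))|_\lambda=a_{(0)}^{-1}\cdot ua_{(0)}=u
\]
for every $\lambda$. The element $u\cdot 1$ also restricts to $u$ at every $\lambda$, because the module structure over $R(T^{n+1})$ acts componentwise. Since an element of $K_{T^{n+1}}(Pl(d,n))$ is determined by its restrictions, we conclude $\phi(\xi(x))=u\cdot1$.

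The only delicate point is bookkeeping. One must check that the $R(T^n)$-scalar $a_{(0)}^{-1}$ acts componentwise through the inclusion $R(T^n)\to R(T^{n+1})$ used in Theorem \ref{thm_iso_alg}. One must also check that the coefficient ring maps via $a_i\mapsto 1$ for $i>n$, which does not affect $a_{(0)}$ because $d<n$. Both follow from the definitions, so the argument is essentially immediate.
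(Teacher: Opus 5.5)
Your proof is correct and is the paper's argument: you write $\xi(x)=a_{(0)}^{-1}\bigl(1-G_{(1)}(x|1-a)\bigr)$, push it through $\phi=\pi^*\circ\Psi$ to get $a_{(0)}^{-1}(1-pS_{(1)})$, and finish with Lemma \ref{lem_div class}. The only difference is that you spell out the componentwise check $a_{(0)}^{-1}\cdot ua_{(0)}=u$ in each $R(\widetilde{T}_\lambda)$, which the paper leaves implicit.
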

\begin{proof}
  Using Example \ref{eg_sur_hom}, it follows that $$\Psi(\xi(x))=\Psi\Big(\frac{1-G_{(1)}(x|1-a)}{a_{(0)}}\Big)=\frac{1-S_{(1)}}{a_{(0)}}. \text{  Thus}$$ $$(\pi^*\circ\Psi)(\xi(x))=\pi^*\Big(\frac{1-S_{(1)}}{a_{(0)}}\Big)=\frac{1-pS_{(1)}}{a_{(0)}}=u{\cdot}1.$$
  The last equality follows from  Lemma \ref{lem_div class}.
\end{proof}

Now $\ZZ[a_1^{\pm1},a_2^{\pm1},\dots][x_1,\dots,x_d]^{S_d}$ also has a $\ZZ[\xi(x)^{\pm1},a_1^{\pm1},a_2^{\pm1},\dots]$-algebra structure by obvious multiplication and $K_{T^{n+1}}(Pl(d,n))$ can be considered as a $\ZZ[u^{\pm1},a_1^{\pm1},\dots,a_n^{\pm1}]$-algebra. Thus the map $\phi$ in Proposition \ref{sur_alg_hom_pldn} is also an algebra homomorphism 
   %$$\phi:\ZZ[{a}^{\pm1}_1,{a}^{\pm1}_2,\dots][x_1,\dots,x_d]^{S_d}\to K_{T^{n+1}}(Pl(d,n))$$
 with respect to $\ZZ[\xi(x)^{\pm1},a_1^{\pm1},a_2^{\pm1},\dots] \to \ZZ[u^{\pm1},a_1^{\pm1},\dots,a_n^{\pm1}]$ algebra, where $\xi(x)\to u$ and $a_i\to 1$ for $i>n$.

\begin{remark}\label{rmk_phi_mu}
    For every $\mu\in \mathcal{P}(d,n)$ there exist maps $$\phi_\mu:\ZZ[a_1^{\pm1},a_2^{\pm1},\dots][x_1,\dots,x_d]^{S_d}\to\frac{\ZZ[a_1^{\pm1}, \dots, a_n^{\pm1}, u^{\pm1}]}{(1-ua_{\mu})}$$ such that
$\phi=(\phi_\mu)_{\mu\in \mathcal{P}(d,n)}$ and $\phi_\mu(G_{\lambda}(x|1-a))=pS_{\lambda}|_\mu$. Moreover, $\phi_\mu(\xi(x))=\frac{1}{a_\mu}$ and $\phi_\mu(a_i)=1$ for $i> n$.
\end{remark}

\section{Schubert classes in the equivariant $K$-theory of divisive weighted Grassmann orbifolds}\label{Sec_Sc_bas_eq_K_th_wgt_gsm}
In this section, we explore the equivariant $K$-theory of the divisive weighted Grassmann orbifolds. We provide a combinatorial description of the Schubert classes and explore that they form a basis in the equivariant $K$-theory of divisive weighted Grassmann orbifolds. 
    
    %The $T^{n+1}$ action on $Pl(d,n)$ described in \eqref{eq_c_action} is induced by the injective map $$\rho\colon T^{n+1} \to (S^1)^{n \choose d}$$defined by $\rho(t_1, t_2, \dots, t_n,t) =(t \cdot t_{\lambda})_{\lambda\in \mathcal{P}(d,n)}$, where $t_\lambda=t_{\widetilde{\lambda}_1}t_{\widetilde{\lambda}_2}\cdots t_{\widetilde{\lambda}_d}$.

% Note that $\rho$ is an injective group homomorphism. If $\rho(t_1, t_2, \dots, t_n,t) =(1,1,\dots,1)$ then $t{\cdot}t_{\lambda}=t{\cdot}t_{\mu}$ for any two Schubert symbol $\lambda$ and $\mu$.

% for any arbitrary $i,j$ with $i\neq j$ consider two partitions $\lambda$ and $\mu$ such that $\lambda=(i,j)\mu$.  Then $t{\cdot}t_{\lambda}=t{\cdot}t_{\mu}$ implies $t_i=t_j$.

% Consider a circle subgroup $S^1_{\bf c}$ of $\rho(T^{n+1})$ defined by
% $$S^1_{\bf c}:=\{(t^{c_0},\dots, t^{c_m}): t\in S^1\},$$ where ${\bf c}$ is a {\Pl} weight vector. Then $S^1_{\bf c}=\rho(WD)$ for some subgroup $WD$ of $T^{n+1}$ as defined in \eqref{eq_wd}. Define $T_{\bf c}:=\frac{\rho(T^{n+1})}{S^1_{\bf c}}$. Then $T_{\bf c}$ is isomorphic to $\frac{T^{n+1}}{WD}$ as $\rho$ is an injective map. 
%Consider the $T_{\bf c}$ action on $\G_{\bf c}(d,n)$. 

%One  circle subgroup $S^1_{\bf c}$ can corresponds to many subgroup $WD$ of $T^{n+1}$ as described in Remark \ref{lem_many_WD}.
%This helps to find a nice presentation of $R(T_{\bf c})$. 

\begin{lemma}\label{lem_div_a,1}
Every divisive weighted Grassmann orbifold $\G_{\bf c}(d,n)$ is homeomorphic to some divisive weighted Grassmann orbifold $\G_{\bf c'}(d,n)$ such that ${\bf c'}$ corresponds to $(W,a)$ for $W=(w_1,\dots,w_n)\in (\ZZ_{\geq0})^n$ and $a=1$.
\end{lemma}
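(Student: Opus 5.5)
The plan is to move between weight vectors using only operations that visibly give homeomorphic orbit spaces, and then to choose an affine representation $(W,a)$ of the resulting vector with $a=1$ and $W\geq 0$. I will use three such operations. The first is rescaling by a common factor; this is the main tool. The second is a Pl\"{u}cker permutation, via Proposition \ref{prop_pl_per}. The third is the affine re-parametrisation of $(W,a)$ that leaves ${\bf c}$ unchanged. I expect the congruence condition on $a$ modulo $d$ in the last step to be the real obstacle; I have not checked that the steps below always resolve it, and where they do not the argument would have to be repaired.

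\emph{Step 1 (rescaling).} Let ${\bf c}$ be divisive, normalised as in the paper so that $c_\lambda\mid c_\mu$ whenever $\mu\leq\lambda$. Let $\lambda_{\max}=(n-d+1,\dots,n)$ be the largest Schubert symbol and put $m:=c_{\lambda_{\max}}$; then $m$ divides every $c_\mu$. The finite group $G(m)\subset\CC^*$ acts trivially on $Pl(d,n)$ under the ${\bf c}$-action, and $t\mapsto t^m$ identifies $\CC^*/G(m)$ with $\CC^*$. Hence the ${\bf c}$-orbits are exactly the orbits of the ${\bf c}/m$-action, so $\G_{\bf c}(d,n)=\G_{{\bf c}/m}(d,n)$ as quotient spaces. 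Conversely, for any $k\geq 1$ the map $t\mapsto t^k$ is surjective, so $\G_{\bf c}(d,n)=\G_{k{\bf c}}(d,n)$. Both ${\bf c}/m$ and $k{\bf c}$ are still Pl\"{u}cker weight vectors, and divisibility survives scaling. After Step 1 I therefore assume $c_{\lambda_{\max}}=1$ and $c_\lambda\mid c_\mu$ for $\mu\leq\lambda$.

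\emph{Step 2 (affine normalisation).} By Proposition \ref{prop_rel_two_wt}, write $c_\lambda=a+\sum_j w_{\tilde\lambda_j}$ with $W\in\ZZ^n$. The pair $(W,a)$ is determined by ${\bf c}$ only up to $w_i\mapsto w_i+\alpha$, $a\mapsto a-d\alpha$. So the differences $\delta_i:=w_i-w_n$ and the quantity $A:=a+dw_n$ are invariants of ${\bf c}$, and $a=1$ is attainable if and only if $A\equiv 1 \pmod d$.

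When $A\equiv1\pmod d$, I take $\alpha=(A-1)/d-w_n$, which gives $a'=1$ and $w'_n=w_n+\alpha$. It then remains to check $w'_i\geq0$. For this I would use the divisive normalisation $c_{\lambda_{\max}}=1$: evaluating $c=1+\sum w'$ on symbols that differ from $\lambda_{\max}$ by one entry gives $c_\mu-c_{\lambda_{\max}}=w'_i-w'_j$. Combined with $c_\mu\geq c_{\lambda_{\max}}$, this should show that the $w'_i$ with $i\leq n-d$ dominate those with $i>n-d$. The identity $1=1+\sum_{i>n-d}w'_i$ then forces $\sum_{i>n-d}w'_i=0$. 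Nonnegativity of the individual $w'_i$ with $i>n-d$ is not yet established; I would try to get it from divisibility along the dictionary order, or else by a further Pl\"{u}cker permutation that reorders the indices $i>n-d$.

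\emph{Step 3 (the obstacle: $A\not\equiv1\pmod d$).} Here I would first combine scalings. Replacing ${\bf c}$ by $k{\bf c}$ replaces $A$ by $kA$ and $\delta_i$ by $k\delta_i$. Dividing again by the new minimal weight may change $A$ further. This settles the case $\gcd(A,d)=1$, but I do not see that it handles $\gcd(A,d)>1$. For that case my first attempt would be to apply the Pl\"{u}cker permutation of Proposition \ref{prop_pl_per} induced by reversing $\{1,\dots,n\}$, or the duality $\G(d,n)\cong\G(n-d,n)$, which changes the affine data. Failing that, I would use the freedom in Proposition \ref{prop_rel_two_wt} that allows $w_i$ to be arbitrary integers, and attempt a direct construction. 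If neither works, the lemma as stated may need an extra hypothesis, and I would test small cases such as $d=2$, $n=4$ to decide whether a counterexample exists.
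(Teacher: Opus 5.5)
The proposal has a genuine gap. Your Step 1 (rescaling) and the final shift $w_i\mapsto w_i+\alpha$, $a\mapsto a-d\alpha$ in Step 2 are exactly the first and last steps of the paper's proof. What is missing is the middle of the argument. You never show that, after normalising $c_{\lambda_{\max}}=1$, one has $A\equiv 1\pmod d$. You also never show that the shifted $w'_i$ with $i>n-d$ are nonnegative. Step 3 is a list of strategies rather than an argument.

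You stall because the only consequence of divisibility you use is $c_\mu\ge c_{\lambda_{\max}}$ for neighbours of $\lambda_{\max}$. That compares $w_i$ with $w_j$ only across the cut $i\le n-d<j$. Divisibility along $S\cup\{i\}<S\cup\{i+1\}$ does give $w_1\ge\dots\ge w_n$. But combined with $\sum_{i>n-d}w'_i=0$, this only yields $w'_n\le 0$, so monotonicity alone cannot finish.

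The missing idea is to feed divisibility into a single Pl\"{u}cker relation and then use the arithmetic fact of Remark \ref{rmk_div_case}. Assume $d\ge 2$; the case $d=1$ is trivial, taking $w_i=c_i-1$.

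For $j\in\{n-d,\dots,n\}$ let $\lambda^{j}=\{n-2d+1,\dots,n-d-1,j\}$ and $\mu^{j}=\{n-d,\dots,n\}\setminus\{j\}$. Then $\mu^{n-d}=\lambda_{\max}$, and Definition \ref{def_plu_vec} gives $c_{\lambda^{j}}+c_{\mu^{j}}=c_{\lambda^{n-d}}+1$. In the dictionary order $\lambda^{n-d}\le\lambda^{j}<\mu^{j}$, so both $c_{\lambda^{j}}$ and $c_{\mu^{j}}$ divide $c_{\lambda^{n-d}}$. Remark \ref{rmk_div_case} then forces one of them to equal $1$. Since $c_{\mu^{j}}\mid c_{\lambda^{j}}$, in fact $c_{\mu^{j}}=1$.

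Hence $w_{n-d}=\dots=w_n$, and $A=a+dw_n=c_{\lambda_{\max}}=1$ exactly. So the obstacle of your Step 3 never arises. After the shift, $a'=1$ and $w'_{n-d}=\dots=w'_n=0$, and monotonicity gives $W'\ge 0$. This is the paper's proof.

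Note that it needs $n\ge 2d$, so that $d-1$ indices below $n-d$ exist. Outside that range your worry about an extra hypothesis is justified. Take $(d,n)=(2,3)$ and $(c_{(1,2)},c_{(1,3)},c_{(2,3)})=(2,1,1)$, so $\G_{\bf c}(2,3)=\WW P(2,1,1)$. Here $(W,a)=((0,0,-1),2)$, so $A=0$.

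On the other hand, every divisive ${\bf c}'$ coming from $(W,1)$ with $W\ge 0$ satisfies $c'_{(i,j)}<c'_{(i,k)}+c'_{(j,k)}$. It is therefore a permutation of $(xm,xm,x)$, which gives a space homeomorphic to $\CC P^2$. But $\WW P(2,1,1)$ is not homeomorphic to $\CC P^2$: a generator of its $H^2(\,\cdot\,;\ZZ)$ squares to twice a generator of $H^4$. Your proposed test case $d=2$, $n=4$ lies in the good range, where the argument above forces ${\bf c}=(t,t,t,1,1,1)$ after normalisation.
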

\begin{proof}
Let $\G_{\bf c}(d,n)$ be a divisive weighted Grassmann orbifold. Then $c_\mu$ divides $c_\lambda$ for $\lambda\leq \mu$. Weighted Grassmann orbifolds $\G_{\bf c}(d,n)$ and $\G_{r\bf c}(d,n)$ are homeomorphic for every positive integer $r$, see \cite[Lemma 3.7]{br}. Thus, we can assume $c_\lambda=1$ for the partition $\lambda=(n-d)^d\in \mathcal{P}(d,n)$ of maximal length. The corresponding element in $I(d,n)$ is given by $(n-d+1,\dots,n)$. For every $i=n-d,\dots,n$, consider the pairs $(\lambda^i,\mu^i)$ of Schubert symbol, where $\lambda^{i}:=(n-2d+1,\dots,n-d+1, i)\in I(d,n)$, and $\mu^{i}:=(n-d,\dots,\hat{i},\dots,n)\in I(d,n)$ corresponds to two order sequences $n-2d+1<\dots<n-d-1$ and $n-d<\dots<n$ as in Definition \ref{def_plu_vec}. Therefore, using the Remark \ref{rmk_div_case}, we get $c_{\mu^{i}}=1$. Thus $w_n=w_i$ for all $i=n-d,\dots,n$. Moreover, $c_{\mu^{n-d}}=1$ implies that $dw_n+a=1$. Thus $a\cong 1(\text{mod}(d))$.
%For the integer solution of $w_n$, we choose $a=1$ and $w_n=0$. 
Now, the proof follows using the similar argument of \cite[Proposition 2.9]{BS}. Since ${\bf c}$ is divisive we have $w_i\geq w_{i+1}$ for all $i=1,\dots,n-1$. Hence the proof follows.
\end{proof}

\begin{remark}\label{rmk_div_case}
    Let $n_1,n_2,n_3$ be positive integers such that $n_1+1=n_2+n_3$. If $n_2$, $n_3$ divides $n_1$ then $\{n_1,1\}=\{n_2,n_3\}$.  To prove this, let $n_1=k_2n_2=k_3n_3$. Then $(k_2-1)n_2+1=n_3$ and $(k_3-1)n_3+1=n_2$. Then $(k_2-1)(k_3-1)n_2+k_3=n_2$. Thus either $k_2=1$ or, $k_3=1$. Thus either $n_2=1$ or, $n_3=1$.
\end{remark}

% \begin{example}
%     Consider a divisive {\Pl} weight vector  ${\bf c}=(12,12,12,3,3,3)$. Then $\G_{\bf c}(2,4)$ is homeomorphic to $\G_{\bf c'}(2,4)$, where ${\bf c'}=(4,4,4,1,1,1)$. Now ${\bf c'}$ correspond to $W=(3,0,0,0)$ and $a=1$ as explained in Lemma \ref{lem_div_a,1}. Note that ${\bf c}$ also corresponds to $W=(9,1,1,1)$ and $a=2$.
% \end{example}

Using Lemma \ref{lem_div_a,1}, any divisive \Pl~ weight vector ${\bf c}$ always corresponds to a pair $(W,a)$ such that $a=1$. Recall the $T^{n+1}$-action on $Pl(d,n)$ described in \eqref{eq_c_action} and the subgroup $WD$ of $T^{n+1}$ as defined in \eqref{eq_wd}. Thus we have the $T_{\bf c}:=\frac{T^{n+1}}{WD}$ action on $\G_{\bf c}(d,n)$.
Now, we prove that $T_{\bf c}$ is isomorphic with $T^n$. Define $\bar{f}\colon T^{n+1}\to T^n$ by 
$$\bar{f}(t_1,\dots,t_{n},t)=(t_1t^{-w_1},t_2t^{-w_2},  \dots, t_nt^{-w_n}).$$

For $t\in \CC^*$, $\bar{f}(t^{w_1}\dots,t^{w_n},t)=(1,\dots,1)$. Thus, $WD\subset \ker(\bar{f})$. Also, $$\ker(\bar{f})=\{(t_1,t_2,\dots,t_n,t):t_it^{-w_i}=1, \text{ for }1\le i\le n\}=\{(t^{w_1}\dots,t^{w_n},t):t\in \CC^*\}.$$ The inverse image of $(t_1,\dots,t_n)$ is $(t_1,t_2,\dots,t_n,1)$. Thus $\bar{f}$ is onto and it induces an isomorphism $$f\colon T_{\bf c}\to T^n.$$

 The inverse of $f$ is defined by $$f^{-1}: T^n\to T_{\bf c};\quad f^{-1}(t_1,\dots,t_n)=(t_1,\dots,t_n,1)+WD.$$ 
The isomorphism $f$ between $T^n$ and $T_{\bf c}$ induces a $T^n$-action on $\G_{\bf c}(d,n)$ by the following.
$$(t_1,\dots,t_n){\cdot}[\zeta_\lambda]=[t_{\lambda}\zeta_\lambda],$$
where $t_\lambda=t_{\tilde{\lambda}_1}t_{\tilde{\lambda}_2}\dots t_{\tilde{\lambda}_d}$. Consequently, $f$ induces an isomorphism between $K_{T^n}(\G_{\bf c}(d,n))$ and $K_{T_{\bf c}}(\G_{\bf c}(d,n))$.

The equivariant $K$ theory $K_{T_{\bf c}}(\G_{\bf c}(d,n))$ has a $R(T_{\bf c})$ algebra structure. Using the isomorphism $f$, we can write $R(T_{\bf c})=\ZZ[{\bf{a}}_1^{\pm1}, \dots, {\bf a}_n^{\pm1}]$, where ${\bf a}_i=\frac{a_i}{u^{w_i}}$. Thus, $R(T_{\bf c})$ can be regarded as a subring of $R(T^{n+1})$  by ${\bf a}_i=\frac{a_i}{u^{w_i}}\in \ZZ[a_1^{\pm1},\dots,a_n^{\pm1},u^{\pm1}]$. For $\lambda\in \mathcal{P}(d,n)$ denote ${\bf a}_{\lambda}:={\bf a}_{\widetilde\lambda_1}{\bf a}_{\widetilde\lambda_2}{\cdots}{\bf a}_{\widetilde\lambda_d}\in R(T_{\bf c})$. The next theorem follows by applying the same argument use in \cite[Theorem 4.7]{BS}. 
\begin{theorem}\label{thm_K_T_CGRBDN}
    The $T_{\bf c}$-equivariant $K$-theory of $\G_{\bf c}(d,n)$ is given by 
   {\small $$K_{T_{\bf c}}(\G_{\bf c}(d,n))=\Big\{f\in \bigoplus_{\lambda\in \mathcal{P}(d,n)} \ZZ[{\bf a}_1^{\pm1}, \dots, {\bf a}_n^{\pm1}]~~|~~ f|_\lambda-f|_\mu \text{ is divisible by } (1-\frac{{\bf a}_{\lambda}}{({\bf a}_{\mu})^{d_{\lambda\mu}}})\text{ if } \mu=I(\lambda) \Big\}.$$}
\end{theorem}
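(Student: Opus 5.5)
We will follow the GKM-type argument of \cite[Theorem 4.7]{BS}, transplanted to equivariant $K$-theory via the equivariant stratification machinery of \cite{HHH}, as was done for $Pl(d,n)$ in Theorem \ref{thm_eq_K_thm_pldn}. We first pass from $T_{\bf c}$ to $T^n$ through the isomorphism $f\colon T_{\bf c}\to T^n$, so that $R(T_{\bf c})=\ZZ[{\bf a}_1^{\pm1},\dots,{\bf a}_n^{\pm1}]$ with ${\bf a}_i=a_i/u^{w_i}$. Since $\G_{\bf c}(d,n)$ is divisive, Proposition \ref{thm_CW_cplx_str} gives a genuine CW structure with even cells $E(\lambda)$, with the finite isotropy groups $G(c_\lambda)$ acting trivially on each cell. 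Taking $X^{\bf c}_\lambda:=\pi_{\bf c}(X_\lambda)$ then gives a $T_{\bf c}$-invariant stratification of $\G_{\bf c}(d,n)$ whose successive quotients are Thom spaces of $T_{\bf c}$-bundles $E(\lambda)\to\{[f_\lambda]\}$ over the fixed points. These bundles split into line bundles $E_{\lambda\mu}$, $\mu\in I(\lambda)$, exactly as in Proposition \ref{prop:cond_hhh}.

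Next we compute the $T_{\bf c}$-weight of $E_{\lambda\mu}$. The coordinate $z_\mu$ near $[f_\lambda]$ is normalized by the ${\bf c}$-action to $z_\lambda=1$. Locally, the chart near $f_\lambda$ is $E(\lambda)$ divided by $G(c_\lambda)$ acting trivially, and the invariant coordinate is $z_\mu/z_\lambda^{c_\mu/c_\lambda}$, with $c_\mu/c_\lambda=d_{\lambda\mu}$ an integer by divisivity. Its character under $(t_1,\dots,t_n,t)$ is $t^{c_\mu}t_\mu/(t^{c_\lambda}t_\lambda)^{d_{\lambda\mu}}$; note that $c_\mu=d_{\lambda\mu}c_\lambda$ cancels the $t$-part. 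Rewriting with $a=1$ and $c_\lambda=1+\sum_j w_{\tilde\lambda_j}$, the character becomes ${\bf a}_\mu/{\bf a}_\lambda^{d_{\lambda\mu}}$, up to the orientation and inversion conventions that fix which of $\lambda,\mu$ carries the exponent. The Euler class is therefore $1-{\bf a}_\lambda/({\bf a}_\mu)^{d_{\lambda\mu}}$ in the convention stated. We will check that this indexing matches: $I(\lambda)$ consists of $\mu\succ\lambda$, hence $\mu\le$-larger, so $d_{\lambda\mu}$ is the quotient in the correct direction.

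Finally we apply \cite[Theorem 3.1]{HHH}. This requires that the Euler classes $\{1-{\bf a}_\lambda/{\bf a}_\mu^{d_{\lambda\mu}}\}_{\mu\in I(\lambda)}$ be pairwise relatively prime in $\ZZ[{\bf a}^{\pm1}]$. The attaching maps send each $E_{\lambda\mu}$ to the fixed point $\mu$, and since fixed points are now points the restriction maps $h^*$ are identities. The output is exactly the displayed GKM description. \textbf{The main obstacle} is the coprimality condition. Distinct $\mu=(s,s')\lambda$ give characters ${\bf a}_\lambda^{1-d}\,{\bf a}_{s'}/{\bf a}_s\cdot(\text{power of }{\bf a}_\mu)$, and we must check that no two of them are powers of a common primitive character times each other, which could produce a common factor of $1-\chi^k$ and $1-\chi^l$. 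We would argue, as in \cite[Lemma 5.2]{HHH} and the corresponding step of \cite{BS}, that the exponent vectors of these characters are pairwise linearly independent. This uses the distinct transposition pairs $(s,s')$ together with divisivity, and $a=1$ from Lemma \ref{lem_div_a,1} makes the bookkeeping clean. If full coprimality fails, we would fall back on the weaker hypothesis that pairwise coprimality is needed only within each fixed point's tangent weights, which the linear-independence check provides.
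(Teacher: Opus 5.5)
Your overall route, a stratification satisfying the hypotheses of \cite[Theorem 3.1]{HHH}, is the route the paper takes by citing \cite[Theorem 4.7]{BS}. However, the step that produces the ideal in the statement is wrong, and the appeal to ``orientation and inversion conventions'' hides a real mismatch.

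In the paper's normalization of divisivity we have $c_\lambda\mid c_\mu$ whenever $\mu\le\lambda$, $d_{\lambda\mu}=c_\lambda/c_\mu$ for $\lambda\le\mu$, and $c_{(n-d)^d}=1$. Every $\mu\in I(\lambda)$ satisfies $\lambda\prec\mu$, hence $c_\mu\mid c_\lambda$. This has two consequences. First, $c_\mu/c_\lambda=1/d_{\lambda\mu}$ is not an integer, so your coordinate $z_\mu/z_\lambda^{c_\mu/c_\lambda}$ is not defined. Second, $G(c_\lambda)$ acts on $z_\mu$ through $t^{c_\mu}$, which is nontrivial in general. So the strata of your filtration $\pi_{\bf c}(X_\lambda)$ are $E(\lambda)/G(c_\lambda)$ with a nontrivial finite action, and in general they are not Thom spaces. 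For instance, for $\WW P(4,2,1)=\G_{\bf c}(1,3)$ the open stratum is $\CC^2/\ZZ_4$ with weights $(2,1)$, which is a cone on $\mathbb{R}P^3$. Hence HHH does not apply to your filtration. Moreover, the character ${\bf a}_\mu/{\bf a}_\lambda^{d_{\lambda\mu}}$ you arrive at is neither ${\bf a}_\lambda/{\bf a}_\mu^{d_{\lambda\mu}}$ nor its inverse ${\bf a}_\mu^{d_{\lambda\mu}}/{\bf a}_\lambda$. One minus your character therefore generates a different ideal, and no convention converts one into the other.

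The missing idea is that divisivity decides which endpoint of each invariant weighted projective line $\{z_\nu=0,\ \nu\neq\lambda,\mu\}$ is smooth, and the cells must be based at that endpoint. This is what the Pl\"ucker permutation $\sigma$ in the $q$-CW structure $E(\lambda)/G(c_{\sigma(\lambda)})$ accomplishes. Concretely, filter by the opposite cells $\{z_\mu=1,\ z_\nu=0 \text{ for } \nu\npreceq\mu\}/G(c_\mu)$. Here $G(c_\mu)$ acts on $z_\nu$ by $t^{c_\nu}$ with $c_\mu\mid c_\nu$, hence trivially, so these are honest cells with linear $T_{\bf c}$-action. Since $c_\lambda=d_{\lambda\mu}c_\mu$, the line toward $\lambda\in R(\mu)$ has invariant coordinate $z_\lambda/z_\mu^{d_{\lambda\mu}}$. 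Using ${\bf a}_i=a_i/u^{w_i}$ and $c_\lambda=1+\sum_j w_{\tilde\lambda_j}$, its character is $ua_\lambda/(ua_\mu)^{d_{\lambda\mu}}={\bf a}_\lambda/({\bf a}_\mu)^{d_{\lambda\mu}}$. Rewriting $\lambda\in R(\mu)$ as $\mu\in I(\lambda)$ gives exactly the stated divisibility condition.

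With these weights your coprimality plan goes through. At a fixed $\mu$, the exponent vectors are $(1-d_{\lambda\mu})\sum_{j\in\mu}\epsilon_j+\epsilon_{s'}-\epsilon_s$ for $\lambda=(s,s')\mu$, where $\epsilon_j$ is the $j$-th standard basis vector of $\ZZ^n$. These are pairwise linearly independent. Pairwise coprimality of the Euler classes at each single fixed point is precisely HHH's hypothesis, not a fallback.
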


\begin{proposition}\label{prop_alg_mor}
The projection map $\pi_{\bf c} \colon Pl(d,n)\to \G_{\bf c}(d,n) $ induces a $R(T_{\bf c})$-algebra homomorphism $$\pi_{\bf c}^*\colon K_{T_{\bf c}}(\G_{\bf c}(d,n))\to K_{T^{n+1}}(Pl(d,n)),$$
where the map $R(T_{\bf c})$ to $R(T^{n+1})$ is given by the inclusion   ${\bf a}_i\to \frac{a_i}{u^{w_i}}$. 
\end{proposition}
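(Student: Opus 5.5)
The proof has two parts. First, $\pi_{\bf c}^*$ is a pullback along a map that is equivariant for the right groups, so it is a ring homomorphism. Second, it is compatible with the two module structures, which reduces to checking what happens at a point.

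\textbf{Equivariance.} The torus $T^{n+1}$ acts on $Pl(d,n)$ by \eqref{eq_c_action}. The subgroup $WD$ of \eqref{eq_wd} acts through the ${\bf c}$-action of $\CC^*$, and $\G_{\bf c}(d,n)=Pl(d,n)/WD$. Hence $\pi_{\bf c}$ is equivariant along the quotient homomorphism $q\colon T^{n+1}\to T^{n+1}/WD=T_{\bf c}$, where $T^{n+1}$ acts on $\G_{\bf c}(d,n)$ through $q$.

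\textbf{Pullback is a ring map.} Pulling back along $q$ gives $K_{T_{\bf c}}(\G_{\bf c}(d,n))\to K_{T^{n+1}}(\G_{\bf c}(d,n))$. Composing with the ordinary $T^{n+1}$-equivariant pullback along $\pi_{\bf c}$ gives $\pi_{\bf c}^*$. A $T_{\bf c}$-equivariant bundle $E$ goes to $\pi_{\bf c}^*E$ with the inflated $T^{n+1}$-action. This operation respects direct sums and tensor products, so $\pi_{\bf c}^*$ is a ring homomorphism. Since $\G_{\bf c}(d,n)$ is an orbifold, one can alternatively work with orbifold/equivariant $K$-theory of the global quotient. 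In that model $K_{T_{\bf c}}(\G_{\bf c}(d,n))$ is $K_{T^{n+1}}(Pl(d,n))$ viewed through the $WD$-quotient, and $\pi_{\bf c}^*$ is the natural map. Either way, multiplicativity is formal.

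\textbf{Module structure.} The $R(T_{\bf c})$-module structure comes from pulling back along the map to a point, and this commutes with $\pi_{\bf c}^*$. So linearity reduces to identifying $q^*\colon R(T_{\bf c})\to R(T^{n+1})$. Under the isomorphism $f\colon T_{\bf c}\to T^n$ induced by $\bar f(t_1,\dots,t_n,t)=(t_1t^{-w_1},\dots,t_nt^{-w_n})$, the $i$-th coordinate character ${\bf a}_i$ of $T^n$ pulls back to the character $(t_1,\dots,t_n,t)\mapsto t_it^{-w_i}$. In the notation $R(T^{n+1})=\ZZ[a_1^{\pm1},\dots,a_n^{\pm1},u^{\pm1}]$ this character is $a_i/u^{w_i}$. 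So $q^*({\bf a}_i)=a_i/u^{w_i}$. The map is injective because $\bar f$ is surjective, which justifies calling it an inclusion.

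\textbf{Main obstacle.} The delicate step is checking that the identification matches the conventions of the paper. The fixed-point data must be consistent under localization: restricting to $F(\lambda)$ should send $f|_\lambda$ to its image in $R(\widetilde T_\lambda)=\ZZ[a^{\pm1},u^{\pm1}]/(1-ua_\lambda)$, via ${\bf a}_i\mapsto a_iu^{-w_i}$. I would verify this on the fixed points $f_\lambda$. There $\widetilde T_\lambda\to T_{\bf c}$ lands in the isotropy of $[f_\lambda]$. Moreover ${\bf a}_\lambda$ maps to $a_\lambda u^{-\sum_j w_{\tilde\lambda_j}}=a_\lambda u^{1-c_\lambda}$, using $a=1$ from Lemma \ref{lem_div_a,1}. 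This is compatible with the relation $ua_\lambda=1$, and it confirms the divisibility conditions of Theorem \ref{thm_K_T_CGRBDN} map into those of Theorem \ref{thm_eq_K_thm_pldn}.
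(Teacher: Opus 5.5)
Your proof is correct, but it runs in the opposite direction from the paper's. The paper never constructs $\pi_{\bf c}^*$ geometrically. It takes the GKM presentations of Theorems \ref{thm_eq_K_thm_pldn} and \ref{thm_K_T_CGRBDN} as given and lets the map act on the $\lambda$-component by ${\bf a}_i\mapsto a_i/u^{w_i}$, followed by reduction modulo $1-ua_\lambda$. It then only checks that the GKM conditions are preserved. Because $c_\mu=d_{\mu\lambda}c_\lambda$ and $c_\lambda=1+\sum_j w_{\widetilde{\lambda}_j}$, one gets $1-{\bf a}_\mu/({\bf a}_\lambda)^{d_{\mu\lambda}}\mapsto 1-ua_\mu/(ua_\lambda)^{d_{\mu\lambda}}\equiv 1-ua_\mu$ modulo $1-ua_\lambda$. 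Multiplicativity and linearity are then automatic, since the map is a componentwise substitution.

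You instead get the ring map and its $R(T_{\bf c})$-linearity for free from functoriality along the $q$-equivariant map $\pi_{\bf c}$, and reduce the content to computing $q^*$ on characters. You then recover the componentwise formula of Remark \ref{rem_mu_com} by naturality of restriction to $[f_\lambda]$ and $F(\lambda)$, together with injectivity of both localization maps.

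What your route buys is a justification that the substitution map really is the pullback along $\pi_{\bf c}$, which the paper takes for granted. In your version the divisibility check is a corollary rather than the content of the proof. So your unproved remark in the last paragraph that the divisibility conditions are confirmed is harmless; the one-line computation above is all it needs. The paper's route is more elementary once the GKM descriptions are accepted, and it directly produces the explicit formula used in Lemma \ref{lem_subset}.

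One correction: drop the orbifold alternative. The $T_{\bf c}$-equivariant $K$-theory of $\G_{\bf c}(d,n)$ as an orbifold, i.e.\ of the quotient stack $[Pl(d,n)/WD]$, is $K_{T^{n+1}}(Pl(d,n))$ itself. In that model $\pi_{\bf c}^*$ would be an isomorphism. The ring of Theorem \ref{thm_K_T_CGRBDN} is instead the $K$-theory of the coarse space, and its image $\mathscr{R}$ is in general a proper subring of $K_{T^{n+1}}(Pl(d,n))$. For instance, at the fixed point $(0)$ its components lie in $\ZZ[(a_ia_{(0)}^{w_i})^{\pm1}]$, which does not contain $a_1$ once $c_{(0)}>1$; so the constant $a_1\cdot 1$ is not in $\mathscr{R}$. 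Only your main argument, pulling back $T_{\bf c}$-bundles on the coarse space, matches the paper's setting.
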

\begin{proof}
This follows directly from Theorem \ref{thm_eq_K_thm_pldn} and Theorem \ref{thm_K_T_CGRBDN}. If we substitute ${\bf a}_i$ by $\frac{a_i}{u^{w_i}}$ then $\frac{({\bf a}_{\mu})^{c_\lambda}}{({\bf a}_{\lambda})^{c_\mu}}=\frac{(u{a}_{\mu})^{c_\lambda}}{(u{a}_{\lambda})^{c_\mu}}.$
Thus $$1-\frac{{\bf a}_{\mu}}{({\bf a}_{\lambda})^{d_{\mu\lambda}}}= 1-\frac{u{a}_{\mu}}{(u{a}_{\lambda})^{d_{\mu\lambda}}}=1-{u{a}_{\mu}}\in \frac{\ZZ[a_1^{\pm1}, \dots, a_n^{\pm1},u^{\pm1}]}{(1-ua_{\lambda})}.$$
Therefore, if $x\in \ZZ[{\bf{a}}_1^{\pm1}, \dots, {\bf a}_n^{\pm1}]$ is divisible by $1-\frac{{\bf a}_{\mu}}{({\bf a}_{\lambda})^{d_{\mu\lambda}}}$, then substituting ${\bf a}_i$ by $\frac{a_i}{u^{w_i}}$ in $x$, and viewing the result in the quotient ring $\frac{\ZZ[a_1^{\pm1}, \dots, a_n^{\pm1},u^{\pm1}]}{(1-ua_{\lambda})}$ produces an element divisible by $1-{u{a}_{\mu}}$.
\end{proof}

% {\color{red}
%  Also $R(T^n)=\ZZ[a_1^\pm,\dots,a_n^\pm]$ is a submodule of $R(T^{n+1})=\ZZ[a_1^\pm,\dots,a_n^\pm,u^\pm]$. For each $\lambda\in \mathcal{P}(d,n)$ $R(T^n)$ is same as $\frac{\ZZ[a_1^{\pm}, \dots, a_n^{\pm},u^{\pm}]}{(1-u{\cdot}a_{\lambda})}$ if we put $u=\frac{1}{a_\lambda}$. Also, for each $\lambda$ there is an morphism between $R(T_{\bf c})$ and $R(T^n)$ through the map ${\bf{a}}_i$ to $a_i(a_\lambda)^{w_i}$. Thus ${\bf a}_\lambda$ maps to $a_\lambda(a_\lambda)^{c_\lambda-1}=a_\lambda^{c_\lambda}.$
% Thus there is an onto map from $R(T^n)$ to $R(T_{\bf c})$ given by $a_i^{c_\lambda}$ maps to $\frac{{\bf a}_i^{c_\lambda}}{{\bf a}_\lambda^{w_i}}$ 

%
%For an element $x_\lambda\in \ZZ[{\bf{a}}_1^{\pm}, \dots, {\bf a}_n^{\pm}]$ define $\bar{x}_\lambda\in\frac{\ZZ[a_1^{\pm}, \dots, a_n^{\pm},u^{\pm}]}{(1-u{\cdot}a_{\lambda})}$, be the corresponding element in the quotient space. This gives a $R(T_{\bf c})$-algebra morphism $\phi:K_{T_{\bf c}}(\G_{{\bf c}}(d,n))\to K_{T^{n+1}}(Pl(d,n)) $.For an element $f=(f_\lambda)\in K_{T_{\bf c}}(\G_{\bf c}(d,n))$ define $\phi({f})=(\bar{f}_\lambda)\in K_{T^{n+1}}(Pl(d,n))$.

%Thus the map $\pi_{\bf c}^*$ in Proposition \ref{prop_alg_mor} is actually given by the map ${\bf a}_i$ to $\frac{a_i}{u^{w_i}}$. More explicitly, if we substitute ${\bf a}_i$ by $\frac{a_i}{u^{w_i}}$ in an element $x\in K_{T_{\bf c}}(\G_{\bf c}(d,n))$ we get the element $\pi_{\bf c}^*(x)\in K_{T^{n+1}}(Pl(d,n))$.

\begin{remark}\label{rem_mu_com}
The map $\pi _{\mathbf{c}}^*$ in Proposition \ref{prop_alg_mor}, can be described as a component wise map, with the component indexed by $\lambda \in \mathcal{P}(d,n)$ given by $\mathbf{a_{\mathnormal{i}}}\mapsto a_ia_{\lambda }^{w_i}$.
%  The map $\pi_{\bf c}^*$ in Proposition \ref{prop_alg_mor} can be expressed as component wise, each component corresponds to $\lambda\in \mathcal{P}(d,n)$. The $\lambda$-th component is given by the map ${\bf a}_i$ to ${a_i}{a_\lambda^{w_i}}$.  
\end{remark}

We recall the Schubert class $pS_\lambda\in K_{T^{n+1}}(Pl(d,n))$ from \eqref{eq_pl_sc_bas}. Define 
$$\mathscr{R}:=\pi_{\bf c}^*\Big(K_{T_{\bf c}}(\G_{\bf c}(d,n))\Big)\subset K_{T^{n+1}}(Pl(d,n)).$$ 
  
\begin{lemma}\label{lem_subset}
    $pS_{\lambda}\in \mathscr{R}$.
\end{lemma}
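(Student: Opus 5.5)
The plan is to construct an explicit element $x_\lambda\in K_{T_{\bf c}}(\G_{\bf c}(d,n))$ with $\pi_{\bf c}^*(x_\lambda)=pS_\lambda$. The construction is componentwise, using the polynomial description of Remark \ref{rem_pol_2} and the componentwise description of $\pi_{\bf c}^*$ in Remark \ref{rem_mu_com}. By Lemma \ref{lem_div_a,1} we may assume that ${\bf c}$ comes from $(W,a)$ with $a=1$, so that $c_\nu=1+\sum_j w_{\tilde\nu_j}$. By Remark \ref{rem_pol_2} there are polynomials $f_{\lambda\mu}$ with $pS_\lambda|_\mu=f_{\lambda\mu}\big((ua_\nu)_{\nu\preceq\mu}\big)$.

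\textbf{Step 1 (definition of $x_\lambda$).} For each $\nu\preceq\mu$ we have $\nu\le\mu$ in the dictionary order, so divisivity gives $c_\mu\mid c_\nu$. Hence $d_{\nu\mu}=c_\nu/c_\mu$ is a positive integer and ${\bf a}_\nu/({\bf a}_\mu)^{d_{\nu\mu}}$ is a Laurent monomial in $\ZZ[{\bf a}_1^{\pm1},\dots,{\bf a}_n^{\pm1}]$. Define
$$x_\lambda|_\mu:=f_{\lambda\mu}\Big(\big(\tfrac{{\bf a}_\nu}{({\bf a}_\mu)^{d_{\nu\mu}}}\big)_{\nu\preceq\mu}\Big).$$

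\textbf{Step 2 (image under $\pi_{\bf c}^*$).} Apply the $\mu$-th component map ${\bf a}_i\mapsto a_ia_\mu^{w_i}$ of Remark \ref{rem_mu_com}. This sends ${\bf a}_\nu$ to $a_\nu a_\mu^{c_\nu-1}$. Therefore
$${\bf a}_\nu/({\bf a}_\mu)^{d_{\nu\mu}}\longmapsto \frac{a_\nu a_\mu^{c_\nu-1}}{a_\mu^{c_\nu}}=\frac{a_\nu}{a_\mu}=ua_\nu$$
in $\ZZ[a^{\pm1},u^{\pm1}]/(1-ua_\mu)$. So the $\mu$-th component of $\pi_{\bf c}^*(x_\lambda)$ equals $pS_\lambda|_\mu$ for every $\mu$. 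This step is routine once the exponent bookkeeping is checked.

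\textbf{Step 3 (main obstacle: the GKM conditions).} It remains to show that $(x_\lambda|_\mu)_\mu$ satisfies the divisibility conditions of Theorem \ref{thm_K_T_CGRBDN}, so that $x_\lambda$ really lies in $K_{T_{\bf c}}(\G_{\bf c}(d,n))$.

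My first attempt is an induction on $|\mu|$ that mirrors the proof of Lemma \ref{lem_loc_sc_bas_gsm}. I would use the recurrence of Proposition \ref{prop_mu_comp}, transported to the weighted variables: each ratio $a_\nu/a_\mu$ in the recurrence is replaced by ${\bf a}_\nu/({\bf a}_\mu)^{d_{\nu\mu}}$. I would then check that the differences $x_\lambda|_\lambda-x_\lambda|_\mu$ for $\mu\in I(\lambda)$ factor through $1-{\bf a}_\lambda/({\bf a}_\mu)^{d_{\lambda\mu}}$.

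The key algebraic input is that for $\mu\in I(\lambda)$ we have $\mu\le\lambda$, so $d$ is an integer. The divisibility $1-X\mid 1-X^k$ then lets one pass from $1-\frac{a_\lambda}{a_\mu}$-divisibility, which holds for $pS_\lambda$, to $1-{\bf a}_\lambda/{\bf a}_\mu^{d}$-divisibility.

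If this becomes messy, the fallback is to use the injectivity of $\ZZ[{\bf a}^{\pm1}]\to\ZZ[a^{\pm1},u^{\pm1}]$ together with Theorem \ref{thm_eq_K_thm_pldn}. One lifts the divisibility of $pS_\lambda|_\lambda-h^*(pS_\lambda|_\mu)$ by $1-ua_\mu$ back to the weighted ring, using that the element $1-{\bf a}_\mu/{\bf a}_\lambda^{d_{\mu\lambda}}$ maps to $1-ua_\mu$ (Proposition \ref{prop_alg_mor}).
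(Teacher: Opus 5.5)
Your Steps 1 and 2 are exactly the paper's proof: the same $x_\lambda$, and the same componentwise check via Remark \ref{rem_mu_com}. The paper then just asserts that $x_\lambda$ inherits the GKM conditions from $pS_\lambda$. So all the content is in your Step 3, and neither of your sketches closes it.

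\textbf{Problems with your Step 3.} The conditions of Theorem \ref{thm_K_T_CGRBDN} must hold on every edge $\{\kappa,\kappa'\}$ with $\kappa'\in I(\kappa)$, not only at the vertex $\lambda$ that indexes the class. On such an edge $\kappa'\succ\kappa$, so $\kappa<\kappa'$ and $c_{\kappa'}\mid c_\kappa$; you have this inequality reversed. The edge element is therefore $g:=1-\mathbf{a}_\kappa/(\mathbf{a}_{\kappa'})^{d_{\kappa\kappa'}}$ with $d_{\kappa\kappa'}=c_\kappa/c_{\kappa'}$. The element you borrow from Proposition \ref{prop_alg_mor} has the two vertices swapped and is in general not a Laurent polynomial.

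Under the $\kappa$-component map, $g$ becomes $1-(a_\kappa/a_{\kappa'})^{d_{\kappa\kappa'}}$, which is a multiple of $1-a_\kappa/a_{\kappa'}$. So $1-X\mid 1-X^k$ only shows that weighted divisibility implies unweighted divisibility, which is the wrong direction. Likewise, reading the Pl\"{u}cker condition in $R(\widetilde{T}_\kappa)$, divisibility by $1-ua_{\kappa'}$ does not by itself give divisibility by the image of $g$.

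The recurrence route does not transport either: $s_i$ does not commute with the substitution $a_\nu/a_\mu\mapsto\mathbf{a}_\nu/(\mathbf{a}_\mu)^{d_{\nu\mu}}$ when $w_i\neq w_{i+1}$. Finally, injectivity of $\ZZ[\mathbf{a}^{\pm1}]\to\ZZ[a^{\pm1},u^{\pm1}]$ is never enough to pull divisibility back; you need the relevant ideal to contract.

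\textbf{The missing argument.} Since $a_i=\mathbf{a}_iu^{w_i}$, we have $\ZZ[a^{\pm1},u^{\pm1}]=\ZZ[\mathbf{a}^{\pm1},u^{\pm1}]$, and with $a=1$, $1-ua_\kappa=1-u^{c_\kappa}\mathbf{a}_\kappa$.
\begin{enumerate}
\item Step 2 says precisely that $x_\lambda|_\kappa\equiv S_\lambda|_\kappa$ modulo $1-u^{c_\kappa}\mathbf{a}_\kappa$.
\item $S_\lambda|_\kappa-S_\lambda|_{\kappa'}$ is divisible by $1-a_\kappa/a_{\kappa'}$, and $1-a_\kappa/a_{\kappa'}\in(1-ua_\kappa,1-ua_{\kappa'})$.
\item Hence $x_\lambda|_\kappa-x_\lambda|_{\kappa'}\in J:=(1-u^{c_\kappa}\mathbf{a}_\kappa,\,1-u^{c_{\kappa'}}\mathbf{a}_{\kappa'})$.
\item Modulo the second generator, $u^{c_\kappa}=(u^{c_{\kappa'}})^{d_{\kappa\kappa'}}\equiv\mathbf{a}_{\kappa'}^{-d_{\kappa\kappa'}}$, so $J=(g,\,1-u^{c_{\kappa'}}\mathbf{a}_{\kappa'})$.
\item Thus $\ZZ[\mathbf{a}^{\pm1},u^{\pm1}]/J$ is free over $\ZZ[\mathbf{a}^{\pm1}]/(g)$ with basis $1,u,\dots,u^{c_{\kappa'}-1}$.
\item Therefore $J\cap\ZZ[\mathbf{a}^{\pm1}]=(g)$, which is exactly the required divisibility.
\end{enumerate}
The two ideas you are missing are these. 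You must work at the vertex $\kappa'$ of smaller weight, where $g$ maps exactly to $1-ua_\kappa$. And you must use this freeness, i.e.\ that the ideal contracts, rather than mere injectivity.
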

\begin{proof}
Using Remark \ref{rem_pol_2}, $ pS_{\lambda}|_\mu=f_{\lambda\mu}((ua_\nu)_{\nu\leq \mu})$, for some polynomial $f_{\lambda\mu}$. For every $\lambda\in \mathcal{P}(d,n)$ define an element $$x_\lambda\in \bigoplus_{\lambda\in \mathcal{P}(d,n)} R(T_{\bf c})$$ by 
 $x_{\lambda}|_\mu:=f_{\lambda\mu}\Big((\frac{{\bf a}_{\nu}}{({\bf a}_{\mu})^{d_{\nu\mu}}})_{\nu\leq \mu}\Big)\in R(T_{\bf c})$.  Note that $$\frac{({\bf a}_{\nu})}{({\bf a}_{\mu})^{d_{\nu\mu}}}=\frac{ua_\nu}{(ua_\mu)^{d_{\nu\mu}}}=ua_\nu\in \frac{\ZZ[a_1^{\pm1}, \dots, a_n^{\pm1},u^{\pm1}]}{(1-ua_{\mu})}.$$

 Thus $\pi_{\bf c}^*(x_\lambda|_\mu)=pS_\lambda|_\mu$. Since $pS_\lambda\in K_{T^{n+1}}(Pl(d,n))$, it satisfies the GKM condition. Then $x_{\lambda}$ also satisfies the GKM condition and $x_{\lambda}\in K_{T_{\bf c}}(\G_{\bf c}(d,n))$. Hence $pS_\lambda=\pi_{\bf c}^*(x_\lambda)\in \pi_{\bf c}^*(K_{T_{\bf c}}(\G_{\bf c}(d,n)))$.
\end{proof}

For every $\lambda\in \mathcal{P}(d,n)$, we define ${\bf c}S_\lambda|_\mu:=f_{\lambda\mu}\Big((\frac{{\bf a}_{\nu}}{({\bf a}_{\mu})^{d_{\nu\mu}}})_{\nu\leq \mu}\Big)$. In other words,
$${\bf c}S_{\lambda}=(\pi_{\bf c}^*)^{-1}\circ\pi^*(S_{\lambda}).$$
Then using Lemma \ref{lem_subset}, ${\bf c}S_{\lambda}\in K_{T_{\bf c}}(\G_{\bf c}(d,n))$.

\begin{lemma}\label{lem_sc_bas_wt_case}
For every $\lambda\in \mathcal{P}(d,n)$, the element ${\bf c}S_{\lambda}$ satisfies the following condition 
\begin{enumerate}
    \item ${\bf c}S_{\lambda}|_\mu= 0~\text{if }~ \mu \nsucceq \lambda$\\
    \item ${\bf c}S_{\lambda}|_\lambda=\prod_{\nu \in R(\lambda)}(1-\frac{{\bf a}_\nu}{({\bf a}_{\lambda})^{d_{\nu\lambda}}})$
\end{enumerate}
% $${\bf c}S_{\lambda}|_\mu= 
% \begin{cases}
%     \prod_{\nu \in I(\mu)}(1-\frac{{\bf a}_\nu}{({\bf a}_{\mu})^{d_{\nu\mu}}}) &\text{if}~~~ \lambda=\mu\\
%    ~~~~~~~~0~~~~&\text{if }~~~~~ \mu \nsucceq \lambda
% \end{cases}
%   $$  
\end{lemma}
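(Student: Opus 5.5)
The argument transports Lemma \ref{lem_pslam} through the substitution that defines ${\bf c}S_\lambda$. Recall that ${\bf c}S_\lambda|_\mu=f_{\lambda\mu}\big((\frac{{\bf a}_\nu}{({\bf a}_\mu)^{d_{\nu\mu}}})_{\nu\le\mu}\big)$, where $f_{\lambda\mu}$ is the polynomial of Remark \ref{rem_pol_1} and Remark \ref{rem_pol_2}, so that $S_\lambda|_\mu=f_{\lambda\mu}\big((\frac{a_\nu}{a_\mu})_{\nu\preceq\mu}\big)$ and $pS_\lambda|_\mu=f_{\lambda\mu}\big((ua_\nu)_{\nu\preceq\mu}\big)$. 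Both statements will follow by identifying $f_{\lambda\mu}$ for the relevant pairs $(\lambda,\mu)$, using Proposition \ref{prop_van_pro_S}, and then substituting.

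For (1), let $\mu\nsucceq\lambda$. Proposition \ref{prop_van_pro_S}(1) gives $S_\lambda|_\mu=0$. The polynomial $f_{\lambda\mu}$ is produced by the recurrence of Proposition \ref{prop_mu_comp}, as in the proof of Lemma \ref{lem_loc_sc_bas_gsm}. I will therefore check that this recurrence yields the zero polynomial, not merely a polynomial that vanishes after substitution. Induct on $|\mu|$. If $\mu=(0)$ and $\lambda\neq(0)$, then $S_\lambda|_{(0)}=0$ is the zero polynomial. If $\mu\succ(0)$, pick $i$ with $s_i\mu\prec\mu$. One checks that $\mu\nsucceq\lambda$ forces $s_i\mu\nsucceq\lambda$, and also $s_i\mu\nsucceq s_i\lambda$ when $s_i\lambda\prec\lambda$. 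These are standard Bruhat-order facts for the Schubert symbols in $I(d,n)$. Hence every term in the recurrence is zero by induction, so $f_{\lambda\mu}=0$ and ${\bf c}S_\lambda|_\mu=0$.

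For (2), $f_{\lambda\lambda}$ should be $\prod_{\nu\in R(\lambda)}(1-y_\nu)$ in the variables $y_\nu=\frac{a_\nu}{a_\lambda}$. This agrees with Proposition \ref{prop_van_pro_S}(2), and with Lemma \ref{lem_pslam}(2) under $y_\nu\mapsto ua_\nu$. Substituting $y_\nu\mapsto\frac{{\bf a}_\nu}{({\bf a}_\lambda)^{d_{\nu\lambda}}}$ gives the claimed formula. Two points need care. First, each $\nu\in R(\lambda)$ satisfies $\nu\prec\lambda$, hence $\nu\le\lambda$ in the dictionary order, so $d_{\nu\lambda}$ is defined. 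Second, the expression $S_\lambda|_\lambda$ as a polynomial in the ratios is not unique, because the ratios $a_\nu/a_\lambda$ are algebraically dependent. Since ${\bf c}S_\lambda$ is defined through the specific $f_{\lambda\mu}$ produced by the recurrence, I must show that the recurrence at $\mu=\lambda$ literally outputs the product form.

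This second point is the main obstacle. I would handle it by induction on $|\lambda|$. Choose $i$ with $s_i\lambda\prec\lambda$ and apply the first case of Proposition \ref{prop_mu_comp} at $\mu=\lambda$. By part (1), $S_\lambda|_{s_i\lambda}=0$ as a polynomial, so
\[f_{\lambda\lambda}=e_i(a)\,s_i\big(f_{s_i\lambda,s_i\lambda}\big).\]
The factor $e_i(a)=1-\frac{a_{s_i\lambda}}{a_\lambda}$ corresponds to $\nu=s_i\lambda\in R(\lambda)$. Moreover, $s_i$ permutes $R(s_i\lambda)$ bijectively onto $R(\lambda)\setminus\{s_i\lambda\}$ and sends the ratios based at $s_i\lambda$ to those based at $\lambda$. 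By induction, $f_{\lambda\lambda}$ is then the product polynomial.

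An alternative avoids this bookkeeping. Define ${\bf c}S_\lambda$ as $(\pi_{\bf c}^*)^{-1}(pS_\lambda)$ and use the injectivity of $\pi_{\bf c}^*$ componentwise. The map $R(T_{\bf c})\to R(T^{n+1})/(1-ua_\lambda)$, ${\bf a}_i\mapsto a_ia_\lambda^{w_i}$, is injective since it is a monomial map with trivial kernel on exponents. One then checks that both sides of (1) and (2) have the same image under it, using Lemma \ref{lem_pslam} together with the identity $\frac{{\bf a}_\nu}{({\bf a}_\lambda)^{d_{\nu\lambda}}}\mapsto ua_\nu$ from the proof of Lemma \ref{lem_subset}. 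I would present this second route if the injectivity check goes through cleanly.
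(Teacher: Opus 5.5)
Your proposal is correct, and your second route is the paper's argument. The paper's proof is the single line ``this follows from Lemma \ref{lem_pslam} and the definition of ${\bf c}S_\lambda$,'' with ${\bf c}S_\lambda=(\pi_{\bf c}^*)^{-1}\circ\pi^*(S_\lambda)$. That definition silently uses the componentwise injectivity you flag.

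The injectivity check does go through cleanly. At the $\mu$-component, ${\bf a}_i\mapsto a_ia_\mu^{w_i}$ sends a monomial ${\bf a}^m$ to $a^{Mm}$ with $M=I+\chi_\mu w^{T}$, where $\chi_\mu$ is the indicator vector of $\mu$. Since $a=1$, $\det M=1+\sum_{j\in\mu}w_j=c_\mu\geq 1$, so distinct monomials go to distinct monomials. With this in hand, the non-uniqueness of $f_{\lambda\mu}$ that you call the main obstacle disappears: every polynomial representing $pS_\lambda|_\mu$ gives the same ${\bf c}S_\lambda|_\mu$. Parts (1) and (2) then follow by comparing images with Lemma \ref{lem_pslam}, using $\frac{{\bf a}_\nu}{({\bf a}_\mu)^{d_{\nu\mu}}}\mapsto ua_\nu$.

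Your first route is genuinely different and also valid.
\begin{itemize}
\item The first Bruhat fact is just transitivity: $s_i\mu\succeq\lambda$ together with $\mu\succ s_i\mu$ would give $\mu\succeq\lambda$.
\item The second fact, that $s_i\mu\succeq s_i\lambda$ implies $\mu\succeq\lambda$ when both $s_i\lambda\prec\lambda$ and $s_i\mu\prec\mu$, can be checked directly. Passing from $s_i\lambda$, $s_i\mu$ to $\lambda$, $\mu$ only raises the entry $i$ to $i+1$ in place.
\item The bijection $\mathrm{rev}(s_i\lambda)\to\mathrm{rev}(\lambda)\setminus\{(i+1,i)\}$ induced by $s_i$ holds, because $(i+1,i)$ is not a reversal of $s_i\lambda$.
\end{itemize}

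What the first route buys is independence from injectivity. This holds only if $f_{\lambda\mu}$ is read as the specific output of the recurrence of Proposition \ref{prop_mu_comp}; your induction does cover every choice of $i$ along the way. However, the paper only says ``for some polynomial $f_{\lambda\mu}$.'' So injectivity is needed anyway just to make ${\bf c}S_\lambda$ well defined, and once you have it the paper's one-line comparison is the natural proof.
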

\begin{proof}
The proof follows from Lemma \ref{lem_pslam} and the definition of ${\bf c}S_{\lambda}$.
\end{proof}
%This can also be proved using the character of the torus action.
\begin{proposition}\label{lem_sc_bas_wt_case_gen}
   $\{{\bf c}S_\lambda\}_{\lambda\in \mathcal{P}(d,n)}$ forms a $R(T_{\bf c})$ module basis of $K_{T_{\bf c}}(\G_{\bf c}(d,n))$.   
\end{proposition}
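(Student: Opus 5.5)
The plan is to run the standard upper-triangularity argument for GKM-type descriptions, using Theorem \ref{thm_K_T_CGRBDN} together with Lemma \ref{lem_sc_bas_wt_case}. The proof has two parts, linear independence and spanning, both carried out in $\bigoplus_{\mu\in\mathcal{P}(d,n)}R(T_{\bf c})$. We fix a total order refining $\preceq$, for instance the dictionary order on $I(d,n)$; recall that it satisfies $\lambda\preceq\mu\Rightarrow\lambda\le\mu$. By Lemma \ref{lem_sc_bas_wt_case}(1), the matrix $({\bf c}S_\lambda|_\mu)_{\lambda,\mu}$ is triangular with respect to this order. Its diagonal entries ${\bf c}S_\lambda|_\lambda=\prod_{\nu\in R(\lambda)}(1-{\bf a}_\nu/({\bf a}_\lambda)^{d_{\nu\lambda}})$ are nonzero, since each factor is a nonconstant Laurent binomial in the domain $R(T_{\bf c})$. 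Independence follows at once: given a relation $\sum k_\lambda{\bf c}S_\lambda=0$, choose $\lambda_0$ minimal with $k_{\lambda_0}\ne0$ and restrict to $\lambda_0$. Every other term with $k_\lambda\ne0$ satisfies $\lambda\not\preceq\lambda_0$, so it restricts to zero, and we are left with $k_{\lambda_0}\,{\bf c}S_{\lambda_0}|_{\lambda_0}=0$. This is a contradiction.

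For spanning, take $f\in K_{T_{\bf c}}(\G_{\bf c}(d,n))$ and argue by induction on the minimal $\lambda$ in the support of $f$, peeling off one term at a time. Let $\lambda$ be minimal with $f|_\lambda\ne0$. The key claim is that ${\bf c}S_\lambda|_\lambda$ divides $f|_\lambda$ in $R(T_{\bf c})$. For each $\nu\in R(\lambda)$ we have $\nu\prec\lambda$, and $\nu$ is joined to $\lambda$ by an edge of the GKM graph, i.e.\ $\lambda\in I(\nu)$. Since $\nu\not\succeq\lambda$, minimality gives $f|_\nu=0$, so Theorem \ref{thm_K_T_CGRBDN} shows that $f|_\lambda$ is divisible by the edge label $1-{\bf a}_\nu/({\bf a}_\lambda)^{d_{\nu\lambda}}$. (I will need to check carefully that the divisibility condition in Theorem \ref{thm_K_T_CGRBDN}, written with the exponent $d$ on the correct side, matches the factor appearing in Lemma \ref{lem_sc_bas_wt_case}(2); I would verify this by pulling back via $\pi_{\bf c}^*$ as in Proposition \ref{prop_alg_mor}, where both become $1-ua_\nu$.) Once the claim holds, $f-\frac{f|_\lambda}{{\bf c}S_\lambda|_\lambda}{\bf c}S_\lambda$ lies in $K_{T_{\bf c}}(\G_{\bf c}(d,n))$ and vanishes at $\lambda$ and at every $\mu\not\succeq\lambda$. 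Iterating over the finite poset finishes the argument.

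The main obstacle is the divisibility step. Knowing that $f|_\lambda$ is divisible by each factor separately is not enough; we need divisibility by their product, so we must show the factors $1-{\bf a}_\nu/({\bf a}_\lambda)^{d_{\nu\lambda}}$, $\nu\in R(\lambda)$, are pairwise coprime in the UFD $R(T_{\bf c})$. I would establish this as in \cite[Lemma 5.2]{HHH}, which the paper already invokes in Remark \ref{rem_eq_strati}: distinct $\nu=(s,s')\lambda$ give characters ${\bf a}_\nu{\bf a}_\lambda^{-d_{\nu\lambda}}$ that are pairwise linearly independent (not rational multiples of each other) in the character lattice, because they differ in the ${\bf a}_{s'}$ and ${\bf a}_s$ exponents. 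Since $1-\chi$ and $1-\chi'$ share a factor only when $\chi,\chi'$ are powers of a common character, coprimality follows. Alternatively, one can cite \cite[Proposition 4.1]{HHH} directly, as in Proposition \ref{prop_HHH}, once it is checked that the hypotheses are satisfied for the GKM description of Theorem \ref{thm_K_T_CGRBDN}; given the previous sentence, this check is exactly the coprimality of the Euler classes. This is the route I would take.
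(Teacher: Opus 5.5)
Your argument is correct and is essentially the paper's proof: the paper simply cites \cite[Proposition 4.1]{HHH}, whose content is exactly this triangularity argument, and your two checks are sound. The labels $1-{\bf a}_\nu/({\bf a}_\lambda)^{d_{\nu\lambda}}$, $\nu\in R(\lambda)$, do match the edge conditions of Theorem \ref{thm_K_T_CGRBDN}, and they are pairwise coprime: the characters differ in the ${\bf a}_{s'}$-exponent, and if $s'$ agrees, comparing the ${\bf a}_s$-exponents would force $d_{\nu_2\lambda}=d_{\nu_1\lambda}+1=d_{\nu_2\lambda}+2$.

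One small repair is needed in the spanning step. Take $\lambda$ to be the least point of the support of $f$ in the dictionary order. After subtracting, you only know that the new class vanishes at $\lambda$ and at all $\mu<\lambda$, not at every $\mu\not\succeq\lambda$; with this choice, however, the least support point strictly increases, so the process terminates.
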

\begin{proof}
    This follows from \cite[Proposition 4.1]{HHH}.
\end{proof}

\begin{lemma}\label{lem_cS_1}
    ${\bf c}S_{(1)}|_{\lambda}=1-\frac{{\bf a}_{(0)}}{({\bf a}_{\lambda})^{d_\lambda}}$.
\end{lemma}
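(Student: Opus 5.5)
The plan is to unwind the definition ${\bf c}S_{(1)}=(\pi_{\bf c}^*)^{-1}\circ\pi^*(S_{(1)})$ and check the formula componentwise through the injectivity of $\pi_{\bf c}^*$. By Lemma \ref{lem_div class}, $pS_{(1)}|_\lambda=1-ua_{(0)}$ in $R(\widetilde T_\lambda)=\ZZ[a_1^{\pm1},\dots,a_n^{\pm1},u^{\pm1}]/(1-ua_\lambda)$. We therefore need an element of $R(T_{\bf c})$ whose image under the $\lambda$-th component of $\pi_{\bf c}^*$ is $1-ua_{(0)}$. The natural candidate is $1-\frac{{\bf a}_{(0)}}{({\bf a}_\lambda)^{d_\lambda}}$, where $d_\lambda=d_{(0)\lambda}=c_{(0)}/c_\lambda$.

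To verify this, I will substitute ${\bf a}_i=a_i/u^{w_i}$, as in Proposition \ref{prop_alg_mor}, using $a=1$ from Lemma \ref{lem_div_a,1}. This gives ${\bf a}_\mu=a_\mu u^{-(c_\mu-1)}=ua_\mu\, u^{-c_\mu}$. Consequently
$$\frac{{\bf a}_{(0)}}{({\bf a}_\lambda)^{d_\lambda}}=\frac{ua_{(0)}\,u^{-c_{(0)}}}{(ua_\lambda)^{d_\lambda}\,u^{-c_\lambda d_\lambda}}=\frac{ua_{(0)}}{(ua_\lambda)^{d_\lambda}},$$
because $c_\lambda d_\lambda=c_{(0)}$. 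Modulo $1-ua_\lambda$ this equals $ua_{(0)}$. This is exactly the computation already carried out in Proposition \ref{prop_alg_mor} and Lemma \ref{lem_subset}, specialized to $\nu=(0)\le\lambda$. Here $(0)$ is the smallest element in the dictionary order, so $d_{(0)\lambda}$ is defined for every $\lambda$.

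A more literal version follows the definition ${\bf c}S_\lambda|_\mu=f_{\lambda\mu}\big((\tfrac{{\bf a}_\nu}{({\bf a}_\mu)^{d_{\nu\mu}}})_{\nu\le\mu}\big)$. By Lemma \ref{lem_div_case} and Remark \ref{rem_pol_1}, $S_{(1)}|_\lambda=1-\frac{a_{(0)}}{a_\lambda}$, so the relevant polynomial is $f_{(1)\lambda}=1-X_{(0)}$ in the variable indexed by $\nu=(0)$. Substituting $X_{(0)}=\frac{{\bf a}_{(0)}}{({\bf a}_\lambda)^{d_\lambda}}$ gives the claim directly.

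The one point requiring care is well-definedness. The definition of ${\bf c}S_\lambda$ as $(\pi_{\bf c}^*)^{-1}(pS_\lambda)$ needs $\pi_{\bf c}^*$ to be injective, or at least needs the choice of $f_{\lambda\mu}$ not to matter. I would handle this by noting that the $\lambda$-th component map $R(T_{\bf c})\to R(\widetilde T_\lambda)\cong\ZZ[a_1^{\pm1},\dots,a_n^{\pm1}]$, given by ${\bf a}_i\mapsto a_ia_\lambda^{w_i}$ (Remark \ref{rem_mu_com}), is injective. Its image consists of Laurent monomials that are multiplicatively independent, since the matrix $(\delta_{ij}+w_i\cdot[j\in\lambda])$ is unimodular. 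It follows that the component is determined by its image, and the formula holds.
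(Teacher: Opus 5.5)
Your proof is correct and follows the paper's route. The paper's one-line proof is exactly Lemma \ref{lem_div class} combined with the definition ${\bf c}S_\lambda|_\mu=f_{\lambda\mu}\big((\tfrac{{\bf a}_\nu}{({\bf a}_\mu)^{d_{\nu\mu}}})_{\nu\le\mu}\big)$, taking $f_{(1)\lambda}=1-X_{(0)}$; your injectivity check adds an explicit well-definedness argument that the paper leaves implicit.

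One correction to that check: the exponent matrix $(\delta_{ij}+w_i[j\in\lambda])_{i,j}$ of ${\bf a}_i\mapsto a_ia_\lambda^{w_i}$ has determinant $1+\sum_{j\in\lambda}w_j=c_\lambda$, so it is not unimodular when $c_\lambda>1$ (indeed ${\bf a}_\lambda\mapsto a_\lambda^{c_\lambda}$). However, a nonzero determinant already makes the character-lattice map injective, hence also the map of Laurent polynomial rings, and injectivity is all your argument uses.
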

\begin{proof}
 This follows from Lemma \ref{lem_div class} and the definition of ${\bf c}S_{\lambda}$.
\end{proof}

Thus through the map $\pi_{\bf c}^*$ we have a $R(T_{\bf c})$-algebra isomorphism between $K_{T_{\bf c}}(\G_{\bf c}(d,n))$ and $\mathscr{R}\subset K_{T^{n+1}}(Pl(d,n))$, such that ${\bf c}S_\lambda$ maps to $pS_\lambda$.

Let $\varepsilon_{\bf c}: \ZZ[\mathbf{a}_1^{\pm1},\dots,{\bf a}_n^{\pm1}]\to \ZZ$ be the ring homomorphism given by $\varepsilon_{\bf c}({\bf a}_i)=1$. The ordinary $K$-theory $K(\G_{\bf c}(d,n))$ is a ring over $\mathbb{Z}$, can be recovered from the equivariant $K$-theory by base change:
\begin{equation}\label{eq_base_cng}
    K(\mathrm{Gr}_{\bf c}(d,n)) \cong K_{T_{\bf c}}(\mathrm{Gr}_{\bf c}(d,n)) \otimes_{R(T_{\bf c})} \mathbb{Z},
\end{equation}
where $\ZZ$ is considered as a $R(T_{\bf c })$-algebra by the map $\varepsilon_{\bf c}$, see \cite[Proposition 3.25]{KK}. The map $\varepsilon_{\bf c}$ corresponds to trivializing the torus action.
Thus, we have the forgetful map 
$$ K_{T_{\bf c}}(\mathrm{Gr}_{\bf c}(d,n)) \longrightarrow K(\mathrm{Gr}_{\bf c}(d,n)).$$
Let ${\bf c}\mathbb{S}_\lambda$ be the image of ${\bf c}S_\lambda$ through the forgetful map.

\begin{remark}
 A divisive weighted Grassmann orbifolds  $\G_{\bf c}(d,n)$ always has a CW complex structure $\{E(\lambda)\}_{\lambda\in \mathcal{P}(d,n)}$ as in Theorem \ref{thm_CW_cplx_str}. Geometrically, ${\bf c}{{S}}_{\lambda}$ represents the Schubert class in the equivariant $K$-theory of the divisive weighted Grassmann orbifold corresponding to the closure of the cell ${E}(\lambda)$. Thus ${\bf c}{\mathbb{S}}_{\lambda}$ represents the Schubert class in ordinary $K$-theory corresponding to the closure of the cell ${E}(\lambda)$.
\end{remark}

% We have the following product formulae in the equivariant $K$-theory of the divisive weighted Grassmann orbifold.
% \begin{equation}
% 	{\bf{c}}{{S}}_{\lambda}{\bf{c}}{{S}}_{\mu}=\sum_{\nu }{\bf{c}}{K}_{\lambda\mu}^{\nu}{\bf{c}}{{S}}_{\nu}.
% \end{equation}

\begin{proposition}
    The structure constants ${\bf c}K_{\lambda \mu}^{\nu}$ with respect to Schubert basis $\{{\bf c}S_{\lambda}:\lambda\in \mathcal{P}(d,n)\}$ have the following properties.
	\begin{enumerate}
		\item ${\bf{c}}{K}_{\lambda \mu}^{\nu}=0$ unless  $\nu \succeq \lambda, \mu$.   
		\item If $\lambda=\nu$ we have ${\bf{c}}{K}_{\lambda\mu}^{\lambda}={\bf c}{{S}_{\mu}}|_{\lambda}$.
	\end{enumerate}
\end{proposition}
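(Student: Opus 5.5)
We prove both parts by localizing the product identity ${\bf c}S_\lambda\,{\bf c}S_\mu=\sum_{\eta}{\bf c}K_{\lambda\mu}^{\eta}\,{\bf c}S_\eta$ at the torus fixed points. Multiplication in $K_{T_{\bf c}}(\G_{\bf c}(d,n))$ is componentwise in the GKM description of Theorem \ref{thm_K_T_CGRBDN}. Hence for every $\nu\in\mathcal{P}(d,n)$ we get
$${\bf c}S_\lambda|_\nu\,{\bf c}S_\mu|_\nu=\sum_{\eta}{\bf c}K_{\lambda\mu}^{\eta}\,{\bf c}S_\eta|_\nu .$$
By Lemma \ref{lem_sc_bas_wt_case}(1), the term ${\bf c}S_\eta|_\nu$ vanishes unless $\nu\succeq\eta$. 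So the sum runs over $\eta\preceq\nu$, giving an upper triangular system with respect to $\preceq$.

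For (1), fix $\nu$ with $\nu\not\succeq\lambda$ or $\nu\not\succeq\mu$; we show ${\bf c}K_{\lambda\mu}^{\nu}=0$ by induction along $\preceq$. Let $\nu$ be a $\preceq$-minimal element of the set of such $\nu$ with ${\bf c}K_{\lambda\mu}^{\nu}\neq 0$, and evaluate the identity at $\nu$. The left side is zero by Lemma \ref{lem_sc_bas_wt_case}(1). On the right, consider any $\eta\prec\nu$ with ${\bf c}K_{\lambda\mu}^{\eta}\ne0$. By minimality $\eta\succeq\lambda,\mu$, and then $\nu\succ\eta$ would force $\nu\succeq\lambda,\mu$, a contradiction. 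So all such terms vanish and the right side reduces to ${\bf c}K_{\lambda\mu}^{\nu}\,{\bf c}S_\nu|_\nu$.

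It remains to see that ${\bf c}S_\nu|_\nu$ is a nonzero element of the integral domain $\ZZ[{\bf a}_1^{\pm1},\dots,{\bf a}_n^{\pm1}]$. By Lemma \ref{lem_sc_bas_wt_case}(2) it equals $\prod_{\rho\in R(\nu)}(1-{\bf a}_\rho/({\bf a}_\nu)^{d_{\rho\nu}})$. Each factor is nonzero because ${\bf a}_\rho\neq({\bf a}_\nu)^{d_{\rho\nu}}$ as monomials whenever $\rho\neq\nu$. This follows from the multidegrees: ${\bf a}_\rho$ is squarefree with support $\rho$, and $\rho\neq\nu$ differ in support, so no power of ${\bf a}_\nu$ equals it. Hence ${\bf c}K_{\lambda\mu}^{\nu}=0$, contradicting the choice of $\nu$.

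For (2), set $\nu=\lambda$ in the localized identity. By (1), only those $\eta$ with $\eta\succeq\lambda,\mu$ and $\eta\preceq\lambda$ survive, and these force $\eta=\lambda$. The identity becomes ${\bf c}S_\lambda|_\lambda\,{\bf c}S_\mu|_\lambda={\bf c}K_{\lambda\mu}^{\lambda}\,{\bf c}S_\lambda|_\lambda$. Cancelling the nonzero factor ${\bf c}S_\lambda|_\lambda$ in the domain gives ${\bf c}K_{\lambda\mu}^\lambda={\bf c}S_\mu|_\lambda$.

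The only delicate point is the non-vanishing of the diagonal restrictions ${\bf c}S_\nu|_\nu$ in the weighted setting. There the factors $1-{\bf a}_\rho/({\bf a}_\nu)^{d_{\rho\nu}}$ replace the familiar $1-a_\rho/a_\nu$, and the monomial comparison above settles it. An alternative is to transport everything through the injective map $\pi_{\bf c}^*$ and the isomorphism $\pi^*$, reducing both parts to the Grassmannian case where $S_\lambda|_\lambda\neq0$ is standard.
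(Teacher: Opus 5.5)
Your proof is correct and follows the paper's route. The paper's proof is the single remark that both parts follow from the upper triangularity of ${\bf c}S_\lambda$ in Lemma \ref{lem_sc_bas_wt_case}, and you have written out exactly that argument: localizing the product, inducting along $\preceq$, and cancelling the diagonal entry. Your monomial check that ${\bf c}S_\nu|_\nu\neq 0$ in the domain $\ZZ[{\bf a}_1^{\pm1},\dots,{\bf a}_n^{\pm1}]$ supplies the one detail the paper leaves implicit.
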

\begin{proof}
    This can be proved using the upper triangularity of ${\bf c}{{S}_{\lambda}}$ as in Lemma \ref{lem_sc_bas_wt_case}.
\end{proof}

We will describe the explicit formulae of ${\bf{c}}{K}_{\lambda \mu}^{\nu}$ in Section \ref{sec_int_coh}.

\section{Twisted factorial Grothendieck polynomials and algebraic localization map}\label{sec_twis_gro_pol}
In this section, we introduce twisted factorial Grothendieck polynomials and prove that twisted factorial Grothendieck polynomials represent the Schubert classes in the equivariant $K$-theory of divisive weighted Grassmann orbifold through an algebraic localization map. We also introduce twisted Grothendieck polynomials and prove that they represent the Schubert structure sheaves in the ordinary $K$-theory of divisive weighted Grassmann orbifolds.

\subsection{Twisted factorial Grothendieck polynomials}
 Let ${\bf c}=(c_\lambda)_{\lambda \in \mathcal{P}(d,n)}$ be a divisive {\Pl} weight vector. By Lemma \ref{lem_div_a,1}, ${\bf c}$ corresponds to $W=(w_1,\dots,w_n)\in (\ZZ_{\geq 0})^n$, and $a=1$.  We assume $w_i=0$ for $i> n$. We define a sub ring $\ZZ[\mathbb{a}_1^{\pm1},\mathbb{a}^{\pm1}_2,\dots]$ of $\ZZ[\xi(x)^{\pm1},a_1^{\pm1},\dots]$ by 
 $$\mathbb{a}_i:=\frac{a_i}{(\xi(x))^{w_i}} \text{ for }i\geq 1.$$ There is a canonical isomorphism of rings $$h: \ZZ[a_1^{\pm1},a_2^{\pm1},\dots][x_1,\dots,x_d]^{S_d} \to \ZZ[\mathbb{a}^{\pm1}_1,\mathbb{a}^{\pm1}_2,\dots][x_1,\dots,x_d]^{S_d}$$ by sending $h(a_i)=(\xi(x))^{w_i}\mathbb{a}_i$ and $h(x_i)=x_i$ for all $i\geq 1$. For every $\lambda\in \mathcal{P}_d$, we define $G_{\lambda}^{\bf c}(x_1,\dots,x_d|\mathbb{a}_1,\mathbb{a}_2,\dots)$ as the image of $G_{\lambda}(x_1,\dots,x_d|1-a_1,1-a_2,\dots)$ under the identification $h$.
 %$ \ZZ[a_1^\pm,\dots,a_n^\pm][x_1,\dots,x_d]^{S_d}\cong\ZZ[\mathbb{a}^{\pm1}_1,\mathbb{a}^{\pm1}_2,\dots,\mathbb{a}^{\pm1}_n][x_1,\dots,x_d]^{S_d}$ defined by $a_i=\mathbb{a}_i^{\bf c}(\pi(x))^{w_i}$. 
 Thus 
 \begin{equation}\label{eq_fac_gro_pol}
G_\lambda^{\bf c}(x_1,\dots,x_d|\mathbb{a}_1,\mathbb{a}_2,\dots):=G_\lambda(x_1,\dots,x_d|1-\mathbb{a}_1{(\xi(x))^{w_1}},1-\mathbb{a}_2{(\xi(x))^{w_2}},\dots).
 \end{equation}

For notational convenience, we denote the polynomial $G_\lambda^{\bf c}(x_1,\dots,x_d|\mathbb{a}_1,\mathbb{a}_2,\dots)$ by $G_\lambda^{\bf c}(x|\mathbb{a})$. %The polynomial $G_\lambda^{\bf c}(x|\mathbb{a})\in \ZZ[\mathbb{a}^{\pm1}_1,\mathbb{a}^{\pm1}_2,\dots][x_1,\dots,x_d]^{S_d}$.
We call the polynomial $G_\lambda^{\bf c}(x|\mathbb{a})$ by `twisted factorial Grothendieck polynomial'.

%It represent the Schubert basis ${\bf c}S_{\lambda}$. 

\begin{example}\label{ex_int_case}
If $\lambda=(1)$ then $G_{(1)}(x|1-a)=1-\xi(x)a_{(0)}$. Now replace $a_i=(\xi(x))^{w_i}\mathbb{a}_i$, then $a_{(0)}=(\xi(x))^{c_{(0)}-1}\mathbb{a}_{(0)}$. Thus $G_{(1)}^{\bf c}(x|\mathbb{a})=1-(\xi(x))^{c_{(0)}}\mathbb{a}_{(0)}.$  
\end{example}

\begin{remark}
 $\{G_{\lambda}^{\bf c}(x_1,\dots,x_d|\mathbb{a}_1,\mathbb{a}_2,\dots)\}_{\lambda \in \mathcal{P}_d}$ form a basis of $\ZZ[\mathbb{a}^{\pm1}_1,\mathbb{a}^{\pm1}_2,\dots][x_1,\dots,x_d]^{S_d}$ as a $\ZZ[\mathbb{a}^{\pm1}_1,\mathbb{a}^{\pm1}_2,\dots]$-algebra.
 \end{remark}

% For ordinary case $G_{\lambda}(x|1-a)$ represent Schubert basis in $K_{T^n}(\G(d,n))$ through the algebraic localization map $\Psi$. Thus, comparing the component we have
% \begin{proposition}
%     $\Psi_{\mu}(G_{\lambda}(x|1-a))=S_{\lambda}|_{\mu}$. In particular, for $\lambda=\mu$    $$\Psi_{\lambda}(G_{\lambda}(x|1-a))=\prod_{\mu\in I(\lambda)}(1-\frac{a_{{\mu}}}{a_{{\lambda}}}).$$
% \end{proposition}

% \begin{example}
%     $\Psi_{\mu}(\xi(x))=\frac{1}{a_{{\mu}}}$. Thus $\Psi_{\mu}(G_{(1)}(x|1-a))=\Psi_{\mu}(1-\xi(x)a_{(0)})=(1-\frac{a_{(0)}}{a_{{\mu}}})$.
% \end{example}
 
\subsection{Algebraic localization map}
Now we construct an algebraic localization map $\Psi^{\bf c}$, an algebra homomorphism from  $\ZZ[\mathbb{a}^{\pm1}_1,\mathbb{a}^{\pm1}_2,\dots][x_1,\dots,x_d]^{S_d}$ to $K_{T_{\bf c}}(\G_{\bf c}(d,n))$ that gives the correspondence between the twisted factorial Grothendieck polynomial $G_{\lambda}^{\bf c}(x|\mathbb{a})$ and the Schubert class ${\bf c}S_\lambda$ in $K_{T_{\bf c}}(\G_{\bf c}(d,n))$ introduced in Section \ref{Sec_Sc_bas_eq_K_th_wgt_gsm}.

%The image of $G_{\lambda}^{\bf c}(x|\mathbb{a})$ under the map $\psi^{\bf c}$ is shown to be a family of Schubert classes.

% Define an algebra homomorphism of $\ZZ[\mathbb{a}_1^{\pm1},\mathbb{a}_2^{\pm1},\dots,\mathbb{a}_n^{\pm1}]$ to $\ZZ[{\mathbf{a}}_1^{\pm}, \dots, {\bf a}_n^{\pm}]$ algebra as  
% $$\psi^{\bf c}_{\mu}: \ZZ[\mathbb{a}^{\pm1}_1,\mathbb{a}^{\pm1}_2,\dots,][x_1,x_2,\dots,x_d]^{S_d}\to \ZZ[{\bf{a}}_1^{\pm}, \dots, {\bf a}_n^{\pm}]$$ by $\psi_\mu^{\bf c}=\psi_\mu\circ h^{-1}$.

% {\color{red}
% For weighted case, $a_i=\mathbb{a}_i\xi(x)^{w_i}$. Thus $${a}_{\widetilde{\mu}_i}=\frac{\mathbb{a}_{\widetilde{\mu}_i}}{(a_{\widetilde{\mu}})^{w_{\widetilde{\mu}_i}}}.$$ Moreover,  ${a}_{{\mu}}^{c_\mu}={\mathbb{a}_{{\mu}}}$.
% the following $\psi_{\mu}^{\bf c}(x_i)=1-\frac{(a_{{\mu}})^{w_{\widetilde{\mu}_i}}}{{\bf{a}}_{\widetilde{\mu}_i}}$, where $({a}_{{\mu}})^{c_\mu}={\bf{a}_{{\mu}}}$. }

\begin{theorem}\label{thm_alg_mor}
There exists a surjective algebra homomorphism $$\Psi^{\bf c}\colon \ZZ[\mathbb{a}^{\pm1}_1,\mathbb{a}^{\pm1}_2,\dots][x_1,\dots,x_d]^{S_d}\to K_{T_{\bf c}}(\G_{\bf c}(d,n))$$ of $\ZZ[\mathbb{a}_1^{\pm1},\mathbb{a}_2^{\pm1},\dots]\to \ZZ[{\mathbf{a}}_1^{\pm1}, \dots, {\bf a}_n^{\pm1}]$ algebra, where $\mathbb{a}_i\to {\bf{a}_i}$ for $1\leq i\leq n$ and $\mathbb{a}_i\to 1$ for $i>n$ such that \[\Psi^{\bf c}(G_{\lambda}^{\bf c}(x|\mathbb{a}))=\begin{cases}
{\bf c}S_{\lambda} &\text{ if } \lambda\in \mathcal{P}(d,n)\\
 0 &\text{ otherwise } \\  
\end{cases}.
\]
\end{theorem}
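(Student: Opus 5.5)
The plan is to build $\Psi^{\bf c}$ by transporting the map $\phi=\pi^*\circ\Psi$ of Proposition \ref{sur_alg_hom_pldn} through the ring isomorphism $h$ and then pulling back along the injection $\pi_{\bf c}^*$. Concretely, consider the composite
$$\phi\circ h^{-1}\colon \ZZ[\mathbb{a}^{\pm1}_1,\mathbb{a}^{\pm1}_2,\dots][x_1,\dots,x_d]^{S_d}\to K_{T^{n+1}}(Pl(d,n)).$$
By construction $h^{-1}(G_\lambda^{\bf c}(x|\mathbb{a}))=G_\lambda(x|1-a)$, so $\phi\circ h^{-1}$ sends $G^{\bf c}_\lambda(x|\mathbb{a})$ to $pS_\lambda$ when $\lambda\in\mathcal{P}(d,n)$ and to $0$ otherwise. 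We then need to track the coefficient ring. Since $\phi(\xi(x))=u\cdot 1$ and $\phi(a_i)=a_i$ for $i\le n$ (and $1$ for $i>n$, where also $w_i=0$), we get
$$\phi\circ h^{-1}(\mathbb{a}_i)=\phi\big(a_i\xi(x)^{-w_i}\big)=\frac{a_i}{u^{w_i}},$$
which is exactly the image of ${\bf a}_i$ under the inclusion $R(T_{\bf c})\hookrightarrow R(T^{n+1})$ of Proposition \ref{prop_alg_mor}. Hence $\phi\circ h^{-1}$ is linear over $\ZZ[\mathbb{a}^{\pm1}_1,\mathbb{a}^{\pm1}_2,\dots]\to R(T_{\bf c})$ with $\mathbb{a}_i\mapsto{\bf a}_i$ for $i\le n$ and $\mathbb{a}_i\mapsto 1$ for $i>n$.

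Next I would show that the image of $\phi\circ h^{-1}$ lands in $\mathscr{R}=\pi_{\bf c}^*(K_{T_{\bf c}}(\G_{\bf c}(d,n)))$. By the Remark preceding the theorem, every element of the source is a finite $\ZZ[\mathbb{a}^{\pm1}]$-combination $\sum k_\lambda G^{\bf c}_\lambda(x|\mathbb{a})$. Its image is $\sum \bar k_\lambda\, pS_\lambda$, where $\bar k_\lambda$ is the image of $k_\lambda$ and lies in $\pi_{\bf c}^*(R(T_{\bf c}))$. By Lemma \ref{lem_subset}, each $pS_\lambda=\pi_{\bf c}^*({\bf c}S_\lambda)$ lies in $\mathscr{R}$. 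Since $\mathscr{R}$ is an $R(T_{\bf c})$-subalgebra, the image lies in $\mathscr{R}$.

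Finally, I need $\pi_{\bf c}^*$ to be injective so that $\Psi^{\bf c}:=(\pi_{\bf c}^*)^{-1}\circ\phi\circ h^{-1}$ is defined; the paper already asserts that $\pi_{\bf c}^*$ is an isomorphism onto $\mathscr{R}$ with ${\bf c}S_\lambda\mapsto pS_\lambda$. This step is where I expect the main (mild) obstacle. If I must justify it myself, I would argue componentwise via Remark \ref{rem_mu_com}: the $\mu$-component map $\ZZ[{\bf a}^{\pm1}]\to \ZZ[a^{\pm1}]$, ${\bf a}_i\mapsto a_ia_\mu^{w_i}$, is injective. Indeed, it is a monomial map whose exponent matrix $I+w\,e_\mu^T$ has determinant $1+\sum_{j\in\mu}w_j=c_\mu\neq0$, so it is injective on Laurent monomials. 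Alternatively, injectivity follows from the basis statement Proposition \ref{lem_sc_bas_wt_case_gen} combined with the fact that the $pS_\lambda$ are $R(T^{n+1})$-independent.

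With $\Psi^{\bf c}$ defined, it is an algebra homomorphism as a composite of algebra homomorphisms, and it sends $G^{\bf c}_\lambda(x|\mathbb{a})$ to ${\bf c}S_\lambda$ for $\lambda\in\mathcal{P}(d,n)$ and to $0$ otherwise. Surjectivity then follows because its image contains the basis $\{{\bf c}S_\lambda\}$ of Proposition \ref{lem_sc_bas_wt_case_gen}, and the coefficient map $\ZZ[\mathbb{a}^{\pm1}]\to R(T_{\bf c})$ is onto.
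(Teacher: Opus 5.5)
Your construction is the paper's own. The paper defines $\Psi^{\bf c}_\mu=\phi_\mu\circ h^{-1}$ one fixed point at a time and views $\ZZ[\mathbf{a}_1^{\pm1},\dots,\mathbf{a}_n^{\pm1}]$ inside $\ZZ[a_1^{\pm1},\dots,a_n^{\pm1},u^{\pm1}]/(1-ua_\mu)$ via $\mathbf{a}_i\mapsto a_ia_\mu^{w_i}$; that is exactly your $(\pi_{\bf c}^*)^{-1}\circ\phi\circ h^{-1}$. Your determinant computation ($\det=c_\mu\neq0$) correctly supplies the injectivity that the paper uses tacitly. Your alternative argument does not work: the $pS_\lambda$ are not independent over $R(T^{n+1})$, since $u\,a_{(0)}\,pS_{(0)}=pS_{(0)}-pS_{(1)}$. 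You should also note that $h^{-1}(\mathbb{a}_i)=a_i\xi(x)^{-w_i}$ forces you to extend $\phi$ to the localization at $\xi(x)$. This is harmless, because $\phi(\xi(x))=u$ is a unit.

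The genuine gap is the step showing that the image lies in $\mathscr{R}$. It rests on the Remark after Example \ref{ex_int_case}, that the $G^{\bf c}_\lambda(x|\mathbb{a})$ span $\ZZ[\mathbb{a}^{\pm1}_1,\mathbb{a}^{\pm1}_2,\dots][x_1,\dots,x_d]^{S_d}$. That Remark is false as soon as ${\bf c}\neq(1,\dots,1)$. For instance, for $d=1$ one has $G^{\bf c}_{(k)}(x|\mathbb{a})=\prod_{j=1}^{k}\bigl(1-\mathbb{a}_j(1-x_1)^{1+w_j}\bigr)$. When $w_1\geq1$, every $G^{\bf c}_{(k)}$ with $k\geq1$ has degree at least $2$ in $1-x_1$, the degrees strictly increase, and the leading coefficients are units. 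Hence $x_1$ is not in their span.

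The step cannot be patched, because its conclusion is itself false. If ${\bf c}\neq(1,\dots,1)$ then $c_{(0)}\geq2$, since every $c_\lambda$ divides $c_{(0)}$. In that case no homomorphism with the stated properties exists on the whole ring:
\begin{itemize}
\item By Lemma \ref{lem_cS_1}, ${\bf c}S_{(1)}|_{(0)}=0$.
\item By Example \ref{ex_int_case}, $G^{\bf c}_{(1)}(x|\mathbb{a})=1-\mathbb{a}_{(0)}\xi(x)^{c_{(0)}}$.
\item So $y:=\Psi^{\bf c}(\xi(x))|_{(0)}\in\ZZ[\mathbf{a}_1^{\pm1},\dots,\mathbf{a}_n^{\pm1}]$ would satisfy $y^{c_{(0)}}=\mathbf{a}_{(0)}^{-1}$.
\item Then $y$ is a unit $\pm\mathbf{a}^e$, which forces $c_{(0)}e_1=-1$, impossible.
\end{itemize}
In your notation, $\phi\circ h^{-1}(\xi(x))=u\cdot 1$ has $(0)$-component $a_{(0)}^{-1}$. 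This is not of the form $\mathbf{a}^e$ under $\mathbf{a}_i\mapsto a_ia_{(0)}^{w_i}$, because that would require $e=t\chi_{(0)}$ with $t\,c_{(0)}=-1$. So $u\cdot1\notin\mathscr{R}$.

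The paper's proof has the same hole. Its final step, that the image is the $R(T_{\bf c})$-algebra generated by the ${\bf c}S_\lambda$, uses the same spanning claim. Its own example after Corollary \ref{cor_alg_mor} computes $\Psi^{\bf c}_\mu(\xi(x))=1/a_\mu$, which is not in $\ZZ[\mathbf{a}_1^{\pm1},\dots,\mathbf{a}_n^{\pm1}]$.

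What your argument does prove is a restricted statement. On the $\ZZ[\mathbb{a}^{\pm1}_1,\mathbb{a}^{\pm1}_2,\dots]$-span of the $G^{\bf c}_\lambda(x|\mathbb{a})$, the map $(\pi_{\bf c}^*)^{-1}\circ\phi\circ h^{-1}$ is well defined by Lemma \ref{lem_subset}. It sends $G^{\bf c}_\lambda$ to ${\bf c}S_\lambda$ or $0$, and it is onto. To call it an algebra homomorphism, you would still have to show that this span is closed under multiplication, which needs its own argument.
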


\begin{proof}
%We construct the algebra homomorphism $\Psi_\mu^{\bf c}$ using $\phi_\mu$ as described in Remark \ref{rmk_phi_mu}, and the isomorphism $h$. 
For every $\mu\in \mathcal{P}(d,n)$, we define an  algebra homomorphism $$\Psi_{\mu}^{\bf c}:\ZZ[\mathbb{a}^{\pm1}_1,\mathbb{a}^{\pm1}_2,\dots][x_1,\dots,x_d]^{S_d}\to \ZZ[\mathbf{a}^{\pm1}_1,\dots,\mathbf{a}^{\pm1}_n] $$ by $\Psi_\mu^{\bf c}(z)=\phi_\mu\circ h^{-1}(z)$. 
  Thus we have the following commutative diagram
\[ \begin{tikzcd}
\ZZ[a_1^{\pm1},a_2^{\pm1},\dots][x_1,\dots,x_d]^{S_d}	 \arrow{r}{\phi_{\mu}}  & {\ZZ[a_1^{\pm1},\dots,a_n^{\pm1},u^{\pm1}]}/{(1-ua_\mu)}  \\%
\ZZ[\mathbb{a}^{\pm1}_1,\mathbb{a}^{\pm1}_2,\dots,][x_1,\dots,x_d]^{S_d}\arrow{u}{\cong} \arrow{r}{\Psi_{\mu}^{\bf c}} & \ZZ[\mathbf{a}^{\pm1}_1,\dots,\mathbf{a}^{\pm1}_n]\arrow{u}{}.
	\end{tikzcd}
	\]

First we show that $\phi_\mu\circ h^{-1}(z)\in \ZZ[\mathbf{a}^{\pm1}_1,\dots,\mathbf{a}^{\pm1}_n]$. For every $\mu \in \mathcal{P}(d,n)$, the ring $\ZZ[\mathbf{a}^{\pm1}_1,\dots,\mathbf{a}^{\pm1}_n]$ can be considered as a sub ring of ${\ZZ[a_1^{\pm1},\dots,a_n^{\pm1},u^{\pm1}]}/{(1-ua_\mu)}$ via the identification $\mathbf{a}_i=a_ia_\mu^{w_i}$, as described in Remark \ref{rem_mu_com}.

 For $1\leq i\leq n, \Psi_\mu^{\bf c}(\mathbb{a}_i)=\phi_\mu(\frac{a_i}{\xi(x)^{w_i}})={a_i}{a_\mu^{w_i}}=\mathbf{a}_i;~ \Psi_\mu^{\bf c}(\mathbb{a}_i)=\phi_\mu(a_i)=1$ for $i>n.$
$$\text{ For } \lambda \in \mathcal{P}(d,n), \Psi_\mu^{\bf c}(G_{\lambda}^{\bf c}(x|\mathbb{a}))=\phi_\mu(G_\lambda(x|1-a))=pS_\lambda|_\mu.$$
% For $i>n$ $\Psi_\mu^{\bf c}(\mathbb{a}_i)=\phi_\mu(\frac{a_i}{\pi(x)^{w_i}})=0$
%Moreover, $\psi_\mu^{\bf c}(G_{\lambda}^{\bf c}(x|\mathbb{a}))=\psi_\mu(G_\lambda(x|1-a))=S_\lambda|_\mu$. 

 Using Lemma \ref{lem_subset}, $pS_\lambda|_\mu\in \ZZ[\mathbf{a}^{\pm1}_1,\dots,\mathbf{a}^{\pm1}_n]$. Thus $\Psi_\mu^{\bf c}$ is well defined.
    Define 
    $$\Psi^{\bf c}\colon \ZZ[\mathbb{a}^{\pm1}_1,\mathbb{a}^{\pm1}_2,\dots,][x_1,\dots,x_d]^{S_d}\to \bigoplus_{\lambda\in \mathcal{P}(d,n)} \ZZ[{\bf{a}}_1^{\pm1}, \dots, {\bf a}_n^{\pm1}]$$ by $(\Psi^{\bf c}(f))_\mu=\Psi^{\bf c}_\mu(f)$. Then $\Psi^{\bf c}(G_{\lambda}^{\bf c}(x|\mathbb{a}))={\bf c}S_{\lambda}$ for $\lambda\in \mathcal{P}(d,n)$ and $\Psi^{\bf c}(G_{\lambda}^{\bf c}(x|\mathbb{a}))=0$ for $\lambda\in \mathcal{P}_d\setminus\mathcal{P}(d,n)$. Therefore, the image of $\Psi^{\bf c}$ is written as $R(T_{\bf c})$ algebra generated by $\{{\bf c}S_{\lambda}\}_{\lambda\in \mathcal{P}(d,n)}$. Consequently, Image$(\Psi^{\bf c})\subset K_{T_{\bf c}}(\G_{\bf c}(d,n)).$ Hence we have the proof.
\end{proof}

% \begin{corollary}
%    There exist surjective algebra morphism $$\Psi^{\bf c}:\ZZ[\mathbb{a}^{\pm1}_1,\mathbb{a}^{\pm1}_2,\dots,\mathbb{a}^{\pm1}_n][x_1,\dots,x_d]^{S_d}\to K_{T_{\bf c}}(\G_{\bf c}(d,n))$$
%  with respect to $\ZZ[\mathbb{a}^{\pm1}_1,\mathbb{a}^{\pm1}_2,\dots,\mathbb{a}^{\pm1}_n]\to \ZZ[{\bf a}^\pm_1,{\bf a}^\pm_2,\dots,{\bf a}^\pm_n]$ which sends $G_{\lambda}(x|\mathbb{a})$ to ${\bf c}S_\lambda$ for $\lambda\in \mathcal{P}_{d,n}$, and  to 0 otherwise.  
% \end{corollary}

% \begin{proof}

% \end{proof}

% \begin{proof}
%    Recall $\phi:\ZZ[{a}^{\pm1}_1,{a}^{\pm1}_2,\dots,{a}^{\pm1}_n][x_1,\dots,x_d]^{S_d}\to K_{T^{n+1}}(Pl(d,n))$ defined by $\pi^*\circ \Psi$. Also,  $\phi(\frac{a_i}{\pi(x)^{w_i}})=\frac{a_i}{u^{w_i}}$. Thus 
% \end{proof}

\begin{corollary}\label{cor_alg_mor} 
For every $\lambda,\mu \in \mathcal{P}(d,n)$, there exist algebra homomorphisms 
$$\Psi^{\bf c}_{\mu}: \ZZ[\mathbb{a}^{\pm1}_1,\mathbb{a}^{\pm1}_2,\dots][x_1,\dots,x_d]^{S_d}\to \ZZ[{\bf{a}}_1^{\pm1}, \dots, {\bf a}_n^{\pm1}]$$ such that the restriction of the Schubert class ${\bf c}S_\lambda\in K_{T^n}(\G_{\bf c}(d,n))$ in the torus fixed point corresponding to $\mu$ can be explicitly computed as the image of the twisted factorial Grothendieck polynomial $G_{\lambda}^{\bf c}(x|\mathbb{a})$. i.e, $\Psi_{\mu}^{\bf c}(G_{\lambda}^{\bf c}(x|\mathbb{a}))={\bf c}S_{\lambda}|_{\mu}$. In particular,
    $$\Psi_{\lambda}^{\bf c}(G_{\lambda}^{\bf c}(x|\mathbb{a}))=\prod_{\nu\in R(\lambda)}(1-\frac{\mathbf{a}_{{\nu}}}{(\mathbf{a}_{{\lambda}})^{d_{\nu \lambda}})}.$$
\end{corollary}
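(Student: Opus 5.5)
The maps $\Psi^{\bf c}_\mu$ will be exactly the component maps already built in the proof of Theorem \ref{thm_alg_mor}, namely $\Psi^{\bf c}_\mu=\phi_\mu\circ h^{-1}$. Here $\phi_\mu$ is the $\mu$-th component of $\phi=\pi^*\circ\Psi$ from Remark \ref{rmk_phi_mu}, and $h$ is the canonical ring isomorphism sending $a_i\mapsto(\xi(x))^{w_i}\mathbb{a}_i$. By construction of $\Psi^{\bf c}$ we have $(\Psi^{\bf c}(f))_\mu=\Psi^{\bf c}_\mu(f)$. Since Theorem \ref{thm_alg_mor} gives $\Psi^{\bf c}(G^{\bf c}_\lambda(x|\mathbb{a}))={\bf c}S_\lambda$ for $\lambda\in\mathcal{P}(d,n)$, comparing $\mu$-th components yields $\Psi^{\bf c}_\mu(G^{\bf c}_\lambda(x|\mathbb{a}))={\bf c}S_\lambda|_\mu$ for all $\lambda,\mu\in\mathcal{P}(d,n)$. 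Each $\Psi^{\bf c}_\mu$ is an algebra homomorphism because it is a composite of the ring homomorphisms $h^{-1}$ and $\phi_\mu$. Its image lies in $\ZZ[{\bf a}_1^{\pm1},\dots,{\bf a}_n^{\pm1}]$ (embedded in $\ZZ[a^{\pm1},u^{\pm1}]/(1-ua_\mu)$ via ${\bf a}_i=a_ia_\mu^{w_i}$), and this was already verified in the theorem.

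For the particular case $\mu=\lambda$, I would simply combine the identity above with Lemma \ref{lem_sc_bas_wt_case}(2), which gives ${\bf c}S_\lambda|_\lambda=\prod_{\nu\in R(\lambda)}\bigl(1-\frac{{\bf a}_\nu}{({\bf a}_\lambda)^{d_{\nu\lambda}}}\bigr)$. As a consistency check, one can also compute directly. We have $\phi_\lambda(G_\lambda(x|1-a))=pS_\lambda|_\lambda=\prod_{\nu\in R(\lambda)}(1-ua_\nu)$ by Lemma \ref{lem_pslam}. In the quotient by $(1-ua_\lambda)$, and using the identification of Proposition \ref{prop_alg_mor}, each factor $1-ua_\nu$ equals $1-\frac{{\bf a}_\nu}{({\bf a}_\lambda)^{d_{\nu\lambda}}}$.

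The only point that needs care is bookkeeping rather than mathematics. One must make sure that $d_{\nu\lambda}$ is defined for $\nu\in R(\lambda)$, i.e.\ that $\nu\le\lambda$ in the total order. This holds because $\nu\prec\lambda$ for a reversal, and $\preceq$ refines to $\le$. One must also confirm that the identification ${\bf a}_i\mapsto a_ia_\lambda^{w_i}$ sends ${\bf a}_\nu/({\bf a}_\lambda)^{d_{\nu\lambda}}$ to $ua_\nu$ modulo $1-ua_\lambda$. The latter follows since ${\bf a}_\nu=a_\nu a_\lambda^{c_\nu-1}$ and ${\bf a}_\lambda=a_\lambda^{c_\lambda}$ (recall $a=1$ from Lemma \ref{lem_div_a,1}), so the ratio is $a_\nu a_\lambda^{-1}=ua_\nu$.
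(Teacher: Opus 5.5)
Your route is the paper's. You take $\Psi^{\bf c}_\mu=\phi_\mu\circ h^{-1}$ from the proof of Theorem \ref{thm_alg_mor}, compare $\mu$-th components, and use Lemma \ref{lem_sc_bas_wt_case}(2) for the diagonal. Your bookkeeping ($\nu\prec\lambda$ for $\nu\in R(\lambda)$, and $\mathbf{a}_\nu/(\mathbf{a}_\lambda)^{d_{\nu\lambda}}\mapsto ua_\nu$) is correct. One small point: $h^{-1}(\mathbb{a}_i)=a_i\,\xi(x)^{-w_i}$, so $\phi_\mu$ must first be extended to the localization at $\xi(x)$. This is harmless, since $\phi_\mu(\xi(x))=a_\mu^{-1}$ is a unit.

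\textbf{The gap.} The step that fails is ``its image lies in $\ZZ[\mathbf{a}_1^{\pm1},\dots,\mathbf{a}_n^{\pm1}]$, already verified in the theorem.''

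\emph{Why the theorem's check is insufficient.} It only examines the values on $\mathbb{a}_i^{\pm1}$ and on the $G^{\bf c}_\lambda(x|\mathbb{a})$. That would suffice only if the $G^{\bf c}_\lambda$ spanned the domain over $\ZZ[\mathbb{a}_1^{\pm1},\mathbb{a}_2^{\pm1},\dots]$, and they do not once some weight exceeds $1$. For $d=1$, $G^{\bf c}_{(k)}(x|\mathbb{a})=\prod_{j=1}^{k}\bigl(1-\mathbb{a}_j\,\xi(x)^{1+w_j}\bigr)$. These have distinct $\xi(x)$-degrees $\sum_{j\le k}(1+w_j)$ and unit leading coefficients. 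Hence $\xi(x)=1-x_1$ is not in their span when $w_1\ge1$.

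\emph{Concrete failure.} Your map sends $\xi(x)$ to $a_\mu^{-1}$, while $\mathbf{a}_i\mapsto a_ia_\mu^{w_i}$ sends $\mathbf{a}_\mu\mapsto a_\mu^{c_\mu}$. This embedding is injective: on exponents it is $m\mapsto m+(\sum_i w_im_i)\chi_\mu$, with $\chi_\mu$ the indicator vector of the Schubert symbol $\mu$, and its determinant is $c_\mu\neq0$. So a preimage of $a_\mu^{-1}$ would be a $c_\mu$-th root of $\mathbf{a}_\mu^{-1}$ in $\ZZ[\mathbf{a}_1^{\pm1},\dots,\mathbf{a}_n^{\pm1}]$. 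No such root exists for $c_\mu\ge2$, since units there are $\pm$ monomials. Worse, whenever $c_{(0)}\ge2$, no algebra map into that ring with $\mathbb{a}_i\mapsto\mathbf{a}_i$ can send $G^{\bf c}_{(1)}=1-\mathbb{a}_{(0)}\xi(x)^{c_{(0)}}$ to ${\bf c}S_{(1)}|_{(0)}=0$. That would force $\mathbf{a}_{(0)}$ to have a $c_{(0)}$-th root among units. So the statement, as literally posed, is not provable by any argument.

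\textbf{The fix.} Take the codomain to be $\ZZ[a_1^{\pm1},\dots,a_n^{\pm1}]\cong\ZZ[a_1^{\pm1},\dots,a_n^{\pm1},u^{\pm1}]/(1-ua_\mu)$, which contains $\ZZ[\mathbf{a}_1^{\pm1},\dots,\mathbf{a}_n^{\pm1}]$ via the injective embedding above. Then your argument goes through verbatim. Indeed, $\Psi^{\bf c}_\mu(G^{\bf c}_\lambda(x|\mathbb{a}))=pS_\lambda|_\mu$ lies in the embedded subring by Lemma \ref{lem_subset}, and its unique preimage there is ${\bf c}S_\lambda|_\mu$. The defect is inherited from the paper: the well-definedness check in the proof of Theorem \ref{thm_alg_mor}, and the remark after Example \ref{ex_int_case} asserting that the $G^{\bf c}_\lambda$ form a basis. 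Still, as written your proof asserts something false and should not lean on that check.
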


\begin{example}
     $\Psi_\mu^{\bf c}(\xi(x))=\phi_{\mu}(\xi(x))=\frac{1}{a_{{\mu}}}$. Moreover, $\mathbf{a}_{{\mu}}=a_{\mu}^{c_{\mu}}$ in the codomain of $\phi_\mu$. $$\text{ Thus }\Psi_{\mu}^{\bf c}(G_{(1)}^{\bf c}(x|\mathbb{a})=\Psi_{\mu}^{\bf c}(1-(\xi(x))^{c_{(0)}}\mathbb{a}_{{(0)}})=(1-\frac{\mathbf{a}_{{(0)}}}{(\mathbf{a}_{{\mu}})^{d_\mu}}).$$
    Note that $\Psi_{\mu}^{\bf c}(G_{(1)}^{\bf c}(x|\mathbb{a}))={\bf c}S_{(1)}|_{\mu}$, following from Lemma \ref{lem_cS_1}.
\end{example}

\subsection{Twisted Grothendieck polynomials}

\begin{definition}
    For each $\lambda\in \mathcal{P}_d$ , define 
    $$G_{\lambda}^{\bf c}(x):=G_{\lambda}^{\bf c}(x|\mathbb{a})|_{\mathbb{a}=1}.$$
    In other words $G_{\lambda}^{\bf c}(x)=G_{\lambda}(x|1-(\xi(x))^{w_1},1-(\xi(x))^{w_2},\dots).$
\end{definition}

We call the polynomial $G_{\lambda}^{\bf c}(x)$ by twisted  Grothendieck polynomial.

\begin{example}
  From Example \ref{ex_int_case}, we have $G_{(1)}^{\bf c}(x|\mathbb{a})=1-(\xi(x))^{c_0}\mathbb{a}_{(0)}$. Now substituting $\mathbb{a}_i=1$, we have $G_{(1)}^{\bf c}(x)=1-(\xi(x))^{c_0}$.
\end{example}

\begin{proposition}
    $\{G_{\lambda}^{\bf c}(x_1,x_2,\dots,x_d)\}_{\lambda \in \mathcal{P}_d}$ form a $\ZZ$-basis of $\ZZ[x_1,\dots,x_d]^{S_d}$.
\end{proposition}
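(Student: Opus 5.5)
We need to show that $\{G_\lambda^{\bf c}(x)\}_{\lambda\in\mathcal{P}_d}$ is a $\ZZ$-basis of $\ZZ[x_1,\dots,x_d]^{S_d}$. The plan is a triangularity argument with respect to the ordinary Grothendieck basis $\{G_\lambda(x)\}_{\lambda\in\mathcal{P}_d}$, which is a $\ZZ$-basis by the proposition from \cite{mc,ikNa} recalled above. Set $y:=1-\xi(x)$, so $\xi(x)=1-y$. Then $y=G_{(1)}(x)$ is symmetric of lowest degree $1$. In the definition, $G^{\bf c}_\lambda(x)=G_\lambda(x\,|\,b)$ with the specialization $b_i=1-\xi(x)^{w_i}=1-(1-y)^{w_i}$. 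Each such $b_i$ is a symmetric polynomial in $x$ with zero constant term, and it lies in the ideal generated by $y$.

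\textbf{Step 1 (leading term).} Expand the factorial Grothendieck polynomial $G_\lambda(x|b)$ in the parameters $b$. Regard $G_\lambda(x|b)$ as a formal power series (in fact a polynomial) in $b_1,b_2,\dots$ with coefficients in $\ZZ[x]^{S_d}$, and write
\[
G_\lambda(x|b)=G_\lambda(x)+\sum_{\alpha\neq 0} b^\alpha\, F_{\lambda,\alpha}(x).
\]
Since $G_\lambda(x|b)$ has a basis expansion over $\mathcal{R}$, the $F_{\lambda,\alpha}$ are $\ZZ$-combinations of $G_\nu(x)$. A degree count in the determinant formula \eqref{def_gro_pol} shows that the lowest-degree part of $G_\lambda(x|b)$, with $x$ and $b$ both of degree $1$, is the factorial Schur polynomial $s_\lambda(x|b)$, of homogeneous degree $|\lambda|$. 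Hence every term $b^\alpha F_{\lambda,\alpha}(x)$ has total lowest degree at least $|\lambda|$.

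\textbf{Step 2 (specialization).} After substituting $b_i=1-(1-y)^{w_i}$, each $b_i$ has lowest $x$-degree at least $d$, since $y$ begins with $e_1(x)$; this bound is only needed to show the degree does not drop. We therefore obtain
\[
G^{\bf c}_\lambda(x)=G_\lambda(x)+(\text{symmetric polynomial with all monomials of degree} \geq |\lambda|+1)?
\]
This step is the main obstacle. Degree $\ge |\lambda|$ follows easily, but the correction terms $b^\alpha F_{\lambda,\alpha}$ with $F_{\lambda,\alpha}$ of lower degree can also contribute in degree exactly $|\lambda|$.

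I would avoid this difficulty by filtering by lowest degree. Let $\mathcal{F}_k$ be the ideal of symmetric polynomials all of whose monomials have degree $\ge k$. It suffices to show
\[
G^{\bf c}_\lambda(x)\equiv G_\lambda(x)+\sum_{|\nu|=|\lambda|} m_{\lambda\nu}\, s_\nu(x) \pmod{\mathcal{F}_{|\lambda|+1}},
\]
where the lowest homogeneous component is $s_\lambda(x)+(\text{integer combination of other } s_\nu,\ |\nu|=|\lambda|)$. In particular, the lowest component of $G^{\bf c}_\lambda$ is $s_\lambda(x|\tilde b)$ with $\tilde b_i=w_i e_1(x)$ (the linear part of $b_i$). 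This is a polynomial in $x$ that is homogeneous of degree $|\lambda|$. We then need its Schur expansion to be unitriangular with respect to dominance (or any total order refining size). One checks this from the factorial Schur formula: the terms involving $\tilde b$ replace some $x$-powers by $e_1$-multiples, and these lower $\lambda$ in dominance. If that check fails, I would fall back on the structure of $\{{\bf c}\mathbb{S}_\lambda\}$ via Theorem~\ref{thm_alg_mor}: after setting $\mathbb{a}=1$, the determinant of the change-of-basis matrix between $\{G^{\bf c}_\lambda\}$ and $\{G_\lambda\}$ must be a unit.

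\textbf{Step 3 (conclusion).} Granting unitriangularity of the associated graded with respect to the $\mathcal{F}$-filtration, $\{G^{\bf c}_\lambda(x)\}$ spans $\ZZ[x]^{S_d}$. Indeed, given $f$, subtract the appropriate $\ZZ$-combination of $G^{\bf c}_\lambda$ with $|\lambda|=k$ to kill the lowest component; the remaining terms have higher lowest degree. A polynomial of bounded degree cannot keep being pushed upward, so this process terminates. The elements are also linearly independent, since the associated graded elements $s_\lambda+\cdots$ are independent. Hence $\{G^{\bf c}_\lambda(x)\}_{\lambda\in\mathcal{P}_d}$ is a $\ZZ$-basis.
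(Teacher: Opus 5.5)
The step that fails is the unitriangularity you need in Step 2, and it fails on the diagonal. The substitution $\tilde b_i=w_ie_1(x)$ does not only produce Schur functions lower in dominance. Its contributions are powers of $e_1$ times Schur functions of smaller size, and by Pieri these contain $s_\lambda$ itself, so they change the coefficient of $s_\lambda$. Already for $\lambda=(1)$ you get $s_{(1)}(x|\tilde b)=(1+w_1+\cdots+w_d)\,e_1=c_{(0)}e_1$. This agrees with Example \ref{ex_int_case}: $G^{\bf c}_{(1)}(x)=1-\xi(x)^{c_{(0)}}=c_{(0)}e_1(x)+(\text{degree}\ge 2)$. For $d=1$ the lowest term of $G^{\bf c}_{(k)}$ is $\prod_{j\le k}(1+w_j)\,x^k$. (Incidentally, $b_i=1-(1-y)^{w_i}$ has lowest degree $1$, not $d$.) Your fallback does not rescue this. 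Theorem \ref{thm_alg_mor} only concerns images in the finite-rank quotient $K_{T_{\bf c}}(\G_{\bf c}(d,n))$, so it cannot force a transition matrix in $\ZZ[x_1,\dots,x_d]^{S_d}$ to be unimodular. Theorem \ref{thm_Z_lin_com} only gives the easy inclusion of the span of the $G^{\bf c}_\lambda$ into that of the $G_\lambda$.

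In fact no argument can close this gap, because the statement is false whenever $c_{(0)}>1$, and your own filtration shows it. By \eqref{def_gro_pol}, every monomial of $G_\lambda(x|b)$ has total degree $\ge|\lambda|$ in $(x,b)$, and the specialized parameters $b_i=1-\xi(x)^{w_i}$ have no constant term. Hence $G^{\bf c}_\lambda(x)\in\mathcal{F}_{|\lambda|}$, and modulo $\mathcal{F}_2$ the $\ZZ$-span of the family is $\ZZ\cdot 1\oplus\ZZ\,c_{(0)}e_1$, which misses $e_1(x)$.

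Concretely, take the weighted projective line $\WW P(2,1)$ of Example \ref{eg_wgt_pro_sp}, with $d=1$, $n=2$, $W=(1,0)$, $a=1$. There $x\oplus(1-(1-x)^{w})=1-(1-x)^{w+1}$, so $G^{\bf c}_{(0)}=1$ and $G^{\bf c}_{(k)}(x)=x^k(2-x)$ for $k\ge 1$. Every $\ZZ$-linear (even $\mathbb{Q}$-linear) combination $f$ of these satisfies $f(0)=f(2)$, so $x$ is not in the span.

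The paper states this proposition without proof. The only visible support is the preceding Remark, presumably obtained from the ``canonical isomorphism'' $h$ and then specializing $\mathbb{a}=1$. But $h$ cannot be defined on $a_i^{-1}$, since its would-be image $\xi(x)^{-w_i}\mathbb{a}_i^{-1}$ is not a polynomial. It also does not carry $\ZZ[a_1^{\pm1},a_2^{\pm1},\dots]$ into $\ZZ[\mathbb{a}_1^{\pm1},\mathbb{a}_2^{\pm1},\dots]$. So that route does not transport the basis property either.
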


\begin{theorem}\label{thm_alg_hom_ord_case}There exist a surjective homomorphism $\ZZ[x_1,\dots,x_d]^{S_d}\to K(\G_{\bf c}(d,n))$ as $\ZZ$-algebra,
  which sends $G_{\lambda}^{\bf c}(x_1,x_2,\dots,x_d)$ to ${\bf c}\mathbb{S}_\lambda$ for $\lambda\in \mathcal{P}(d,n)$, and 0 otherwise.  
\end{theorem}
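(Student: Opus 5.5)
We obtain the required map by specializing the equivariant homomorphism $\Psi^{\bf c}$ of Theorem \ref{thm_alg_mor} along the augmentations $\mathbb{a}_i\mapsto 1$ and $\varepsilon_{\bf c}$, and then checking that everything descends. Concretely, let $\epsilon\colon \ZZ[\mathbb{a}^{\pm1}_1,\mathbb{a}^{\pm1}_2,\dots][x_1,\dots,x_d]^{S_d}\to \ZZ[x_1,\dots,x_d]^{S_d}$ be the ring map sending every $\mathbb{a}_i$ to $1$ and fixing the $x_i$. By definition $\epsilon(G^{\bf c}_\lambda(x|\mathbb{a}))=G^{\bf c}_\lambda(x)$. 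On the target side, let $\rho\colon K_{T_{\bf c}}(\G_{\bf c}(d,n))\to K(\G_{\bf c}(d,n))$ be the forgetful map. By \eqref{eq_base_cng} it is the base change $-\otimes_{R(T_{\bf c})}\ZZ$ along $\varepsilon_{\bf c}$. It is a ring homomorphism with $\rho({\bf c}S_\lambda)={\bf c}\mathbb{S}_\lambda$, and $\rho(r\cdot y)=\varepsilon_{\bf c}(r)\rho(y)$ for $r\in R(T_{\bf c})$.

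The key step is to show that $\rho\circ\Psi^{\bf c}$ factors through $\epsilon$. Since $\Psi^{\bf c}$ is an algebra map over $\mathbb{a}_i\mapsto {\bf a}_i$ for $i\le n$ and $\mathbb{a}_i\mapsto 1$ for $i>n$, we have $\rho\circ\Psi^{\bf c}(\mathbb{a}_i)=\varepsilon_{\bf c}({\bf a}_i)=1$ for $i\le n$, and it equals $1$ trivially for $i>n$. Hence $\rho\circ\Psi^{\bf c}$ kills the ideal generated by $\{\mathbb{a}_i-1\}_{i\ge1}$, and this ideal is exactly $\ker\epsilon$. So we get a well-defined ring map
$$\bar\Psi\colon \ZZ[x_1,\dots,x_d]^{S_d}\to K(\G_{\bf c}(d,n)),\qquad \bar\Psi\circ\epsilon=\rho\circ\Psi^{\bf c}.$$
The only point needing care is that $\ker\epsilon$ is generated by the $\mathbb{a}_i-1$. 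This is standard: $\ZZ[\mathbb{a}^{\pm1}][x]^{S_d}=\ZZ[\mathbb{a}^{\pm1}]\otimes_\ZZ\ZZ[x]^{S_d}$, and $\epsilon$ is $\varepsilon\otimes\mathrm{id}$ with $\ker\varepsilon=(\mathbb{a}_i-1)_{i\ge1}$. Alternatively one can define $\bar\Psi$ directly via the basis $\{G^{\bf c}_\lambda(x)\}$ and use the factorization to check multiplicativity. The tensor-decomposition argument seems cleanest, so I would use it.

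The images now follow. Applying the factorization to $G^{\bf c}_\lambda(x|\mathbb{a})$ gives $\bar\Psi(G^{\bf c}_\lambda(x))=\rho(\Psi^{\bf c}(G^{\bf c}_\lambda(x|\mathbb{a})))$. By Theorem \ref{thm_alg_mor} this equals $\rho({\bf c}S_\lambda)={\bf c}\mathbb{S}_\lambda$ for $\lambda\in\mathcal{P}(d,n)$, and $0$ otherwise.

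For surjectivity, Proposition \ref{lem_sc_bas_wt_case_gen} says $\{{\bf c}S_\lambda\}$ is an $R(T_{\bf c})$-basis, so by \eqref{eq_base_cng} the classes $\{{\bf c}\mathbb{S}_\lambda\}_{\lambda\in\mathcal{P}(d,n)}$ span $K(\G_{\bf c}(d,n))$ over $\ZZ$. Each of them lies in the image of $\bar\Psi$. I expect no serious obstacle; the only subtle point is the well-definedness of the factorization, which is handled above.
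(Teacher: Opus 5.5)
Your proposal is correct and takes the same route as the paper, whose proof consists only of the remark that the result follows from Theorem~\ref{thm_alg_mor} together with the base change \eqref{eq_base_cng}. You supply the details the paper leaves implicit: $\rho\circ\Psi^{\bf c}$ kills the ideal $(\mathbb{a}_i-1)_{i\ge 1}=\ker\epsilon$ and so descends to $\ZZ[x_1,\dots,x_d]^{S_d}$, and surjectivity survives the base change.
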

\begin{proof}
    This follows from Theorem \ref{thm_alg_mor} and \eqref{eq_base_cng}.
\end{proof}
\begin{theorem}\label{thm_Z_lin_com}
For every partition $\lambda$, the twisted Grothendieck polynomial $G_\lambda^{\bf c}(x)$ can be expressed as a $\ZZ$-linear combination of the Grothendieck polynomials.
\end{theorem}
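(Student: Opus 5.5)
The first observation is that the statement is a direct consequence of the basis property of Grothendieck polynomials. The twisted Grothendieck polynomial is
\[
G_\lambda^{\bf c}(x)=G_\lambda\bigl(x\,\big|\,1-(\xi(x))^{w_1},1-(\xi(x))^{w_2},\dots\bigr).
\]
It is obtained by substituting polynomials with integer coefficients in $x_1,\dots,x_d$ into the parameters of the factorial Grothendieck polynomial. Since $w_i\geq 0$ and $w_i=0$ for $i>n$, each parameter $b_i=1-\xi(x)^{w_i}$ is an honest polynomial in $x$, and $b_i=0$ for $i>n$. So only finitely many parameters are involved.

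The first step is to check that $G_\lambda^{\bf c}(x)$ lies in $\ZZ[x_1,\dots,x_d]^{S_d}$. By \eqref{def_gro_pol}, $G_\lambda(x|b)$ is a polynomial in the $x_i$ and $b_j$ with integer coefficients. This holds because the bialternant quotient is a polynomial over $\ZZ[b]$: the denominator is monic in the Vandermonde sense, and divisibility holds over $\ZZ[b][x]$. Substituting $b_j=1-\xi(x)^{w_j}$ keeps integrality.

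Symmetry is the only point needing care, since the substitution makes the parameters depend on $x$. Here $\xi(x)$ is symmetric, so each substituted parameter is $S_d$-invariant. Permuting the $x_i$ therefore commutes with the substitution. Equivalently, one can view $G_\lambda(x|b)$ as symmetric in $x$ with coefficients in $\ZZ[b]$: we have $G_\lambda(x|b)=\sum_\alpha g_\alpha(b)\,m_\alpha(x)$, and after substitution every coefficient $g_\alpha(1-\xi^{w})$ is itself a symmetric polynomial. Hence $G_\lambda^{\bf c}(x)\in\ZZ[x_1,\dots,x_d]^{S_d}$.

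The final step invokes the earlier proposition that $\{G_\mu(x)\}_{\mu\in\mathcal{P}_d}$ is a $\ZZ$-basis of $\ZZ[x_1,\dots,x_d]^{S_d}$. Expanding $G_\lambda^{\bf c}(x)$ in this basis gives a finite $\ZZ$-linear combination $\sum_\mu k_{\lambda\mu}G_\mu(x)$, as required.

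To make the coefficients more explicit, I would optionally add a triangularity remark. The lowest-degree homogeneous part of $G_\lambda^{\bf c}$ is the Schur polynomial $s_\lambda$, since each $b_j$ has no constant term. Together with the Chevalley-type identity \eqref{eq_che_rul}, which expresses multiplication by $\xi(x)$ in the Grothendieck basis, this shows that $k_{\lambda\lambda}=1$ and $k_{\lambda\mu}=0$ unless $\mu\supseteq\lambda$.

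The main (mild) obstacle is justifying the integrality and symmetry after substituting $x$-dependent parameters; I would handle it via the monomial-symmetric expansion described above.
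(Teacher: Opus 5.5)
Your argument is correct, and it takes a different route from the paper. The paper proves the statement constructively. It expands $G_\lambda(x|1-a)$ in the ordinary Grothendieck basis with coefficients polynomial in the $a_i$, via \cite[Proposition 5.8]{LLS}, and then substitutes $a_i=\xi(x)^{w_i}$. It then rewrites each product $\xi(x)^{W_J}G_\mu(x)$ by iterating the $a_i=1$ specialization of \eqref{eq_che_rul}. This gives explicit integer coefficients in terms of the numbers $C(\lambda,\mu,J)$ and the chain counts $N_{\mu,W_J}^{\eta}$. You instead show that $G_\lambda^{\bf c}(x)$ already lies in $\ZZ[x_1,\dots,x_d]^{S_d}$, and then invoke the $\ZZ$-basis property of $\{G_\mu(x)\}_{\mu\in\mathcal{P}_d}$. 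Integrality holds because the bialternant quotient is defined over $\ZZ[b]$ and each $1-\xi(x)^{w_j}$ is an integral polynomial, since $w_j\ge 0$. Symmetry holds because $\xi(x)$ is symmetric. Your route is shorter and needs neither the external expansion nor the Chevalley rule, but it is purely existential. The paper's route buys a combinatorial formula for the coefficients.

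Your optional triangularity remark, however, is false and should be dropped. Although $b_j=1-\xi(x)^{w_j}$ has no constant term, its linear part is $w_j e_1(x)$. This has the same degree as $x_i$, so it contributes to the lowest-degree part of $G_\lambda^{\bf c}(x)$.

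\begin{itemize}
\item For $\lambda=(1)$: $G_{(1)}^{\bf c}(x)=1-\xi(x)^{c_{(0)}}$ has degree-one part $c_{(0)}e_1(x)$. So the coefficient of $G_{(1)}(x)$ is $c_{(0)}$, not $1$.
\item For $d=2$ and $\lambda=(1,1)$: one has $G_{(1,1)}^{\bf c}(x)=\bigl(1-(1-x_1)\xi(x)^{w_1}\bigr)\bigl(1-(1-x_2)\xi(x)^{w_1}\bigr)$. Its degree-two part is $(1+w_1+w_1^2)s_{(1,1)}+(w_1+w_1^2)s_{(2)}$. Hence $G_{(2)}(x)$ appears with coefficient $w_1+w_1^2$, even though $(2)\not\supseteq(1,1)$.
\end{itemize}

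This coefficient is nonzero whenever $w_1\ge 1$, for instance in Example \ref{eg_wgt_gr_24} with $\beta\ge 2$. What does survive is a degree statement. Every entry in the $j$-th column of the numerator has order at least $\lambda_j+d-j$, so only $G_\mu$ with $|\mu|\ge|\lambda|$ can occur.
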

\begin{proof}
  Using \cite[Proposition 5.8]{LLS}, factorial Grothendieck polynomials can be expressed in terms of Grothendieck polynomials:  
  \begin{equation}\label{eq_to_ord_ex}
      G_{\lambda}(x|1-a)=G_\lambda(x)+\sum_{\mu}C_{\lambda}^\mu(a)G_\mu(x),
  \end{equation}
Let $J$ be a finite collection of elements in $\{1,2,\dots,n-1\}$ and $a_J=\prod_{j\in J}a_j$. Then 
$C_{\lambda}^\mu(a)=\sum_{J}C(\lambda,\mu,J)a_J$, where  $C(\lambda,\mu,J)\in \ZZ$ is the coefficient of $a_J$ in $C_{\lambda}^\mu(a)$. Define $W_J:=\sum_{j\in J}w_j$.
Substituting $a_i=\xi(x)^{w_i}$ in \eqref{eq_to_ord_ex}, we get 
\begin{align*}
  G_\lambda^{\bf c}(x)
  &=G_\lambda(x)+\sum_\mu  \sum_{J}C(\lambda,\mu,J)\xi(x)^{W_J}G_\mu(x)
  \end{align*}
We substitute $a_i=1$ in \eqref{eq_che_rul}, and iterate this process $k$-times. Then $$(\xi(x))^kG_{\mu}(x)=\sum_{\eta:\mu\xRightarrow[k]{}\eta}(-1)^{|\eta\setminus \mu|}N_{\mu, k}^{\eta}G_{\eta}(x).$$

We denote $\mu\xRightarrow[k]{}\eta$ if
  there is a chain $\mu=\nu^1\Rightarrow \nu^2\Rightarrow \dots\Rightarrow \nu^k\Rightarrow \eta$ and $N_{\mu, k}^{\eta}$ denote the number of possibility of such chains. Thus
 \begin{align*}
    G_\lambda^{\bf c}(x)
  &=G_\lambda(x)+\sum_\mu  \sum_{J}C(\lambda,\mu,J)\sum_{\eta:\mu \xRightarrow[W_J]{} \eta}(-1)^{|\eta \setminus \mu|}N_{\mu, W_J}^{\eta}G_{\eta}(x)\\
  &=G_{\lambda}(x)+\sum_{\eta}\Big(\sum_{\mu}\sum_{J:\mu \xRightarrow[W_J]{} \eta}C(\lambda,\mu,J)(-1)^{|\eta\setminus\mu|}\Big)N_{\mu, W_J}^{\eta}G_\eta(x).
 \end{align*}  
 This completes the proof.
\end{proof}

\begin{corollary}
    ${\bf c}\mathbb{S}_\lambda$ can be written as $\ZZ$-linear combination of $\mathbb{S}_\lambda$.
\end{corollary}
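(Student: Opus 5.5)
The corollary should follow by pushing Theorem \ref{thm_Z_lin_com} through the surjection of Theorem \ref{thm_alg_hom_ord_case}. One point needs care: we must compare it with the analogous surjection $\ZZ[x_1,\dots,x_d]^{S_d}\to K(\G(d,n))$, $G_\lambda(x)\mapsto\mathbb{S}_\lambda$. These two maps land in different rings, $K(\G_{\bf c}(d,n))$ and $K(\G(d,n))$. So the first task is to make precise in which ring ``${\bf c}\mathbb{S}_\lambda$ is a $\ZZ$-combination of $\mathbb{S}_\mu$'' is meant.

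I would make it precise through the chain of isomorphisms already available. The map $\pi^*\colon K_{T^n}(\G(d,n))\to K_{T^{n+1}}(Pl(d,n))$ of Theorem \ref{thm_iso_alg} is an isomorphism. The map $\pi_{\bf c}^*$ identifies $K_{T_{\bf c}}(\G_{\bf c}(d,n))$ with $\mathscr{R}$, and under it ${\bf c}S_\lambda\mapsto pS_\lambda=\pi^*(S_\lambda)$. Both $\{S_\lambda\}$ and $\{{\bf c}S_\lambda\}$ are module bases over their coefficient rings, by Proposition \ref{prop_HHH} and Proposition \ref{lem_sc_bas_wt_case_gen}. Hence the $\ZZ$-span of the $S_\lambda$ and the $\ZZ$-span of the ${\bf c}S_\lambda$ are identified, and after base change by $\varepsilon$, resp.\ $\varepsilon_{\bf c}$ via \eqref{eq_base_cng}, this gives a $\ZZ$-module isomorphism $K(\G(d,n))\cong K(\G_{\bf c}(d,n))$ with $\mathbb{S}_\lambda\leftrightarrow{\bf c}\mathbb{S}_\lambda$ additively. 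The corollary is then the statement that, under this additive identification (or inside a common ring once one is chosen), ${\bf c}\mathbb{S}_\lambda$ has an integral expansion with respect to the family $\{\mathbb{S}_\mu\}$.

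The key computation is the following. Apply the surjection $\rho^{\bf c}\colon\ZZ[x]^{S_d}\to K(\G_{\bf c}(d,n))$ of Theorem \ref{thm_alg_hom_ord_case} to the identity of Theorem \ref{thm_Z_lin_com},
$$G_\lambda^{\bf c}(x)=G_\lambda(x)+\sum_\eta m_{\lambda\eta}G_\eta(x),\qquad m_{\lambda\eta}\in\ZZ.$$
The left side maps to ${\bf c}\mathbb{S}_\lambda$. Next, invert the unitriangular change of basis between $\{G^{\bf c}_\mu(x)\}$ and $\{G_\mu(x)\}$. This is legitimate because both families are $\ZZ$-bases of $\ZZ[x]^{S_d}$, and the expansion is triangular: every $\eta$ occurring satisfies $\eta\succ\lambda$, since the chains $\mu\xRightarrow[k]{}\eta$ only add boxes and $\mu\succeq\lambda$ in \eqref{eq_to_ord_ex}. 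Inverting gives $G_\eta(x)$ as an integral combination of the $G^{\bf c}_\nu(x)$, and mapping under $\rho^{\bf c}$ shows that $\rho^{\bf c}(G_\eta(x))$ lies in the $\ZZ$-span of the ${\bf c}\mathbb{S}_\nu$. Those with $\nu\notin\mathcal{P}(d,n)$ are killed. Reading the result through the identification of the previous paragraph expresses ${\bf c}\mathbb{S}_\lambda$ as $\mathbb{S}_\lambda+\sum_{\eta}m_{\lambda\eta}\mathbb{S}_\eta$, with the sum restricted to $\eta\in\mathcal{P}(d,n)$.

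The main obstacle is exactly that identification. $\rho^{\bf c}(G_\eta(x))$ is not obviously equal to the image of $\mathbb{S}_\eta$, because the two presentations have different kernels (the twisting by powers of $\xi(x)$ changes the ideal). I would resolve this by working in $K_{T^{n+1}}(Pl(d,n))$ specialized at $a_i=1$. There $\xi(x)\mapsto u$, both $pS_\lambda$ and ${\bf c}$-classes live in one ring, and \eqref{eq_to_ord_ex} with $a_i=u^{w_i}$ is an honest identity. The truncation to $\eta\in\mathcal{P}(d,n)$ is harmless, since $G_\eta(x|1-a)\mapsto 0$ for $\eta\notin\mathcal{P}(d,n)$ by Theorem \ref{thm_sur_hom}.
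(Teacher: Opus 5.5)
Your skeleton is what the paper has in mind. The paper gives no argument beyond reading Theorem~\ref{thm_Z_lin_com} through the surjection of Theorem~\ref{thm_alg_hom_ord_case}, tacitly calling the image of $G_\eta(x)$ ``$\mathbb{S}_\eta$''. You are right that this compares elements of two different rings, but your repairs do not close that gap.

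First, the additive identification. Under $\mathbb{S}_\lambda\leftrightarrow{\bf c}\mathbb{S}_\lambda$ the corollary becomes the tautology ${\bf c}\mathbb{S}_\lambda=\mathbb{S}_\lambda$. The nontrivial expansion you derive would then have to collapse, and it does not (see the example below).

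Second, the passage through $K_{T^{n+1}}(Pl(d,n))$ fails for a precise reason. Identifying $\phi(G^{\bf c}_\lambda(x))$ with the image of $pS_\lambda=\phi(G_\lambda(x|1-a))$ requires $a_i=u^{w_i}$, i.e.\ ${\bf a}_i=1$. Identifying $\phi(G_\eta(x))$ with the image of $pS_\eta$ requires $a_i=1$. You use both at once.
\begin{itemize}
\item At $a_i=1$ one gets ${\bf a}_i\mapsto u^{-w_i}$ with $u\mapsto 1-\mathbb{S}_{(1)}\neq 1$. So this specialization does not factor through $\varepsilon_{\bf c}$, and ${\bf c}S_\lambda=pS_\lambda$ simply becomes $\mathbb{S}_\lambda$ there.
\item At $a_i=u^{w_i}$ the images of the $G_\eta(x)$ are not Schubert classes, and they need not vanish for $\eta\notin\mathcal{P}(d,n)$. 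Your truncation remark concerns $G_\eta(x|1-a)$, not $G_\eta(x)$. For the example ${\bf c}=(c,1)$ below, this ring is $\ZZ[u^{\pm1}]/((1-u^c)(1-u))$, and the image of $x^2$ is $(1-u)^2\neq 0$ there.
\end{itemize}
What is missing is an honest ring homomorphism from $K(\G_{\bf c}(d,n))$ to $K(\G(d,n))$, or to a common ring, sending ${\bf c}\mathbb{S}_\lambda$ to the image of $G^{\bf c}_\lambda(x)$.

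Independently, your inversion step rests on a false triangularity claim. Nothing in \eqref{eq_to_ord_ex} forces $\mu\succeq\lambda$: for $d=1$ one has $G_{(1)}(x|1-a)=a_1G_{(1)}(x)+(1-a_1)G_{(0)}(x)$.

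Take $d=1$, $n=2$, ${\bf c}=(c,1)$ with $c\geq 2$, so $W=(c-1,0)$. Then $G_{(k)}(x)=x^k$ and $G^{\bf c}_{(k)}(x)=(1-(1-x)^c)\,x^{k-1}$ for $k\geq 1$. Hence $G^{\bf c}_{(1)}(x)=c\,G_{(1)}(x)-\binom{c}{2}G_{(2)}(x)+\cdots$, with diagonal coefficient $c$, not $1$.

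Every $\ZZ$-combination of the $G^{\bf c}_{(k)}(x)$ has $x$-coefficient divisible by $c$. So $x$ is not in their $\ZZ$-span, and the $\ZZ$-basis property you invoke fails. Moreover, write $K(\G_{\bf c}(1,2))=\ZZ\oplus\ZZ s$ with $s={\bf c}\mathbb{S}_{(1)}$ and $s^2=0$. No ring map $\ZZ[x]\to K(\G_{\bf c}(1,2))$ can send $G^{\bf c}_{(1)}(x)\mapsto s$ and $G^{\bf c}_{(2)}(x)\mapsto 0$, since it would force $x\mapsto\beta s$ with $c\beta=1$. So $\rho^{\bf c}(G_\eta(x))$ is not defined over $\ZZ$ in general.

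In this example the comparison that does work is pullback along the degree-$c$ map $\CC P^1\to\WW P(c,1)$, $[z_0:z_1]\mapsto[z_0^c:z_1]$. It sends ${\bf c}\mathbb{S}_{(1)}$ to $c\,\mathbb{S}_{(1)}$. That agrees with the expansion above but not with your unitriangular formula ${\bf c}\mathbb{S}_\lambda=\mathbb{S}_\lambda+\cdots$.

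A correct proof has to construct such a comparison map in general. Neither your proposal nor the bare deduction from Theorem~\ref{thm_Z_lin_com} does so.
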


\section{Chevalley rule in the equivariant $K$-theory of divisive weighted Grassmann orbifolds}\label{sec_che_rule}

In this section, we describe the multiplication of any twisted factorial Grothendieck polynomial  $G_{\lambda}^{\bf c}(x|\mathbb{a})$ with $G_{(1)}^{\bf c}(x|\mathbb{a})$. This describes the Chevalley rule in the equivariant and ordinary $K$-theory of divisive weighted Grassmann orbifolds.

\begin{lemma}\label{lem:wgt_ch_rl}
    $$\xi(x)^{c_\lambda}G_{\lambda}^{\bf c}(x|{\mathbb{a}})=\frac{1}{\mathbb{a}_{\lambda}}\sum_{\mu:\lambda\Rightarrow\mu}(-1)^{|\mu\setminus \lambda|}G_{\mu}^{\bf c}(x|{\mathbb{a}}).$$
\end{lemma}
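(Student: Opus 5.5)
Apply the ring isomorphism $h$ to the untwisted identity \eqref{eq_che_rul}, then rewrite $a_\lambda$ in terms of $\mathbb{a}_\lambda$ and $\xi(x)$.

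Recall that \eqref{eq_che_rul} (McNamara's formula, used in the proof of Proposition \ref{thm_eq_ch}) states
$$\xi(x)G_{\lambda}(x|1-a)=\frac{1}{a_{\lambda}}\sum_{\mu:\lambda\Rightarrow\mu}(-1)^{|\mu\setminus\lambda|}G_{\mu}(x|1-a).$$
This is an identity in $\ZZ[a_1^{\pm1},a_2^{\pm1},\dots][x_1,\dots,x_d]^{S_d}$. It holds for every $\lambda\in\mathcal{P}_d$, because McNamara's formula is stated for partitions of length at most $d$. In that case the sum over $\mu$ runs over partitions in $\mathcal{P}_d$ and is finite. I will take this as given, as the paper already does.

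Next apply $h$, which is a ring isomorphism fixing each $x_i$ (hence fixing $\xi(x)$) and sending $a_i\mapsto \xi(x)^{w_i}\mathbb{a}_i$. By definition $h(G_\mu(x|1-a))=G^{\bf c}_\mu(x|\mathbb{a})$, so the identity becomes
$$\xi(x)G^{\bf c}_{\lambda}(x|\mathbb{a})=\frac{1}{h(a_{\lambda})}\sum_{\mu:\lambda\Rightarrow\mu}(-1)^{|\mu\setminus\lambda|}G^{\bf c}_{\mu}(x|\mathbb{a}).$$
The inverse $1/h(a_\lambda)$ makes sense because $\xi(x)$ is inverted in the ambient ring $\ZZ[\xi(x)^{\pm1},a^{\pm1}]$ where $h$ is defined.

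The one computation needed is $h(a_\lambda)$:
$$h(a_\lambda)=\prod_{j=1}^d \xi(x)^{w_{\tilde\lambda_j}}\mathbb{a}_{\tilde\lambda_j}=\xi(x)^{\sum_j w_{\tilde\lambda_j}}\,\mathbb{a}_\lambda .$$
By Lemma \ref{lem_div_a,1} we may take $a=1$, so \eqref{eq_wli} gives $\sum_j w_{\tilde\lambda_j}=c_\lambda-1$. (This is the same step as in Example \ref{ex_int_case}, where $a_{(0)}=\xi(x)^{c_{(0)}-1}\mathbb{a}_{(0)}$.) Substituting, multiplying both sides by $\xi(x)^{c_\lambda-1}$, and cancelling the power of $\xi(x)$ gives
$$\xi(x)^{c_\lambda}G_\lambda^{\bf c}(x|\mathbb{a})=\frac{1}{\mathbb{a}_\lambda}\sum_{\mu:\lambda\Rightarrow\mu}(-1)^{|\mu\setminus\lambda|}G_\mu^{\bf c}(x|\mathbb{a}),$$
which is the claim.

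There is one point to watch. The weight $c_\lambda$ is defined for $\lambda\in\mathcal{P}(d,n)$, but the lemma is stated for arbitrary $\lambda$. For $\lambda\notin\mathcal{P}(d,n)$ I interpret $c_\lambda$ by the formula $1+\sum_j w_{\tilde\lambda_j}$, using the convention $w_i=0$ for $i>n$. The argument above then goes through verbatim.
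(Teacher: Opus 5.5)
Your proof is correct and follows the paper's argument. The paper likewise substitutes $a_i=\mathbb{a}_i\xi(x)^{w_i}$ into \eqref{eq_che_rul}, uses $a_\lambda=\xi(x)^{c_\lambda-1}\mathbb{a}_\lambda$ (which relies on $a=1$), and clears the power of $\xi(x)$. Your remarks on inverting $\xi(x)$, and on reading $c_\lambda$ for $\lambda\notin\mathcal{P}(d,n)$ via the convention $w_i=0$ for $i>n$, make explicit conventions that the paper leaves implicit.
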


\begin{proof}
    Substituting $a_i=\mathbb{a}_i(\xi(x))^{w_i}$ in \eqref{eq_che_rul}, we get 
    \begin{align*}
      \xi(x)G_{\lambda}^{\bf c}(x|{\mathbb{a}})&=\frac{1}{(\xi(x))^{c_\lambda-1}{\mathbb{a}}_{\lambda}}\sum_{\mu:\lambda\Rightarrow \mu}(-1)^{|\mu\setminus \lambda|}G_{\mu}^{\bf c}(x|{\mathbb{a}})\\
      \implies \xi(x)^{c_\lambda}G_{\lambda}^{\bf c}(x|{\mathbb{a}})&=\frac{1}{\mathbb{a}_{\lambda}}\sum_{\mu:\lambda \Rightarrow\mu}(-1)^{|\mu\setminus \lambda|}G_{\mu}^{\bf c}(x|{\mathbb{a}}).
    \end{align*}
    Hence, we get the proof.
\end{proof}

By applying the Lemma \ref{lem:wgt_ch_rl}, $k$ times in an iterative way we get
\begin{equation}\label{eq_it_pro}
\xi(x)^{k{\cdot}c_\lambda}G_{\lambda}^{\bf c}(x|{\mathbb{a}})=\sum_{\mu:\lambda \xRightarrow[k]{} \mu}L_{\lambda,k}^\mu G_{\mu}^{\bf c}(x|{\mathbb{a}}),
\end{equation}
%where $\lambda \xRightarrow[k]{} \mu$ is defined by the following:

% $\mu$ can be obtained from $\lambda$ after applying $\Rightarrow$ iteratively $k$ times. In other words, 

where we denote $\lambda \xRightarrow[k]{} \mu$ if there exist a chain $\mathscr{C}$ of elements in $\mathcal{P}_d$ as following
\begin{equation}\label{eq_k_Chain}
 \mathscr{C}: \lambda=\nu^1\Rightarrow \nu^2\xRightarrow[d_{\lambda\nu^2}]{} \nu^3\dots\xRightarrow[d_{\lambda\nu^{k-1}}]{} \nu^k\xRightarrow[d_{\lambda\nu^k}]{} \nu^{k+1}=\mu.  
\end{equation}
\begin{enumerate}
\item If $k=1$ then $\lambda \xRightarrow[k]{} \mu$ is equivalent to $\lambda\Rightarrow \mu$.
\item If $c_\lambda=1$ then $\lambda \xRightarrow[k]{} \mu$ means there is a chain $\lambda=\nu^1\Rightarrow \nu^2 \Rightarrow \cdots\Rightarrow \nu^k\Rightarrow \mu$. 
\item If $\mu=\lambda$ or, $|\mu\setminus \lambda|=1$ then for every $k\geq 1$, all possible chains as in \eqref{eq_k_Chain} are explained in Example \ref{eg_mu_eq_lam} and Example \ref{eg_mu_to_lam}. 
\end{enumerate}

For the remaining cases, we can use \eqref{eq_k_Chain} iteratively to reduce each step $\nu^i\xRightarrow[s]{} \nu^{i+1}$ into one of the above three cases. Thus the notation $\lambda \xRightarrow[k]{} \mu$ is well defined. 

Next, we describe the formulae to compute the coefficient $L_{\lambda,k}^\mu$. Using Lemma \ref{lem:wgt_ch_rl},
 if $\lambda\Rightarrow \mu$ then  $$L_{\lambda,1}^\mu=(-1)^{|\mu\setminus \lambda|}\frac{1}{\mathbb{a}_\lambda}.$$

 For $k>1$,
 %let $\lambda=\nu^1\Rightarrow \nu^2 \Rightarrow \cdots\Rightarrow \nu^k\Rightarrow \mu$ be a chain.
 every chain as in \eqref{eq_k_Chain} has a contribution in $L_{\lambda,k}^\mu$ given by $\prod_{i=1}^{k}L_{\nu^i,d_{\lambda\nu^i}}^{\nu^{i+1}}$. Again by using Lemma \ref{lem:wgt_ch_rl}, $L_{\nu^1,1}^{\nu^2}=\frac{(-1)^{|\nu^2\setminus\lambda|}}{\mathbb{a}_{\lambda}}.$ Thus 
\begin{equation}\label{eq_lau_pol}
L_{\lambda,k}^\mu:=\sum_{\mathscr{C}}\frac{(-1)^{|\nu^2\setminus\lambda|}}{\mathbb{a}_{\lambda}}\prod_{i=2}^{k}L_{\nu^i,d_{\lambda\nu^i}}^{\nu^{i+1}},
\end{equation}
where the summation runs over all the chains $\mathscr{C}$ as in \eqref{eq_k_Chain}.

%\substack{\lambda=\nu^1\Rightarrow \nu^2\xRightarrow[d_{\lambda\nu^2}]{} \cdots\\\xRightarrow[d_{\lambda\nu^{k-1}}]{} \nu^k\xRightarrow[d_{\lambda\nu^k}]{}\nu^{k+1}= \mu}

\begin{example}\label{eg_mu_eq_lam}
    If $\mu=\lambda$, then there is only one  chain as in \eqref{eq_k_Chain} given by $\nu^i=\lambda$ for all $i\in \{1,\dots,k+1\}$. 
    Then using the fact that $L_{\lambda,1}^\lambda=\frac{1}{\mathbb{a}_\lambda}$, we get
 \begin{equation}\label{eq_L_lam_k}
     L_{\lambda,k}^{\lambda}=\frac{1}{(\mathbb{a}_{\lambda})^{k}}.
 \end{equation}
\end{example}
\begin{example}\label{eg_mu_to_lam}
    If $\mu$ is obtained by attaching a box to $\lambda$ then
    we have $k$ possibilities of chains as in \eqref{eq_k_Chain}. For every $s\in \{1,2,\dots,k\}$ we get chains as in \eqref{eq_k_Chain} given by
\[\nu^i=\begin{cases}
    \lambda &\text{ for } i\leq s\\
    \mu    &\text{ for } i> s.
\end{cases}
\]
In this case, we have 
\begin{equation}\label{eq_add_one_box}
   L_{\lambda,k}^\mu=-\frac{1}{(\mathbb{a}_{\lambda})^k}\Big(1+\frac{\mathbb{a}_{\lambda}}{(\mathbb{a}_{\mu})^{d_{\lambda\mu}}}+(\frac{\mathbb{a}_{\lambda}}{(\mathbb{a}_{\mu})^{d_{\lambda\mu}}})^2+\dots+(\frac{\mathbb{a}_{\lambda}}{(\mathbb{a}_{\mu})^{d_{\lambda\mu}}})^{k-1}\Big).
\end{equation}
\end{example}

\begin{remark}
    \begin{enumerate}
\item We apply \eqref{eq_lau_pol} iteratively so that, after finitely many steps, each factor of the right hand side of \eqref{eq_lau_pol} reduces to a term of the form $L_{\nu^i,s}^{\nu^{i+1}}$ for some $s\geq 1$ with either $\nu^i=\nu^{i+1}$ or, $\nu^{i+1}$ is obtained by attaching a box to $\nu^i$. Then using \eqref{eq_L_lam_k} and \eqref{eq_add_one_box} we can compute $L_{\lambda,k}^\mu$ explicitly.   

\item If $k=0$, we use the convention $L_{\lambda,k}^\mu=1$ if $\mu=\lambda$ and $L_{\lambda,k}^\mu=0$ if $\mu\neq\lambda$. 
\end{enumerate}
\end{remark}

 $L_{\lambda,k}^\mu\in \ZZ[\mathbb{a}_1^{\pm1},\dots,\mathbb{a}_n^{\pm1}]$ follows from \eqref{eq_it_pro}.  Define $\mathcal{L}_{\lambda,k}^\mu\in \ZZ[\mathbf{a}_1^{\pm1},\dots,\mathbf{a}_n^{\pm1}]$ by substituting $\mathbb{a}_i$ by $\mathbf{a}_i$ in $L_{\lambda,k}^\mu$. We also denote $$\lambda\xRightarrow[k]{}^*\mu\text{ if }\lambda\xRightarrow[k]{}\mu \text{ and }\mu\neq\lambda.$$

\begin{theorem}[Chevalley rule]\label{thm_eq_che}
\begin{equation*}
{\bf c}S_{\lambda}{\bf c}S_{(1)}=(1-\frac{\mathbf{a}_{(0)}}{(\mathbf{a}_{\lambda})^{d_\lambda}}){\bf c}S_\lambda-{\mathbf{a}_{(0)}}\sum_{\substack{\lambda\xRightarrow[d_\lambda]{}^*\mu}}\mathcal{L}_{\lambda, d_{\lambda}}^\mu{\bf c}S_{\mu}.
\end{equation*}
\end{theorem}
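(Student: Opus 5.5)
The plan is to prove the identity at the level of twisted factorial Grothendieck polynomials and then push it to $K_{T_{\bf c}}(\G_{\bf c}(d,n))$ with the surjective algebra homomorphism $\Psi^{\bf c}$ of Theorem \ref{thm_alg_mor}. Example \ref{ex_int_case} gives
\[
G_{(1)}^{\bf c}(x|\mathbb{a})=1-\mathbb{a}_{(0)}\,\xi(x)^{c_{(0)}} .
\]
Since we normalized ${\bf c}$ through Lemma \ref{lem_div_a,1} and $\G_{\bf c}(d,n)$ is divisive, $c_{(0)}=d_\lambda c_\lambda$, so $\xi(x)^{c_{(0)}}=\xi(x)^{d_\lambda\cdot c_\lambda}$. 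Hence
\[
G_{(1)}^{\bf c}(x|\mathbb{a})\,G_\lambda^{\bf c}(x|\mathbb{a})=G_\lambda^{\bf c}(x|\mathbb{a})-\mathbb{a}_{(0)}\,\xi(x)^{d_\lambda c_\lambda}G_\lambda^{\bf c}(x|\mathbb{a}).
\]
The second term is exactly the left-hand side of the iterated identity \eqref{eq_it_pro} with $k=d_\lambda$.

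Applying \eqref{eq_it_pro} gives
\[
G_{(1)}^{\bf c}G_\lambda^{\bf c}=G_\lambda^{\bf c}-\mathbb{a}_{(0)}\sum_{\mu:\lambda\xRightarrow[d_\lambda]{}\mu}L_{\lambda,d_\lambda}^\mu G_\mu^{\bf c}.
\]
We then separate the term $\mu=\lambda$. By Example \ref{eg_mu_eq_lam}, $L_{\lambda,d_\lambda}^\lambda=(\mathbb{a}_\lambda)^{-d_\lambda}$, so the coefficient of $G_\lambda^{\bf c}$ becomes $1-\mathbb{a}_{(0)}/(\mathbb{a}_\lambda)^{d_\lambda}$. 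The remaining terms run over $\lambda\xRightarrow[d_\lambda]{}^*\mu$.

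Finally we apply $\Psi^{\bf c}$. It is a homomorphism over $\mathbb{a}_i\mapsto{\bf a}_i$ for $i\le n$ and $\mathbb{a}_i\mapsto 1$ for $i>n$, so it sends $L_{\lambda,d_\lambda}^\mu$ to $\mathcal{L}_{\lambda,d_\lambda}^\mu$. This uses that $L^\mu_{\lambda,k}$ involves only $\mathbb{a}_1,\dots,\mathbb{a}_n$, or at least that any $\mathbb{a}_i$ with $i>n$ is sent to $1$ consistently with the definition of $\mathcal{L}$. The homomorphism also sends $G_\mu^{\bf c}(x|\mathbb{a})$ to ${\bf c}S_\mu$ when $\mu\in\mathcal{P}(d,n)$ and to $0$ otherwise. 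Hence every chain that leaves the $d\times(n-d)$ rectangle contributes nothing, and the sum restricts to $\mu\in\mathcal{P}(d,n)$. This yields the stated formula.

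The main obstacle is the exponent bookkeeping in the first step. We need $c_{(0)}=d_\lambda c_\lambda$ exactly, which is the definition $d_\lambda=c_{(0)}/c_\lambda$ and requires divisibility $c_\lambda\mid c_{(0)}$; this holds because $(0)\le\lambda$ in the dictionary order and $\G_{\bf c}(d,n)$ is divisive. We also need $\mathbb{a}_{(0)}$ to pass through as a scalar. A secondary check is that the iterated identity \eqref{eq_it_pro} genuinely uses the power $\xi(x)^{k c_\lambda}$ with the intermediate exponents $d_{\lambda\nu^i}$, so that it is consistent with Lemma \ref{lem:wgt_ch_rl} applied at each $\nu^i$. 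This holds because $c_{\nu^i}d_{\lambda\nu^i}=c_\lambda$. We would verify this by induction on $k$ before invoking \eqref{eq_it_pro}.
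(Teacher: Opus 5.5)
Your proposal is correct and is essentially the paper's own proof. It rewrites $\mathbb{a}_{(0)}\xi(x)^{c_{(0)}}=1-G^{\bf c}_{(1)}(x|\mathbb{a})$ via Example \ref{ex_int_case}, uses $c_{(0)}=d_\lambda c_\lambda$ to invoke \eqref{eq_it_pro} with $k=d_\lambda$, splits off the $\mu=\lambda$ term using $L^{\lambda}_{\lambda,d_\lambda}=(\mathbb{a}_\lambda)^{-d_\lambda}$, and applies $\Psi^{\bf c}$ from Theorem \ref{thm_alg_mor}, which kills the $G^{\bf c}_\mu$ with $\mu\notin\mathcal{P}(d,n)$; the bookkeeping checks you flag ($c_{\nu^i}d_{\lambda\nu^i}=c_\lambda$ in the iteration, and that $L^\mu_{\lambda,d_\lambda}$ involves only $\mathbb{a}_1,\dots,\mathbb{a}_n$ for $\mu$ in the rectangle) are left implicit in the paper but consistent with it.
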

\begin{proof}
  We have $c_{(0)}=c_{\lambda}d_{\lambda}$. Substituting $k=d_\lambda$ in \eqref{eq_it_pro}, and then multiplying both side by $\mathbb{a}_{(0)}$ yields the following: $$\mathbb{a}_{(0)}\xi(x)^{c_{(0)}}G_{\lambda}^{\bf c}(x|{\mathbb{a}})=\mathbb{a}_{(0)}\sum_{\mu:\lambda\xRightarrow[d_\lambda]{}\mu}L_{\lambda, d_\lambda}^\mu G_{\mu}^{\bf c}(x|{\mathbb{a}}).$$
 From Example \ref{ex_int_case}, we get $\mathbb{a}_{(0)}\xi(x)^{c_{(0)}}=1-G_{(1)}^{\bf c}(x|{\mathbb{a}})$. Thus 
\begin{equation}\label{eq_pr_Glam_c}
    G_{\lambda}^{\bf c}(x|{\mathbb{a}})G_{(1)}^{\bf c}(x|{\mathbb{a}})=(1-\frac{\mathbb{a}_{(0)}}{(\mathbb{a}_{\lambda})^{d_\lambda}})G_{\lambda}^{\bf c}(x|{\mathbb{a}})-{\mathbb{a}_{(0)}}\sum_{\lambda\xRightarrow[d_\lambda]{}^*\mu}L_{\lambda, d_{\lambda}}^\mu G_{\mu}^{\bf c}(x|{\mathbb{a}}).
\end{equation}
Now using the Theorem \ref{thm_alg_mor}, we can complete the proof.
\end{proof}

\begin{remark}\label{rem_a_0}
 $\mathcal{L}_{\lambda, k}^\mu|_{\mathbf{a}=1}$ is same as $(-1)^{|\mu\setminus \lambda|}N_{\lambda, k}^{\mu}$, where $N_{\lambda, k}^{\mu}\in \ZZ_{\geq 0}$. 
 %denote the number of possibilities of chains $\lambda=\nu^1\Rightarrow \nu^2\Rightarrow \dots\Rightarrow \nu^k\Rightarrow\nu^{k+1}= \mu$. 
 Moreover, using \eqref{eq_lau_pol}, \eqref{eq_L_lam_k} and \eqref{eq_add_one_box} it follows that $(-1)^{|\mu\setminus \lambda|}\mathcal{L}_{\lambda, k}^\mu$ can be factored as $\frac{1}{({\bf a}_\lambda)^{k}}$ multiplied by a polynomial in $\frac{\mathbf{a}_{\nu}}{(\mathbf{a}_{\eta})^{d_{\nu\eta}}}$ for some $\nu$ and $\eta$ with coefficients are non-negative integers. 
\end{remark}

\begin{corollary}[Chevalley rule]\label{thm_wgt_che}
$${\bf c}\mathbb{S}_{\lambda}{\bf c}\mathbb{S}_{(1)}=\sum_{\mu:\lambda\xRightarrow[d_\lambda]{}^*\mu}(-1)^{|\mu\setminus \lambda|-1}N_{\lambda, d_\lambda}^{\mu}{\bf c}\mathbb{S}_{\mu}.$$
\end{corollary}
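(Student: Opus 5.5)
The plan is to specialize the equivariant Chevalley rule of Theorem \ref{thm_eq_che} along the forgetful map $K_{T_{\bf c}}(\G_{\bf c}(d,n))\to K(\G_{\bf c}(d,n))$. By \eqref{eq_base_cng}, this map is the base change along $\varepsilon_{\bf c}$, which sends every ${\bf a}_i$ to $1$. Since the map is a ring homomorphism and sends ${\bf c}S_\lambda$ to ${\bf c}\mathbb{S}_\lambda$, applying it to both sides of Theorem \ref{thm_eq_che} gives
\begin{equation*}
{\bf c}\mathbb{S}_{\lambda}{\bf c}\mathbb{S}_{(1)}=\varepsilon_{\bf c}\Big(1-\frac{\mathbf{a}_{(0)}}{(\mathbf{a}_{\lambda})^{d_\lambda}}\Big){\bf c}\mathbb{S}_\lambda-\varepsilon_{\bf c}(\mathbf{a}_{(0)})\sum_{\lambda\xRightarrow[d_\lambda]{}^*\mu}\varepsilon_{\bf c}\big(\mathcal{L}_{\lambda,d_\lambda}^\mu\big){\bf c}\mathbb{S}_{\mu}.
\end{equation*}

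The diagonal coefficient $1-{\bf a}_{(0)}/({\bf a}_\lambda)^{d_\lambda}$ maps to $1-1=0$, so the ${\bf c}\mathbb{S}_\lambda$ term drops out. Also $\varepsilon_{\bf c}({\bf a}_{(0)})=1$. By Remark \ref{rem_a_0}, $\varepsilon_{\bf c}(\mathcal{L}_{\lambda,d_\lambda}^\mu)=\mathcal{L}_{\lambda,d_\lambda}^\mu|_{{\bf a}=1}=(-1)^{|\mu\setminus\lambda|}N_{\lambda,d_\lambda}^\mu$. Hence the coefficient of ${\bf c}\mathbb{S}_\mu$ is $-(-1)^{|\mu\setminus\lambda|}N^\mu_{\lambda,d_\lambda}=(-1)^{|\mu\setminus\lambda|-1}N^\mu_{\lambda,d_\lambda}$, which is the claimed formula.

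The only point needing care is the identity in Remark \ref{rem_a_0}. I would justify it directly from \eqref{eq_lau_pol}, \eqref{eq_L_lam_k} and \eqref{eq_add_one_box}. At ${\bf a}=1$, each factor $L^{\nu^{i+1}}_{\nu^i,s}$ with $\nu^{i+1}=\nu^i$ becomes $1$. Each factor with $\nu^{i+1}$ obtained by attaching one box becomes $-s$, which is $(-1)$ times the number of chain positions at which the box can be added. Multiplying along a chain, the signs combine to $(-1)^{|\mu\setminus\lambda|}$, since every added box contributes exactly one minus sign. The remaining positive integers count chain refinements, so the total is $(-1)^{|\mu\setminus\lambda|}N^\mu_{\lambda,d_\lambda}$ with $N^\mu_{\lambda,d_\lambda}\ge 0$.

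Alternatively, one can avoid Remark \ref{rem_a_0} altogether. Set $\mathbb{a}=1$ in \eqref{eq_pr_Glam_c} and then apply Theorem \ref{thm_alg_hom_ord_case}. In that form $N^\mu_{\lambda,d_\lambda}$ is simply defined by $L^\mu_{\lambda,d_\lambda}|_{\mathbb{a}=1}=(-1)^{|\mu\setminus\lambda|}N^\mu_{\lambda,d_\lambda}$, and nonnegativity is a separate bookkeeping check.
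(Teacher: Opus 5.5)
Your proposal is correct and matches the paper's own proof: it likewise specializes the equivariant Chevalley rule of Theorem~\ref{thm_eq_che} at $\mathbf{a}_i=1$, so the diagonal coefficient vanishes and $\mathbf{a}_{(0)}\mapsto 1$, and then reads off $\mathcal{L}_{\lambda,d_\lambda}^{\mu}|_{\mathbf{a}=1}=(-1)^{|\mu\setminus\lambda|}N_{\lambda,d_\lambda}^{\mu}$ from Remark~\ref{rem_a_0}. In your optional sketch of that remark, note that a single step $\nu\Rightarrow\nu'$ may add several boxes at once; such a step contributes $(-1)^{|\nu'\setminus\nu|}$ at $\mathbf{a}=1$ rather than splitting into one-box steps, but your sign count is unaffected.
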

\begin{proof}
    The proof follows from Theorem \ref{thm_eq_che} and Remark \ref{rem_a_0}.
\end{proof}

If $\mu$ is obtained by attaching one box to $\lambda$ then using \eqref{eq_add_one_box} $\mathcal{L}_{\lambda, d_\lambda}^\mu|_{\mathbf{a}=1}=-d_\lambda$. The coefficients of ${\bf c}\mathbb{S}_{\mu}$ in the right hand side of the expression in Corollary \ref{thm_wgt_che} is $\frac{c_{(0)}}{c_\lambda}$. This coincide with the chevalley formulae in the  cohomology ring of divisive weighted Grassmann orbifold, see \cite[Proposition 6.2]{br}.

\begin{remark}
On the right hand side of \eqref{eq_pr_Glam_c}, the partition $\mu$ may lie in $\mathcal{P}_d\setminus \mathcal{P}(d,n)$. However, in the statement of Theorem \ref{thm_eq_che} and Corollary \ref{thm_wgt_che}, such $\mu$ do not appear. The reason is that, by Theorem \ref{thm_alg_mor}, if $\mu\in \mathcal{P}_d\setminus \mathcal{P}(d,n)$ then $\Psi^{\bf c}(G_{\mu}^{\bf c}(x|{\mathbb{a}}))=0$.
\end{remark}
% \section{equivariant $K$-theory ring of weighted Grassmann orbifold}
% Describe the equivariant $K$-theory ring and its basis follows from the earlier paper 

% I know the following theorem. Let $\mathcal{R}$ be the localization of $\ZZ[\beta][b_1,b_2,\dots]$ by the multiplicative system formed by the products $1+\beta b_i~(i\geq 1)$.

% \begin{theorem}\cite{mc,ikNa}
%     $G_{\lambda}(x_1,x_2,\dots,x_d|b)$ $(\lambda \in \mathcal{P}_d)$ form an $\mathcal{R}$ basis of $\mathcal{R}[x_1,x_2,\dots,x_d]^{S_d}$. 
% \end{theorem}

% Next, we consider $\beta=-1$. Then $\mathcal{R}$ is same as $R(T)$ and we have the following results

% \begin{corollary}
%      $G_{\lambda}(x_1,x_2,\dots,x_n|b)$ $(\lambda \in \mathcal{P}_n)$ form an $R(T)$ basis of $R(T)[x_1,x_2,\dots,x_n]^{S_n}$. 
% \end{corollary}

%  \begin{theorem}
%      There exist a surjective homomorphism of $R(T)$ algebra
%      \[\Psi\colon R(T)[x_1,x_2,\dots,x_d]^{S_d}\to K_{T^n}^0(\G(d,n))\]
%      which sends $G_{\lambda}(x|1-\alpha_1,\dots,1-\alpha_n)$ to $S_{\lambda}$ if $\lambda_1\leq n-d$ and 0 othewise.
%  \end{theorem}

% Look at Theorem 3.4 of Macnamara. 

% \begin{theorem}
%     There exist a subring $G_n$ of $\ZZ[\beta][x_1,\dots,x_n]$ such that $G_{\lambda}(x_1,x_2,\dots,x_n|b)$ $(\lambda \in \mathcal{P}_n)$ form an $R(T)$ basis of $R(T)\otimes G_n$.
% \end{theorem}

%  \begin{theorem}
%      There exist a surjective homomorphism of $R(T)$ algebra
%      \[\Psi\colon R(T)\otimes G_n\to K_T^*(\G(n,n+k))\]
%      which sends $G_{\lambda}(x|1-e^t)$ to $\sigma_{\lambda}^T$ if $\lambda_1\leq k$ and 0 othewise.
%  \end{theorem}

\section{Structure constants of the equivariant $K$-theory of divisive weighted Grassmann orbifolds}\label{sec_int_coh}
In this Section, we explicitly compute the structure constants 
in $K_{T_{\bf c}}(\G_{\bf c}(d,n))$ with respect to the Schubert classes ${\bf c}S_\lambda$. In \cite{PY, PY1} Pechenik-Yong describe a combinatorial formulae  the equivariant structure constants ${{K}}_{\lambda\mu}^{\nu}$ with respect to the Schubert classes $S_{\lambda}$.
\begin{equation}
	{{S}}_{\lambda}{{S}}_{\mu}=\sum_{\nu }{{K}}_{\lambda\mu}^{\nu}{{S}}_{\nu}.
\end{equation}

% \begin{equation}
% 	{{S}}_{\lambda}{{S}}_{\mu}=\sum_{\nu\succeq\lambda,\mu }{K}_{\lambda\mu}^{\nu}{{S}}_{\nu}.
% \end{equation}

\begin{lemma}\label{lem_special_mu}
    $K_{\lambda \mu}^\nu$ can be written as a polynomial in $\frac{a_{i}}{a_{i+1}}$ with $1\le i\leq \tilde\nu_d-1$.
\end{lemma}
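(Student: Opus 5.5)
We prove the lemma by computing $K_{\lambda\mu}^\nu$ through localization at fixed points and then reading off which variables can appear. Since the Schubert classes are upper triangular (Proposition \ref{prop_van_pro_S}), restricting ${S}_{\lambda}{S}_{\mu}=\sum_{\eta}K_{\lambda\mu}^{\eta}S_\eta$ to the fixed point $\nu$ gives the triangular recursion
\begin{equation*}
K_{\lambda\mu}^{\nu}\,S_\nu|_\nu=S_\lambda|_\nu S_\mu|_\nu-\sum_{\eta\prec\nu}K_{\lambda\mu}^{\eta}S_\eta|_\nu .
\end{equation*}
This determines $K_{\lambda\mu}^\nu$ by induction on $\nu$ in the Bruhat order. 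We know a priori that $K_{\lambda\mu}^\nu\in R(T^n)$, and by Pechenik--Yong \cite{PY} it is even a polynomial in the ratios $a_i/a_{i+1}$ (equivalently in $1-a_i/a_{i+1}$). So the real content of the lemma is the bound $i\le \tilde\nu_d-1$, and the proof reduces to controlling which variables $a_j$ can occur.

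The key steps are as follows.

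First, we show that every localization $S_\eta|_\nu$ with $\eta\preceq\nu$ involves only $a_1,\dots,a_{\tilde\nu_d}$. By Lemma \ref{lem_loc_sc_bas_gsm}, $S_\eta|_\nu$ is a polynomial in the quantities $a_\rho/a_\nu$ with $\rho\preceq\nu$. Each such $\rho$ has all entries at most $\tilde\rho_d\le\tilde\nu_d$. Hence $a_\rho/a_\nu$ is a Laurent monomial in $a_1,\dots,a_{\tilde\nu_d}$ of total degree $0$. A degree-zero Laurent monomial in these variables can be rewritten as a Laurent monomial in the ratios $a_i/a_{i+1}$ with $i\le\tilde\nu_d-1$.

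Second, we apply the same observation to the product $S_\lambda|_\nu S_\mu|_\nu$ and to each term $K_{\lambda\mu}^\eta S_\eta|_\nu$ with $\eta\prec\nu$. For the latter we use the induction hypothesis together with $\tilde\eta_d\le\tilde\nu_d$. We also check that $S_\nu|_\nu=\prod_{\rho\in R(\nu)}(1-a_\rho/a_\nu)$ involves only indices at most $\tilde\nu_d$. This places $K_{\lambda\mu}^\nu$ in the fraction field of $\mathbb{Z}[(a_i/a_{i+1})^{\pm1}: i\le\tilde\nu_d-1]$.

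Third, we combine this with the a priori fact that $K_{\lambda\mu}^\nu$ is a polynomial in all the ratios $a_i/a_{i+1}$ for $1\le i\le n-1$. An element of this polynomial ring that lies in the fraction field of the subring generated by the first $\tilde\nu_d-1$ ratios must itself be a polynomial in those ratios alone. This holds because the ratios are algebraically independent, so the larger ring is a polynomial extension of the smaller one. This gives the lemma.

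The main obstacle is the polynomiality claim: ruling out negative powers and denominators. I would first take it directly from the Pechenik--Yong positivity formula \cite{PY}, which expresses $K_{\lambda\mu}^\nu$ as a polynomial in $1-a_i/a_{i+1}$. If only Laurent-ness in the ratios is available, the intersection argument in the third step still bounds the indices. As a check, one can instead use the recurrence of Proposition \ref{prop_mu_comp} through simple reflections $s_i$ with $i<\tilde\nu_d$, which never introduces $a_j$ for $j>\tilde\nu_d$.
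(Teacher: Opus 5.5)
Your proposal is correct and follows essentially the same route as the paper. Both arguments use the triangular localization recursion $K_{\lambda\mu}^{\nu}S_\nu|_\nu=S_\lambda|_\nu S_\mu|_\nu-\sum_{\eta\prec\nu}K_{\lambda\mu}^{\eta}S_\eta|_\nu$, Lemma \ref{lem_loc_sc_bas_gsm} to confine all localizations to $a_1,\dots,a_{\tilde\nu_d}$, induction on $\nu$, and Pechenik--Yong polynomiality to finish. Your third step, the argument that an element of a polynomial extension lying in the fraction field of the base must lie in the base, supplies the justification for dividing by $S_\nu|_\nu$ that the paper leaves implicit when it asserts $K_{\lambda\mu}^\nu\in\ZZ[a_1^{\pm1},\dots,a_{\tilde\nu_d}^{\pm1}]$.
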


\begin{proof}
From Lemma \ref{lem_loc_sc_bas_gsm}, it follows that $S_{\lambda}|_\mu\in \ZZ[a_1^{\pm1},\dots,a_{\tilde{\mu}_d}^{\pm1}]$. Let $\rho$ be a minimal Schubert symbol such that $\lambda\preceq \rho$ and $\mu\preceq \rho$ both hold. Here $\rho$ is minimal means that there is no other Schubert symbol $\rho'$ such that $\rho'\prec\rho$ and $\lambda\preceq \rho'$,  $\mu\preceq \rho'$ both hold. Then, using the upper triangularity of $S_\lambda$, we have  $K_{\lambda\mu}^{\nu}=0$ if $\nu\prec\rho$.

For $\nu=\rho$  we have $K_{\lambda \mu}^\rho S_\rho|_\rho=S_{\lambda}|_\rho S_\mu|_\rho$. 
Thus $K_{\lambda\mu}^{\rho}\in \ZZ[a_1^{\pm1},\dots,a_{\tilde{\rho}_d}^{\pm1}]$. We complete the prove by induction. For any arbitrary $\nu\succ\rho$ we have
$$K_{\lambda \mu}^\nu S_\nu|_\nu=S_{\lambda}|_\nu S_\mu|_\nu-\sum_{\eta:\rho\preceq\eta\prec\nu}K_{\lambda \mu}^\eta S_\eta|_\nu.$$ 
From induction hypothesis, $K_{\lambda \mu}^\eta\in \ZZ[a_1^{\pm1},\dots,a_{\tilde{\eta}_d}^{\pm1}]$ and $S_{\eta}|_\nu\in \ZZ[a_1^{\pm1},\dots,a_{\tilde{\nu}_d}^{\pm1}]$ follows from Lemma \ref{lem_loc_sc_bas_gsm}. Thus $K_{\lambda \mu}^\nu\in \ZZ[a_1^{\pm1},\dots,a_{\tilde{\nu}_d}^{\pm1}]$. Also, using \cite[Corollary 1.5]{PY}, $K_{\lambda \mu}^\nu$ can be written as a polynomial in $\frac{a_{i}}{a_{i+1}}$. Hence we have the proof. 
\end{proof}
Let $I$ be a finite collection of elements in $\{1,2,\dots,\tilde{\nu}_d-1\}$. We define ${u}_i=\frac{{a}_{i}}{{a}_{i+1}}$ and $U_I:=\prod_{i\in I} u_i$. Using \cite[Corollary 1.5]{PY}, ${K}_{\lambda\mu}^{\nu}$ can be written as the following: 
\begin{equation}\label{eq_st_co_gsm}
    {K}_{\lambda\mu}^{\nu}=\sum_{I}C(\lambda,\mu,\nu,I)U_I,
\end{equation}
  $C(\lambda,\mu,\nu,I)\in \ZZ$ is the coefficient of $U_I$. 
 %Thus  $C(\lambda,\mu,\nu,I)=0$ if there is no monomial in the expansion of  ${K}_{\lambda\mu}^{\nu}$ which is constant multiple of $U_I$. 
 Moreover, $(-1)^{|\nu|-|\lambda|-|\mu|}C(\lambda,\mu,\nu,I)\in \ZZ_{\geq 0}$.

\begin{remark}
    The elements in $I$ need not be always distinct; some elements can occur finitely many times. For example, $U_I$ could be $u_3u_4^2$ corresponding to $I=\{3,4,4\}$.
\end{remark}

For each $1< i\leq \tilde{\nu}_d-1$, one can construct two minimal Schubert symbols $\eta$ and $\eta'$ such that $\eta=(i,i+1)\eta'$. To describe this explicitly, let $i_1,i_2,\dots,i_{d-1}$ the smallest $d-1$ elements in $\{1,\dots,d+1\}\setminus\{i,i+1\}$. Define $\eta=(i_1,\dots,i_{d-1},i+1)$ and $\eta'=(i_1,\dots,i_{d-1},i)$. Then $w_{i}-w_{i+1}=c_{\eta'}-c_\eta$. Define $d_i=c_{\eta'}-c_\eta=(d_{\eta'\eta}-1)c_\eta$. Note that both $\eta\prec\nu$ and $\eta'\prec\nu$ hold. Thus $c_\nu$ divides $c_{\eta'}-c_{\eta}$. Define $d_{i,\nu}$ by $c_{\eta'}-c_{\eta}=d_{i,\nu}c_\nu$ and $d_{I,\nu}=\sum_{i\in I}d_{i,\nu}$. Define  $\mathbb{u}_i=\frac{\mathbb{a}_{i}}{\mathbb{a}_{i+1}}$, $\mathbb{U}_I=\prod_{i\in I}\mathbb{u}_i$ and similarly $\mathfrak{u}_i=\frac{\mathbf{a_{i}}}{\mathbf{a}_{i+1}}$, $\mathcal{U}_I=\prod_{i\in I}\mathfrak{u}_i$.

\begin{theorem}\label{str_cst_eq_case}
For $\lambda,\mu,\eta\in \mathcal{P}(d,n)$ with $\eta\succeq \lambda, \mu$, we have
    $${\bf c}{K}_{\lambda\mu}^\eta=\sum_{\nu:\nu \succeq\lambda,\mu  }\sum_{I: \nu \xRightarrow[d_{I,\nu}]{} \eta}C(\lambda,\mu,\nu,I)\mathcal{U}_I \mathcal{L}_{\nu,d_{I,\nu}}^\eta.$$
\end{theorem}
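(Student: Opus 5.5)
We work at the level of polynomials, using the twisted factorial Grothendieck polynomials, and then push the identity to $K_{T_{\bf c}}(\G_{\bf c}(d,n))$ with Theorem \ref{thm_alg_mor}. By Theorem \ref{thm_sur_hom}, Pechenik--Yong's expansion ${S}_\lambda S_\mu=\sum_\nu K_{\lambda\mu}^\nu S_\nu$ lifts to the identity
\[
G_\lambda(x|1-a)\,G_\mu(x|1-a)=\sum_{\nu\succeq\lambda,\mu}K_{\lambda\mu}^\nu(a)\,G_\nu(x|1-a)+R(x|a).
\]
Here $R$ is a $\ZZ[a^{\pm1}]$-combination of $G_\nu$ with $\nu\notin\mathcal{P}(d,n)$. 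To make this precise we choose $n$ large, or equivalently we work in the stable limit, so that the product expansion holds exactly in $\ZZ[a^{\pm1}][x]^{S_d}$. Such terms are killed by $\Psi^{\bf c}$ in any case. By Lemma \ref{lem_special_mu} and \eqref{eq_st_co_gsm}, each coefficient is written as $K_{\lambda\mu}^\nu=\sum_I C(\lambda,\mu,\nu,I)U_I$ with $U_I=\prod_{i\in I}a_i/a_{i+1}$.

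Next we apply the ring isomorphism $h$, which sends $a_i\mapsto\mathbb{a}_i\xi(x)^{w_i}$. This turns the left side into $G^{\bf c}_\lambda G^{\bf c}_\mu$ and each $G_\nu(x|1-a)$ into $G^{\bf c}_\nu(x|\mathbb{a})$. It also sends $U_I$ to $\mathbb{U}_I\,\xi(x)^{\sum_{i\in I}(w_i-w_{i+1})}$. By the construction preceding the theorem, $w_i-w_{i+1}=c_{\eta'}-c_\eta=d_{i,\nu}c_\nu$, so $h(U_I)=\mathbb{U}_I\,\xi(x)^{d_{I,\nu}c_\nu}$. We then expand $\xi(x)^{d_{I,\nu}c_\nu}G^{\bf c}_\nu(x|\mathbb{a})$ with \eqref{eq_it_pro}, taking $k=d_{I,\nu}$:
\[
\xi(x)^{d_{I,\nu}c_\nu}G^{\bf c}_\nu=\sum_{\eta:\nu\xRightarrow[d_{I,\nu}]{}\eta}L_{\nu,d_{I,\nu}}^\eta G^{\bf c}_\eta.
\]
Collecting the coefficient of $G^{\bf c}_\eta$ gives
\[
\sum_{\nu}\sum_{I}C(\lambda,\mu,\nu,I)\,\mathbb{U}_I\,L^\eta_{\nu,d_{I,\nu}}.
\]
Applying $\Psi^{\bf c}$ sends $\mathbb{a}_i\mapsto\mathbf a_i$, $\mathbb{U}_I\mapsto\mathcal U_I$ and $L\mapsto\mathcal L$. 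The terms with $\eta\notin\mathcal{P}(d,n)$ vanish. Since $\{{\bf c}S_\eta\}$ is a basis by Proposition \ref{lem_sc_bas_wt_case_gen}, comparing coefficients yields the formula.

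The main obstacle is justifying the exponent bookkeeping. The identity $w_i-w_{i+1}=c_{\eta'}-c_\eta$ comes from $c_\lambda=1+\sum_j w_{\tilde\lambda_j}$ in Lemma \ref{lem_div_a,1}. We must also check that $d_{i,\nu}$ is a nonnegative integer with $c_\nu\mid(c_{\eta'}-c_\eta)$, which requires $\eta,\eta'\preceq\nu$ (hence $\le$ in dictionary order) and divisivity. Only indices $i\le\tilde\nu_d-1$ occur, by Lemma \ref{lem_special_mu}, and this is what guarantees that the needed $\eta,\eta'$ lie below $\nu$. We also need $w_i\ge w_{i+1}$, so that only nonnegative powers of $\xi$ appear and \eqref{eq_it_pro} applies. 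Summing the multiplicities of repeated indices in $I$ then gives the integer $d_{I,\nu}$, which settles this step.

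A secondary point is the well-definedness of the polynomial-level product identity, given that $K^\nu_{\lambda\mu}$ depends on $n$. We handle this either with the stable factorial Grothendieck product, whose coefficients involve only $a_1,\dots,a_{\tilde\nu_d}$, or by arguing directly in the image. In the second approach, $\Psi^{\bf c}$ is a ring map, so the computation needs only the images. Each step above, namely the $h$-substitution and \eqref{eq_it_pro}, is an identity of polynomials and is compatible with $\Psi^{\bf c}_\mu$ through $\phi_\mu$.
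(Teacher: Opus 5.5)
Your proposal follows the paper's proof step for step. You lift the Pechenik--Yong expansion \eqref{eq_st_co_gsm} to factorial Grothendieck polynomials, apply $h$ so that $U_I$ picks up $\xi(x)^{\sum_{i\in I}(w_i-w_{i+1})}=\xi(x)^{c_\nu d_{I,\nu}}$, expand with \eqref{eq_it_pro}, and push forward by $\Psi^{\bf c}$. You also handle the product identity modulo $\ker\Psi$, either by stability or by using $\Psi^{\bf c}_\mu\circ h=\phi_\mu$, which factors through $\Psi$. That is more careful than the paper, which simply asserts the polynomial identity.

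There is, however, a genuine error in the step you call the main obstacle, and the paper makes the same error. The bound $i\le\tilde\nu_d-1$ from Lemma \ref{lem_special_mu} does \emph{not} guarantee $\eta,\eta'\preceq\nu$. Take $d=2$, $n=4$, ${\bf c}=(\beta,\beta,\beta,1,1,1)$ (so $W=(\beta-1,0,0,0)$, $a=1$) and the Schubert symbol $\nu=\{1,3\}$. The index $i=1$ is allowed, but the construction gives $\eta=\{2,3\}\not\preceq\nu$. Moreover $c_\nu=\beta$ does not divide $w_1-w_2=\beta-1$ when $\beta\ge 2$, so $d_{1,\nu}$ is not an integer.

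The theorem survives because such $u_i$ never actually occur, and you can prove this as follows.
\begin{itemize}
\item Let $\{1,\dots,k\}$ be the largest initial segment contained in $\nu$. Every $\rho\preceq\nu$ contains it, since $\tilde\rho_j\le\tilde\nu_j=j$ for $j\le k$.
\item Hence $a_1,\dots,a_k$ cancel from every ratio $a_\kappa/a_\rho$ with $\kappa\preceq\rho\preceq\nu$ (Remark \ref{rem_pol_1}).
\item The triangular recursion $K^\nu_{\lambda\mu}S_\nu|_\nu=S_\lambda|_\nu S_\mu|_\nu-\sum_{\rho\prec\nu}K^\rho_{\lambda\mu}S_\rho|_\nu$, with induction on $\nu$, shows that $K^\nu_{\lambda\mu}$ is free of $a_1,\dots,a_k$. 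Therefore $u_1,\dots,u_k$ do not appear in \eqref{eq_st_co_gsm}.
\item For $k<i<\tilde\nu_d$, check directly that $\eta\preceq\nu$ (and $\eta'\prec\eta$). If $i\ge d$, then $\eta=\{1,\dots,d-1,i+1\}$. If $i<d$, then $\eta=\{1,\dots,d+1\}\setminus\{i\}$, and $\tilde\nu_j\ge j+1$ for all $j>k$.
\item Consequently $\eta,\eta'\le\nu$ in dictionary order, and divisivity gives $c_\nu\mid c_{\eta'}-c_\eta=w_i-w_{i+1}$.
\end{itemize}
With this sharpening of Lemma \ref{lem_special_mu}, your argument is complete.
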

\begin{proof}
Using Theorem \ref{thm_sur_hom} together with \eqref{eq_st_co_gsm} we have
\begin{equation*}
     G_{\lambda}(x|1-a)G_{\mu}(x|1-a)=\sum_{\nu\succeq\lambda,\mu }\sum_{I}C(\lambda,\mu,\nu,I){U}_IG_\nu(x|1-a).
\end{equation*}
    Now substituting $a_i=\mathbb{a}_i(\xi(x))^{w_i}$ and using Lemma \ref{lem_special_mu} we get
    
    \begin{align*}
    G_\lambda^{\bf c}(x|\mathbb{a}) G_\mu^{\bf c}(x|\mathbb{a})&=\sum_{\nu\succeq\lambda,\mu }\sum_{I}C(\lambda,\mu,\nu,I)\mathbb{U}_I\xi(x)^{\sum_{i\in I}(w_{i}-w_{i+1})} G_\nu^{\bf c}(x|\mathbb{a})\\
    &=\sum_{\nu\succeq\lambda,\mu }\sum_{I}C(\lambda,\mu,\nu,I)\mathbb{U}_I\xi(x)^{c_\nu d_{I,\nu}}G_\nu^{\bf c}(x|\mathbb{a})\\
    &=\sum_{\nu\succeq\lambda,\mu }\sum_{I}C(\lambda,\mu,\nu,I)\mathbb{U}_I\Big(\sum_{\eta:\nu \xRightarrow[d_{I,\nu}]{} \eta}L_{\nu,d_{I,\nu}}^\eta G_\eta^{\bf c}(x|\mathbb{a})\Big)\\
    &=\sum_{\eta}\Big{(}\sum_{\nu:\nu \succeq\lambda,\mu }\sum_{I:\nu \xRightarrow[d_{I,\nu}]{} \eta}C(\lambda,\mu,\nu,I)\mathbb{U}_I L_{\nu,d_{I,\nu}}^\eta\Big{)}G_\eta^{\bf c}(x|\mathbb{a}).
\end{align*}
Now using the Theorem \ref{thm_alg_mor}, we can complete the proof.
\end{proof}

\begin{corollary}\label{cor_st_con_ord_case}
 Structure constants ${\bf{c}}{\mathscr{K}}_{\lambda\mu}^{\eta}$ with respect to the Schubert basis $\{{\bf{c}}{\mathbb{S}}_{\lambda}\}_{\lambda\in \mathcal{P}(d,n)}$ in the ordinary $K$-theory are given by 
 $$ {\bf{c}}{\mathscr{K}}_{\lambda\mu}^{\eta}={\mathscr{K}}_{\lambda\mu}^{\eta}+\sum_{\nu:~\eta\succ\nu \succeq\lambda,\mu  }\sum_{I: \nu \xRightarrow[d_{I,\nu}]{} \eta}(-1)^{|\eta\setminus \nu|}C(\lambda,\mu,\nu,I)N_{\nu,d_{I,\nu}}^{\eta}.$$ 
 % $$ {\bf{c}}{\mathscr{K}}_{\lambda\mu}^{\eta}=\sum_{\nu:\nu \succeq\lambda,\mu  }\sum_{I: \nu \xRightarrow[d_{I,\nu}]{} \eta}C(\lambda,\mu,\nu,I)(-1)^{|\eta\setminus \nu|}N_{\nu,d_{I,\nu}}^{\eta}.$$  
\end{corollary}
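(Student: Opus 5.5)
The plan is to specialize Theorem \ref{str_cst_eq_case} along the forgetful map. By \eqref{eq_base_cng} and the definition of ${\bf c}\mathbb{S}_\lambda$ as the image of ${\bf c}S_\lambda$, the product ${\bf c}\mathbb{S}_\lambda{\bf c}\mathbb{S}_\mu$ is obtained by applying $\varepsilon_{\bf c}$ to the equivariant expansion ${\bf c}S_\lambda{\bf c}S_\mu=\sum_\eta {\bf c}K_{\lambda\mu}^\eta\,{\bf c}S_\eta$. Since $\{{\bf c}\mathbb{S}_\eta\}$ is a $\ZZ$-basis of $K(\G_{\bf c}(d,n))$, this gives ${\bf c}\mathscr{K}_{\lambda\mu}^\eta=\varepsilon_{\bf c}({\bf c}K_{\lambda\mu}^\eta)$. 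Applying $\varepsilon_{\bf c}$ termwise to the formula of Theorem \ref{str_cst_eq_case}, we have $\varepsilon_{\bf c}(\mathcal{U}_I)=1$, and Remark \ref{rem_a_0} gives $\varepsilon_{\bf c}(\mathcal{L}_{\nu,d_{I,\nu}}^\eta)=(-1)^{|\eta\setminus\nu|}N_{\nu,d_{I,\nu}}^\eta$. Hence
\[
{\bf c}\mathscr{K}_{\lambda\mu}^\eta=\sum_{\nu\succeq\lambda,\mu}\ \sum_{I:\,\nu\xRightarrow[d_{I,\nu}]{}\eta}(-1)^{|\eta\setminus\nu|}C(\lambda,\mu,\nu,I)\,N_{\nu,d_{I,\nu}}^\eta .
\]

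Next we split off the terms with $\nu=\eta$. A chain $\nu\xRightarrow[k]{}\eta$ only adds boxes, so $\nu\preceq\eta$. The summand therefore splits into the part with $\nu=\eta$ and the part with $\eta\succ\nu\succeq\lambda,\mu$. When $\nu=\eta$, Example \ref{eg_mu_eq_lam} shows there is exactly one chain for each $k$, so $N_{\eta,k}^\eta=1$; the convention for $k=0$ gives the same value. Every $I$ is therefore admissible, and the $\nu=\eta$ part equals $\sum_I C(\lambda,\mu,\eta,I)$. By \eqref{eq_st_co_gsm} this is $K_{\lambda\mu}^\eta|_{a=1}$.

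It remains to identify $K_{\lambda\mu}^\eta|_{a=1}$ with $\mathscr{K}_{\lambda\mu}^\eta$, the ordinary structure constant for $\mathbb{S}_\lambda$ in $K(\G(d,n))$. This holds because the forgetful map $K_{T^n}(\G(d,n))\to K(\G(d,n))$ is base change along $\varepsilon$ and sends $S_\lambda$ to $\mathbb{S}_\lambda$. Substituting this into the displayed sum gives the formula of the corollary.

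The main obstacle is the bookkeeping in the specialization $\varepsilon_{\bf c}(\mathcal{L})=(-1)^{|\eta\setminus\nu|}N$, which Remark \ref{rem_a_0} asserts somewhat informally. If more justification is needed, I would verify it directly from \eqref{eq_lau_pol}. Setting $\mathbf{a}=1$ in each chain contribution gives $\prod(-1)^{|\nu^{i+1}\setminus\nu^i|}$, and the signs telescope to $(-1)^{|\eta\setminus\nu|}$. What remains is the count of chains, which is $N$ by its definition.
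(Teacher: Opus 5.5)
Your proposal is correct and follows the paper's own argument: specialize Theorem~\ref{str_cst_eq_case} at $\mathbf{a}_i=1$, use Remark~\ref{rem_a_0} for $\mathcal{L}_{\nu,d_{I,\nu}}^{\eta}$, and split off the $\nu=\eta$ terms using $\mathcal{L}_{\eta,k}^{\eta}|_{\mathbf{a}=1}=1$, so that these terms sum to $K_{\lambda\mu}^{\eta}|_{a=1}=\mathscr{K}_{\lambda\mu}^{\eta}$. The only difference is that you spell out the base-change steps ${\bf c}\mathscr{K}_{\lambda\mu}^{\eta}=\varepsilon_{\bf c}({\bf c}K_{\lambda\mu}^{\eta})$ and $\mathscr{K}_{\lambda\mu}^{\eta}=\varepsilon(K_{\lambda\mu}^{\eta})$ explicitly, which the paper leaves implicit.
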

\begin{proof}
We substitute ${\bf a}_i=1$ in ${\bf c}{K}_{\lambda\mu}^\eta$ and use the fact $\mathcal{L}_{\eta,d_{I,\nu}}^\eta|_{{\bf a}_i=1}=1$. Then the proof follows using Remark \ref{rem_a_0} and Theorem \ref{str_cst_eq_case}.
\end{proof}

% \begin{remark}
% The structure constant $ {\bf{c}}{\mathscr{K}}_{\lambda\mu}^{\eta}$ is non-zero only if $|\eta|\geq|\lambda|+|\mu|$. If $|\eta|=|\lambda|+|\mu|$ the structure constant $ {\bf{c}}{\mathscr{K}}_{\lambda\mu}^{\eta}$ coincide with structure constant in the cohomology ring of divisive weighted Grassmann orbifold.
% \end{remark}

%Next, we prove the positivity of the structure constants ${\bf c}K_{\lambda\mu}^{\eta}$.
For $\lambda\preceq \mu$, we denote  $\mathbf{a}_{\lambda\mu}:=\frac{\mathbf{a}_\lambda}{(\mathbf{a}_\mu)^{d_{\lambda\mu}}}.$

\begin{theorem}\label{thm_postivity}
    $(-1)^{|\eta|-|\lambda|-|\mu|}\,{\bf c}K_{\lambda\mu}^{\eta}$ is expressible as a Laurent polynomial with nonnegative integer coefficients in the $\mathbf{a}_{\lambda\mu}$. 
\end{theorem}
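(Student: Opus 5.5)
The plan is to start from the explicit formula of Theorem \ref{str_cst_eq_case},
$${\bf c}{K}_{\lambda\mu}^\eta=\sum_{\nu \succeq\lambda,\mu}\sum_{I:\, \nu \xRightarrow[d_{I,\nu}]{} \eta}C(\lambda,\mu,\nu,I)\,\mathcal{U}_I\, \mathcal{L}_{\nu,d_{I,\nu}}^\eta ,$$
and check that every summand, after multiplication by $(-1)^{|\eta|-|\lambda|-|\mu|}$, is a Laurent polynomial in the $\mathbf{a}_{\alpha\beta}$ with nonnegative coefficients. Since a sum of such expressions has the same form, this is enough. The sign splits as
$$(-1)^{|\eta|-|\lambda|-|\mu|}=(-1)^{|\nu|-|\lambda|-|\mu|}\cdot(-1)^{|\eta\setminus\nu|},$$
and we attach the first factor to $C(\lambda,\mu,\nu,I)$ and the second to $\mathcal{L}_{\nu,d_{I,\nu}}^\eta$. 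By the Pechenik--Yong positivity recorded after \eqref{eq_st_co_gsm}, $(-1)^{|\nu|-|\lambda|-|\mu|}C(\lambda,\mu,\nu,I)\in\ZZ_{\geq 0}$. By Remark \ref{rem_a_0}, $(-1)^{|\eta\setminus\nu|}\mathcal{L}_{\nu,k}^\eta$ equals $(\mathbf{a}_\nu)^{-k}$ times a polynomial with nonnegative integer coefficients in quantities $\mathbf{a}_{\alpha\beta}=\mathbf{a}_\alpha/(\mathbf{a}_\beta)^{d_{\alpha\beta}}$. To justify that remark, I would unwind \eqref{eq_lau_pol} through \eqref{eq_L_lam_k} and \eqref{eq_add_one_box}. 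Each one-box step contributes $-(\mathbf{a}_\nu)^{-k}\sum_j \mathbf{a}_{\nu\mu}^{\,j}$, the sign $-1$ per box accounts for $(-1)^{|\eta\setminus\nu|}$, and the products along the chains are sign-coherent.

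Two factors remain to be rewritten in terms of the $\mathbf{a}_{\alpha\beta}$: the monomial $\mathcal{U}_I=\prod_{i\in I}\mathbf{a}_i/\mathbf{a}_{i+1}$ and the prefactor $(\mathbf{a}_\nu)^{-d_{I,\nu}}$. The plan is to pair them using the minimal Schubert symbols $\eta_i=(i_1,\dots,i_{d-1},i+1)$ and $\eta_i'=(i_1,\dots,i_{d-1},i)$ introduced before Theorem \ref{str_cst_eq_case}. We have $\mathbf{a}_i/\mathbf{a}_{i+1}=\mathbf{a}_{\eta_i'}/\mathbf{a}_{\eta_i}$ and $c_{\eta_i'}-c_{\eta_i}=d_{i,\nu}c_\nu$. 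Using $d_{\alpha\beta}=c_\alpha/c_\beta$ and the divisibility chain, one writes
$$\mathfrak{u}_i\,(\mathbf{a}_\nu)^{-d_{i,\nu}}=\frac{\mathbf{a}_{\eta_i'}}{(\mathbf{a}_\nu)^{d_{\eta_i'\nu}}}\Big(\frac{\mathbf{a}_{\eta_i}}{(\mathbf{a}_\nu)^{d_{\eta_i\nu}}}\Big)^{-1}=\mathbf{a}_{\eta_i'\nu}\,\mathbf{a}_{\eta_i\nu}^{-1},$$
which is a Laurent monomial in the allowed variables. Taking the product over $i\in I$ absorbs both $\mathcal{U}_I$ and $(\mathbf{a}_\nu)^{-d_{I,\nu}}$. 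What remains is a nonnegative integer times a product of Laurent monomials and polynomials with nonnegative coefficients in the $\mathbf{a}_{\alpha\beta}$, which is the claim.

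The main obstacle is the bookkeeping behind Remark \ref{rem_a_0}. For intermediate chain steps $\nu^i\xRightarrow[d_{\lambda\nu^i}]{}\nu^{i+1}$, the powers $(\mathbf{a}_{\nu^i})^{-s}$ must themselves be rewritten as monomials in the $\mathbf{a}_{\alpha\beta}$, not merely left over as stray factors. I would handle this by induction on the chain length. The exponent $d_{\lambda\nu^i}$ is chosen exactly so that $(\mathbf{a}_{\nu^i})^{-d_{\lambda\nu^i}}$ combines with the factor $\mathbf{a}_\lambda$ inherited from the preceding step into $\mathbf{a}_{\lambda\nu^i}^{-1}$ (or its reciprocal pattern). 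The signs contribute exactly one $-1$ per added box, so no cancellation occurs.
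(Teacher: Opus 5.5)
\textbf{The sign-positivity of the monomial coefficients fails.} Your outline reproduces the paper's proof step for step: the sign split, the Pechenik--Yong input for $C(\lambda,\mu,\nu,I)$, Remark~\ref{rem_a_0} for $\mathcal{L}$, and the pairing $\mathfrak{u}_i(\mathbf{a}_\nu)^{-d_{i,\nu}}=\mathbf{a}_{\eta_i'\nu}/\mathbf{a}_{\eta_i\nu}$. You even orient this pairing correctly, whereas the paper's proof swaps $\eta$ and $\eta'$.

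The failing step is the one you import from the remark after \eqref{eq_st_co_gsm}: that $(-1)^{|\nu|-|\lambda|-|\mu|}C(\lambda,\mu,\nu,I)\in\ZZ_{\geq 0}$ for the coefficients of the monomials $U_I$. This is not what Pechenik--Yong prove. Their Graham--Kumar-type positivity is
$(-1)^{|\nu|-|\lambda|-|\mu|}K^\nu_{\lambda\mu}\in\ZZ_{\geq0}[u_1-1,\dots,u_{n-1}-1]$,
that is, positivity in the variables $u_i-1$. Expanding products of $u_i-1$ into monomials $U_I$ produces both signs. The paper's own Example~\ref{eg_wgt_gr_24} shows this:
\begin{itemize}
\item For $\lambda=(2,0)$, $\mu=\nu=(2,1)$ the sign is $(-1)^{3-2-3}=+1$, yet $K^{(2,1)}_{(2,0),(2,1)}=1-u_3-u_1u_2u_3+u_1u_2u_3^2$.
\item For $\nu=(2,2)$ the sign is $-1$ and $K=u_3-u_1u_2u_3^2$.
\end{itemize}
The paper's proof makes the identical appeal, so you inherited the error rather than introduced it, but it is fatal to the argument.

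\textbf{The statement itself is false as worded.} Take the same example. There
${\bf c}K^{(2,1)}_{(2,0),(2,1)}={\bf c}S_{(2,0)}|_{(2,1)}=(1-x)(1-y)$,
with $x=\mathbf{a}_3\mathbf{a}_4^{-1}$, $y=\mathbf{a}_1\mathbf{a}_2^{1-\beta}\mathbf{a}_4^{-\beta}$, and sign factor $+1$. Each $\mathbf{a}_{\alpha\beta}$ is a Laurent monomial in $\mathbf{a}_1,\dots,\mathbf{a}_n$ with coefficient $1$. Hence any Laurent polynomial with nonnegative integer coefficients in the $\mathbf{a}_{\alpha\beta}$ expands, without cancellation, to a Laurent polynomial in the $\mathbf{a}_i$ with nonnegative coefficients. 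But $1-x-y+xy$ has coefficient $-1$ on the two distinct monomials $x$ and $y$. Already for $\beta=1$, which is the ordinary $\mathrm{Gr}(2,4)$, this is $(1-a_3/a_4)(1-a_1/a_4)$.

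So no care with the chain bookkeeping behind Remark~\ref{rem_a_0} can rescue the argument; that part is fine and is not where the problem lies. A true positivity result would have to be of Graham--Kumar type, i.e.\ nonnegative in factors of the form (monomial in the $\mathbf{a}_{\alpha\beta}$) minus $1$. Proving it would need a different argument. The substitution $a_i\mapsto\mathbb{a}_i\xi(x)^{w_i}$ attaches the power $\xi(x)^{c_\nu d_{I,\nu}}$ to a whole monomial $U_I$, not to the individual factors $u_i-1$. Consequently, positivity in the $(u_i-1)$-expansion does not transfer termwise through Theorem~\ref{str_cst_eq_case}.
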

\begin{proof}
We have $w_{i}-w_{i+1}=c_{\eta}-c_{\eta'}=d_{i,\nu}c_\nu$ and $d_{i,\nu}=d_{\eta\nu}-d_{\eta'\nu}$. Thus $\mathfrak{u}_i=\frac{{\bf a}_{i}}{{\bf a}_{i+1}}=\frac{\bf a_{\eta}}{\bf a_{\eta'}}$. Using Remark \ref{rem_a_0}, $(-1)^{|\eta|-|\nu|}\mathcal{L}_{\nu,d_{I,\nu}}^\eta$ can be written as $\frac{1}{({\bf a}_\nu)^{d_{I,\nu}}}$ times a polynomial in ${\bf a}_{\nu\mu}$ with coefficients non-negative integers. Moreover, $$\mathfrak{u}_i\frac{1}{({\bf a}_\nu)^{d_{i,\nu}}}=\frac{\bf a_{\eta}}{({\bf a}_\nu)^{d_{\eta\nu}}}\frac{({\bf a}_\nu)^{d_{\eta'\nu}}}{\bf a_{\eta'}}=\frac{\mathbf{a}_{\eta\nu}}{\mathbf{a}_{\eta'\nu}};~~~~ \mathcal{U}_I\frac{1}{({\bf a}_\nu)^{d_{I,\nu}}}=\prod_{i\in I} \mathfrak{u}_i\frac{1}{({\bf a}_\nu)^{d_{i,\nu}}}.$$
Now the proof follows from Theorem \ref{str_cst_eq_case}, using 
$(-1)^{|\nu|-|\lambda|-|\mu|}C(\lambda,\mu,\nu,I)\in \ZZ_{\geq 0}$. 
\end{proof}

\begin{corollary}
$(-1)^{|\eta|-|\lambda|-|\mu|}{\bf c}\mathscr{K}_{\lambda\mu}^{\eta}\in \ZZ_{\geq 0}$ follows from Theorem \ref{thm_postivity}.    
\end{corollary}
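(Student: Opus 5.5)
The statement to prove is the final corollary: $(-1)^{|\eta|-|\lambda|-|\mu|}{\bf c}\mathscr{K}_{\lambda\mu}^{\eta}\in \ZZ_{\geq 0}$. The plan is to specialize Theorem \ref{thm_postivity} along the forgetful map. By \eqref{eq_base_cng}, the ordinary structure constants are the images of the equivariant ones under $\varepsilon_{\bf c}$, which sends ${\bf a}_i\mapsto 1$. So we first check that
$${\bf c}\mathscr{K}_{\lambda\mu}^{\eta}=\varepsilon_{\bf c}\big({\bf c}K_{\lambda\mu}^{\eta}\big).$$
Apply the forgetful map to ${\bf c}S_\lambda{\bf c}S_\mu=\sum_\eta {\bf c}K_{\lambda\mu}^\eta{\bf c}S_\eta$. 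The map is a ring homomorphism, semilinear over $\varepsilon_{\bf c}$, and it sends ${\bf c}S_\eta$ to ${\bf c}\mathbb{S}_\eta$. Since $\{{\bf c}S_\eta\}$ is an $R(T_{\bf c})$-basis (Proposition \ref{lem_sc_bas_wt_case_gen}), the base change \eqref{eq_base_cng} turns $\{{\bf c}\mathbb{S}_\eta\}$ into a $\ZZ$-basis of $K(\G_{\bf c}(d,n))$. Hence the coefficients are uniquely determined, and the identity above follows.

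Next we take the expression from Theorem \ref{thm_postivity}:
$$(-1)^{|\eta|-|\lambda|-|\mu|}{\bf c}K_{\lambda\mu}^{\eta}=P\big((\mathbf{a}_{\alpha\beta})\big),$$
where $P$ is a Laurent polynomial with nonnegative integer coefficients in finitely many variables $\mathbf{a}_{\alpha\beta}=\mathbf{a}_\alpha/(\mathbf{a}_\beta)^{d_{\alpha\beta}}$. Under $\varepsilon_{\bf c}$ each such variable goes to $1/1=1$. Therefore
$$(-1)^{|\eta|-|\lambda|-|\mu|}{\bf c}\mathscr{K}_{\lambda\mu}^{\eta}=P(1,\dots,1),$$
which is the sum of the coefficients of $P$. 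That sum is a nonnegative integer, which gives the claim.

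The main point to be careful about is that $\varepsilon_{\bf c}$ really is evaluation of the Laurent polynomial at $1$. This holds because every monomial of $P$ is a product of integer powers of the $\mathbf{a}_{\alpha\beta}$, and each of these is a Laurent monomial in the ${\bf a}_i$. So there is no cancellation issue: the sign, and the fact that the coefficients are nonnegative, pass through evaluation unchanged.

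As a consistency check, we can also set ${\bf a}=1$ in Corollary \ref{cor_st_con_ord_case}. Each term $(-1)^{|\eta\setminus\nu|}C(\lambda,\mu,\nu,I)N^{\eta}_{\nu,d_{I,\nu}}$ carries the sign $(-1)^{|\eta|-|\lambda|-|\mu|}$, because $C(\lambda,\mu,\nu,I)$ has sign $(-1)^{|\nu|-|\lambda|-|\mu|}$ and $N\ge 0$. This agrees with the conclusion above.
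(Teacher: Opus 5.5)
Your proposal is correct and follows the paper's argument. The paper simply specializes $\mathbf{a}_i\mapsto 1$ (i.e.\ applies $\varepsilon_{\bf c}$) in Theorem~\ref{thm_postivity}, so every $\mathbf{a}_{\alpha\beta}\mapsto 1$ and the nonnegative Laurent polynomial evaluates to the sum of its coefficients. Your extra check that ${\bf c}\mathscr{K}_{\lambda\mu}^{\eta}=\varepsilon_{\bf c}({\bf c}K_{\lambda\mu}^{\eta})$, via freeness of the Schubert basis and the base change \eqref{eq_base_cng}, is a step the paper uses only implicitly (as in the proof of Corollary~\ref{cor_st_con_ord_case}), and it is sound.
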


\begin{example}\label{eg_wgt_pro_sp}
  Consider the divisive weighted projective space $\WW P(c_1,\dots,c_n)$ as defined in \cite{HHRW}. $\WW P(c_1,\dots,c_n)$ is same as $\WG(1,n)$ for $W=(c_1-1,\dots,c_n-1)$ and $a=1$. We have a natural $T^{n}$-action on $\WW P(c_1,\dots,c_n)$ by coordinate wise multiplications. The Schubert symbols are given by $I(1,n)=\{(1),(2),\dots,(n)\}$. 
  
  % Note that the partitions are given by $\mathcal{P}(1,n)=\{(0),(1),\dots,(n-1)\}$. 
   %For every $(i)\in \mathcal{P}(1,n)$ the factorial Grothendieck polynomial $G_{(i)}(x|b)$ is given by $$G_{(0)}(x|b)=1 \text{ and } G_{(i)}(x|b)=(x\oplus b_1)\dots (x\oplus b_{i}) \text{ for } 1\le i\le n-1.$$

For $(i)\in I(1,n)$, the Schubert class $S_{(i)}\in K_{T^n}(\G(1,n))$ is given by $$S_{(1)}|_{(j)}=1.~\text{ For } i\geq 2, S_{(i)}|_{(j)}=\prod_{k=1}^{i-1}(1-\frac{{a}_k}{{a}_j}).$$

  For every $i\in \{1,2,\dots,n\}$, the Schubert class ${\bf c}S_{(i)}\in K_{T^n}(\WG(1,n))$ is given by   $${\bf c}S_{(1)}|_{(j)}=0.~ ~\text{ For } i\geq 2 \text{ and } j\geq i,  {\bf c}S_{(i)}|_{(j)}=\prod_{k=1}^{i-1}(1-\frac{\mathbf{a}_k}{\mathbf{a}_j^{d_{k,j}}}),$$
  % $${\bf c}S_{(2)}|_{(j)}=(1-\frac{\mathbf{a}_1}{\mathbf{a}_j^{d_{j}}}).$$
where  $d_{i,j}=\frac{c_i}{c_j}$, and $d_j=\frac{c_1}{c_j}$. The weighted Chevalley rule is given by
  \begin{equation*}
{\bf c}S_{(i)}{\bf c}S_{(1)}=(1-\frac{\mathbf{a}_{1}}{(\mathbf{a}_{i})^{d_{i}}}){\bf c}S_{(i)}-{\mathbf{a}_{1}}\sum_{j\geq 1}\mathcal{L}_{(i), d_{i}}^{(i+j)}{\bf c}S_{(i+j)}.
\end{equation*}

Using Example \ref{eg_mu_to_lam}, $\mathcal{L}_{(i), k}^{(i+1)}=-\frac{1}{(\mathbf{a}_i)^{k}}\Big(1+\frac{\mathbf{a}_i}{(\mathbf{a}_{i+1})^{d_{i,i+1}}}+(\frac{\mathbf{a}_i}{(\mathbf{a}_{i+1})^{d_{i,i+1}}})^2+\dots+(\frac{\mathbf{a}_{i}}{(\mathbf{a}_{i+1})^{d_{i,i+1}}})^{k-1}\Big)$. 

 For $k\geq 2$, consider $(i)\xRightarrow[k]{}(i+2)$. In this case, we have $\frac{k(k-1)}{2}$ many possibilities of chains as in \eqref{eq_k_Chain} containing $k_1+1$ many $(i)$, $k_2+1$ many $(i+1)$ and $k_3+1$ many $(i+2)$, where $k_1,k_2,k_3\geq 0$ such that $k_1+k_2+k_3=k-2$. Thus  
 $$\mathcal{L}_{(i),k}^{(i+2)}=\mathcal{L}_{(i+1), d_{i,i+1 }}^{(i+2)}\sum_{\substack{k_1,k_2,k_3\geq 0\\k_1+k_2+k_3=k-2}}\frac{1}{({\bf a}_i)^{k_1+1}({\bf a}_{i+1})^{d_{i,i+1}k_2}({\bf a}_{i+2})^{d_{i,i+2}k_3}}.$$
Moreover, $\mathcal{L}_{(i), d_i}^{(i+3)},\dots , \mathcal{L}_{(i), d_i}^{(i+j)}$ can be computed iteratively using \eqref{eq_lau_pol}. Thus $\mathcal{L}_{(i), k}^{(i+1)}|_{{\bf a}_\ell=0}=k$, 
 $\mathcal{L}_{(i), k}^{(i+2)}|_{{\bf a}_\ell=0}=\frac{d_{i,i+1}k(k-1)}{2}$ and so on. In this way, one can compute  the structure constants of $K_{T^n}(\WW P(c_1,\dots,c_n))$ using Theorem \ref{str_cst_eq_case}. 
%   \begin{equation*}
% {\bf c}\mathbb{S}_{(i)}{\bf c}\mathbb{S}_{(1)}=(-1)^{j-1}\sum_{j=1}^{d_{i}}{d_i\choose j}{\bf c}\mathbb{S}_{(i+j)}.
% \end{equation*}
% Moreover, if $S_iS_j=\sum_{k:k\ge i,j}(\sum_IC(i,j,k,I)U_I)S_k$ then
%  $$ {\bf{c}}\mathbb{S}_{i}{\bf c}\mathbb{S}_{j}=\sum_{\ell:\ell \geq i,j  }\sum_{s=1}^{d_{I,\ell}}C(i,j,\ell,I)(-1)^{s}{d_{I,\ell}\choose s}{\bf c}\mathbb{S}_{\ell+s},$$
%  where $I$  be a finite collection of elements in $\{1,2,\dots,n-1\}$.
\end{example}

In the following example, we compute the structure constants in the equivariant $K$-theory of a divisive weighted Grassmann orbifold $\G_{\bf{c}}(2,4)$. 

\begin{example}\label{eg_wgt_gr_24}
Consider a Pl\"{u}cker weight vector ${\bf{c}}:=(\alpha\beta,\alpha\beta,\alpha\beta,\alpha,\alpha,\alpha)$, where $\alpha$ and $\beta$ are two positive integers. Then ${\bf{c}}$ is a Pl\"{u}cker weight following Example \ref{ex_pl_wgt_vec} and it is divisive. $\G_{\bf c}(2,4)$ is homeomorphic to $\G_{\bf c'}(2,4)$, where ${\bf c'}=(\beta,\beta,\beta,1,1,1)$ which correspond to $W=(\beta-1,0,0,0)$ and $a=1$ as in Lemma \ref{lem_div_a,1}. In the equivariant $K$-theory ring $K_{T^4}(\G(2,4))$ we have the following:
\begin{align*}
{S}_{(2,0)}{S}_{(2,1)}={S}_{(2,0)}|_{(2,1)}{S}_{(2,1)}+{K}_{(2,0),(2,1)}^{(2,2)}{S}_{(2,2)}.
%\Big{(}(1-\frac{a_1}{a_2})\frac{a_3}{a_4}+(1-\frac{a_2}{a_3})\frac{a_3}{a_4}-(1-\frac{a_1}{a_2})(1-\frac{a_2}{a_3})\frac{a_3}{a_4}\\+(1-\frac{a_3}{a_4})\frac{a_2}{a_4}-(1-\frac{a_1}{a_2})(1-\frac{a_3}{a_4})\frac{a_2}{a_4}\Big{)}{S}_{(2,2)}.
\end{align*}

$${K}_{(2,0),(2,1)}^{(2,1)}={S}_{(2,0)}|_{(2,1)}=(1-\frac{a_3}{a_4})(1-\frac{a_1}{a_4})=(1-u_3)(1-u_1u_2u_3)=1-u_3-u_1u_2u_3+u_1u_2u_3^2.$$

\[C((2,0),(2,1),(2,1);I)=\begin{cases}
1&\text{ if } I=\emptyset\\
 -1 &\text{ if } I=\{3\}\\
   -1 &\text{ if } I=\{1,2,3\}\\
   1 &\text{ if } I=\{1,2,3,3\}.
\end{cases}\]
We calculate the value of $d_{I,\nu}$ for $\nu=(2,1)$. If $I=\emptyset$ and $I=\{3\}$, then $d_{I,\nu}=0$. If $I=\{1,2,3\}$ and $I=\{1,2,3,3\}$, then $d_{I,\nu}=\beta-1$. 

Moreover, $\mathcal{L}_{(2,1),0}^{(2,1)}=0$ and $\mathcal{L}_{(2,1),\beta-1}^{(2,1)}=\frac{1}{(\mathbf{a}_{(2,1)})^{\beta-1}}=\frac{1}{(\mathbf{a}_2\mathbf{a}_4)^{\beta-1}}$. Thus

\begin{align*}
{\bf c}{K}_{(2,0),(2,1)}^{(2,1)}&=1-{\mathfrak{u}_3}-\frac{\mathfrak{u}_1\mathfrak{u}_2\mathfrak{u}_3}{(\mathbf{a}_2\mathbf{a}_4)^{\beta-1}}+\frac{\mathfrak{u}_1\mathfrak{u}_2\mathfrak{u}_3^2}{(\mathbf{a}_2\mathbf{a}_4)^{\beta-1}}\\
    &=1-\frac{\mathbf{a}_2\mathbf{a}_3}{\mathbf{a}_2\mathbf{a}_4}-\frac{\mathbf{a}_1\mathbf{a}_2}{(\mathbf{a}_2\mathbf{a}_4)^\beta}+\frac{\mathbf{a}_1\mathbf{a}_2\mathbf{a}_2\mathbf{a}_3}{(\mathbf{a}_2\mathbf{a}_4)^{\beta+1}}\\
    &=(1-\frac{\mathbf{a}_2\mathbf{a}_3}{\mathbf{a}_2\mathbf{a}_4})(1-\frac{\mathbf{a}_1\mathbf{a}_2}{(\mathbf{a}_2\mathbf{a}_4)^\beta})={\bf c}{S}_{(2,0)}|_{(2,1)}.
\end{align*}
The formulae of $ {K}_{(2,0),(2,1)}^{(2,2)}$ is described in \cite[Example 1.4]{PY1}.
{\small \begin{align*}
 {K}_{(2,0),(2,1)}^{(2,2)}&= (1-\frac{a_1}{a_2})\frac{a_3}{a_4}+(1-\frac{a_2}{a_3})\frac{a_3}{a_4}-(1-\frac{a_1}{a_2})(1-\frac{a_2}{a_3})\frac{a_3}{a_4}+(1-\frac{a_3}{a_4})\frac{a_2}{a_4}-(1-\frac{a_1}{a_2})(1-\frac{a_3}{a_4})\frac{a_2}{a_4} \\
 &=(1-u_1)u_3+(1-u_2)u_3-(1-u_1)(1-u_2)u_3+(1-u_3)u_2u_3-(1-u_1)(1-u_3)u_2u_3\\
 &=u_3-u_1u_2u_3+(1-u_3)u_1u_2u_3=u_3-u_1u_2u_3^2.
\end{align*}}

\[C((2,0),(2,1),(2,2);I)=\begin{cases}
   -1 &\text{ if } I=\{1,2,3,3\}\\
   1 &\text{ if } I=\{3\}.
\end{cases}\]

If $\nu=(2,1)$, $\mathcal{L}_{(2,1),0}^{(2,2)}=0$ and $\mathcal{L}_{(2,1),\beta-1}^{(2,2)}=\frac{-1}{({\bf a}_2{\bf a}_4)^{\beta-1}}(1+\frac{\mathbf{a}_2\mathbf{a}_4}{\mathbf{a}_3\mathbf{a}_4}+(\frac{\mathbf{a}_2\mathbf{a}_4}{\mathbf{a}_3\mathbf{a}_4})^2+\dots+(\frac{\mathbf{a}_2\mathbf{a}_4}{\mathbf{a}_3\mathbf{a}_4})^{\beta-2})$.

If $\nu=(2,2)$, $\mathcal{L}_{(2,2),0}^{(2,2)}=1$ and $\mathcal{L}_{(2,2),\beta-1}^{(2,2)}=\frac{1}{({\bf a}_3{\bf a}_4)^{\beta-1}}$. Thus
{\small \begin{align*}
{\bf c}{K}_{(2,0),(2,1)}^{(2,2)}&={\mathfrak{u}_3}-\frac{\mathfrak{u}_1\mathfrak{u}_2\mathfrak{u}_3^2}{(\mathbf{a}_3\mathbf{a}_4)^{\beta-1}}+\frac{(\mathfrak{u}_1\mathfrak{u}_2\mathfrak{u}_3-\mathfrak{u}_1\mathfrak{u}_2\mathfrak{u}_3^2)}{(\mathbf{a}_2\mathbf{a}_4)^{\beta-1}}(1+\frac{\mathbf{a}_2\mathbf{a}_4}{\mathbf{a}_3\mathbf{a}_4}+(\frac{\mathbf{a}_2\mathbf{a}_4}{\mathbf{a}_3\mathbf{a}_4})^2+\dots+(\frac{\mathbf{a}_2\mathbf{a}_4}{\mathbf{a}_3\mathbf{a}_4})^{\beta-2}).
\end{align*}}
\end{example}

{\bf Acknowledgement.} 
The author would like to thank Takeshi Ikeda, Soumen Sarkar, and Parameshwaran Sankaran for many valuable discussions. The author thanks Chennai Mathematical Institute and Infosys for the postdoctoral fellowship. The author thanks Waseda University and the JSPS Postdoctoral Research Fellowship. This work was supported by JSPS KAKENHI Grant Numbers 24KF0258, 25KF0074.

\bibliographystyle{abbrv}
\bibliography{ref-Wedge1}

\end{document}